\newtheorem{case}{Case}[section]
\newtheorem{cas1}{Case}%[section]
\newtheorem{cas3}{Case}%[section]
\newtheorem*{caution}{Caution}
\numberwithin{section}{chapter}
\numberwithin{equation}{chapter}
\newcommand*{\W}[2]{W^{(\mf #1)}_{\lambdab_{#2}}}
\newcommand*{\Db}[2]{D^{(\mf #1)}_{\lambdab_{#2}}}
\newcommand*{\N}[2]{N^{(\mf #1)}_{\lambdab_{#2}}}
\newcommand*{\Hi}[2]{\mathscr H^{(\mf #1)}_{#2}}
\newcommand*{\D}[1]{D_{\lambdab_{#1}}}
\newcommand*{\Lmd}[2]{\lambda_{#1 #2}}
\newcommand*{\w}{\sqcup_{\mf b}}
\newcommand{\strutstretchdef}{\newcommand{\strutstretch}{\vphantom{\raisebox{1pt}{$\big($}\raisebox{-1pt}{$\big($}}}}
\theoremstyle{plain}
\newtheorem{theorem}{Theorem}[chapter]
\newtheorem{lemma}[theorem]{Lemma}
\newtheorem{proposition}[theorem]{Proposition}
\newtheorem{corollary}[theorem]{Corollary}
\theoremstyle{definition}
\newtheorem{definition}[theorem]{Definition}
\newtheorem{example}[theorem]{Example}
\theoremstyle{remark}
\newtheorem{remark}[theorem]{Remark}
\numberwithin{equation}{chapter}
\numberwithin{equation}{section}
\newlength{\struh}
\newlength{\textminustop}
\newcommand{\gr}{\tt{graph}}
\newcommand{\mf}{\mathfrak}
\newcommand{\V}{V_{\infty}}
\newcommand*{\des}[1]{{{\mathsf{Des}}(#1)}}
\newcommand*{\desb}[1]{{{\mathsf{Des}_{\mf b}}(#1)}}
\newcommand*{\Desb}[1]{{{\mathsf{Des}_{\mf b}}[#1]}}
\newcommand*{\asc}[1]{{{\mathsf{Asc}}(#1)}}
\newcommand*{\child}[1]{\mathsf{Chi}(#1)}
\newcommand*{\childn}[2]{{\mathsf{Chi}}^{\langle#1\rangle}(#2)}
\newcommand*{\parentn}[2]{{\mathsf{par}}^{\langle#1\rangle}(#2)}
\newcommand*{\parset}[2]{\mathfrak{par}(#1, #2)}
\newcommand*{\Ge}{\geqslant}
\newcommand*{\lambdab}{\boldsymbol\lambda}
\newcommand*{\Le}{\leqslant}
\newcommand*{\parent}[1]{\mathsf{par}(#1)}
\newcommand*{\rootb}{{\mathsf{root}}}
\def \dep{\mathsf{d}}
\newcommand*{\supp}{\mathrm{supp}}
\newenvironment{psmallmatrix}
  {\left(\begin{smallmatrix}}
  {\end{smallmatrix}\right)}
\def\moverlay{\mathpalette\mov@rlay}
\def\mov@rlay#1#2{\leavevmode\vtop{%
   \baselineskip\z@skip \lineskiplimit-\maxdimen
   \ialign{\hfil$\m@th#1##$\hfil\cr#2\crcr}}}
\newcommand{\charfusion}[3][\mathord]{
    #1{\ifx#1\mathop\vphantom{#2}\fi
        \mathpalette\mov@rlay{#2\cr#3}
      }
    \ifx#1\mathop\expandafter\displaylimits\fi}
\newcommand{\cupdot}{\charfusion[\mathbin]{\sqcup}{\cdot}}
\newcommand{\bigcupdot}{\charfusion[\mathop]{\bigsqcup}{\cdot}}
\newcommand{\ncom}{\newcommand}
\ncom{\bq}{\begin{equation}}
\ncom{\eq}{\end{equation}}
\ncom{\beqn}{\begin{eqnarray*}}
\ncom{\eeqn}{\end{eqnarray*}}
\ncom{\beq}{\begin{eqnarray}}
\ncom{\eeq}{\end{eqnarray}}
\ncom{\nno}{\nonumber}
\ncom{\rar}{\rightarrow}
\ncom{\Rar}{\Rightarrow}
\ncom{\noin}{\noindent}
\ncom{\bc}{\begin{centre}}
\ncom{\ec}{\end{centre}}
\ncom{\sz}{\scriptsize}
\ncom{\rf}{\ref}
\ncom{\sgm}{\sigma}
\ncom{\Sgm}{\Sigma}
\ncom{\dt}{\delta}
\ncom{\Dt}{Delta}
\ncom{\eps}{\epsilon}
\ncom{\pcc}{\stackrel{P}{>}}
\ncom{\dist}{{\rm\,dist}}
\ncom{\sspan}{{\rm\,span}}
\ncom{\im}{{\rm Im\,}}
\ncom{\sgn}{{\rm sgn\,}}
\ncom{\ba}{\begin{array}}
\ncom{\ea}{\end{array}}
\ncom{\eop}{\hfill{{\rule{2.5mm}{2.5mm}}}}
\ncom{\eoe}{\hfill{{\rule{1.5mm}{1.5mm}}}}
\ncom{\eof}{\hfill{{\rule{1.5mm}{1.5mm}}}}
\ncom{\hone}{\mbox{\hspace{1em}}}
\ncom{\htwo}{\mbox{\hspace{2em}}}
\ncom{\hthree}{\mbox{\hspace{3em}}}
\ncom{\hfour}{\mbox{\hspace{4em}}}
\ncom{\hsev}{\mbox{\hspace{7em}}}
\ncom{\vone}{\vskip 2ex}
\ncom{\vtwo}{\vskip 4ex}
\ncom{\vonee}{\vskip 1.5ex}
\ncom{\vthree}{\vskip 6ex}
\ncom{\vfour}{\vspace*{8ex}}
\ncom{\norm}{\|\;\;\|}
\ncom{\integ}[4]{\int_{#1}^{#2}\,{#3}\,d{#4}}
\ncom{\inp}[2]{\langle{#1},\,{#2} \rangle}
\ncom{\Inp}[2]{\Langle{#1},\,{#2} \Langle}
\ncom{\vspan}[1]{{{\rm\,span}\#1 \}}}
\ncom{\dm}[1]{\displaystyle {#1}}
\begin{document}

\frontmatter

\title[Weighted Join Operators]{Weighted Join Operators on Directed Trees}
\author[S. Chavan \and R. Gupta]{Sameer Chavan \and Rajeev Gupta
}
\address{Department of Mathematics and Statistics \\ Indian Institute of  Technology   Kanpur, India }
   \email{chavan@iitk.ac.in}
   \email{rajeevg@iitk.ac.in} 
 \author[K. B. Sinha]{Kalyan
B. Sinha
}
   \address{J. N. Centre for Advanced Scientific Research, Jakkur, Bangalore 560064, India  \\ and
 Indian Statistical Institute, India
  }
\email{kbs@jncasr.ac.in}

\thanks{The first, second and third authors are supported by P.K. Kelkar Research Fellowship, Inspire Faculty Fellowship and SERB-Distinguished Fellowship respectively.}

\keywords{Directed tree, join, meet, rank one perturbation, discrete Hilbert transform, commutant, Gelfand-triplet, sectorial, complex Jordan, $n$-symmetric, form-sum}

\date{}

\subjclass[2010]{Primary 47B37, 47B15, 47H06; Secondary 05C20, 47B20}

\begin{abstract} A rooted directed tree $\mathscr T=(V, E)$ with root $\rootb$ can be extended to a directed graph $\mathscr T_\infty=(V_\infty, E_\infty)$ by adding a vertex $\infty$ to $V$ and declaring each vertex in $V$ as a parent of $\infty.$
One may associate with the extended directed tree $\mathscr
T_{\infty}$ a family of semigroup structures $\sqcup_{\mf b}$ with
extreme ends being induced by the join operation $\sqcup$ and the meet
operation $\sqcap$ from lattice theory (corresponding to $\mf
b=\rootb$ and $\mf b= \infty$ respectively). Each semigroup
structure among these leads to a family of densely defined  linear
operators $\W{b}{u}$ acting on $\ell^2(V),$ which we refer to as
weighted join operators at a given base point $\mf b \in V_{\infty}$
with prescribed vertex $u \in V$. The extreme ends of this family
are weighted join operators $\W{\mf \rootb}{u}$ and weighted meet
operators $\W{\mf \infty}{u}$. In this paper, we systematically study the weighted join operators on rooted directed trees.
We also present a more involved
counter-part of weighted join operators $\W{b}{u}$ on rootless
directed trees $\mathscr T$.  In rooted case, these operators are
either finite rank operators, diagonal
operators or  rank one perturbations of diagonal
operators.
In rootless case,  these operators are
either possibly infinite rank operators, diagonal
operators or (possibly unbounded) rank one perturbations of diagonal
operators.
In both cases, the class of weighted
join operators overlaps with the well-studied classes of complex
Jordan operators and $n$-symmetric operators.
An important half of this paper is devoted to the study of rank one extensions  $W_{f, g}$ of weighted join operators $\W{b}{u}$ on rooted directed trees, where $f \in \ell^2(V)$ and $g : V \rar \mathbb C$ is unspecified.
Unlike weighted join operators, these operators are not necessarily closed.
 We provide a couple of compatibility conditions involving the weight system $\lambdab_u$ and $g$ to ensure closedness of $W_{f, g}$.
 These compatibility conditions are intimately related to whether or not an associated discrete Hilbert transform is well-defined. We discuss the role of the Gelfand-triplet
 in the realization of the Hilbert space adjoint of $W_{f, g}$.
Further, we describe various spectral parts of $W_{f, g}$ in terms of the weight system and the tree data. We also provide sufficient conditions for $W_{f, g}$ to be a sectorial operator (resp. an infinitesimal generator of a quasi-bounded strongly continuous semigroup).
In case $\mathscr T$ is leafless, we characterize rank one extensions $W_{f, g}$, which admit compact resolvent. Motivated by the above graph-model, we also take a brief look into the general theory of rank one non-selfadjoint perturbations.
\end{abstract}

\maketitle

\setcounter{tocdepth}{1}
\tableofcontents

%\newpage

%\section{Preliminaries}

%\section{Introduction}

\mainmatter

\chapter{Background}

The present work is yet another illustration of the rich interplay
between graph theory and operator theory that includes the recent
developments pertaining to the weighted shifts on directed trees
\cite{Jablonski, BJJS, CPT, BJJS2}. This work exploits the order
structure of directed trees to introduce a class of possibly
unbounded linear operators to be referred to as weighted join
operators $\W{b}{u}$ based at the vertex $\mf b$ and with prescribed
vertex $u \in V$. We capitalize on the fact that any directed tree
has a natural partial ordering induced by the notion of the directed
path. This ordering satisfies all the requirements of the so-called
{\it spiral-like ordering} (SLO) introduced and studied by Pruss for
$p$-regular trees (see \cite[Definition 6.1]{Pr}; see also
\cite[Definition 3]{BL} for modified and extended definition). This
allows us to define join and meet operations on a directed tree
(refer to \cite[Chapter 4]{G} for the basics of lattice theory).
These operations, in turn, induce the so-called weighted join
operators $\W{\mf \rootb}{u}$ and weighted meet operators $\W{\mf
\infty}{u}$ on a directed tree. The present work is devoted to a
systematic study of this class. In case the directed tree is rooted,
it turns out that weighted join operators $\W{b}{u}$ are either
(possibly unbounded) finite rank operators, diagonal operators or
bounded rank one perturbations of (possibly unbounded) diagonal
operators. In case the directed trees are rootless, the situation
being more complex allows unbounded rank one perturbations of
(possibly unbounded) diagonal operators. In particular, weighted
join operators $\W{b}{u}$ on rootless directed trees need not be
even closable. A substantial part of this paper is devoted to the
study of rank one extensions $W_{f, g}$ of weighted join operators.
The so-called compatibility conditions (which controls the rank one
perturbation $f \obslash g$ with the help of the diagonal operator
$\Db{b}{u}$) play an important role in the spectral theory of these
operators. We also discuss the problem of determining the Hilbert
space adjoint of $W_{f, g}$. Our analysis in this problem relies on
the idea of the Hilbert rigging (refer to \cite{BSU}). The notion of
the rank one extensions of weighted join operators is partly
motivated by the graph-model arising from the semigroup structures
on directed trees. Interestingly, the above graph-model plays a decisive
role in deriving various spectral properties of these operators.

The  class of rank one perturbations of diagonal operators has been
studied extensively in the context of hyperinvariant subspace
problem \cite{St, I, FJKP-0, FJKP-1, FJKP, FX, JL, Kl} and spectral
analysis \cite{W, GK, BY0, BY, CT, BY1}. The reader is referred to
\cite{Si0} for a survey on the classical theory of self-adjoint rank
one perturbations of self-adjoint operators (refer also to \cite{LT}
for its connection with the theory of singular integral operators).
The class of rank one perturbations of diagonal operators also
arises naturally in a problem of domain inclusion in the context of
weighted shifts on directed trees (see \cite[Theorem
4.2.2]{Jablonski}). We find it necessary to comment upon the
relationship of the present work to the existing literature. The
class of weighted join operators has essentially no intersection
with the existing class ($\mathcal R \mathcal O$), as studied in
\cite{FJKP-1}, of bounded rank one perturbations of bounded diagonal
operators. Unlike the case of operators in ($\mathcal R \mathcal
O$), commutants of weighted join operators  are not necessarily
abelian. It turns out that there are no non-normal hyponormal
weighted join operators (refer to \cite{Ja, OS} for basics of
unbounded hyponormal operators). In the context of bounded rank one
perturbations of bounded normal operators, similar behaviour has
been observed in \cite{JL}. On the other hand,  the class of
weighted join operators and their rank one extensions contains
bounded as well as unbounded complex Jordan operators  (and
$n$-symmetric operators) in abundance (refer to \cite{He, BH, Ag,
BF, Ag-St, JKP, MR} for the basic theory of Jordan operators,
$n$-symmetric operators, and their connections with the classes of
$n$-normal operators and $n$-isometries). Further, it overlaps with
the class of sectorial operators, and also provides a family of
examples of non-normal compact operators with large null summand in
the sense of Anderson \cite{An}. We would also like to draw
attention to the works \cite{N0, N, ALMS, BBDHL, MS, T} on the
spectral theory of unbounded operator matrices on non-diagonal
domains (refer also to the authoritative exposition \cite{T0} on
this topic). The rank one extensions $W_{f, g}$ of weighted join
operators fit into the class of operator matrices with not
necessarily of {\it diagonal domain}.
%it is interesting to know whether the general spectral theory as established in the aforementioned work can be helpful in their study (see the discussion prior to the statement of Proposition \ref{semigroup}).
Further,  under some compatibility conditions, $W_{f, g}$ is {\it diagonally dominant} in the sense of \cite{T} with the exception that exactly one of its entries is not closable.
%\section{Preliminaries}

In Sections 1 and 2 of this chapter, we collect preliminaries pertaining to the
directed trees and the Hilbert space operators respectively (the
reader is referred to \cite{G, Jablonski} for the basics of graph
theory, and \cite{Si, Sc} for that of Hilbert space operators). In
particular, we set notations and introduce some natural and known
classes of directed trees and unbounded Hilbert space operators,
which are relevant to the investigations in this paper. In Section
3, we collect several simple but basic properties of bounded and
unbounded rank one operators. We conclude this chapter with a
prologue including some important aspects and the lay out of the
paper.

\section{Directed trees}

\index{$\childn{n}{\cdot}$}
\index{$\mathsf{root}$}
\index{$\mathscr T= (V,E)$}
\index{$\child{\cdot}$}
\index{$\mathsf{Par}(\cdot)$}

A pair $\mathscr T= (V,E)$ is  said to be a {\it directed graph} if $V$ is
a nonempty set and $E$ is a nonempty subset of $  V \times V
\setminus \{(v,v): v \in V\}$.
An element of $V$ (resp. $E$) is
referred to as a {\it vertex} (resp. an {\it edge}) of $\mathscr T$.  A
finite sequence $\{v_i\}_{i=1}^n$ of distinct vertices is said to be
a {\it circuit} in $\mathscr T$ if $n \geqslant  2$, $(v_i,v_{i+1})
\in E$ for all $1 \leqslant i \leqslant n-1$ and $(v_n,v_1) \in E$.
Given $u, v \in V$, by a {\it directed path from $u$ to $v$ in
$\mathscr T$}, we understand a finite sequence $\{u_1, \ldots,
u_k\}$ in $V$ such that \beqn u_1=u,~ (u_j, u_{j+1}) \in E~(1 \leqslant j
\leqslant k-1)~ \mbox{and~}u_k=v. \eeqn We say that two distinct vertices $u$ and
$v$ of $\mathscr T$ are {\it connected by a path} if there exists a
finite sequence $\{u_1, \ldots,
u_k\}$ of distinct vertices of $\mathscr
T$ such that \beqn u_1=u, ~ (u_j,u_{j+1})~ \mbox{or~} (u_{j+1}, u_j)
\in E~(1 \leqslant j \leqslant k-1) ~\mbox{and~} u_k=v. \eeqn
A directed graph
$\mathscr T$ is said to be {\it connected} if any two distinct
vertices of $\mathscr T$ can be connected by a path in $\mathscr T.$
For a subset $W$ of $V$, define
$$\child{W} \,:=\, \bigcup_{u\in W} \{v\in V
\colon (u,v) \in E\}.
$$
One may define inductively $\childn{n}{W}$ for
a non-negative integer $n$ as follows:
\beqn
\childn{n}{W}\,:=\,
\begin{cases} W & ~\mbox{if }~n=0, \\
\child{\childn{n-1}{W}} & ~\mbox{if~} n \geqslant 1. \end{cases}
\eeqn Given $v\in V$ and an integer $n \geqslant 0$, we set
$\childn{n}{v}:=\childn{n}{\{v\}}$. An element of $\child{v}$ is
called a {\it child} of $v.$ For a given vertex $v \in V,$ consider
the set $$\mathsf{Par}(v)\,:=\,\{u \in V : (u, v) \in E\}.$$ If
$\mathsf{Par}(v)$ is singleton, then the unique vertex in
$\mathsf{Par}(v)$ is called the {\it parent} of $v$, which we denote
by $\parent{v}.$ Define the subset $\mathsf{Root}(\mathscr T)$ of
$V$ by
$$\mathsf{Root}(\mathscr T) \,:=\, \{v \in V : \mathsf{Par}(v)  = \emptyset\}.$$
An element of $\mathsf{Root}(\mathscr T)$ is called a {\it root} of $\mathscr T$. If $\mathsf{Root}(\mathscr T)$ is singleton, then its unique element
is denoted  by $\mathsf{root}$.
We set $V^\circ:=V \setminus \mathsf{Root}(\mathscr T)$.
A directed graph $\mathscr T= (V,E)$ is called a {\it directed tree} if
$\mathscr T$ has no circuits, $\mathscr T$ is connected and
each vertex $v \in V^\circ$ has a unique parent. A subgraph of a directed tree $\mathscr T$ which itself is a directed tree is said to be a {\it subtree} of $\mathscr T$.
A directed tree $\mathscr T$ is said to be
%{\it rooted} if it has a unique root.
\begin{enumerate}
\item[(i)] {\it rooted} if it has a unique root.
\item[(ii)]
{\it locally finite} if $\mbox{card}(\child u)$ is finite for all $u \in V,$ where  $\mbox{card}(X)$ stands for the cardinality of the set $X.$
\item[(iii)]
{\it leafless} if every vertex has at least one child.
\item[(iv)] {\it narrow} if there exists a positive integer $\mf m$ such that
\beq \label{narrow-c}
\mbox{card}(\childn{n}{\rootb}) \,\leqslant\, \mf m, \quad n \in \mathbb N.
\eeq
The smallest positive integer $\mf m$ satisfying \eqref{narrow-c} will be referred to as the {\it width} of $\mathscr T.$
\end{enumerate}
\begin{remark}
Note that any narrow directed tree is necessarily locally finite.
However, the converse is not true. Consider, for instance, the
binary tree (see \cite[Example 4.3.1]{Jablonski}). It is worth
noting that there exist narrow directed trees with
$\mbox{card}(V_{\prec})=\aleph_0$, where $V_{\prec}$ denotes the set
of {\it branching vertices} of $\mathscr T$ defined by \beqn
V_{\prec}\,:=\,\{u\in V: \mbox{card}(\mathsf{Chi}(u)) \Ge 2\} \eeqn (see
Figure \ref{fig-narrow}). This is not possible if the directed tree
is leafless.
\end{remark}

\index{$V_{\prec}$}
\index{$\mbox{card}(X)$}
\index{$V^{\circ}$}

\begin{figure}
\begin{tikzpicture}[scale=.8, transform shape]
\tikzset{vertex/.style = {shape=circle,draw, minimum size=1em}}
\tikzset{edge/.style = {->,> = latex'}}
% vertices
%\node[vertex] (a) at  (0,0) {$0$};
\node[vertex] (b) at  (2,0) {$1$};
\node[vertex] (c) at  (2,2) {$0$};
\node[vertex] (d) at  (4, 0) {$3$};
\node[vertex] (e) at  (4, 2) {$2$};
\node[vertex] (f) at  (6, 0) {$5$};
\node[vertex] (g) at  (6, 2) {$4$};
\node[vertex] (h) at  (8, 0) {$7$};
\node[vertex] (i) at  (8, 2) {$6$};
\node[vertex] (j) at  (10, 0) {$\ldots$};
\node[vertex] (k) at  (10, 2) {$\ldots$};

%edges
\draw[edge] (c) to (b);
%\draw[edge] (a) to (c);
\draw[edge] (e) to (d);
\draw[edge] (c) to (e);
\draw[edge] (g) to (f);
\draw[edge] (e) to (g);
\draw[edge] (g) to (i);
\draw[edge] (i) to (h);
\draw[edge] (i) to (k);
\draw[dashed] [->] (k) to (j);
%\draw[orange,rotate=45,shift={(3 cm,5 cm)}] \draw[edge] (e) to (g);
\end{tikzpicture}
\caption{A narrow tree $\mathscr T=(V, E)$ with width $\mf m=2$ and $V_{\prec}=2\mathbb N$} \label{fig-narrow}
\end{figure}

\index{$\dep_v$}

Let $\mathscr T=(V, E)$ be a rooted directed tree with root $\rootb$.
For each $u \in V$, the {\it depth} of $u$ is the unique non-negative integer $\dep_{u}$
such that
$u \in \mathsf{Chi}^{\langle \dep_u\rangle}(\mathsf{root})$ (see \cite[Corollary 2.1.5]{Jablonski}).
We discuss here the convergence of nets associated with rooted directed trees induced by the depth.  Define the relation $\leq$ on $V$
as follows:
\beq
\label{p-order}
v ~\leq ~w ~~\mbox{if}~~\dep_v ~\Le ~\dep_w,
\eeq
where $\dep_v$ denotes the depth of $v$ in $\mathscr T.$
Note that $V$ is a partially ordered set with {\it partial order relation} $\leq$, that is, $\leq$ is reflexive and transitive. Further, if $V$ is infinite, then given two vertices $v, w \in V$, there exists $u \in V$ such that $v \leq u$ and $w \leq u$ (the reader is referred to the discussion prior to \cite[Remark 3.4.1]{CPT} for details).
In this text, we will frequently be interested in the nets $\{\mu_v\}_{v \in V}$ of complex numbers induced by the above partial order
(the reader is referred to \cite[Chapter 2]{Si} for the definition and elementary facts pertaining to nets).

\index{$\mathsf{Des}(\cdot)$}
\index{$\cupdot$}
\index{$\parentn{n}{\cdot}$}
\index{$\asc{\cdot}$} \index{$[u, v]$}
%\index{$\mathscr T_u=(\des{u}, E_u)$}

Let $\mathscr T=(V, E)$ be a directed tree. For a vertex $u \in V,$
we set $\parentn{0}{u}=u.$ Note that the correspondence
$\parent{\cdot} : u \mapsto \parent{u}$ is a partial function in
$V$. For a positive integer $n$, by the partial function
$\parentn{n}{\cdot}$, we understand $\parent{\cdot}$ composed with
itself $n$-times. The {\it descendant} of a vertex $u \in V$ is
defined by
$$\mathsf{Des}(u)\,:=\, \bigcupdot_{n=0}^{\infty}
\childn{n}{u}~(\mbox{disjoint sum})
$$
(see the discussion prior to \cite[Eqn
(2.1.10)]{Jablonski}).
Note that $\mathscr T_u=(\des{u}, E_u)$ is a rooted subtree of
$\mathscr T$ with root $u$, where \beq \label{E-u} E_u \,:=\,\{(v, w)
\in E : v, w \in \des{u}\}. \eeq The {\it ascendant} or {\it
ancestor} of a vertex $u \in V$ is defined by $$\asc{u}\,:=\,
\{\parentn{n}{u} : n \Ge 1\}.$$ In particular, a vertex is its
descendant, while it is {\it not} its ascendant. Although this is
not a standard practice, we find it convenient. Note that a directed
path from $u$ to $v$ in $\mathscr T$, denoted by $[u, v],$ is unique
whenever it exists. Indeed, since there exists a path from $u$ to
$v$ in $\mathscr T$,  $v \in \des{u}$ and $\dep_v \geqslant \dep_u$.
In this case, it is easy to see that \beqn [u, v] \,=\, \{\parentn{n}{v}
: n =\dep_v-\dep_u, \dep_v-\dep_u-1, \ldots, 0\}. \eeqn Further, for
$u, v \in V,$ we set \beqn (u, v]\,:=\, \begin{cases} [u, v] \setminus
\{u\} & \mbox{if~}
v \in \des{u}, \\
\emptyset & \mbox{otherwise}. \end{cases} \eeqn We also need the
following subsets of $V$: For $u \in V$ and $v \in \des{u},$
 \beq
 \label{desb}
 \left.
 \begin{array}{ccc}
 \mathsf{Des}_v[u] &:=& \des{u} \setminus (u, v] \\
\mathsf{Des}_v(u) &:=& \des{u} \setminus [u, v].
 \end{array}
\right\} \eeq
Note that $\mathsf{Des}_u[u]=\des{u}$  and $\mathsf{Des}_u(u)=\des{u} \setminus \{u\}.$
%where $(u, v]:=[u, v] \setminus \{u\}.$ In case $v
%\notin \des{u},$ we set $[u, v] =\emptyset.$

\index{$\mathscr T_u=(\des{u}, E_u)$}
\index{$\mathsf{Des}_v[u]$}
\index{$\mathsf{Des}_v(u)$}
\index{$(u, v]$}

\section{Hilbert space operators}

For a subset $\Omega$ of the complex plane $\mathbb C$, let ${\rm int}(\Omega)$, ${\overline {\Omega}}$ and
${\mathbb C} \setminus \Omega$  denote  the interior, the closure and the complement of $\Omega$
in $\mathbb C$ respectively. We use $\mathbb R$ to denote the real line, and  ${\Re}z$ and
${\Im}z$ denote the real and imaginary parts of a
complex number $z$ respectively.
The  conjugate of the complex number $z$ will be denoted by $\bar{z}$, while $\arg(z)$ stands for the argument of a non-zero complex number $z$.
We reserve the notation $\mathbb N$ for the set of non-negative integers, while $\mathbb Z$ (resp. $\mathbb Z_+$) stands for the set of all integers (resp. all positive integers).
Unless stated otherwise, all the Hilbert spaces occurring below are complex,
infinite-dimensional and separable.
Let ${\mathcal H}$ be a complex,
separable Hilbert space with the inner product
 $\langle \cdot,\cdot \rangle_{\mathcal H}$ and the corresponding norm
$\|\cdot\|_{\mathcal H}$.
Whenever there is no ambiguity,
we will suppress the suffix and simply
write $\inp{x}{y}$ and  $\|x\|$ in
place of $\inp{x}{y}_{{\mathcal H}}$ and  $\|x\|_{{\mathcal H}}$ respectively.
By $\mbox{span} \{x \in \mathcal H: x \in W\}$ (resp. $\bigvee \{x \in \mathcal H: x \in W\}$),
we mean the smallest linear subspace (resp. smallest closed linear
subspace)
generated by the subset $W$ of $\mathcal H.$ In case $W=\{x\}$, we use the simpler notation $[x]$ for the linear span of $W.$ The orthogonal complement of a closed subspace $W$ of $\mathcal H$ is denoted by $\mathcal H \ominus W$.  Sometimes  $\mathcal H \ominus W$ is denoted by  $W^{\perp}$.
\index{$\mathbb C$}
\index{$\mathbb R$}
\index{$\Re z$}
\index{$\Im z$}
\index{$\bar{z}$}
\index{$\arg{z}$}
\index{$\mathbb N$}
\index{$\mathbb Z_+$}
\index{$\mbox{span}\,W$}
\index{$\bigvee W$}
\index{$[x]$}
\index{$\mathcal H \ominus W=W^{\perp}$}
%\index{$W^{\perp}$}

\index{$\mathscr D(S)$}
\index{$\ker S$}
\index{$\mbox{ran}\,S$}
\index{$\sigma_p(S)$}
\index{$\pi(S)$}
\index{$\sigma(S)$}
\index{$\rho(S)$}
\index{$B(\mathcal H)$}
\index{$R_S(\cdot)$}
\index{$d_S(\cdot)$}
\index{$\mathscr E_S(\mu)$}
\index{${\mf m}_S(\cdot)$}
\index{$\mbox{ind}_S(\cdot)$}
\index{$\sigma_e(S)$}

Let $S$ be a densely defined linear operator in $\mathcal H$ with
domain ${\mathscr D}(S).$ The symbols $\ker S$ and $\mbox{ran}\,S$
will stand for the kernel of $S$ and the range of $S$ respectively.
%If $S$ is a densely defined linear operator in ${\mathcal H}$ with domain
%${\mathcal D}(S)$, then
We use  $\sigma_p(S)$, $\sigma_{ap}(S)$, $\sigma(S)$ to  denote the
point spectrum,  the approximate-point spectrum, and the  spectrum
of $S$ respectively. It may be recalled that $\sigma_p(S)$ is the
set of eigenvalues of $S$, that $\sigma_{ap}(S)$ is the set of those
$\lambda$ in $\mathbb C$ for which $S-\lambda$ is not bounded below,
and that $\sigma(S)$ is the complement of the set of those $\lambda$
in $\mathbb C$ for which ${(S-\lambda)}^{-1}$ exists as a bounded
linear operator on $\mathcal H$. Here, by $S-\lambda$, we understand
the linear operator $S-\lambda I$ with $I$ denoting the identity
operator on $\mathcal H$. We reserve the symbol $B(\mathcal H)$ for
the unital $C^*$-algebra of bounded linear operators on $\mathcal
H$.  The {\it resolvent set} $\rho(S)$ of $S$ is defined as the
complement of $\sigma(S)$ in $\mathbb C$. The {\it resolvent
function} $R_S : \rho(S) \rar B(\mathcal H)$ is given by \beqn
R_S(\lambda)\,:=\,(S-\lambda)^{-1}, \quad \lambda \in \rho(S). \eeqn
%The range of $S$ is denoted by $\mbox{ran}(S)$.
The {\it regularity domain} $\pi(S)$ of $S$ is defined as the
complement of $\sigma_{ap}(S)$ in $\mathbb C$. For $\mu \in \pi(S)$,
we refer to the linear subspace $\mbox{ran}(S - \mu)^{\perp}$ of
$\mathcal H$ the {\it deficiency subspace} of $S$ at $\mu$ and its
dimension \beq \label{d-S-mu} d_S(\mu) \,:=\, \dim \mbox{ran}(S -\mu)^{\perp}, \eeq the {\it
defect number of $S$ at $\mu$}.
%Let $T$ be a densely defined linear operator in $\mathcal H.$
By the {\it multiplicity function} of $S$, we understand the function ${\mf m}_S : \sigma_p(S) \rar \mathbb Z_+ \cup \{\aleph_0\}$ assigning with each eigenvalue $\lambda$ of $S$, the dimension of the eigenspace $\mathscr E_S(\lambda)$ of $S$ corresponding to $\lambda$. We extend ${\mf m}_S$ to the entire complex plane by setting
 ${\mf m}_S(\lambda)=0,$ $\lambda \in \mathbb C \setminus \sigma_p(S).$
We say that a densely defined linear operator $S$ in $\mathcal H$ is {\it Fredholm} if the range of $S$ is closed, $\dim \ker S$ and $\dim \ker S^*$ are finite. The {\it essential spectrum} $\sigma_e(S)$ of $S$ is the complement of the set of those $\lambda \in \mathbb C$ for which $S-\lambda$ is Fredholm. The {\it Fredholm index} $\mbox{ind}_S : \mathbb C \setminus \sigma_e(S) \rar \mathbb Z$ is given by 
\beqn \mbox{ind}_S(\lambda)\,:=\,\mf m_S(\lambda)-\mf m_{S^*}(\bar{\lambda}), \quad \lambda \in \mathbb C \setminus \sigma_e(S) \eeqn
(the reader is referred to \cite{K, M, Sc} for elementary properties of various spectra of unbounded linear operators).

Let $T$ be a densely defined linear operator in $\mathcal H$ with
domain ${\mathscr D}(T).$
%The symbols $\ker(T)$ and $\mbox{ran(T)}$
%will stand for the kernel of $T$ and the range of $T$ respectively.
The closure (resp. adjoint) of $T$ is denoted by $\overline{T}$
(resp. $T^*$), whenever it exists.  A subspace ${\mathscr D}$ of
${\mathcal H}$ is said to be a {\it core} of a closable linear
operator $T$ if \beqn {\mathscr D} \subseteq{\mathscr D}(T),~
\overline{\mathscr D}=\mathcal H,~ \mbox{and}~
\overline{T|_{{\mathscr D}}}=\overline{T}.\eeqn
%Observe that if ${\mathcal D}_1, {\mathcal
 % D}_2$ are two subspaces of $\mathcal H$ such that ${\mathcal D}_1 \subseteq
%{\mathcal D}_2$, then ${\mathcal D}_2$ is a core of a closable linear
%operator $T$ if so is ${\mathcal D}_1.$
If $S$ is a linear operator in ${\mathcal H}$ such that $$\mathscr D(S) \subseteq \mathscr D(T)~\mbox{and~}Sh=Th~\mbox{for~every~}h \in
\mathscr D(S),$$ then we say that {\it $T$ extends $S$} (denoted by
$S \subseteq T$). Note that two operators $S$ and $T$ are same if
and only if $S \subseteq T$ and $T \subseteq S$. A closed linear
subspace $\mathcal M$ of $\mathcal H$ is said to be {\it invariant}
for $T$ if $T (\mathcal M \cap \mathscr D(T)) \subseteq {\mathcal
M}.$ In this case, the {\it restriction of $T$ to $\mathcal M$} is
denoted by $T|_{\mathcal M}$. Note that if $T$ has {\it invariant
domain}, that is, $T \mathscr D(T) \subseteq \mathscr D(T)$, then
$T$ admits polynomial calculus in the sense that $p(T)$ is a
well-defined linear operator with domain $\mathscr D(T)$ for every
complex polynomial $p$ in one variable. A closed linear subspace
$\mathcal M$ of $\mathcal H$  is {\it reducing} for $T$ if there
exist linear operators $T_0$ in $\mathcal M$ and $T_1$ in $\mathcal
M^{\perp}$ such that $T= T_0 \oplus T_1.$
%\beqn T\mathcal M \subseteq \mathcal M~ \mbox{and~} T^*\mathcal M \subseteq \mathcal M.\eeqn
The {\it commutant} of a linear operator $T$ is given by
\beqn
\{T\}'\,:=\,\{A \in B(\mathcal H) : AT \subseteq TA \}.
\eeqn
In case $T \in B(\mathcal H),$
%\beqn
$\{T\}'=\{A \in B(\mathcal H) : AT = TA \}.$
%\eeqn
If $P_{\mathcal M}$ is an orthogonal projection of $\mathcal H$ onto a closed subspace $\mathcal M$ of $\mathcal H$, then $P_{\mathcal M} \in \{T\}'$ if and only if $\mathcal M$ is a reducing subspace for $T$ (see \cite[Proposition 1.15]{Sc}).
\index{$\overline{T}$}
\index{$T^*$}
\index{$S \subseteq T$}
\index{$T\vert_{\mathcal M}$}
\index{$T_0 \oplus T_1$}
\index{$\{T\}'$}
\index{$P_\mathcal M$}

We recall definitions of some well-studied classes of unbounded
linear operators, which are relevant to the present investigations
(refer to \cite{Ru, O, M, Sc}). A densely
defined linear operator $T$ in a complex Hilbert space $\mathcal H$
is said to be
\begin{enumerate}
\item {\it self-adjoint} if $\mathscr D(T) = \mathscr D(T^*)$ and $T^*x = Tx$ for all $x \in \mathscr D(T).$
\item {\it normal} if $\mathscr D(T) = \mathscr D(T^*)$ and $\|T^*x\| = \|Tx\|$ for all $x \in \mathscr D(T).$
%\item {\it hyponormal} if $\mathscr D(T) \subseteq \mathscr D(T^*)$ and $\|T^*x\| \Le \|Tx\|$ for all $x \in \mathcal D(T).$
%\item {\it cohyponormal} if $T$ is closed and $T^*$ is hyponormal.
\item {\it nilpotent} if $T$ has invariant domain such that $T^n=0$ for some positive integer $n$. The smallest positive integer with this property is referred to as the {\it nilpotency index} of $T$.
\item {\it complex Jordan} if there exist a normal operator $M$ and a nilpotent linear operator
$N$ such that any one of the following holds:
\begin{enumerate}
\item $M \in B(\mathcal H)$, $T=M+N$ and $M \in \{N\}',$
\item $N \in B(\mathcal H)$, $T=M+N$ and $N \in \{M\}'.$
\end{enumerate}
%\item {\it $n$-symmetric} if \beqn \sum_{j=0}^n (-1)^{n-j} {n \choose j}
%\inp{T^jx}{T^{n-j}y}=0, \quad x, y \in \mathscr D(T^n),\eeqn
%where $n$ is a positive integer.
%We refer to $1$-symmetric operator as {\it symmetric operator}.
%\item {\it strictly $n$-symmetric} if it is $n$-symmetric, but not $(n-1)$-symmetric, where $n$ is a positive integer bigger than $1.$
%\item  {\it sectorial} if
%there exist $a \in {\mathbb R}, \ M \in
%(0,\infty), \ \theta \in (0, \frac {\pi}{2})$ such that
%\beqn
% \lambda \in \rho(T)~\mbox{and~} \|R_T(\lambda)\| \leqslant \frac{M}{|\lambda -a|}\quad \mbox{whenever $\lambda \in {\mathbb C}$ and $|\arg(\lambda-a)| \geqslant
%\theta$.}\quad \quad
% \eeqn
\end{enumerate}

\section{Rank one operators}

We will see that the rank one (possibly unbounded) operators form building blocks in the orthogonal decomposition of weighted join operators (see Theorem \ref{o-deco}). Hence we find
it necessary to collect below several elementary properties of rank one operators.

\index{$x \otimes y$}
Let $\mathcal H$ be  a complex Hilbert space. For $x, y \in \mathcal H,$ the
{\it injective tensor product} $x \otimes y$ of $x$ and $y$ is
defined by \beqn x \otimes y(h)\,=\,\inp{h}{y}\, x, \quad h \in \mathcal
H. \eeqn
Clearly, $x \otimes y$ is a rank one bounded linear
operator. In fact, $\|x \otimes y\|=\|x\|\|y\|.$ Conversely, every
rank one bounded linear operator arises in this fashion. Indeed, if
$T \in B(\mathcal H)$ is a rank one operator with range spanned by a
unit vector $y \in \mathcal H,$ then for every $x \in \mathcal H,$
$Tx = \alpha_x y$ for some scalar $\alpha_x \in \mathbb C$, and
hence \beqn Tx = \inp{Tx}{y}y = \inp{x}{T^*y}y = (y \otimes
T^*y)(x), \quad x \in \mathcal H \eeqn (cf. \cite[Proposition
2.1.1]{Si1}). It is worth noting that a diagonal operator $D_{\lambdab}$ on
$\mathcal H$ with respect to an orthonormal basis $\{e_j\}_{j \in
J}$ and diagonal entries $\lambdab:=\{\lambda_j\}_{j \in J} \subseteq \mathbb
C$ (counted with multiplicities) can be rewritten uniquely (up to permutation) in terms
of injective tensor products as follows: \beqn D_{\lambdab} \,=\, \sum_{j \in J}
\lambda_j\, e_j \otimes e_j, \eeqn where $J$ is a directed set.
Note that $D_{\lambdab}$ is a bounded linear operator on $\mathcal H$ if and
only if $\lambdab$ forms a bounded subset of $\mathbb
C$.

The analysis of weighted join operators relies on a thorough study of bounded and unbounded rank one operators. As we could not locate an appropriate reference for a number of facts essential in our investigations, we include their statements and elementary verifications.
%First some notations.
%Let $T$ be a linear operator in $\mathcal H.$
%By the {\it multiplicity function} of $T$, we understand the function ${\mf m}_T : \sigma_p(T) \rar \mathbb Z_+ \cup \{\aleph_0\}$ assigning with each eigenvalue $\mu$ of $T$, the dimension of the eigenspace $\mathscr E_T(\mu)$ of $T$ corresponding to $\mu$.
%In order to maintain the flow of the paper, we have relegated to the Appendix the elementary verifications of these facts.

%\index{${\mf m}_T$}

\begin{lemma} \label{lem-inj-ten}
Let $\mathcal H$ be a complex Hilbert space of dimension bigger than
$1$. For unit vectors $x, y, z, w \in \mathcal H,$ we have the
following statements:
\begin{enumerate}
\item $x \otimes y$ is an algebraic operator:
\beq \label{algebraic}
p(x \otimes y)=0~\mbox{with}~p(\lambda)=\lambda(\lambda-\inp{x}{y}), ~\lambda \in \mathbb C.
\eeq
\item $\sigma(x \otimes y)=\{0, \inp{x}{y}\}=\sigma_p(x\otimes y).$
Moreover, the eigenspace $\mathscr E_{x \otimes y}(\mu)$
corresponding to the eigenvalue $\mu$ of $x \otimes y$ is given by
\beqn \mathscr E_{x \otimes y}(0)=[y]^{\perp}, \quad \mathscr E_{x
\otimes y}(\inp{x}{y})=[x]. \eeqn Thus the multiplicity ${\mf m}_{x
\otimes y}(\mu)$ of the eigenvalue $\mu$ is given by \beqn {\mf
m}_{x \otimes y}(0)=\aleph_0~\mbox{if~}\dim \mathcal H =\aleph_0,
\quad {\mf m}_{x \otimes y}(\inp{x}{y})=1~\mbox{if~} \inp{x}{y} \neq
0. \eeqn
\item
The resolvent function $R_{x \otimes y} :  \rho(x \otimes y) \rar B(\mathcal H)$ of $x \otimes y$ at $\mu$ is given by
\beqn
R_{x \otimes y}(\mu) \,=\, -\frac{1}{p(\mu)} \Big((\mu-\inp{x}{y})P_{[x]^{\perp}} + x \otimes \big(P_{[x]^{\perp}}y + \bar{\mu}\,x\big)\Big),
%\quad \mu \in \rho(x \otimes y),
\eeqn
where $p$ is as given in \eqref{algebraic}.
%\item A bounded linear operator $A \in B(\mathcal H)$ commutes with $x \otimes y$ if and only if
%$x$ is an eigenvector for $A$ corresponding to the eigenvalue ${\inp{Ay}{y}}$ and $y$ is an eigenvector for $A^*$ corresponding to the eigenvalue $\overline{\inp{Ay}{y}}$.
\item The commutant $\{x \otimes y\}'$ of $x \otimes y$  is given by
\beqn \{x \otimes y\}' \,=\, \big\{A \in B(\mathcal H): Ax = \inp{Ax}{x}x,
~A^*y = \overline{\inp{Ax}{x}}y\big\}. \eeqn
\item $x \otimes y$ is normal if and only if there exists a unimodular scalar $\alpha \in \mathbb C$ such that $x = \alpha y$.
\item $x \otimes y$ is self-adjoint if and only if  $x =\pm\, y$.
\end{enumerate}
\end{lemma}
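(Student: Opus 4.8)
The plan is to prove the six statements about the rank one operator $x \otimes y$ in sequence, since each later part leans on the earlier ones. I would begin with \textbf{(i)}, the algebraic relation, by a direct computation: for any $h \in \mathcal H$, one has $(x \otimes y)^2 h = \inp{h}{y}(x \otimes y)x = \inp{h}{y}\inp{x}{y}\,x = \inp{x}{y}\,(x \otimes y)h$, so $(x \otimes y)^2 = \inp{x}{y}\,(x \otimes y)$, which is exactly $p(x \otimes y) = 0$ with $p(\lambda) = \lambda(\lambda - \inp{x}{y})$. This is routine and sets up the eigenvalue analysis.

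For \textbf{(ii)}, the algebraic relation from (i) forces $\sigma(x \otimes y) \subseteq \{0, \inp{x}{y}\}$, since the minimal polynomial's roots bound the spectrum. I would then exhibit eigenvectors to show both values are genuine eigenvalues: every $h \in [y]^{\perp}$ satisfies $(x \otimes y)h = \inp{h}{y}x = 0$, giving $[y]^{\perp} \subseteq \mathscr E_{x \otimes y}(0)$, and $(x \otimes y)x = \inp{x}{y}x$ shows $x$ is an eigenvector for $\inp{x}{y}$. To get the reverse inclusions for the eigenspaces, note that $\ran(x \otimes y) \subseteq [x]$, so any eigenvector for a nonzero eigenvalue lies in $[x]$, and any vector $h$ with $(x\otimes y)h = 0$ must have $\inp{h}{y} = 0$ (using $x \neq 0$). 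The multiplicity count then follows from $\dim [y]^{\perp} = \dim \mathcal H$ when $\dim \mathcal H = \aleph_0$ and $\dim [x] = 1$. For \textbf{(iii)}, I would verify the resolvent formula directly by checking that the proposed operator, when composed with $(x \otimes y) - \mu$ on either side, yields the identity; this is a bookkeeping computation using $P_{[x]^\perp} x = 0$ and the decomposition $\mathcal H = [x] \oplus [x]^{\perp}$, and the cleanest route is to split $h$ according to this decomposition and track the action of each summand.

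For \textbf{(iv)}, the commutant, I would argue that $A \in \{x \otimes y\}'$ means $A(x \otimes y) = (x \otimes y)A$ as operators on all of $\mathcal H$. Evaluating $A(x \otimes y)h = \inp{h}{y}\,Ax$ and $(x \otimes y)Ah = \inp{Ah}{y}\,x = \inp{h}{A^*y}\,x$ for all $h$, equality of these rank one operators forces $Ax$ and $x$ to be parallel with matching coefficients: taking $h = y$ (assuming $\inp{y}{y}=1$) and comparing gives $Ax = \inp{Ax}{x}x$, and dualizing (or taking adjoints) gives the companion condition $A^*y = \overline{\inp{Ax}{x}}\,y$. Conversely, these two conditions are readily checked to imply commutation. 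Parts \textbf{(v)} and \textbf{(vi)} reduce to comparing $x \otimes y$ with its adjoint $(x \otimes y)^* = y \otimes x$: normality is $\|(y \otimes x)h\| = \|(x \otimes y)h\|$ for all $h$, which after specializing $h$ to $x$ and to $y$ and using $\|x\| = \|y\| = 1$ yields $|\inp{x}{y}| = 1$, forcing $x$ and $y$ to be unimodular scalar multiples of each other (the Cauchy--Schwarz equality case); self-adjointness is $y \otimes x = x \otimes y$, which by comparing ranges and coefficients pins down $x = \pm y$.

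I expect the main obstacle to be \textbf{(iii)}: the resolvent formula has a somewhat opaque structure, and one must be careful that the formula holds on all of $\mathcal H$ (not merely on $[x] \oplus [y]$) and correctly reduces to the two-dimensional behavior on $\sspan\{x, y\}$ while acting as $-\tfrac{1}{\mu}$ (the resolvent of $0$) on the orthogonal complement $[x]^\perp \cap [y]^\perp$. The cleanest verification is to fix $\mu \in \rho(x \otimes y) = \mathbb C \setminus \{0, \inp{x}{y}\}$, decompose $\mathcal H = [x] \oplus [x]^{\perp}$, and confirm $R_{x \otimes y}(\mu)\big((x \otimes y) - \mu\big) = I$ by direct substitution, keeping the sign conventions in $p(\mu)$ straight.
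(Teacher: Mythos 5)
Your proof is correct and, for parts (i), (ii), (iv) and (vi), it essentially coincides with the paper's argument: the paper also starts from $(x\otimes y)^2 = \inp{x}{y}\, x\otimes y$, invokes the spectral mapping theorem for polynomials to bound $\sigma(x\otimes y)$, evaluates the commutation identity at $h=y$ to get $Ax=\inp{Ax}{x}x$, and reduces (vi) to (v) plus a reality argument for the scalar. You diverge in two places. For (iii), the paper \emph{derives} the resolvent by solving $(x\otimes y-\mu)f=g$: writing $f = P_{[x]^{\perp}}f + \inp{f}{x}x$ and comparing components yields $P_{[x]^{\perp}}f = -\mu^{-1}P_{[x]^{\perp}}g$ and $\inp{g}{x}=\inp{f}{y-\bar{\mu}x}$, from which the formula drops out; you instead \emph{verify} the stated formula by composing with $(x\otimes y)-\mu$. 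Your route is shorter, but you should note explicitly that a one-sided identity $R(\mu)\big((x\otimes y)-\mu\big)=I$ suffices only because $\mu\in\rho(x\otimes y)$ is already known from (ii) (an invertible operator with a left inverse has that left inverse as its inverse); the paper's route has the advantage of producing the formula rather than presupposing it. For (v), the paper applies (iv) to $A=y\otimes x$, turning normality into the condition $y=\inp{y}{x}x$ and hence into $x=\alpha y$ with $|\alpha|=1$; you instead use the norm characterization $\|Th\|=\|T^*h\|$ at $h=x$ to get $|\inp{x}{y}|=1$ and conclude via the equality case of Cauchy--Schwarz. Both are sound; yours is self-contained, while the paper's reuses the commutant description and delivers the equivalence in one stroke. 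One small point to add when writing up (v): record the trivial sufficiency direction, namely that $x=\alpha y$ with $|\alpha|=1$ gives $x\otimes y = \alpha\, y\otimes y$, which is visibly normal.
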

\begin{proof}
It is easy to see the following:
\beq \label{adjoint-ro}
x \otimes \alpha y &=& \bar{\alpha}\,x \otimes y, ~\alpha \in \mathbb C, \notag \\
(x \otimes y)^* &=& y \otimes x, \quad (x \otimes y)(z \otimes w)\,=\,
\inp{z}{y}\, x \otimes w. \eeq
To see (i), note that by \eqref{adjoint-ro}, $x \otimes y$
satisfies \beqn (x \otimes y)^2 \,=\, \inp{x}{y} x \otimes y. \eeqn To
see (ii), note that \beqn (x\otimes y)(z)=0 ~\mbox{if~}z \in
[y]^{\perp}, \quad (x\otimes y -
 \inp{x}{y})x=0.
\eeqn Further, by (i) and the spectral mapping property for
polynomials \cite{Si1}, \beqn p(\sigma(x \otimes y))=\sigma(p(x
\otimes y))=\{0\}, \eeqn where $p(z)=z(z - \inp{x}{y})$. The desired
conclusions in (ii) are now immediate.

To see the formula for the resolvent function $R_{x \otimes y}$ in (iii),
%satisfies $R_{x \otimes y}(\mu)(x \otimes y - \mu)= I = (x \otimes y
%- \mu)R_{x \otimes y}(\mu)$ for every $\mu \in \mathbb C \setminus
%\sigma(x \otimes y).$
let $f, g \in \mathcal H$ be such that $(x \otimes y
-\mu)f=g.$ Writing $h=P_{[x]^{\perp}}h + \inp{h}{x}x$ for $h \in
\mathcal H,$ and comparing coefficients, we obtain \beqn
P_{[x]^{\perp}}f= -\frac{1}{\mu}P_{[x]^{\perp}}g, \quad
\inp{g}{x}=\inp{f}{y-\bar{\mu}\, x}. \eeqn It follows that \beqn
\inp{g}{x} &=& \inp{P_{[x]^{\perp}}f + \inp{f}{x}x}{y-\bar{\mu}\, x}
\\ &=&
\inp{P_{[x]^{\perp}}f}{y} + \inp{f}{x} (\inp{x}{y} - \mu) \\
&=& -\frac{1}{\mu}\,\inp{P_{[x]^{\perp}}g}{y} + \inp{f}{x} (\inp{x}{y} - \mu).
\eeqn
This yields
\beqn
f &=& P_{[x]^{\perp}}f + \inp{f}{x}x \\ &=&
-\frac{1}{\mu}P_{[x]^{\perp}}g + \frac{1}{\inp{x}{y} - \mu} \Big(\inp{g}{x} + \frac{1}{\mu}\,\inp{P_{[x]^{\perp}}g}{y}\Big) x.
\eeqn
It is now easy to see that $R_{x \otimes y}(\mu)$ has the desired expression.

To see (iv), note that $A$
commutes with $x \otimes y$ if and only if \beq \label{comm-r-1}
 \inp{z}{y} Ax \,=\, \inp{Az}{y}x, \quad z \in \mathcal H.
\eeq If $z \in [y]^{\perp}$, then $\inp{Az}{y}x=0$, and hence $Az
\in [y]^{\perp}.$ This shows that $A^*$ maps $[y]$ into $[y].$
Letting $z=y$ in \eqref{comm-r-1}, we obtain the necessity part of
(iv). Conversely, if  $Ax = \inp{Ay}{y}x,$ then \eqref{comm-r-1} is
equivalent to $\inp{A(z-\inp{z}{y}y)}{y}= 0$, which is equivalent to
$A^*y =\overline{\inp{Ay}{y}}\,y$. To see (v), note that by
\eqref{adjoint-ro}, $ (x \otimes y)^*=y \otimes x,$ and apply (iv)
to $A = y \otimes x.$
The sufficiency part of (vi) follows immediately from \eqref{adjoint-ro}.
Assume that $(x \otimes y)^* = x \otimes y.$ By \eqref{adjoint-ro},
$x \otimes y =y\otimes x.$ By (v), $x = \alpha y$ for some $\alpha
\in \mathbb C.$ Thus \beqn  \alpha y \otimes y = x \otimes y = y \otimes
x = \bar{\alpha} y \otimes y.\eeqn It follows that $\alpha \in
\mathbb R.$ Since $x, y$ are unit vectors, $\alpha = \pm\, 1$.
%Finally, note that (vi) is immediate from (iv).
%Let $\{e_n\}_{n \in \mathbb N}$ be an orthonormal basis of $\mathcal H$ such that $e_0=x.$
\end{proof}
Recall that a densely define linear operator $T$ in $\mathcal H$
{\it admits a compact resolvent} if there exists $\lambda \in
\mathbb C \setminus \sigma(T)$ such that $(T-\lambda)^{-1}$ is
compact. It may be concluded from Lemma \ref{lem-inj-ten}(iii) that
$x \otimes y$ has a compact resolvent if and only if $\dim \mathcal
H$ is finite.

%\oslash
Let us discuss a class of  unbounded, densely defined, but not
necessarily  closed  (rank one) operators, which we denote as  $f
\obslash g$, $f \in \mathcal H$ and $g$ is unspecified (to be chosen
later). Fix an orthonormal basis $\{e_j\}_{j \in J}$ of $\mathcal H$
for some directed set $J$, and let $g=\{g(j)\}_{j \in J}$.
%be such that $g \notin \ell^2(\mathbb N)$.
Define $f \obslash g$ in $\mathcal H$ by
\beqn
\mathscr D(f \obslash g) &=& \Big\{h \in \mathcal H : \sum_{j \in J} h(j)\overline{g(j)} \mbox{~is convergent} \Big\}, \\
f \obslash g (h) &=& \Big(\sum_{j \in
J}^{\infty}h(j)\overline{g(j)}\Big)f, \quad h \in \mathscr D(f
\obslash g). \eeqn
Note that $f \obslash g$ is densely defined with
$\mbox{span}\{e_j : j \in J\} \subseteq \mathscr D(f \obslash g).$
Thus the Hilbert space adjoint $(f \obslash g)^*$ of $f \obslash g$ is
well-defined.
Recall that for $p \in [1, \infty]$, $\ell^p(J)$ is the Banach space of all $p$-summable complex functions  $f : J \rar \mathbb C$ endowed with the norm
\beqn
\|f\|_p \,=\, \begin{cases} \Big(\displaystyle \sum_{j \in J} |f(j)|^p \Big)^{1/p} & \mbox{if~} p < \infty, \\
\displaystyle \sup_{j \in J}|f(j)| & \mbox{if~}p=\infty.
\end{cases}
\eeqn
It turns out that $(f \obslash g)^*$ is not densely
defined unless $g \in \ell^2(J).$ Indeed, we have the following result:
\index{$f \obslash g$}
\index{$\ell^p(J)$}

\begin{lemma} \label{lem-inj-ten-unb}
Let $J$ be an infinite directed set.
Fix an orthonormal basis $\{e_j\}_{j \in J}$ of $\mathcal H$,
$f \in \mathcal H \setminus \{0\}$, and let $g=\{g(j)\}_{j \in J}$. Then the following statements are equivalent:
\begin{enumerate}
\item $f\obslash g$ admits a  bounded linear extension to $\mathcal H$.
\item $f \obslash g$ is closed.
\item $f \obslash g$ is closable.
\item $g \in \ell^2(J)$.
\item $\mathscr D(f \obslash g)=\mathcal H$.
\end{enumerate}
In case $g \notin \ell^2(J)$,
$(f \obslash g)^*$ is not densely defined,
$\sigma(f \obslash g)=\mathbb C$, and
\beqn
\sigma_p(f \obslash g)\,=\, \begin{cases} \Big\{0, ~\displaystyle \sum_{j \in J} f(j)\overline{g(j)} \Big\} & \mbox{if~}f \in \mathscr D(f \obslash g), \\
\{0\} & \mbox{otherwise}.
\end{cases}
\eeqn
\end{lemma}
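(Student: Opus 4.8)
The plan is to establish the five-fold equivalence through the chain $(iv)\Rightarrow(v)$, $(iv)\Rightarrow(i)\Rightarrow(iii)\Rightarrow(iv)$ and $(iv)\Rightarrow(ii)\Rightarrow(iii)$, together with $(v)\Rightarrow(iv)$, which closes the loop. Throughout I read the convergence in the definition of $\mathscr D(f\obslash g)$ as net convergence of the partial sums $\sum_{j\in F}h(j)\overline{g(j)}$ over the finite subsets $F\subseteq J$; for scalar families this is equivalent to \emph{absolute} summability of $\{h(j)\overline{g(j)}\}_{j\in J}$, a fact I will use repeatedly.

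For the easy half I would assume $(iv)$, so $g\in\ell^2(J)$, and set $\tilde g:=\sum_{j\in J}g(j)e_j\in\mathcal H$. Cauchy--Schwarz gives $\sum_j|h(j)\overline{g(j)}|\le\|h\|\,\|g\|_2<\infty$ for every $h\in\mathcal H$, whence $\mathscr D(f\obslash g)=\mathcal H$ (this is $(v)$) and $f\obslash g(h)=\inp{h}{\tilde g}\,f=(f\otimes\tilde g)(h)$. Thus $f\obslash g$ equals the bounded rank one operator $f\otimes\tilde g$ of Lemma \ref{lem-inj-ten}, which yields $(i)$, and being bounded and everywhere defined it is closed, giving $(ii)$. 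The implications $(i)\Rightarrow(iii)$ and $(ii)\Rightarrow(iii)$ are formal: a bounded everywhere-defined operator is closed, and any operator possessing a closed extension is closable.

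The substantive content is $(iii)\Rightarrow(iv)$ and $(v)\Rightarrow(iv)$, which I would prove contrapositively starting from $g\notin\ell^2(J)$. Since $\sum_{j\in F}|g(j)|^2$ is unbounded over finite $F$, choose finite sets $F_n$ with $c_n:=\sum_{j\in F_n}|g(j)|^2\to\infty$ and put $h_n:=\frac1{c_n}\sum_{j\in F_n}g(j)e_j$. These are finitely supported, hence lie in $\mathscr D(f\obslash g)$, satisfy $\|h_n\|^2=1/c_n\to0$, yet $f\obslash g(h_n)=f$ for all $n$; so the graph of $f\obslash g$ contains points converging to $(0,f)$ with $f\neq0$ while also containing $(0,0)$, proving $f\obslash g$ is not closable. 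This is $(iii)\Rightarrow(iv)$. For $(v)\Rightarrow(iv)$ I would apply the uniform boundedness principle to the bounded functionals $\phi_F(h)=\sum_{j\in F}h(j)\overline{g(j)}$, whose norms equal $\big(\sum_{j\in F}|g(j)|^2\big)^{1/2}$: if $\mathscr D(f\obslash g)=\mathcal H$, then for each fixed $h$ absolute summability bounds $\sup_F|\phi_F(h)|$, so $\sup_F\|\phi_F\|<\infty$ and therefore $g\in\ell^2(J)$. Extracting $\ell^2$-membership from this pointwise control is the step I expect to be the main obstacle, precisely because the convergence is over the directed family of finite subsets rather than along a sequence, so one must first pass through absolute summability before the uniform bound becomes available.

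Finally, assume $g\notin\ell^2(J)$. For the adjoint I would compute $\inp{f\obslash g(h)}{y}=\big(\sum_j h(j)\overline{g(j)}\big)\inp{f}{y}$ for $y\in\mathcal H$: if $\inp{f}{y}=0$ this vanishes identically, so $y\in\mathscr D((f\obslash g)^*)$; if $\inp{f}{y}\neq0$, the functional $h\mapsto\sum_j h(j)\overline{g(j)}$ would have to be an inner product against a fixed vector, forcing that vector (test on $h=e_j$) to be $g$ and hence $g\in\ell^2(J)$, a contradiction. Thus $\mathscr D((f\obslash g)^*)=[f]^{\perp}$, a proper closed subspace, so $(f\obslash g)^*$ is not densely defined. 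Since $f\obslash g$ is then not closed, no resolvent $(f\obslash g-\lambda)^{-1}$ can be bounded and everywhere defined (such an inverse would be closed, forcing $f\obslash g$ closed), giving $\sigma(f\obslash g)=\mathbb C$. For the point spectrum I would solve $\big(\sum_j h(j)\overline{g(j)}\big)f=\lambda h$: the value $\lambda=0$ always occurs since the functional has nontrivial kernel on the finitely supported vectors, whereas a nonzero eigenvalue forces $h\in[f]$ and is therefore possible exactly when $f\in\mathscr D(f\obslash g)$, the eigenvalue then being $\sum_j f(j)\overline{g(j)}$. This produces the stated case distinction.
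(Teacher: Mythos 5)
Your proof is correct, and it diverges from the paper's in one substantive place: the closability implication. The paper proves (iii) $\Rightarrow$ (iv) by contradiction through the adjoint: it passes to a closed extension $A$, invokes the theorem that a densely defined closed operator has densely defined adjoint, and then tests against the vectors $x_F=\sum_{j\in F}g(j)e_j$ to get $\big(\sum_{j\in F}|g(j)|^2\big)^{1/2}|\inp{f}{h}|\leqslant c$ for every $h\in\mathscr D(A^*)$, forcing $\mathscr D(A^*)\subseteq \mathcal H\ominus[f]$ when $g\notin\ell^2(J)$, contradicting density. You use the same test vectors but normalized, $h_n=c_n^{-1}\sum_{j\in F_n}g(j)e_j$ with $c_n\to\infty$, and observe that $h_n\to 0$ while $f\obslash g(h_n)=f$ stays fixed, so the graph closure contains $(0,f)$ and $f\obslash g$ is not closable; this is more elementary (no appeal to the adjoint-density theorem) and establishes the failure of closability constructively rather than by contradiction. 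What the paper's route buys in exchange is that the estimate \eqref{domain-adjoint} is exactly what is reused after the lemma to identify $\mathscr D((f\obslash g)^*)$ with $\mathcal H\ominus[f]$; you recover this separately in your final paragraph, and in fact prove the equality $\mathscr D((f\obslash g)^*)=[f]^{\perp}$ where the paper's proof only records the inclusion. Your uniform-boundedness step closes the loop at (iv) via the functionals $\phi_F$, whereas the paper closes it at (i) via the operators $f\otimes g_n$; this is a cosmetic difference. The endgame (the value $0$ as an eigenvalue from the kernel of the functional on finitely supported vectors, nonzero eigenvectors necessarily lying in $[f]$, and $\sigma(f\obslash g)=\mathbb C$ because a densely defined operator with nonempty resolvent set is closed) coincides with the paper's argument. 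Your explicit reduction of unordered convergence to absolute summability is also consistent with how the paper implicitly reads the domain, so there is no hidden gap there.
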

\begin{proof}
To see the equivalence of (i)-(v), it suffices to check that (iii)
$\Rightarrow$ (iv) and (v) $\Rightarrow$ (i). Suppose that $f
\obslash g$ is a closable operator with closed extension $A.$ Since
$A$ is a densely defined closed operator, by \cite[Theorem
1.5.15]{M}, $\mathscr D(A^*)$ is dense in $\mathcal H$ (see also
\cite[Theorem 1.8]{Sc}). Assume that $g \notin \ell^2(J)$. Then $h
\in \mathscr D(A^*)$ if and only if there exists a positive real
number $c$ such that \beq \label{domain-adjoint} |\inp{Ax}{h}|\,\leqslant\, c
\|x\|, \quad x \in \mathscr D(A). \eeq However, for $x=\sum_{j \in
F} g(j) e_j$ with $F \subseteq J$ and $\mbox{card}(F) < \infty$,
\beqn \inp{Ax}{h} \,=\, \sum_{j \in F} |g(j)|^2 \inp{f}{h}. \eeqn It
follows that \beqn \Big(\sum_{j \in F}|g(j)|^2\Big)^{1/2}
|\inp{f}{h}| ~\Le ~c. \eeqn Since $g \notin \ell^2(J)$, we must have
$\inp{f}{h}=0.$ This shows that $\mathscr D(A^*) \subseteq \mathcal
H \ominus [f].$ Since $f$ is non-zero, this contradicts the fact
that the Hilbert space adjoint $A^*$ of $A$ is densely defined. This
completes the verification of the implication (iii) $\Rightarrow$
(iv).

The implication (v) $\Rightarrow$ (i) may be derived from the
uniform boundedness principle \cite{Co} applied to the family $\{f
\otimes g_n\}_{n \geqslant 0}$ of bounded linear operators, where
$\{g_n\}_{n \geqslant 0}$ is any finitely supported sequence
converging pointwise to $g.$ To see the remaining part, assume that
$g \notin \ell^2(J).$ Arguing as in the preceding paragraph (with
$A$ replaced by $f \obslash g$), we obtain $\mathscr D((f \obslash
g)^*) \subseteq\mathcal H \ominus [f].$ The assertion that $\sigma(f
\obslash g)=\mathbb C$ follows from the fact that any densely
defined operator with non-empty resolvent set is closed (see
\cite[Lemma 1.17]{CM}). Note that any $h \in \mathscr D(f \obslash
g)$ such that $\sum_{j \in J}h(j)\overline{g(j)}=0$ (there are
infinitely many such vectors $h$) is an eigenvector for $f \obslash
g$ corresponding to the eigenvalue $0.$ Suppose $f \in \mathscr D(f
\obslash g).$ Then, as in the proof of Lemma \ref{lem-inj-ten}(iii),
it can be seen that $\sum_{j \in J} f(j)\overline{g(j)} $ is an
eigenvalue of $f \obslash g$ corresponding to the eigenvector $f.$
Since any eigenvector of $f \obslash g$ corresponding to a non-zero
eigenvalue belongs to $[f]$, we conclude in this case that
$$\sigma_p(f \obslash g)\,=\,\Big\{0, \sum_{j \in J} f(j)\overline{g(j)}
\Big\}.$$ This also shows that if $f \notin \mathscr D(f \obslash
g)$, then $f \obslash g$ can not have a non-zero eigenvalue.
%This completes the proof.
\end{proof}
%\begin{remark}
%A couple of remarks related to the unbounded rank one operators are as follows:
%\begin{enumerate}
%\item In case $f \obslash g$ is bounded, the bounded extension of $f \obslash g$ of is precisely $f \otimes g.$
%\item It may be concluded from \eqref{domain-adjoint} that $\mathscr D((f \obslash g)^*)=\mathcal H \ominus [f].$ One may remove this deficiency by adding a suitable unbounded diagonal operator to $f \obslash g$  (see Theorem \ref{closed-0}).
%\end{enumerate}
%\end{remark}

In case $f \obslash g$ is closable, the bounded extension of $f
\obslash g$, as ensured by Lemma \ref{lem-inj-ten-unb}, is precisely
$f \otimes g.$ Otherwise, it may be concluded from
\eqref{domain-adjoint} that $\mathscr D((f \obslash g)^*)=\mathcal H
\ominus [f].$ 
%As it will be shown in Theorem \ref{closed-0} below,
%one may remove this ``deficiency" by adding a suitable unbounded
%diagonal operator to $f \obslash g$.
We conclude this section with a brief discussion on an interesting
family of unbounded rank one operators.
\begin{example}
Let $J$ be an infinite directed set. Let
$f\in \ell^2(J) \setminus \{0\}$ and let $g \in\ell^p(J)$, $1 \leqslant p \leqslant \infty$. Then $f \obslash g$ defines a densely defined rank one operator in $\ell^2(J)$ with domain $\mathscr D(f \obslash g)=\ell^q(J) \cap \ell^2(J)$, where $1 \leqslant q \leqslant \infty$ is such that $\frac{1}{p} + \frac{1}{q} = 1.$ Indeed, since
\beqn
\|(f \obslash g)(h)\|_2 \,\leqslant\, \|h\|_q \|g\|_p \|f\|_2, \quad h \in \ell^q(J),
\eeqn
$f \obslash g$ is bounded when considered as a linear transformation from $\ell^q(J)$ into $\ell^2(J)$. Moreover, since $\ell^p(J) \subseteq \ell^2(J)$ for $1 \leqslant p \leqslant 2,$ by Lemma \ref{lem-inj-ten-unb}, $f \obslash g$ belongs to $B(\ell^2(J))$ if and only if $1 \leqslant p \leqslant 2.$ Thus for $g \in \ell^p(J) \setminus \ell^2(J)$ for some $2 < p \leqslant  \infty$, $f \obslash g$ is a densely defined unbounded rank one operator in $\ell^2(J)$ with domain $\ell^q(J) \cap \ell^2(J).$
\eop
\end{example}

\chapter*{Prologue}

In the following discussion, we attempt to explain some important aspects of the present work with the aid of a family of rank one perturbations of weighted join operators. In the following exposition, we have tried to minimize the graph-theoretic prerequisites.  In particular, we avoided the rather involved graph-theoretic definition of the class $\{\W{b}{u} : u, \mf b \in V\}$ of the so-called weighted join operators. 
%and their rank one extensions $\{\W{b}{u}[f, g] : u, \mf b \in V\}$, where $f \in \ell^2(V)$ is non-zero and $g : V \rar \mathbb C$ is
%unspecified.

%\begin{example}
%\index{$\Hi{b}{u}$}
\index{$U^{(\mf b)}_u$}
%\index{$\Db{b}{u}$}
Let $\mathscr T=(V, E)$ be a rooted directed tree with root $\rootb$ and
%and let $\mathscr T_{\infty}=(\V, E_{\infty})$ be the extended directed tree associated with $\mathscr T$.
let $\mf b, u \in V$.
%and let $\W{b}{u}$ be a weighted join operator on $\mathscr T$.
Consider
the closed subspace $\ell^2(U^{(\mf b)}_u)$ of $\ell^2(V)$, where the subset $U^{(\mf b)}_u$ of $V$ is given by
\beq \label{U-u}
U^{(\mf b)}_u &=& \begin{cases}
V \setminus \{u\} & \mbox{if~}\mf b = u,\\
\displaystyle \asc{u} \cup  \desb{u}
 & \mbox{if~} \mf b \in \mathsf{Des}_u(u),  \\
 \displaystyle  \mathsf{Des}_u(u)   & \mbox{otherwise}
\end{cases}
\eeq 
(see \eqref{desb}).
Consider the standard orthonormal basis $\{e_v\}_{v \in U^{(\mf b)}_u}$ of $\ell^2(U^{(\mf b)}_u)$ and let $D^{(\mf b)}_u$ denote the diagonal operator in $\ell^2(U^{(\mf b)}_u)$ defined as
\beqn
D^{(\mf b)}_u e_v = \lambda_{uv} e_v, \quad v \in U^{(\mf b)}_u,
\eeqn
where the diagonal entries of $D^{(\mf b)}_u$ are given by
\beq \label{wt-Dub} \lambda_{uv}:=\dep_v -
\dep_u, \quad v \in U^{(\mf b)}_u \eeq with $\dep_v$ denoting the depth of $v
\in V$ in $\mathscr T$. Consider the rank one operator $N^{(\mf
b)}_u=e_u \otimes e_{_{A_u}},$ where $e_{_{A_u}} := \sum_{v \in A_u}
(\dep_v - \dep_u) e_v$ and the subset $A_u$ of $V$ is given by \beqn
A_u &=&
\begin{cases}
 [u, \mf b]
 & \mbox{if~} \mf b \in \des{u}, \\
\asc{u} \cup \{\mf b, u\}  & \mbox{otherwise}.
\end{cases}
\eeqn
We also need the (possibly unbounded) rank one operator $e_w \obslash g_{x}$, where $w \in V \setminus U^{(\mf b)}_u$, $x \in \mathbb R$ and $g_{x} : U^{(\mf b)}_u \rar \mathbb C$ is given by
\beq \label{gxv}
g_x(v)\,=\,\dep^x_v, \quad v \in U^{(\mf b)}_u.
\eeq
From the view point of spectral theory, we will be interested in the
 the family $\mathscr W:=\{W_{w, x} : w \in V \setminus U^{(\mf b)}_u, ~x \in \mathbb R\}$ of rank one extensions of weighted join operators defined as follows:
\beqn
\begin{array}{lll}
\mathscr D(W_{w, x}) &=& \big\{(h, k) : h \in \mathscr D(D^{(\mf b)}_u) \cap \mathscr D(e_w \obslash g_x), ~k \in \ell^2(V \setminus U^{(\mf b)}_u)\big\} \\ && \\
W_{w, x} &=& \left[\begin{array}{cc}
D^{(\mf b)}_u & 0 \\
e_w \obslash g_x & N^{(\mf b)}_u
\end{array}
\right].
\end{array}
\eeqn
Clearly, the linear operator $W_{w, x}$ is densely defined in $\ell^2(V)$. 
Further, it is not difficult to see that $W_{w, x}$ is unbounded unless $D^{(\mf b)}_u$ belongs to $B(\ell^2(U^{(\mf b)}_u))$ and $g_x \in \ell^2(U^{(\mf b)}_u)$.
A lot of conclusions can be drawn about the family $\mathscr W$ of rank one extensions of weighted join operators. The analysis of Chapters 4 and 5 of this paper leads us to the following.

\index{$A_u$}
\index{$g_x$}
\index{$W_{w, x}$}
\begin{theorem} \label{w-x}
Let $\mathscr T=(V, E)$ be a rooted directed tree with root $\rootb$ and
%and let $\mathscr T_{\infty}=(\V, E_{\infty})$ be the extended directed tree associated with $\mathscr T$.
let $\mf b, u \in V.$
Assume that $(\des{u}, E_u)$ is a countably infinite narrow subtree of $\mathscr T$ $($see \eqref{E-u}$)$ and
let $U^{(\mf b)}_u$ be as defined in \eqref{U-u}.
Then, for any $w \in V \setminus U^{(\mf b)}_u$ and $x \in \mathbb R,$
%the linear operator $W_{w, x}$ is densely defined in $\ell^2(V)$. Moreover, 
the spectrum of $W_{w, x}$ is a proper subset of $\mathbb C$ if and only if $x < 1/2.$ In case $x \in (-\infty, 1/2)$,  we have the following statements:
\begin{enumerate}
\item $W_{w, x}$ is a closed operator with
domain being the orthogonal direct sum of $\mathscr D(D^{(\mf b)}_u)$ and $\ell^2(V\setminus U^{(\mf b)}_u)$.
\item $\sigma(W_{w, x})=\{\dep_v-\dep_u : v \in U^{(\mf b)}_u \cup \{u\}\}=\sigma_p(W_{w, x})$.
\item 
$\sigma_e(W_{w, x}) \setminus \{0\}= 
\big\{\dep_v-\dep_u :  v \in U^{(\mf b)}_u \cup \{u\}, ~ \mbox{card}(\childn{\dep_v}{\rootb}) = \infty\big\}.$
%The essential spectrum $\sigma_e(W_{w, x})$ of $W_{w, x}$ is given by
%\beqn
%\sigma_e(W_{w, x}) = \begin{cases}
%\{\dep_v-\dep_u :  v \in U^{(\mf b)}_u \cup \{u\}, ~ \mbox{card}(\childn{\dep_v}{\rootb}) < \infty\} & \mbox{if~} \mbox{card}(V \setminus U_u) < \infty, \\
%\{\dep_v-\dep_u :  v \in U_u \cup \{u\}, ~ \mbox{card}(\childn{\dep_v}{\rootb}) < \infty\}  \cup \{0\} & \mbox{otherwise}.
%\end{cases}
%\eeqn
Moreover, $\mbox{ind}_{\,W_{w, x}}=0$ on $\mathbb C \setminus \sigma_e(W_{w, x}).$
\item $W_{w, x}$ is a sectorial operator, which generates a strongly continuous quasi-bounded semigroup.
\item $W_{w, x}$ is never normal.
\item If, in addition, $\mathscr T$ is leafless, then $W_{w, x}$ admits a compact resolvent if and only if the set
$V_{\prec}$ of branching vertices
of $\mathscr T$ is disjoint from $\asc{u}$.
\end{enumerate}
\end{theorem}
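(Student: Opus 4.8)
The plan is to exploit the lower-triangular block form $W_{w,x}=\begin{psmallmatrix}D^{(\mf b)}_u&0\\ e_w\obslash g_x&N^{(\mf b)}_u\end{psmallmatrix}$ on $\ell^2(V)=\ell^2(U^{(\mf b)}_u)\oplus\ell^2(V\setminus U^{(\mf b)}_u)$, handling the self-adjoint diagonal $D^{(\mf b)}_u$ (real integer entries $\dep_v-\dep_u$), the bounded operator $N^{(\mf b)}_u=e_u\otimes e_{A_u}$ (nilpotent, since the $e_u$-coefficient of $e_{A_u}$ is $\dep_u-\dep_u=0$), and the singular coupling $e_w\obslash g_x$ separately. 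The dichotomy at $x=1/2$ is governed by the functional $L(h)=\sum_{v\in U^{(\mf b)}_u}h(v)\overline{g_x(v)}=\sum_v\dep_v^x h(v)$. Writing $h(v)=\big((\dep_v-\dep_u)h(v)\big)/(\dep_v-\dep_u)$ and applying Cauchy--Schwarz, $L$ is bounded on $\mathscr D(D^{(\mf b)}_u)$ in the graph norm iff the coefficients $\dep_v^x/(\dep_v-\dep_u)$ are square-summable; as $(\des{u},E_u)$ is narrow there are at most $\mf m$ vertices per depth, so this reduces to convergence of $\sum_n n^{2x-2}$, i.e.\ to $x<1/2$. For $x<1/2$ this relative boundedness gives $\mathscr D(D^{(\mf b)}_u)\subseteq\mathscr D(e_w\obslash g_x)$ and, with $N^{(\mf b)}_u$ bounded and $D^{(\mf b)}_u$ closed, the closedness of $W_{w,x}$ on $\mathscr D(D^{(\mf b)}_u)\oplus\ell^2(V\setminus U^{(\mf b)}_u)$, which is item (i). For $x\ge 1/2$, $L$ is unbounded in the graph norm, so one selects finitely supported $h_n$ with $\|h_n\|+\|D^{(\mf b)}_u h_n\|\to 0$ while $L(h_n)=1$; then $(h_n,0)\to 0$ but $W_{w,x}(h_n,0)\to(0,e_w)\neq 0$, so $W_{w,x}$ is not closed, whence $\rho(W_{w,x})=\emptyset$ by \cite[Lemma 1.17]{CM} and $\sigma(W_{w,x})=\mathbb C$. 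This establishes the stated equivalence.

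Assume now $x<1/2$. Because the block form is triangular with $N^{(\mf b)}_u$ bounded, $W_{w,x}-\lambda$ is boundedly invertible iff both $D^{(\mf b)}_u-\lambda$ and $N^{(\mf b)}_u-\lambda$ are, so $\sigma(W_{w,x})=\sigma(D^{(\mf b)}_u)\cup\sigma(N^{(\mf b)}_u)$. The diagonal $D^{(\mf b)}_u$ has integer entries $\dep_v-\dep_u$ forming a discrete set with no finite accumulation point, so $\sigma(D^{(\mf b)}_u)=\sigma_p(D^{(\mf b)}_u)=\{\dep_v-\dep_u:v\in U^{(\mf b)}_u\}$; by Lemma \ref{lem-inj-ten}(ii) together with $\inp{e_u}{e_{A_u}}=0$ one gets $\sigma(N^{(\mf b)}_u)=\{0\}=\sigma_p(N^{(\mf b)}_u)$. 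Since $0=\dep_u-\dep_u$, the union is exactly $\{\dep_v-\dep_u:v\in U^{(\mf b)}_u\cup\{u\}\}$. That each such $\lambda$ is an eigenvalue of $W_{w,x}$ (so $\sigma=\sigma_p$) is checked by lifting an eigenvector $e_{v_0}$ of $D^{(\mf b)}_u$ through the triangular structure, solving $(N^{(\mf b)}_u-\lambda)k=-(e_w\obslash g_x)(e_{v_0})$ when $\lambda\neq 0$ and using $e_u$ when $\lambda=0$; this yields item (ii).

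For sectoriality (item (iv)) I would split $L$ at a large depth $R$ to obtain $\|(e_w\obslash g_x)h\|\le\varepsilon_R\|D^{(\mf b)}_u h\|+C_R\|h\|$ with $\varepsilon_R\to 0$ (again by narrowness, as the tail of $\sum_n n^{2x-2}$ is small); hence the off-diagonal-plus-$N^{(\mf b)}_u$ part is $\big(D^{(\mf b)}_u\oplus 0\big)$-bounded with relative bound $0$. Since $D^{(\mf b)}_u\oplus 0$ is self-adjoint and bounded below, it is sectorial and generates an analytic semigroup, and the perturbation theorem for such generators shows $W_{w,x}$ is sectorial and generates a strongly continuous quasi-bounded semigroup. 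For item (v), by (ii) one has $\sigma(W_{w,x})\subseteq\mathbb Z\subseteq\mathbb R$; a normal operator with real spectrum is self-adjoint, and self-adjointness of the triangular $W_{w,x}$ would force $\inp{(e_w\obslash g_x)h}{k}=\inp{(h,0)}{W_{w,x}(0,k)}=0$ for all $h,k$, i.e.\ $e_w\obslash g_x=0$; as $g_x\neq 0$ this is impossible, so $W_{w,x}$ is never normal.

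The heart of the argument, and the expected main obstacle, is item (iii). Here one must decide for which $\lambda$ the operator $W_{w,x}-\lambda$ fails to be Fredholm by computing both $\mf m_{W_{w,x}}(\lambda)=\dim\ker(W_{w,x}-\lambda)$ and the deficiency $\mf m_{(W_{w,x})^*}(\bar\lambda)$. The kernel is accessible from the triangular form, but the deficiency is delicate: since $g_x\notin\ell^2(U^{(\mf b)}_u)$ throughout $-1/2\le x<1/2$, the naive adjoint of $e_w\obslash g_x$ is not densely defined (Lemma \ref{lem-inj-ten-unb}), so $(W_{w,x})^*$ has to be realized through the Gelfand-triplet/Hilbert-rigging apparatus of the later chapters. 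The target is that $\lambda=\dep_v-\dep_u$ lies in $\sigma_e(W_{w,x})$ exactly when the relevant multiplicity is infinite, a condition which translates into the global width $\mbox{card}(\childn{\dep_v}{\rootb})=\infty$, and that the two multiplicities coincide whenever finite, giving $\mbox{ind}_{\,W_{w,x}}=0$; performing this rigged-adjoint computation and matching the multiplicities against the tree data is where the bulk of the work lies. Finally, for item (vi) I would use the triangular resolvent: $(W_{w,x}-\lambda)^{-1}$ is compact iff $(D^{(\mf b)}_u-\lambda)^{-1}$ and $(N^{(\mf b)}_u-\lambda)^{-1}$ are, the latter being compact iff $\ell^2(V\setminus U^{(\mf b)}_u)$ is finite-dimensional by Lemma \ref{lem-inj-ten}(iii); when $\mathscr T$ is leafless every branch is infinite, so both conditions hold simultaneously precisely when no ancestor of $u$ branches, that is, when $V_{\prec}\cap\asc{u}=\emptyset$.
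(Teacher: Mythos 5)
Your handling of the dichotomy at $x=1/2$ and of items (i), (ii), (iv), (v), (vi) is essentially sound and runs parallel to the paper's own proof, which obtains everything by specializing its general machinery: the narrowness estimate at $\mu_0=-\dep_u-1$ shows the compatibility condition I holds iff $x<1/2$, and then Theorem \ref{closed-0}, Corollary \ref{coro-spectrum}, Theorem \ref{spectrum}, Propositions \ref{sectorial} and \ref{semigroup}, and Corollary \ref{cpt-res} are invoked. Two of your steps are in fact more elementary than the paper's: the eigenvector lifting through the triangular structure in (ii), and your argument for (v) via ``real spectrum $+$ normal $\Rightarrow$ self-adjoint,'' where the paper (Proposition \ref{hypo-rone}) instead passes through normality of the resolvent. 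One cosmetic slip: the weights $\dep_v^x/(\dep_v-\dep_u)$ are ill-defined at $v=u$ when $\mf b\in \mathsf{Des}_u(u)$, since then $u\in U^{(\mf b)}_u$ and the diagonal entry is $0$; work with $\dep_v^x/\big((\dep_v-\dep_u)^2+1\big)^{1/2}$ (the graph norm you in fact named) or with $|\lambda_{uv}-\mu_0|$ as the paper does.

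The genuine gap is item (iii), which you leave explicitly as a ``target,'' and the route you propose for it---realizing $(W_{w,x})^*$ through the Gelfand-triplet rigging and computing the deficiency numbers ${\mf m}_{(W_{w,x})^*}(\bar\lambda)$---is the hard and unnecessary one. No adjoint is needed: write $W_{w,x}=A+B+C$ with $A=D^{(\mf b)}_u\oplus 0$, $B$ the coupling $e_w\obslash g_x$ and $C=0\oplus N^{(\mf b)}_u$ bounded of finite rank; then $B(A-\mu)^{-1}=(e_w\obslash g_x)(D^{(\mf b)}_u-\mu)^{-1}$ is an integral operator with kernel $\overline{g_x(v)}\,e_w(\cdot)/(\lambda_{uv}-\mu)$, square-summable precisely by the $x<1/2$ summability you already established, hence Hilbert--Schmidt; so $B+C$ is $A$-compact, and Kato's stability theorems \cite[Theorems 5.26 and 5.35, Chapter IV]{K} yield $\sigma_e(W_{w,x})=\sigma_e\big(D^{(\mf b)}_u\oplus 0\big)$ and $\mbox{ind}_{\,W_{w,x}}=\mbox{ind}_A=0$ off the essential spectrum. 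This is exactly Theorem \ref{spectrum}(iii)--(iv), which the paper's proof of the present theorem invokes; your tail-splitting estimate already contains the needed bound, so the Weyl-type argument was within reach. What remains is bookkeeping, and here your blanket claim that ``the relevant multiplicity is infinite $\ldots$ translates into $\mbox{card}(\childn{\dep_v}{\rootb})=\infty$'' does not go through uniformly in $\mf b$: $\sigma_e\big(D^{(\mf b)}_u\oplus 0\big)$ consists of the diagonal entries of infinite multiplicity \emph{in} $\ell^2(U^{(\mf b)}_u)$ together with $0$ whenever $\ell^2(V\setminus U^{(\mf b)}_u)$ is infinite-dimensional, and one must separate the case $\mf b=u$ (where $U^{(\mf b)}_u=V\setminus\{u\}$, so the multiplicity of $n-\dep_u$ is governed by $\mbox{card}(\childn{n}{\rootb})$) from $\mf b\neq u$ (where narrowness of $\des{u}$ and finiteness of $\asc{u}$ force all diagonal multiplicities to be finite, so only $0$ can enter, through the complementary block). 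Without carrying out this case analysis and the relative-compactness argument, item (iii)---which you correctly identified as the heart of the matter---remains unproved in your proposal.
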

\begin{remark} If $x \geqslant1/2$, then  $\sigma(W_{w, x})=\mathbb C.$ Assume that $x < 1/2$. Then, by \eqref{U-u},  $\sigma(W_{w, x})=\mathbb N$ if $\mf b \notin \des{u}$. Otherwise, \beqn
\big\{k \in \mathbb N : k \notin \{1, \ldots, \dep_{\mf
b}-\dep_{u}\} \big\} \cup \{-1,  \ldots, -\dep_u\} ~\subseteq ~
\sigma(W_{w, x}) \, \subseteq \, \mathbb N \cup  \{-1,  \ldots,
-\dep_u\}.\eeqn The exact description of  $\sigma(W_{w, x})$, $x <
1/2$ depends on the set $V_{\prec} \cap [u, \mf b]$ and the
leaf-structure of branches emanating from this part (see Figure
\ref{fig-spec} for a pictorial representation of the spectra of
$W_{w, x}$, $x \in \mathbb R$, where $x$ varies over the $X$-axis
while the spectra $\sigma(W_{w, x})$ are plotted in the
$Y\!Z$-plane). Note that the spectral behaviour of the family $\{W_{w, x}\}_{x \in \mathbb R}$
changes at $x=1/2$ (see Figure \ref{fig-spec}).
\end{remark}

\begin{figure}
\begin{tikzpicture}[scale=.6, transform shape, circle dotted/.style={dash pattern=on .05mm off 8mm,
                                         line cap=round}]
\tikzset{vertex/.style = {shape=circle,draw,minimum size=1em}}
\tikzset{edge/.style = {->,> = latex'}}

% vertices
\node (a) at  (0,0) {};
\node (a') at  (-0.15,0) {};
\node (a'') at  (0,-0.15) {};
\node (a''') at  (.15,.15) {};
\node (b) at  (0,5) {};
\node (b') at  (-.8,4.5) {$Z$-axis};
\node (c) at  (-4,-4) {};
\node (c') at  (-4,-3) {$Y$-axis};
\node (d) at  (9.5,0) {};
\node (d') at  (8.5,0.4) {$X$-axis};
\node (e) at  (2,0) {$\bullet$};
\node (e') at  (3.1,-1) {$\sigma(W_{w, x}):$ \mbox{discrete}};
\node (e'') at  (2.1,-1.5) {$x  < 1/2$};
\node (e''') at  (1.5,-0.7) {};
\node (f) at  (-1.5,-4) {};
\node (g) at  (3.7,0.3) {$1/2$};
\node (h) at  (3.7,0) {$\red \bullet$};
\node (h') at  (7.4,-1) {$\sigma(W_{w, x})$: \mbox{plane}};
\node (h'') at  (6.8,-1.5) {$x  \geqslant 1/2$};
\draw[edge] (a'') to (b);
\draw[edge] (a''') to (c);
\draw[edge] (a') to (d);
\draw[line width = .9mm,circle dotted] (e''') to (f);

\tikzset{vertex/.style = {shape=circle,draw,red, minimum size=1em}}

\node (h') at  (5.5,0) {$\bullet$};
\node (i) at  (6,4) {};
\node (i') at  (5.91,4) {};
\node (j) at  (6,-3) {};
\node (j') at  (5,3) {};
\node (j'') at  (4.8,2.7) {};
\node (j''') at  (6,-2.8) {};
\node (k) at  (5,-4.5) {};

%\tikzset{vertex/.style = {shape=circle, draw, blue, minimum size=1em}}

%edges

\draw (i') to (j);
\draw[dashed] (i) to (j'');
\draw (j') to (k);
\draw[dashed] (j''') to (k);
\end{tikzpicture}
\caption{Spectra of the family $W_{w, x}$, $x \in \mathbb R$} \label{fig-spec}
\end{figure}

A proof of Theorem \ref{w-x} will be presented towards the end of
Chapter 4. We remark that the conclusion of Theorem \ref{w-x} may
not hold in case $\des{u}$ is not a narrow subtree of $\mathscr T$
(see Remark \ref{remark-narrow}). It is tempting to arrive at the
conclusion that almost everything about $W_{w, x}$ can be
determined. However, this is not the case. Although, the Hilbert
space adjoint of $W_{w, x}$ is densely defined in case $-1 \leqslant
x < 1/2,$ neither we know its domain nor we have a neat expression
for $W^*_{w, x}$. Further,
 in case $x \geqslant 1/2$, we do not know whether or not $W_{w, x}$ is closable.
%We do not know whether or not there exists $x \geqslant -1$ for which $W_{w, x}$ is a hyponormal operator.
% If in addition, $\sigma(W_{w, x})$ is contained in a sector of angle less than $\pi/2$ (resp, of angle $\pi/2$), then $W_{w, x}$ is sectorial (resp. an infinitesimal generator of a strongly continuous semigroup).
%\eop
%\end{example}

\subsection*{Plan of the paper}

We conclude this chapter with the layout of the paper.

\vskip.2cm

%\begin{enumerate}
%\item[$\bullet$]
Chapter 2 provides the graph-theoretic framework essential in the
study of weighted join operators. In particular, we introduce the
notion of extended directed tree and exploit the order structure on
a rooted directed tree $\mathscr T=(V, E)$ to introduce a family of
semigroup structures on extended directed trees to be referred to as
join operations at a base point (see Proposition \ref{sg-3}). The
join operation based at $\rootb$ (resp. $\infty$) is precisely the
join (resp. meet) operation. We exhibit a pictorial illustration of
the decomposition of the set $V$ of vertices into descendant,
ascendant, and the rest with respect to a fixed vertex (see
\eqref{cg-dec0} and Figure \ref{fig0}). This decomposition of $V$
helps to understand the action of join and meet operations. We
conclude this section with the description of the set $M^{(\mf
b)}_u(w)$ of vertices which join to a given vertex $u$ at another
given vertex $w$ (see Proposition \ref{lem3.2}).
%\item[$\bullet$]

In Chapter 3, we see that the semigroup structures on an extended
rooted directed tree $\mathscr T=(V, E)$, as introduced in Chapter
2, induce a family of operators referred to as weighted join operators
$\W{b}{u}$. We show that for $\mf b \neq \infty,$ these operators
are closed and that the linear span of standard orthonormal basis of
$\ell^2(V)$ forms a core for $\W{b}{u}$  (see Proposition
\ref{closed}). We further discuss boundedness of weighted join
operators (see Theorem \ref{bdd}). One of the main results in this
chapter decomposes a weighted join operator with a base point in $V$
as the sum of a diagonal operator and a bounded 
operator of rank at most one (see Theorem \ref{o-deco}). In case the base point is
$\infty,$ the weighted meet operator $\W{b}{u}$ turns out to be possibly an unbounded
finite rank operator. Among various properties of weighted join
operators, it is shown that $\W{b}{u}$ has large null summand except
for at most finitely many choices of $u$ (see Corollary \ref{lns}).
It is also shown that the weighted join operator is either a complex
Jordan operator of index $2$ or it has a bounded Borel functional
calculus (see Corollary \ref{coro-c-Jordan}).  We also characterize
compact weighted join operators and discuss an application to the
theory of commutators of compact operators (see Proposition
\ref{cpt} and Corollary \ref{cpt-comm}). We further provide the
description of commutant of a bounded weighted join operator (see
Theorem \ref{commutant}). We exhibit a concrete family of weighted
join operators to conclude that the commutant of a weighted join
operator is not necessarily abelian.
%\item[$\bullet$]

In Chapter 4, we capitalize on the graph-theoretic frame-work developed in
earlier chapters to introduce and study the class of rank one
extensions of weighted join operators.  In particular, we discuss
closedness, structure of Hilbert space adjoint and spectral theory
of rank one extensions of weighted join operators. We introduce two
compatibility conditions which controls the unbounded rank one
component in the matrix representation of rank one extensions of
weighted join operators and discuss their connection with certain
discrete Hilbert transforms (see Proposition \ref{lem-H-transform}).
Moreover, we characterize these conditions under some sparsity
conditions on the weight systems of weighted join operators (see
Proposition \ref{CC-prop}). We provide a complete spectral picture
(including description of spectra, point-spectra and essential
spectra) for rank one extensions of weighted join operators (see
Theorem \ref{spectrum}). It turns out that the spectra of rank one
extensions of weighted join operators satisfying the so-called
compatibility condition I can be recovered from its point spectra.
In case the compatibility condition I does not hold, either rank one
extensions of weighted join operators are not closed or their domain
of regularity is empty (see Corollary \ref{coro-spectrum}). We give
an example of a rank one extension of a weighted join operator with
spectrum properly containing the topological closure of its point
spectrum (see Example \ref{exam-spectrum}). Further, under the
assumption of compatibility condition I, we characterize rank one
extensions of weighted join operators on leafless directed trees
which admit compact resolvent (see Corollary \ref{cpt-res}).
Given an unbounded closed subset $\sigma$ of the complex plane, we construct a non-trivial rank one extension of a weighted join operator with spectrum same as $\sigma$ (see Corollary \ref{construction}).
Finally, we specialize Theorem \ref{spectrum} to weighted join
operators and conclude that these operators are not complete unless
they are complex Jordan (see Theorem \ref{spectrum1} and Corollary
\ref{completeness}).

%\item[$\bullet$]
In Chapter 5, we investigate various special classes of rank one
extensions of weighted join operators. We exhibit a family of rank
one extensions of weighted join operators, which are sectorial (see
Proposition \ref{sectorial}). A similar result is obtained for the
generators of quasi-bounded strongly continuous semigroups (see
Proposition \ref{semigroup}). Further, we characterize the classes
of hyponormal, cohyponormal, $n$-symmetric weighted join operators
(see Propositions \ref{hypo} and \ref{symm}). It turns out that
there are no non-normal hyponormal weighted join operators. On the
other hand, if a weighted join operator is $n$-symmetric, then
either $n=1$ or $n \Ge 3.$ We also discuss normality and
symmetricity of rank one extensions of weighted join operators (see
Propositions \ref{hypo-rone} and \ref{symm-r-one}). Towards the end
of this section, we present a proof of Theorem \ref{w-x}.

%\item[$\bullet$]
In Chapter 6, we discuss the counter-part of the theory of weighted
join operators for rootless directed trees. The definition of join
operation at a given base point becomes less obvious in view of the
absence of depth of a vertex in a rootless directed tree. One of the
main results in this chapter is a decomposition theorem analogous to
Theorem \ref{o-deco} (see Theorem \ref{o-deco-rl}). It turns out
that a weighted join operator $\W{b}{u}$ on a rootless directed tree
could be a possibly unbounded rank one perturbation of (unbounded)
diagonal operator. Unlike the case of rooted tree, the weighted join
operator $\W{b}{u},$ $\mf b \neq \infty,$ need not be even closable
(see Corollary \ref{coro-bdd-rl}). We also obtain a counter-part of
Corollary \ref{coro-c-Jordan} for weighted join operators on
rootless directed trees (see Corollary \ref{C-Jordan-rl}). Finally,
we briefly discuss some difficulties in the study of the rank one
extensions of these operators.
%\end{enumerate}

In Chapter 7,  we  glimpse at the general theory of unbounded  non-self-adjoint rank one perturbations of diagonal operators or of the forms associated with diagonal operators.  In particular, we discuss sectoriality of rank one perturbations of diagonal operators and some of its immediate applications to spectral theory. We also discuss the role of some compatibility conditions in the sectoriality of the form-sum of the form associated with a sectorial diagonal operator and a form associated with not necessarily square-summable functions $f$ and $g.$

We conclude the paper with an epilogue including several remarks and possible lines of investigations.

\chapter{Semigroup structures on extended directed trees}

In this chapter, we provide the graph-theoretic frame-work essential to introduce and study the so-called weighted join operators and their rank one extensions on rooted directed trees (see Chapters 3 and 4).
In particular, we formally introduce the notion of extended directed tree and exhibit a family of semigroup structures on it. We also present a canonical decomposition of a rooted directed tree suitable for understanding the actions of weighted join operators.
\section{Join and meet operations on extended directed trees}

In what follows,
we always assume that
$\mbox{card}(V)=\aleph_0.$
The following notion of extended directed tree plays an important role in unifying theories of weighted join operators and weighted meet operators.
\begin{definition}[Extended directed tree]
Let $\mathscr T=(V, E)$ be a directed tree. The {\it extended directed tree $\mathscr T_{\infty}$ associated with $\mathscr T$} is the directed graph $(\V, E_{\infty})$ given by
\beqn
\V = V \cupdot \{\infty\}, \quad E_{\infty} = E \cupdot \{(u, \infty) : u \in V\}.
\eeqn
%(see Figure \ref{fig1} for a pictorial representation of $\mathscr T_{\infty}$).
\end{definition}
\begin{remark} \label{infty-child}
The element $\infty \in \V$ is descendant of every vertex in $V$. Indeed, $\infty \in \mathsf{Chi}(u)$ for every $u \in V.$ In view of the Friedman's notion of graph with boundary  (see \cite[Pg 490]{Fr}), its worth pointing out that $\infty$ is a boundary point when $\mathscr T_{\infty}$ is considered as the graph with boundary.
\end{remark}

\index{$\mathscr T_{\infty}=(V_{\infty}, E_{\infty})$}

A pictorial representation of an extended directed tree $\mathscr T_{\infty}=(V_{\infty}, E_{\infty})$ with the vertex set $V=\{\rootb, v_1, v_2, \ldots \}$ is given in Figure \ref{fig1}.
\begin{figure}
\begin{tikzpicture}[scale=.8, transform shape]
\tikzset{vertex/.style = {shape=circle,draw,minimum size=1em}}
\tikzset{edge/.style = {->,> = latex'}}
% vertices
\node[vertex] (a) at  (0,0) {${}_\rootb$};
\node[vertex] (b) at  (2,-2) {$v_1$};
\node[vertex] (c) at  (2,2) {$v_2$};
\node[vertex] (d) at  (4, -2) {$v_3$};
\node[vertex] (e) at  (4, 2) {$v_4$};
\node[vertex] (f) at  (6, -2) {$\ldots$};
\node[vertex] (g) at  (6, 2) {$\ldots$};
\tikzset{vertex/.style = {shape=circle, draw, blue, minimum size=1em}}
\node[vertex] (a2) at (4,0) {$\infty$};
%edges
\draw[edge] (a) to (b);
\draw[edge] (a) to (c);
\draw[edge] (b) to (d);
\draw[edge] (c) to (e);
\draw[edge] (a) to (a2);
\draw[edge] (b) to (a2);
\draw[edge] (c) to (a2);
\draw[edge] (d) to (a2);
\draw[edge] (e) to (a2);
\draw[edge] (d) to (f);
\draw[edge] (e) to (g);
\draw[edge] (f) to (a2);
\draw[edge] (g) to (a2);
\end{tikzpicture}
\caption{An extended directed tree $\mathscr T_{\infty}$} \label{fig1}
\end{figure}

\index{$u \sqcup v$}

\begin{definition}[Join operation]
Let $\mathscr T=(V, E)$ denote a rooted directed tree with root $\rootb$
and let $\mathscr T_{\infty}=(\V, E_{\infty})$ be the extended
directed tree associated with $\mathscr T$.  Given $u, v \in \V,$ we
say that {\it $u$ joins $v$} if either $u \in \des{v}$ or $v \in
\des{u}$. Further, we set \beqn
u \sqcup v ~= ~\begin{cases} u & \mbox{if~} u \in \des{v}, \\
 v & \mbox{if~} v \in \des{u}, \\ \infty & \mbox{otherwise}.
\end{cases}
\eeqn
\end{definition}

By Remark \ref{infty-child},  $
\infty \sqcup u = \infty  = u \sqcup \infty$ for any $u \in \V.$
Further, the join operation $\sqcup$ satisfies the following:
\begin{align} \label{com-aso}
 \left.
  \begin{minipage}{71ex}
\begin{enumerate}
   \item[$\bullet$](Commutativity)
$u \sqcup v = v \sqcup u$ for all $u, v \in \V,$
\item[$\bullet$](Associativity)
$(u \sqcup v) \sqcup w = u \sqcup (v \sqcup w)$ for all $u, v, w \in \V,$
\item[$\bullet$](Neutral element) $u \sqcup \rootb= u = \rootb \sqcup u$ for all $u \in \V$,
\item[$\bullet$](Absorbing element) $u \sqcup \infty= \infty = \infty \sqcup u$ for all $u \in \V$.
\end{enumerate}
 \end{minipage}
   \right\}
\end{align}

We summarize \eqref{com-aso} in the following lemma.
\begin{lemma} \label{sg-1} Let $\mathscr T=(V, E)$ be a rooted directed tree with root $\rootb$ and let $\mathscr T_{\infty}=(\V, E_{\infty})$ be the extended directed tree associated with $\mathscr T$. Then the pair $(\V, \sqcup)$ is a commutative semigroup admitting $\rootb$ as neutral element and $\infty$ as an absorbing element.
\end{lemma}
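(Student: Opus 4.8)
The plan is to note first that the lemma simply records the four bullet points collected in \eqref{com-aso}, so what must be verified is closure, commutativity, associativity, and the neutral/absorbing identities; of these only associativity has any real content. Closure is immediate since each of the three cases defining $u \sqcup v$ returns an element of $\V$. Commutativity follows from the manifest symmetry of the defining cases. The identity $u \sqcup \rootb = u$ holds because every vertex of $\V$ (including $\infty$, by Remark \ref{infty-child}) lies in $\des{\rootb}$, so the first clause of the definition applies; and $u \sqcup \infty = \infty$ is exactly the observation recorded right after the definition of $\sqcup$ via Remark \ref{infty-child}. I would dispatch all of these in a line or two each.

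For associativity I would avoid a brute-force split into the many relative configurations of $u,v,w$ and instead exhibit $\sqcup$ as a lattice-theoretic join. Introduce the relation $\preceq$ on $\V$ by declaring $u \preceq v$ iff $v \in \des{u}$, so that moving up in $\preceq$ means passing to a descendant, $\rootb$ is the bottom element, and $\infty$ is the top (here $\des{\infty}=\{\infty\}$ since $\infty$ has no children, so $\infty$ sits strictly above every vertex of $V$). First I would check that $\preceq$ is a genuine partial order: reflexivity is $u\in\des{u}$; transitivity is the transitivity of the descendant relation, $\des{v}\subseteq\des{u}$ whenever $v\in\des{u}$; and antisymmetry follows for $u,v\in V$ from the fact, already noted in the excerpt, that $v\in\des{u}$ forces $\dep_v\geq\dep_u$, so $u\in\des{v}$ and $v\in\des{u}$ give $\dep_u=\dep_v$ and hence $u=v$, the case of $\infty$ being handled separately as above.

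The key step is then to prove $u\sqcup v=\sup_{\preceq}\{u,v\}$ for all $u,v\in\V$. When $u,v$ are $\preceq$-comparable the supremum is the larger of the two, which is precisely the first two clauses of the definition of $\sqcup$. When $u,v$ are $\preceq$-incomparable I would argue they admit no common upper bound in $V$: an upper bound is a common descendant $w\in\des{u}\cap\des{v}$, and any such $w$ has both $u$ and $v$ among its ancestors, which form a chain (the unique directed path from $\rootb$ to $w$), forcing $u$ and $v$ to be comparable — a contradiction. Hence $\infty$ is the unique upper bound, so $\sup_{\preceq}\{u,v\}=\infty=u\sqcup v$. This identifies $(\V,\preceq)$ as a join-semilattice with join $\sqcup$, whereupon commutativity and associativity are automatic, while the bottom and top elements $\rootb$ and $\infty$ are exactly the neutral and absorbing elements, giving the commutative monoid with absorbing element asserted. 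The main obstacle is precisely this incomparable case: showing two incomparable vertices have no common descendant in $V$ is where the defining tree hypothesis (unique parents, equivalently that the ancestors of any vertex form a chain) is used, and where care must be taken to treat $\infty$ correctly; everything after this identity is formal lattice theory.
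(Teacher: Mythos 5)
Your proof is correct. Note that the paper does not actually prove Lemma \ref{sg-1}: it records commutativity, associativity, and the neutral/absorbing identities as the displayed list \eqref{com-aso} and states that the lemma "summarizes" them, leaving the verification (implicitly, a routine case analysis over the relative positions of $u,v,w$) to the reader. Your route is genuinely different and more structural. You introduce the partial order $u \preceq v$ iff $v \in \des{u}$ — which is precisely the directed-path order the paper alludes to when it says $\sqcup$ is the lattice-theoretic join — verify it is a partial order with bottom $\rootb$ and top $\infty$, and prove $u \sqcup v = \sup_{\preceq}\{u,v\}$, after which associativity is formal semilattice theory. The mathematical content is correctly isolated in the incomparable case: two $\preceq$-incomparable vertices of $V$ have no common descendant in $V$, because any common descendant $w$ would place both $u$ and $v$ on the chain $[\rootb, w]$, the chain property being exactly where uniqueness of parents (the tree hypothesis) enters; and your handling of $\infty$ (noting $\des{\infty}=\{\infty\}$, so $\infty$ is strictly on top and antisymmetry survives) is careful and correct. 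What your approach buys: it replaces the many-case direct check of associativity by a single geometric lemma plus formal lattice theory, and it makes rigorous the paper's informal claim that $\sqcup$ is a join operation in the order-theoretic sense. What it costs is a little extra scaffolding (verifying $\preceq$ is a partial order, and the supremum identification) that the paper's assert-and-summarize presentation avoids.
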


\index{$\parset{u}{v}$}

Before we define the meet operation, let us introduce the following useful notation. For $u, v \in V$,
\beqn \label{common-p}
\parset{u}{v} &:=&\{w \in V : \parentn{n}{u} = w=\parentn{m}{v} ~\mbox{for some}~ m, n \in \mathbb N\}.
\eeqn

\index{$u \sqcap v$}

\begin{definition}[Meet operation]
Let $\mathscr T=(V, E)$ be a rooted directed tree with root $\rootb$ and let $\mathscr T_{\infty}=(\V, E_{\infty})$ be the extended directed tree associated with $\mathscr T$. Let $u, v \in V.$
We say that {\it $u$ meets $v$} if there exists a unique vertex $\omega \in V$ such that $$\displaystyle \sup_{w \in \parset{u}{v}}\dep_w\,=\,\dep_{\omega}.$$
In this case, we set
$u \sqcap v = \omega.$ In case $u \in \V$, we set
\beqn
\infty \sqcap u = u  = u \sqcap \infty.
\eeqn
\end{definition}
\begin{remark}
 Note that
\beq \label{join-rmk}
u \in \parset{v}{v} ~\Longrightarrow ~u \sqcap v= u=v \sqcap u.
\eeq
In fact, if $u=\parentn{l}{v}$ for some $l \in \mathbb N$, then
 \beqn
 \parset{u}{v} &=& \{w \in V : \parentn{n}{u} = w=\parentn{m}{v} ~\mbox{for some}~ m, n \in \mathbb N\} \\ &=& \{w \in V : \parentn{m}{u}=w ~\mbox{for some}~ m \in \mathbb N\}.
 \eeqn
 The conclusion in \eqref{join-rmk} is now immediate.
\end{remark}

\begin{lemma} \label{lem-join}
Let $\mathscr T=(V, E)$ be a rooted directed tree with root $\rootb$ and let $\mathscr T_{\infty}=(\V, E_{\infty})$ be the extended directed tree associated with $\mathscr T$.
%Let $u, v \in V.$ Then
Any two vertices $u, v \in V$ always meet. Further, they meet in a unique vertex $\omega$ belonging to $[\rootb, u] \cap [\rootb, v]$, so that $\max_{w \in \parset{u}{v}}\dep_w=\dep_{\omega}.$
\end{lemma}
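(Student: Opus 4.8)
The plan is to identify the set $\parset{u}{v}$ of common ancestors of $u$ and $v$ with the intersection of the two root-paths, and then to exploit the chain structure of a root-path to pin down a unique deepest common ancestor.

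First I would record the elementary identity $\parset{u}{v} = [\rootb, u] \cap [\rootb, v]$. Indeed, by definition a vertex $w$ lies in $\parset{u}{v}$ precisely when $w = \parentn{n}{u} = \parentn{m}{v}$ for some $n, m \in \mathbb N$, and the iterated parents of $u$ are exactly the vertices of the directed path $[\rootb, u] = \{\parentn{n}{u} : 0 \Le n \Le \dep_u\}$; the same description applies to $v$. Since $\rootb = \parentn{\dep_u}{u} = \parentn{\dep_v}{v}$, the root is a common ancestor, so $\parset{u}{v} \neq \emptyset$; and since $\parset{u}{v} \subseteq [\rootb, u]$ while the path $[\rootb, u]$ is finite (of cardinality $\dep_u + 1$), the set $\parset{u}{v}$ is finite.

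Next I would extract the candidate meet. As $\parset{u}{v}$ is a nonempty finite set of vertices, $\{\dep_w : w \in \parset{u}{v}\}$ is a nonempty finite set of non-negative integers and hence attains its maximum at some $\omega \in \parset{u}{v}$. This $\omega$ realizes $\sup_{w \in \parset{u}{v}} \dep_w = \max_{w \in \parset{u}{v}} \dep_w = \dep_\omega$ and, by the identity above, lies in $[\rootb, u] \cap [\rootb, v]$. The crux is uniqueness, which I would obtain from the injectivity of the depth function along a root-path. Since $\dep_{\parentn{n}{u}} = \dep_u - n$, the depths of the vertices $\parentn{0}{u}, \ldots, \parentn{\dep_u}{u}$ constituting $[\rootb, u]$ are pairwise distinct; in particular two vertices of $[\rootb, u]$ of equal depth must coincide. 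As $\omega$ and any other depth-maximizer of $\parset{u}{v}$ both lie in $[\rootb, u]$ and share the depth $\dep_\omega$, they are equal. Hence the deepest common ancestor is unique, so $u$ and $v$ meet, at $\omega \in [\rootb, u] \cap [\rootb, v]$.

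I would expect the only genuinely substantive point to be this last uniqueness step; the existence and finiteness assertions are immediate once one observes that common ancestors lie on the finite root-paths. The main obstacle, such as it is, is simply to make precise that a directed path carries a total order under ancestry along which depth strictly decreases toward the root, which is exactly what forbids two distinct common ancestors from sharing the maximal depth.
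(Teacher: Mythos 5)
Your proof is correct and follows essentially the same route as the paper: both arguments rest on the observation that every common ancestor of $u$ and $v$ lies on the root-path $[\rootb, u]$, along which the depth function is injective (the paper phrases this as "distinct vertices of $\parset{u}{v}$ have distinct depths"), so the maximal depth is attained at a unique vertex. Your explicit identification $\parset{u}{v} = [\rootb, u] \cap [\rootb, v]$ is just a slightly more packaged version of the same finiteness-plus-injectivity argument.
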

\begin{proof}
Note that the set $\parset{u}{v} $ is non-empty. Indeed, $\rootb \in \parset{u}{v},$ since
\beqn
\parentn{\dep_u}{u}=\rootb=\parentn{\dep_v}{v}.\eeqn
Further, for any $w \in \parset{u}{v} $, there exist $m, n \in \mathbb N$ such that
\beqn
\dep_w = \dep_u-n= \dep_v - m \Le \min \{\dep_u, \dep_v\} < \infty.
\eeqn
This shows that \beq \label{finite-dep} \displaystyle \sup_{w \in \parset{u}{v} } \dep_w ~<~ \infty.\eeq
We claim that $\parset{u}{v} $ is finite. To see this, in view of \eqref{finite-dep}, it suffices to check that for any two distinct vertices $x, y \in \parset{u}{v} $, $\dep_x \neq \dep_y.$
Note that for some integers $n_1, n_2 \in \mathbb N,$
\beq \label{1}
\parentn{n_1}{u} = x, \quad \parentn{n_2}{u}=y.
\eeq
It follows that
\beq
\label{2}
\dep_x = \dep_u - n_1, ~ \dep_y = \dep_u - n_2 ~\Longrightarrow ~\dep_x=\dep_y + n_2 - n_1.
\eeq
Since $x \neq y,$ by \eqref{1}, $n_1 \neq n_2.$ Hence, by \eqref{2}, $\dep_x \neq \dep_y.$ This also shows that $\sup \parset{u}{v} $ is attained at a unique vertex in $\parset{u}{v}.$ The remaining part is also immediate from this.
\end{proof}
%\begin{remark} \label{rmk-join}
%Note that $
%\infty \sqcap u = u  = u \sqcap \infty$ for any $u \in \V.$
%Further, any two vertices in $V$ meets in a vertex in $V.$
%\end{remark}

Let $\mathscr T_{\infty}=(\V, E_{\infty})$ be the extended directed tree associated with the rooted directed tree $\mathscr T$.
Then the meet operation $\sqcap$ satisfies the following:
\begin{align} \label{com-aso-2}
 \left.
  \begin{minipage}{75ex}
\begin{enumerate}
   \item[$\bullet$](Commutativity)
$u \sqcap v = v \sqcap u$ for all $u, v \in \V,$
\item[$\bullet$](Associativity)
$(u \sqcap v) \sqcap w = u \sqcap (v \sqcap w)$ for all $u, v, w \in \V,$
\item[$\bullet$](Neutral element) $u \sqcap \infty= u = \infty \sqcap u$ for all $u \in \V$,
\item[$\bullet$](Absorbing element) $u \sqcap \rootb= \rootb = \rootb \sqcap u$ for all $u \in \V.$
\end{enumerate}
 \end{minipage}
   \right\}
\end{align}

We summarize \eqref{com-aso-2} in the following lemma.
\begin{lemma} \label{sg-2} Let $\mathscr T=(V, E)$ be a rooted directed tree with root $\rootb$ and let $\mathscr T_{\infty}=(\V, E_{\infty})$ be the extended directed tree associated with $\mathscr T$. Then the pair $(\V, \sqcap)$ is a commutative semigroup admitting $\infty$ as neutral element and $\rootb$ as an absorbing element.
\end{lemma}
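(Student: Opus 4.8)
The plan is to verify the four bullet points collected in \eqref{com-aso-2}, the only nontrivial one being associativity; commutativity and the two distinguished-element properties will drop out of the definitions once one reformulates $\sqcap$ in terms of paths. I would begin by observing that, since $\infty$ is a two-sided identity for $\sqcap$ by definition, any occurrence of $\infty$ in an expression $u \sqcap v$ or $(u \sqcap v) \sqcap w$ may simply be deleted; thus all four identities reduce at once to the case $u, v, w \in V$ (the identities among copies of $\infty$ being trivial). For $u, v \in V$, commutativity is clear because $\parset{u}{v} = \parset{v}{u}$, so the defining supremum, and hence the maximizer $\omega$, is symmetric in $u$ and $v$. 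The absorbing property $u \sqcap \rootb = \rootb$ holds because $\rootb$ has no parent, so $\parentn{m}{\rootb}$ is defined only for $m = 0$; together with $\parentn{\dep_u}{u} = \rootb$ this gives $\parset{u}{\rootb} = \{\rootb\}$, whose unique element is $\rootb$.

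The heart of the matter is the following reformulation, which I would establish first: for $u, v \in V$,
\[
\parset{u}{v} \,=\, [\rootb, u] \cap [\rootb, v] \,=\, [\rootb, u \sqcap v].
\]
The first equality is immediate, since $w = \parentn{n}{u}$ for some $n \in \mathbb N$ precisely when $w \in [\rootb, u]$. For the second equality I would invoke Lemma \ref{lem-join}, which supplies a unique $\omega := u \sqcap v$ lying in $[\rootb, u] \cap [\rootb, v]$ with $\dep_\omega = \max_{w \in \parset{u}{v}} \dep_w$. From $\omega \in [\rootb, u]$ one gets $[\rootb, \omega] \subseteq [\rootb, u]$, and likewise $[\rootb, \omega] \subseteq [\rootb, v]$, so $[\rootb, \omega]$ is contained in the intersection; conversely any $w$ in the intersection lies on the chain $[\rootb, u]$ and satisfies $\dep_w \Le \dep_\omega$, so $w$ is an ancestor of (or equal to) $\omega$, that is, $w \in [\rootb, \omega]$. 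This says exactly that $u \sqcap v$ is the deepest vertex lying on both $[\rootb, u]$ and $[\rootb, v]$, and that the ancestor-path of the meet is the intersection of the two ancestor-paths.

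Granting this, associativity for $u, v, w \in V$ is a formality. Applying the reformulation twice and using commutativity and associativity of set intersection,
\[
[\rootb, (u \sqcap v) \sqcap w] \,=\, [\rootb, u] \cap [\rootb, v] \cap [\rootb, w] \,=\, [\rootb, u \sqcap (v \sqcap w)].
\]
Since every vertex $x$ is the unique element of maximal depth in $[\rootb, x]$, equality of these two ancestor-paths forces $(u \sqcap v) \sqcap w = u \sqcap (v \sqcap w)$. I expect the only real obstacle to be the reformulation in the middle paragraph: the supremum-based definition of $\sqcap$ is awkward to manipulate directly, and the work lies in converting it, via Lemma \ref{lem-join}, into the transparent statement that $[\rootb, u \sqcap v] = [\rootb, u] \cap [\rootb, v]$. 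Once that is in place, associativity becomes the elementary fact that intersecting the chains $[\rootb, \cdot\,]$ is associative, and the neutral, absorbing, and commutativity clauses are read off directly.
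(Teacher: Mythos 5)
Your proposal is correct, and it is worth noting that the paper itself offers no proof of this lemma at all: the four properties are simply asserted in \eqref{com-aso-2} and the lemma is declared to ``summarize'' them, with only Lemma \ref{lem-join} (existence and uniqueness of the meet, located in $[\rootb, u] \cap [\rootb, v]$) proved beforehand. So your argument is not a rephrasing of the paper's proof but a genuine filling-in of what the paper leaves implicit. Your key step is the reformulation $\parset{u}{v} = [\rootb, u] \cap [\rootb, v] = [\rootb, u \sqcap v]$, which upgrades Lemma \ref{lem-join} from ``the meet exists, is unique, and lies in $[\rootb, u] \cap [\rootb, v]$'' to the cleaner statement that the root-path of the meet \emph{is} the intersection of the root-paths; once the map $x \mapsto [\rootb, x]$ is observed to be injective (each vertex being the unique deepest element of its own root-path), associativity collapses to associativity of set intersection, and commutativity, the neutrality of $\infty$, and the absorbing property of $\rootb$ are read off directly. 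This is exactly the kind of verification the authors presumably had in mind, and it buys something the bare assertion does not: the identity $[\rootb, u \sqcap v] = [\rootb, u] \cap [\rootb, v]$ is reusable (for instance, it makes the associativity claim for $\sqcup_{\mf b}$ in Proposition \ref{sg-3}, which the paper also disposes of by reference to Lemmata \ref{sg-1} and \ref{sg-2}, straightforward to check on the $\asc{\mf b}$ cases). All individual steps check out against the paper's conventions: $\mathbb N$ contains $0$, so $\parentn{0}{\rootb} = \rootb$ is the sole value of $\parentn{m}{\rootb}$ and your computation $\parset{u}{\rootb} = \{\rootb\}$ is valid, and the reduction eliminating $\infty$ from all four identities is legitimate since $\infty$ is a two-sided identity by definition.
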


The operations meet and join can be unified in the following manner.
%(recall the notation \eqref{common-p}).

\index{$u \sqcup_{\mf b} v$}
\begin{definition}[Join operation at a base point]
Let $\mathscr T=(V, E)$ be a rooted directed tree with root $\rootb$ and let $\mathscr T_{\infty}=(\V, E_{\infty})$ be the extended directed tree associated with $\mathscr T$. Fix $\mf b  \in \V$ and let $u, v \in \V.$
Define the binary operation $\sqcup_{\mf b}$ on $\V$ by
\beqn
u \sqcup_{\mf b} v \,=\, \begin{cases} u \sqcap v & \mbox{if~} u, v \in \asc{\mf b}, \\
 u & \mbox{if~} v = \mf b, \\
 v & \mbox{if~} \mf b = u, \\
 u \sqcup v & \mbox{otherwise}.
\end{cases}
\eeqn
\end{definition}
\begin{remark}
Clearly, $\sqcup_{\rootb}= \sqcup.$ Further, by Remark \ref{infty-child},
$\sqcup_{\infty} = \sqcap.$
Thus we have a family of countably many operations, the first of which (corresponding to $\rootb$) is the join operation, while the farthest operation (corresponding to $\infty$) is the meet operation.
\end{remark}

\begin{proposition} \label{sg-3} Let $\mathscr T=(V, E)$ be a rooted directed tree with root $\rootb$ and let $\mathscr T_{\infty}=(\V, E_{\infty})$ be the extended directed tree associated with $\mathscr T$. Then, for every $\mf b \in V,$ the pair $(\V, \sqcup_{\mf b})$ is a commutative semigroup admitting $\mf b$ as a neutral element and $\infty$ as an absorbing element.
\end{proposition}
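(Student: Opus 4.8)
The plan is to verify the four semigroup axioms separately, clearing commutativity, the neutral element, and the absorbing element by direct inspection of the defining clauses, and then concentrating essentially all of the effort on associativity. First I would note that the four clauses are mutually consistent: since a vertex is never its own ascendant we have $\mf b \notin \asc{\mf b}$, so the clause ``$u,v \in \asc{\mf b}$'' can never overlap with $u=\mf b$ or $v=\mf b$, while the two middle clauses overlap only at $u=v=\mf b$, where both return $\mf b$. Commutativity then follows clause by clause: the first uses commutativity of $\sqcap$ (Lemma \ref{sg-2}), the last uses commutativity of $\sqcup$ (Lemma \ref{sg-1}), and the two middle clauses are interchanged when $u$ and $v$ are swapped. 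For the neutral element, $u \sqcup_{\mf b} \mf b = u = \mf b \sqcup_{\mf b} u$ is exactly the content of the middle clauses. For the absorbing element I would observe that $\infty \notin \asc{\mf b}$ and $\infty \neq \mf b$, so for $u \neq \mf b$ the pair $(u,\infty)$ falls under the last clause and $u \sqcup_{\mf b} \infty = u \sqcup \infty = \infty$ by the absorbing property of $\sqcup$, while for $u=\mf b$ the third clause gives $\mf b \sqcup_{\mf b} \infty = \infty$.

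The heart of the matter is associativity, and the guiding picture is that $\sqcup_{\mf b}$ should be the join (supremum) for the order obtained by re-rooting the underlying tree at $\mf b$: the segment $[\rootb,\mf b]$ is inverted, so that $\mf b$ becomes the least element and the meet $\sqcap$ governs comparisons among the strict ascendants in $\asc{\mf b}$, whereas away from this segment the original descendant order and hence the join $\sqcup$ are retained, with $\infty$ adjoined as top. I would make this operational by partitioning $\V$ into the three blocks $\asc{\mf b}$, $\{\mf b\}$, and $R := \V \setminus (\asc{\mf b} \cup \{\mf b\})$, the last of which contains $\des{\mf b}\setminus\{\mf b\}$, the ``side'' vertices, and $\infty$; one then checks $(u \sqcup_{\mf b} v) \sqcup_{\mf b} w = u \sqcup_{\mf b} (v \sqcup_{\mf b} w)$ according to which blocks $u,v,w$ occupy. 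Two closure facts do the bookkeeping and are easy to establish: the meet of two elements of $\asc{\mf b}$ again lies in $\asc{\mf b}$ (it is the shallower of the two, still a strict ascendant of $\mf b$), so repeated use of the first clause stays inside the meet-semigroup of Lemma \ref{sg-2}; and if $r \in R$ and $s \in \V \setminus\{\mf b\}$ then $r \sqcup s \in R$ (were $r \sqcup s = s \in \asc{\mf b}$ then $r$ would be an ascendant of $\mf b$, contradicting $r \in R$), so repeated use of the last clause stays inside the join-semigroup of Lemma \ref{sg-1}. Granting these, each configuration in which $u,v,w$ lie in a single block inherits associativity directly from Lemma \ref{sg-1} or Lemma \ref{sg-2}, with $\mf b$ inserted or deleted freely as a two-sided identity.

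The cases I expect to be genuinely delicate, and hence the main obstacle, are the mixed ones, in which the partial product switches which clause governs it, typically when an element of $\asc{\mf b}$ is combined with an element of $R$ so that the evaluation passes from $\sqcap$ to $\sqcup$ (or conversely). In these configurations one cannot merely quote associativity of $\sqcup$ or of $\sqcap$ in isolation, since the two associations route the computation through different operations; the interaction of meet and join along the path $[\rootb,\mf b]$ is exactly what must be controlled. Here I would invoke Lemma \ref{lem-join} to locate the relevant meet vertex on $[\rootb,u] \cap [\rootb,v]$ and compare, on each side of the identity, the depth of that vertex against $\dep_{\mf b}$, tracking whether each partial product re-enters $\asc{\mf b}$ or lands in $R$, and confirming that both associations produce the same element of $\V$. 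Once this meet--join interaction is pinned down in the finitely many mixed patterns, associativity follows, and with the neutral element $\mf b$ and absorbing element $\infty$ already in hand the commutative-semigroup conclusion is complete.
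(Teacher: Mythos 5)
Your verification of commutativity, of $\mf b$ being a neutral element, and of $\infty$ being an absorbing element is correct, and your two closure facts together with the reduction of the ``pure'' configurations (all three vertices in $\asc{\mf b}$, or all three in $R=\V\setminus(\asc{\mf b}\cup\{\mf b\})$) to Lemmata \ref{sg-1} and \ref{sg-2} are also sound. This is already more than the paper records: its entire proof is the assertion that commutativity and associativity ``may be deduced from Lemmata \ref{sg-1} and \ref{sg-2}'', followed by the observation about the neutral and absorbing elements. The genuine gap in your proposal is that the mixed configurations, which you yourself isolate as the heart of the matter, are never carried out: saying you \emph{would} invoke Lemma \ref{lem-join}, compare depths along $[\rootb,\mf b]$, and ``confirm that both associations produce the same element'' is a plan, not an argument, and your write-up stops exactly where the difficulty begins.

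Moreover, the gap cannot be closed along the lines you describe, because for the operation as printed the two associations do \emph{not} in general agree in the mixed cases. Take the rooted tree with edges $(\rootb,w)$, $(w,\mf b)$, $(\rootb,c)$, extended arbitrarily below $c$ so that $\mbox{card}(V)=\aleph_0$; then $\asc{\mf b}=\{w,\rootb\}$ and $c\in V_{\mf b}$. The first clause of the definition gives $w\sqcup_{\mf b}\rootb=w\sqcap\rootb=\rootb$, while the last clause gives $\rootb\sqcup_{\mf b}c=\rootb\sqcup c=c$ but $w\sqcup_{\mf b}c=w\sqcup c=\infty$, since $w$ and $c$ are incomparable in $\mathscr T$. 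Hence
\beqn
(w\sqcup_{\mf b}\rootb)\sqcup_{\mf b}c \,=\, \rootb\sqcup_{\mf b}c \,=\, c, \qquad
w\sqcup_{\mf b}(\rootb\sqcup_{\mf b}c) \,=\, w\sqcup_{\mf b}c \,=\, \infty.
\eeqn
The source of the failure is precisely the point your guiding picture glosses over: on pairs in $\asc{\mf b}\times V_{\mf b}$ the re-rooted order makes $w$ comparable to $c$ (it lies on the path from $\mf b$ to $c$, so the re-rooted join would be $c$), whereas the printed ``otherwise'' clause evaluates the \emph{original} join $w\sqcup c$, which is $\infty$; the same disagreement is visible in Table \ref{Table1}. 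So either one works throughout with the re-rooted join --- in which case associativity does hold, but the operation analyzed is no longer the one defined in the paper --- or one works with the printed clauses, in which case the mixed cases fail and no depth-chasing via Lemma \ref{lem-join} can rescue them. Executing your plan honestly would have surfaced this discrepancy; as written, the proposal silently assumes the two operations coincide.
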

\begin{proof}
Let $\mf b \in V.$
The fact that  $(\V, \sqcup_{\mf b})$ is commutative and associative may be deduced from Lemmata \ref{sg-1} and \ref{sg-2}.
To complete the proof, note that $\mf b$ is a neutral element, while $\infty$ is an absorbing element for $(\V, \sqcup_{\mf b})$.
\end{proof}

\section{A canonical decomposition of an extended directed tree}

For a directed tree $\mathscr T=(V, E)$ and $u \in V,$ the set $V$ of vertices can be decomposed into three disjoint subsets:
\beq \label{cg-dec0}
V\,=\, \des{u} \cupdot \asc{u} \cupdot V_u,
\eeq
where $V_u$ is the complement of $\des{u} \sqcup \asc{u}$ in $V$ (see Figure \ref{fig0} for a pictorial representation of this decomposition with $u:=\mf v_4$).
Note that if $\mathscr T_{\infty}=(\V, E_{\infty})$ is the extended directed tree associated with $\mathscr T$, then $\V$ decomposes as follows:
\beq
\label{cg-dec}
\V\,=\, \des{u} \cupdot \asc{u} \cupdot V_u,
\eeq
where $\infty \in \des{u}$ by the very definition of extended directed tree.

\index{$V_u$}

\begin{caution}
Whenever we consider the decomposition in \eqref{cg-dec0}, it is understood that directed tree under consideration is $\mathscr T$ (and not the extended directed graph $\mathscr T_{\infty}$), so that $\infty \notin \des{u}$.
\end{caution}

It turns out that the cardinality of $V_u$ being infinite is intimately related to the large null summand property of weighted join operators (see Corollary \ref{lns}). We record the following general fact for ready reference.
\begin{proposition}
\label{lem-lns}
Let $\mathscr T=(V, E)$ be a directed tree and let $u \in V$. If $V_u$ is as given in \eqref{cg-dec0}, then the following statements hold:
\begin{enumerate}
\item If $\mathscr T$ is rooted, then $\mbox{card}(V_u)=\aleph_0$ if and only if $\mbox{card}(V \setminus \des{u})=\aleph_0.$
\item If $\mathscr T$ is leafless, then $\mbox{card}(V_u)=\aleph_0$ if and only if there exists a branching vertex $w \in \asc{u}.$
\item If $\mathscr T$ is leafless, then either $\mbox{card}(V_u)=0$ or $\mbox{card}(V_u)=\aleph_0$.
\end{enumerate}
\end{proposition}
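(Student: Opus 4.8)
The plan is to analyze the set $V_u = V \setminus (\des{u} \cupdot \asc{u})$ by decomposing it according to the ancestors of $u$. The key structural observation is that every vertex $x \in V_u$ must descend from some ancestor $w \in \asc{u}$ but not through the directed path toward $u$; more precisely, $V_u$ partitions as a disjoint union indexed by the ancestors of $u$, where for each $w \in \asc{u}$ we collect those descendants of $w$ that branch off $w$ via a child of $w$ other than the one lying on $[\rootb, u]$. First I would make this decomposition precise: writing $\asc{u} = \{\parentn{n}{u} : n \Ge 1\}$, I would set, for each $w \in \asc{u}$, the set $C_w := \des{w} \setminus (\des{c_w} \cupdot \{w\})$, where $c_w$ is the unique child of $w$ lying on the path $[\rootb, u]$ (equivalently $c_w = \parentn{n-1}{u}$ if $w = \parentn{n}{u}$). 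A routine check, using uniqueness of directed paths and the depth bookkeeping already established in Lemma \ref{lem-join}, shows that $V_u = \bigcupdot_{w \in \asc{u}} C_w$.

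With this decomposition in hand, I would prove the three statements in a convenient order. For (ii), assuming $\mathscr T$ is leafless, I would argue that if some $w \in \asc{u}$ is a branching vertex (so $\mbox{card}(\child{w}) \Ge 2$), then $w$ has a child $c \neq c_w$, and since $\mathscr T$ is leafless, $\des{c} \subseteq C_w \subseteq V_u$ is infinite; hence $\mbox{card}(V_u) = \aleph_0$. Conversely, if no ancestor of $u$ is a branching vertex, then each $w \in \asc{u}$ has $c_w$ as its only child, whence $C_w = \emptyset$ for all $w$, giving $V_u = \emptyset$. This simultaneously yields (iii): in the leafless case the dichotomy is that either every ancestor of $u$ is non-branching (so $V_u = \emptyset$) or some ancestor branches (so $V_u$ is infinite), leaving no room for a finite nonempty $V_u$. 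For (i), in the rooted case I would use the disjoint decomposition $V = \des{u} \cupdot \asc{u} \cupdot V_u$ together with the finiteness of $\asc{u}$: since $\dep_u < \infty$ and $\asc{u} = \{\parentn{n}{u} : 1 \Le n \Le \dep_u\}$ has exactly $\dep_u$ elements, the sets $V \setminus \des{u} = \asc{u} \cupdot V_u$ and $V_u$ differ by a finite set, so one is infinite precisely when the other is.

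The main obstacle I anticipate is verifying the cleanness of the decomposition $V_u = \bigcupdot_{w \in \asc{u}} C_w$ and correctly identifying the distinguished child $c_w$ on the path to $u$; this requires careful use of the fact that each vertex in $V^\circ$ has a unique parent and that directed paths are unique, so that no vertex of $V_u$ is double-counted and no descendant of $u$ sneaks in. Once the indexing is pinned down, the leafless hypothesis does the real work: it converts ``$w$ has a child off the path'' into ``an entire infinite descendant subtree lies in $V_u$,'' which is exactly what powers the equivalences in (ii) and the dichotomy in (iii). The rooted case (i) is then essentially a finiteness-of-$\asc{u}$ bookkeeping argument and should be the easiest of the three.
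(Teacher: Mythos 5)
Your overall strategy coincides with the paper's: for (i) you use that $V\setminus\des{u}$ and $V_u$ differ by the set $\asc{u}$, which is finite precisely because the tree is rooted; for (ii) a branching ancestor $w\in\asc{u}$ has a child $c$ off the path to $u$, and leaflessness makes $\des{c}\subseteq V_u$ infinite, while if no ancestor branches then $V_u=\emptyset$; and (iii) is the resulting dichotomy. The paper's proof does exactly this, only more economically --- it never sets up your partition $V_u=\bigcupdot_{w\in\asc{u}}C_w$, but simply observes that if every vertex of $\asc{u}$ is non-branching then $V=\des{u}\cupdot\asc{u}$. So your decomposition is an accurate elaboration of the same argument rather than a different route, and in the rooted case everything you wrote checks out.

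The one genuine defect is that your construction of the partition is tied to rooted trees, whereas parts (ii) and (iii) are stated for arbitrary leafless directed trees, which may be rootless. You define $c_w$ as the child of $w$ lying on $[\rootb,u]$ and justify the identity $V_u=\bigcupdot_{w\in\asc{u}}C_w$ by ``depth bookkeeping already established in Lemma \ref{lem-join}''; but a rootless tree has no $\rootb$, no depth function, and Lemma \ref{lem-join} does not apply to it (moreover $\asc{u}$ is then infinite, which is exactly why (i) needs the rootedness hypothesis you correctly invoke there). The sufficiency half of (ii) survives, since $\des{c}\subseteq C_w\subseteq V_u$ can be checked directly from disjointness of the subtrees below distinct children of $w$; what breaks is the necessity half (and hence (iii)), because the inclusion $V_u\subseteq\bigcup_{w\in\asc{u}}\des{w}$ is the step where you silently used the root as a common ancestor. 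The repair is cheap: keep your root-free description $c_w=\parentn{n-1}{u}$ for $w=\parentn{n}{u}$, and replace the appeal to Lemma \ref{lem-join} by the fact that any two vertices of a directed tree lie in the descendants of a common vertex (connectedness; see \cite[Proposition 2.1.4]{Jablonski}, which the paper itself uses in its rootless chapter). With that substitution your argument proves (ii) and (iii) in full generality; as written, it proves them only for rooted trees.
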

\begin{proof} 
Note that (i) follows from \eqref{cg-dec0} and the fact that
$\mbox{card}(\asc{u}) < \infty$ for any $u \in V$ provided $\mathscr T$ is rooted. 
To see (ii), suppose $\mathscr T$ is leafless and assume that there exists $w
\in \asc{u}$ such that $\mbox{card}(\child{w}) \Ge 2.$ Thus there
exists a vertex $v \in \child{w}$ such that $v \notin \asc{u} \cup
\des{u}.$ Further, $\des{v}$ is contained in $V_u$. Since $\mathscr
T$ is leafless, $\mbox{card}(\des{v})=\aleph_0.$ This proves the
sufficiency part of (ii). On the other hand, if all vertices in
$\asc{u}$ are non-branching, then $V = \des{u} \cupdot \asc{u},$
which, by \eqref{cg-dec0}, implies that $V_u=\emptyset.$ This also
yields (iii).
\end{proof}

To see the role of canonical decompositions \eqref{cg-dec0} and \eqref{cg-dec} in determining join and meet of two vertices, let us see an example.
\begin{example}
Consider the rooted directed tree $\mathscr T=(V, E)$ as shown in Figure \ref{fig0}.
To get essential idea about the operations of meet and join, let us compute $v_4 \sqcup v$ and $v_4 \sqcap v$ for $v \in V$.  Note that
\beqn
v_4 \sqcup v \,=\, \begin{cases} v &  \mbox{if~} v \in \{\mf v_4, \mf v_7, \ldots, \}, \\
v_4 &  \mbox{if~} v \in \{v_2, v_0, \rootb\}, \\
\infty & \mbox{if~} v \in \{\bf v_1, \bf v_3, \ldots \} \cup \{\bf v_5, \bf v_8, \ldots\}. \end{cases}
\eeqn
Similarly, one can see that
\beqn
v_4 \sqcap v \,=\, \begin{cases} v_4 & \mbox{if~} v \in \{\mf v_4, \mf v_7, \ldots \}, \\
v & \mbox{if~} v \in \{v_2, v_0, \rootb\}, \\
v_0 & \mbox{if~} v \in \{\bf v_1, \bf v_3, \ldots \}, \\
v_2 & \mbox{if~} v \in  \{\bf v_5, \bf v_8, \ldots\}. \end{cases}
\eeqn
Note that two vertices in $V$ can join at the vertex $\infty,$ while two vertices in $V$ always meet in $V.$
\eop
\end{example}

\begin{figure}
\begin{tikzpicture}[scale=.8, transform shape]
\tikzset{vertex/.style = {shape=circle,draw,blue, minimum size=1em}}
\tikzset{edge/.style = {->,> = latex'}}
% vertices
\node[vertex] (a) at  (-2,0) {${}_\rootb$};

\node[vertex] (a0) at  (0,0) {$v_0$};
\node[vertex] (c) at  (2,2) {$ v_2$};

\tikzset{vertex/.style = {shape=circle,draw,magenta, minimum size=1em}}
\node[vertex] (d1) at  (4, 1) {$\mf v_4$};
\node[vertex] (h) at  (6, 1) {$\mf v_7$};
\node[vertex] (k) at  (8, 1) {$\ldots$};
\node[vertex] (h1) at  (8, -.6) {$\ldots$};

\tikzset{vertex/.style = {shape=circle,draw,black, minimum size=1em}}

\node[vertex] (b) at  (2,-2) {$\bf v_1$};

\node[vertex] (d) at  (4, -2) {$\bf v_3$};
\node[vertex] (e) at  (4, 3) {$\bf v_5$};
\node[vertex] (f) at  (6, -2) {$\bf v_6$};
\node[vertex] (g) at  (6, 3) {$\bf v_8$};
\node[vertex] (j) at  (8, 3) {$\bf \ldots$};

%\node[vertex] (h) at  (6, 1) {$v_2 \sqcup v_7$};
\node[vertex] (i) at  (8, -2) {$\bf \ldots$};

%\node[vertex] (a2) at (4,0) {$\infty$};
%edges
\draw[edge] (a) to (a0);
\draw[edge] (a0) to (b);
\draw[edge] (a0) to (c);
\draw[edge] (b) to (d);
\draw[edge] (c) to (e);
\draw[edge] (c) to (d1);
%\draw[edge] (b) to (a2);
%\draw[edge] (c) to (a2);
%\draw[edge] (d) to (a2);
%\draw[edge] (e) to (a2);
\draw[edge] (d) to (f);
\draw[edge] (e) to (g);
\draw[edge] (d1) to (h);
\draw[edge] (f) to (i);
\draw[edge] (g) to (j);
\draw[edge] (h) to (k);
\draw[edge] (h) to (h1);
\end{tikzpicture}
\caption{A rooted directed tree $\mathscr T=(V, E)$, where $V$ is disjoint union of $\des{\mf v_4}=\{\mf v_4, \mf v_7, \ldots \}$, $\asc{\mf v_4}=\{v_2, v_0, \rootb\}$, and $V_{\mf v_4}=\{\bf v_1, \bf v_3, \ldots \} \cup \{\bf v_5, \bf v_8, \ldots\}$} \label{fig0}
\end{figure}

%For ready reference, we record the following pertaining to the generalized meet operations.

Let $\mathscr T=(V, E)$ be a rooted directed tree with root $\rootb$ and let $\mathscr T_{\infty}=(\V, E_{\infty})$ be the extended directed tree associated with $\mathscr T$. Fix $u  \in V$. Then, for any $\mf b \in \V \setminus \{u\}$ and $v \in V,$
the binary operation $\sqcup_{\mf b}$ on $\V$ satisfies
the following:
\beqn
u \sqcup_{\mf b} v \,=\, \begin{cases} u  & \mbox{if~} \mf b \in \asc{u} \cup  V_u, ~v \in \asc{u}, \\
 v & \mbox{if~} \mf b \in \des{u}, ~v \in \asc{u}, \\
 v & \mbox{if~}  \mf b \in \asc{u}, ~v \in \des{u}, \\
 u & \mbox{if}~ \mf b \in \des{u}, ~v \in [u, \mf b], \\
 v & \mbox{if~} \mf b \in\des{u}\setminus \{\infty\}, ~v \in \desb{u},\\
 u & \mbox{if~} \mf b =\infty, ~v \in \desb{u},\\
 v & \mbox{if~} \mf b \in V_u, ~v \in \des{u},
\end{cases}
\eeqn where $[u, v]$ denotes the directed path from $u$ to $v$ in a
directed tree $\mathscr T$. The above discussion is summarized in the following
table:
\begin{table}[H]
%\begin {table}[H]
\begin{tabular}{|c|c|c|c|c|c|}
\hline $\frac{\mf b \rightarrow}{v \downarrow}$  & $\asc{u}$ &
$\des{u} \setminus \{u, \infty\}$ & $u$ & $V_u$ & $\{\infty\}$ \\
\hline $\asc{u}$ & $u$ & $v$ & $v$ & $u$ & $v$ \\ \hline $\desb{u}$
& $v$ & $v$ & $v$ & $v$ & $u$ \\ \hline $[u, \mf b]$ & $-$ & $u$ &
$u$ & $-$ & $u$ \\ \hline $V_u \setminus \{\mf b\}$ & $\infty$ &
$\infty$ & $v$ & $\infty$ & $u \sqcap v$ \\ \hline $\{\mf b\}$ & $u$
& $u$ & $\mf b$ & $u$ & $u$ \\ \hline
\end{tabular}
%\begin{tabular}{|c|c|c|c|}
%\hline
%$\frac{\mf b \rightarrow}{v \downarrow}$  & $\asc{u}$ & $\des{u}$ & $V_u$  \\ \hline
%$\asc{u}$ & $u$ & $v$ & $u$ \\ \hline
%%$\des{u}$ & $v$ & & \\ \hline
%$\Desb{u}$  & $v$ & $v$ & $v$ \\ \hline
%$[u, \mf b]$ & $-$ & $u$ & $-$ \\ \hline
%\end{tabular}
\vskip.2cm
\caption{\label{Table1} Join operation $u \sqcup_{\mf b} v$ at the base point $\mf b$}
\end{table}

We conclude this section with a useful result describing the set of vertices,
which join to a given vertex (with respect to a base point) at another given vertex.

\index{$M^{(\mf b)}_u(w)$}

\begin{proposition} \label{lem3.2} Let $\mathscr T=(V, E)$ be a rooted directed tree with root $\rootb$ and let $\mathscr T_{\infty}=(\V, E_{\infty})$ be the extended directed tree associated with $\mathscr T$.
For $u \in V$ and $w \in \V,$ define \beq \label{M-u-b} M^{(\mf
b)}_u(w)\,:=\,\{v \in V : u \w v=w\}. \eeq Then the following statements
hold:
\begin{enumerate}
\item If $\mf b = \infty,$ then
\beqn
M^{(\mf b)}_u(w) \,=\, \begin{cases}
\des{\parentn{j}{u}} \setminus \des{\parentn{j-1}{u}}  & \mbox{if~} w =\parentn{j}{u}, ~j=1, \ldots, \dep_u,
\\
\mathsf{Des}(u)   & \mbox{if~}   w=u, \\
\emptyset & \mbox{otherwise}.
\end{cases}
\eeqn
\item If $\mf b \in V$ and $u \in \asc{\mf b}$, then
\beqn
M^{(\mf b)}_u(w) \,=\, \begin{cases}
\{w\} & \mbox{if}~w \in   \asc{u} \sqcup \desb{u}, \\
  [u, \mf b] & \mbox{if}~w =u,
  \\
 V_u & \mbox{if~}w = \infty, \\
\emptyset & \mbox{otherwise}.
\end{cases}
\eeqn
\item
If $\mf b \in V$ and $u \notin \asc{\mf b}$, then

\beqn
M^{(\mf b)}_u(w) \,=\, \begin{cases}
\{w\} & \mbox{if}~\mf b = u ~\mbox{or~} w \in \mathsf{Des}_u(u), \\
 \asc{u} \cup \{u, \mf b\} & \mbox{if}~u \neq \mf b~\mbox{and~}w =u, \\
 V_u \setminus \{\mf b\} & \mbox{if~}w = \infty, \\
\emptyset & \mbox{otherwise}.
\end{cases}
\eeqn
\item If $\mf b \in V \setminus \{u\},$ then $$\V \setminus M^{(\mf b)}_u(\infty) \,=\, \asc{u} \cup  \des{u} \cup \{\mf b\}.$$
\end{enumerate}
\end{proposition}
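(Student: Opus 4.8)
The plan is to derive statement (iv) directly from statements (ii) and (iii), reading off their values at $w=\infty$ and passing to complements in $\V$. Throughout I would use the canonical decomposition $\V = \des{u} \cupdot \asc{u} \cupdot V_u$ from \eqref{cg-dec}, together with the elementary observation that, since $\mf b \neq u$, one has $u \in \asc{\mf b}$ if and only if $\mf b \in \des{u}$. This equivalence is immediate from the definitions of $\asc{\cdot}$ and $\des{\cdot}$, because $u = \parentn{n}{\mf b}$ for some $n \geq 1$ holds exactly when $\mf b \in \childn{n}{u}$ for some $n \geq 1$, i.e.\ $\mf b \in \des{u} \setminus \{u\}$, which under $\mf b \neq u$ is the same as $\mf b \in \des{u}$.

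In the first case, $u \in \asc{\mf b}$, equivalently $\mf b \in \des{u}$. Statement (ii) gives $M^{(\mf b)}_u(\infty) = V_u$, whence the disjointness in \eqref{cg-dec} yields $\V \setminus M^{(\mf b)}_u(\infty) = \des{u} \cupdot \asc{u}$. Since $\mf b \in \des{u}$ in this case, adjoining $\{\mf b\}$ is harmless, and the complement equals $\asc{u} \cup \des{u} \cup \{\mf b\}$, as claimed.

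In the second case, $u \notin \asc{\mf b}$, equivalently $\mf b \notin \des{u}$. Statement (iii) gives $M^{(\mf b)}_u(\infty) = V_u \setminus \{\mf b\}$, and the general set identity $X \setminus (A \setminus \{b\}) = (X \setminus A) \cup \{b\}$, valid here since $\mf b \in \V$, then yields $\V \setminus M^{(\mf b)}_u(\infty) = (\V \setminus V_u) \cup \{\mf b\} = \des{u} \cupdot \asc{u} \cup \{\mf b\}$, again the asserted set. Since the two cases exhaust all $\mf b \in V \setminus \{u\}$, statement (iv) follows.

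I expect no genuine obstacle here: the argument is pure bookkeeping against the decomposition \eqref{cg-dec}, and the one subtlety — correctly locating $\mf b$ among $\des{u}$, $\asc{u}$, and $V_u$ — is dispatched cleanly by the equivalence $u \in \asc{\mf b} \Leftrightarrow \mf b \in \des{u}$. The only point worth stating explicitly is that the set identity used in the second case absorbs $\mf b$ into the complement regardless of whether $\mf b$ lies in $V_u$ or in $\asc{u}$, so no further case split is needed.
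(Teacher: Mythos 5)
Your derivation of part (iv) is correct, and it is in fact a cleaner route than the paper's: where the paper proves the reverse inclusion by taking an arbitrary $v \in \V \setminus M^{(\mf b)}_u(\infty)$ and chasing cases ($v \neq \infty$, $v \neq \mf b$, then $u, v \in \asc{\mf b}$ versus not), you simply pass to complements against the decomposition \eqref{cg-dec} and absorb $\mf b$ with a set identity. The equivalence $u \in \asc{\mf b} \Leftrightarrow \mf b \in \des{u}$ (for $\mf b \neq u$) that you use to split into the two cases is the same pivot the paper uses implicitly, and your observation that the identity $X \setminus (A \setminus \{b\}) = (X \setminus A) \cup \{b\}$ works regardless of whether $\mf b$ lies in $V_u$ or in $\asc{u}$ is exactly the point that lets you avoid a further case split.

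The genuine gap is that the statement to be proved is the whole proposition, and you prove only one of its four parts. Statements (i), (ii) and (iii) are not "given"; they are the substance of the result, and your argument for (iv) consumes (ii) and (iii) as inputs, so as written the proposal establishes nothing unconditionally. The paper's proof spends essentially all of its effort here: it unwinds the definition of $\sqcup_{\mf b}$ into a four-way case analysis according to whether $u \in \asc{\mf b}$, $w = u$, $\mf b = u$, or none of these, and then reads off (i)--(iii) from the facts $\asc{\infty} = V$, $u \sqcap v \in \pars{u, v}$, and $u \sqcup v \in \des{u} \cap \des{v}$ (equivalently, from Table \ref{Table1}). In particular, part (i) requires the decomposition of $\V$ into the sets $\des{\parentn{j}{u}} \setminus \des{\parentn{j-1}{u}}$, which is a separate argument (compare \eqref{contend} and Lemma \ref{lem-join}) and is nowhere touched by your proposal. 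To complete the proof you would need to supply this case analysis; once (i)--(iii) are in hand, your complement bookkeeping disposes of (iv) correctly.
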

\begin{proof}
By the definition of join operation $\sqcup_{\mf b}$,  $M^{(\mf b)}_u(w)$ equals
\beqn
\begin{cases}
\{v \in \asc{\mf b}  : u \sqcap v=w\} \cup \{v \in V \setminus \asc{\mf b}  : u \sqcup v=w\}  & \mbox{if~} u \in \asc{\mf b}, w \neq u, \\
\{v \in \asc{\mf b}  :  v \in \mathsf{Des}(w)\} \cup (\{\mf b\} \cap V)  & \mbox{if~} u \in \asc{\mf b}, w = u,  \\
 \{w\} & \mbox{if~} \mf b = u, \\
 \{v \in V : u \sqcup v=w\} & \mbox{if~}u \notin \asc{\mf b}, ~u \neq \mf b.
\end{cases}
\eeqn
The desired conclusions in (i)-(iii) can be easily deduced from the facts that
\beqn \asc{\infty}=V, ~u \sqcap v \in \parset{u}{v}, ~ u \sqcup v \in \des{u} \cap \des{v},\quad u, v \in V. \eeqn
The parts (i)-(iii) may also be deduced from Table \ref{Table1}.
To see (iv), let  $\mf b \in V \setminus \{u\}$.
As seen above,
\beqn
M^{(\mf b)}_u(\infty) \,=\, \begin{cases} \{v \in \asc{\mf b} : u \sqcap v=\infty\} \cup V_u & \mbox{if~} u\in \asc{\mf b}, \\
\{v \in V \setminus \{\mf b\} : u \sqcup v =\infty\} & \mbox{otherwise},
\end{cases}
\eeqn
where $V_u$ is as given in \eqref{cg-dec0}.
Thus, in any case, $M^{(\mf b)}_u(\infty)=V_u \setminus \{\mf b\},$ and hence $$\asc{u} \cup  \des{u} \cup \{\mf b\} ~\subseteq ~\V \setminus M^{(\mf b)}_u(\infty).$$ To see the reverse inclusion,
let $v \in \V \setminus M^{(\mf b)}_u(\infty).$
Since $\infty \in \des{u},$ we may assume that $v \neq \infty.$
Then $u \sqcup_{\mf b} v \in V,$ and hence we may further assume that $v \neq \mf b.$
If $u, v \in \asc{\mf b}$, then $v \in  \asc{u} \cup  \des{u}.$
Otherwise, $u \sqcup v = u \sqcup_{\mf b} v \in V,$ and hence
$u \in \des{v}$ or $v \in \des{u}$. In this case also, $v \in  \asc{u} \cup  \des{u}.$ This yields the desired equality in (iv).
\end{proof}

The last result turns out to be crucial in decomposing the so-called weighted join operator as a direct sum of a diagonal operator and a finite rank operator.

\chapter{Weighted join operators on rooted directed trees}

%and  $V^{\circ}:=V \setminus \{\rootb\}.$
Let $\mathscr T=(V, E)$ be a rooted directed tree with root $\rootb$ and let $\mathscr T_{\infty}=(\V, E_{\infty})$ be the extended directed tree associated with $\mathscr T$.
In what follows, $\ell^2(V)$ stands for the Hilbert
space of square summable complex functions on $V$
equipped with the standard inner product. Note that
the set $\{e_u\}_{u\in V}$ is an
orthonormal basis of $\ell^2(V)$, where $e_u : V \rar \mathbb C$
is the indicator function of $\{u\}$. The convention $e_{\infty}=0$ will be used throughout this text.
Note that $\ell^2(V)$ is a reproducing kernel Hilbert space. Indeed,  for every $v \in V,$
the evaluation map $f \mapsto f(v)$ is a bounded linear functional from $\ell^2(V)$ into $\mathbb C$.
For a nonempty subset $W$ of $V$, $\ell^2(W)$ may be considered as a subspace of $\ell^2(V)$. Indeed, if one extends $f : W \rar \mathbb C$ by setting $F:=f$ on $W$ and $0$ on $V \setminus W$, then the mapping $U : \ell^2(W) \rar \ell^2(V)$ given by $Uf = F$ is an isometric homomorphism.
Sometimes, the orthogonal projection $P_{\ell^2(W)}$ of $\ell^2(V)$ onto $\ell^2(W)$ will be denoted  by $P_{_W}$. We say that a closed subspace $\mathcal M$ of $\ell^2(V)$ is {\it supported on a subset $W$ of $V$} if $\mathcal M= \bigvee\{ e_v : v \in W\}$. In this case, we refer to $W$ as the {\it support of $\mathcal M$} and
%we understand the subset $W$ of $V.$
we write $\supp\,\mathcal M:=W.$
%The space of bounded functions $f : V \rar \mathbb C$ is denoted by $\ell^{\infty}(V)$.

\index{$P_{\ell^2(W)}=P_W$}
\index{$\supp\, \mathcal M$}
\index{$\supp\, \ell^2(W)=W$}
\index{$\D{u}$}
\index{$\W{b}{u}$}
\index{$\varLambda^{(\mf b)}_{u}$}
\index{$\mathscr D_V$}
\index{$\lambdab_u =\{\lambda_{uv}\}_{v\in \V}$}
\begin{definition}
Let $\mathscr T=(V, E)$ be a rooted directed tree with root $\rootb$
and let $\mathscr T_{\infty}=(\V, E_{\infty})$ be the extended
directed tree associated with $\mathscr T$. For $u \in V$ and  $\mf
b \in \V$, by the
the {\it weight system} $\lambdab_u =
\{\lambda_{uv}\}_{v\in \V},$ we understand the subset 
$\{\lambda_{uv}\}_{v\in \V}$ of complex numbers such that
$\lambda_{u\infty}=0.$
\begin{enumerate}
\item The {\it diagonal operator
$\D{u}$ on $\mathscr T$} is given by
\beqn
    \label{diagonal-u}
\mathscr D(\D{u}) &:=&  \Big\{f \in \ell^2(V) : \sum_{v \in V} |f(v)|^2 |\lambda_{uv}|^2 < \infty\Big\} \\
\D{u}f &:= &\sum_{v \in V} f(v) \lambda_{uv}\, e_{v}, \quad f \in \mathscr D(\D{u}).
\eeqn
\item The {\em weighted join
operator $\W{b}{u}$ on ${\mathscr T}$} is given by
\beqn
{\mathscr D}(\W{b}{u}) & := & \big\{f \in \ell^2(V) \colon
\varLambda^{(\mf b)}_{u} f \in \ell^2(V)\big\},
   \\
\W{b}{u} f & := & \varLambda^{(\mf b)}_{u} f, \quad f \in {\mathscr
D}(\W{b}{u}),
\eeqn
where $\varLambda^{(\mf b)}_{u}$ is the mapping defined on
complex functions $f$ on $V$ by
   \beq \label{Lambda-u-b}
(\varLambda^{(\mf b)}_{u} f) (w)~ := \displaystyle \sum_{v \in
M^{(\mf b)}_u(w)} \!\!\!\!\!\!\Lmd{u}{v}\,  f(v), \quad w \in V
\eeq
with the set $M^{(\mf b)}_u(w)$ given by \eqref{M-u-b}.
%We refer to
%$\W{b}{u}$ simply as the weighted join operator on $\mathscr T$.
The operator $\W{\mf \infty}{u}$ is referred to as the {\it weighted
meet operator}.
%\item \item The {\em weighted join
%operator $\W{b}{u}$ on ${\mathscr T}$} by
%\beqn
%{\mathscr D}(\W{b}{u}) & := \{f \in \ell^2(V) \colon
%\varLambda^{(\mf b)}_{u} f \in \ell^2(V)\},
%   \\
%\W{b}{u} f & := \displaystyle \sum_{v \in V} \lambda_{uv}\,
%f(v)\,e_{u \w v}, \quad f \in {\mathscr
%D}(\W{b}{u}),
%\eeqn
\end{enumerate}
\end{definition}
\begin{remark} \label{rmk-action}
Several remarks are in order.
\begin{enumerate}
\item It is well-known that $\D{u}$
is a densely defined closed linear operator in $\ell^2(V)$. Its adjoint $D^*_{\lambdab_u}$ is the diagonal operator with diagonal entries $\{\overline{\lambda}_{uv}\}_{v\in V}$. Furthermore, $\D{u}$ is normal and $\mathscr D_V$ is a core for $\D{u},$ where
\beq
\label{d-supp-hbu}
\mathscr D_{U}\,=\,\mbox{span}\,\{e_v : v \in U\}, \quad U \subseteq V,
\eeq
(see \cite[Lemma 2.2.1]{Jablonski}).
\item Note that ${\mathscr D}(\W{b}{u})$ forms a subspace of $\ell^2(V)$. Indeed, if $f, g \in {\mathscr D}(\W{b}{u})$, then for every $w \in V$, the series $(\varLambda^{(\mf b)}_{u} f) (w)$ and $(\varLambda^{(\mf b)}_{u} g) (w)$ converge, and hence so does $(\varLambda^{(\mf b)}_{u} (f+g)) (w).$ In particular, by Proposition \ref{lem3.2}, these series are finite sums in case $\mf b \neq \infty.$ Also, $\varLambda^{(\mf b)}_{u} (f+g) \in \ell^2(V)$ if $\varLambda^{(\mf b)}_{u} (f) \in \ell^2(V)$ and $\varLambda^{(\mf b)}_{u} (g) \in \ell^2(V)$. Indeed, $$\|\varLambda^{(\mf b)}_{u} (f+g)\|^2 \,\leqslant\, 2\big(\|\varLambda^{(\mf b)}_{u} (f)\|^2 + \|\varLambda^{(\mf b)}_{u} (g)\|^2\big).$$
\item For every $v \in V,$ $e_v \in {\mathscr D}(\W{b}{u})$ and \beq \label{W-action}
(\W{b}{u} e_v)(w) ~= \displaystyle \sum_{\eta \in M^{(\mf
b)}_u(w)}\!\!\!\!\!\!\lambda_{u\eta}\, e_v(\eta) = \lambda_{uv}\,
e_{u \w v}(w), \quad w \in V. \eeq In particular, \beq
\label{inv-dom} \mathscr D_{V}:=\mbox{span}\,\{e_v : v \in V\}
\subseteq {\mathscr D}(\W{b}{u}), \quad \W{b}{u} \mathscr D_{V}
\subseteq \mathscr D_{V}. \eeq Thus all positive integral powers of
$\W{b}{u}$ are densely defined and the Hilbert space adjoint
${\W{b}{u}}^*$ of $\W{b}{u}$ is well-defined. 
To
see the action of ${\W{b}{u}}^*,$ let $f \in \ell^2(V)$ and $w \in V$. Note that
\beqn \inp{\W{b}{u}f}{e_w} =  \sum_{v \in V} f(v) \lambda_{uv}\,
e_{u \w v}(w)  =\sum_{v \in M^{(\mf b)}_u(w)}\!\!\!\!\!\!f(v)
\lambda_{uv}=\inp{f}{g_w}, \eeqn where $M^{(\mf b)}_u(w)$ is as
given in \eqref{M-u-b} and $g_w =  \sum_{{v \in M^{(\mf b)}_u(w)}}
\bar{\lambda}_{uv}e_v$. Thus we obtain \beqn \label{adjoint}
\Big(\W{b}{u}\Big)^*g \,=\, \sum_{w \in V}  \sum_{v \in M^{(\mf
b)}_u(w)}\!\!\!\!\!\!g(w) \bar{\lambda}_{uv}e_v, \quad g \in
\ell^2(V). \eeqn
\item Finally, note that  if $\mf b =u$, then $\W{b}{u}$ is the diagonal operator $\D{u}.$
\end{enumerate}
\end{remark}

%{\red
%If we define an operator  $J^{(b)}_u e_v = e_ {u \w v}$   and
%extend linearly.  $J^{(b)}_u$  extends to an isometry in $\ell^2(V)$,  and one can probably write  $\W{b}{u} = \Db{b}{u} J^(b)_u$, with this  $J^{(b)}_u$ giving an isometric
%      representation of the  semigroup $\w$, $J^{(b)}_b = id$. And possibly , there is  some kind of imprimitivity  that this J carries vis-a-vis $\Db{b}{u}$ with respect this semigroup,
%just as the case with Mackey's imprimitivity  for the  translation group or for the SNAG theorem.   This is just a passing thought, it is NOT in the centre of  the Operator theoretic
% investigations , but more in line with the Representation theory of groups.
%    }

\index{$\mathscr T_1$}

Let us see two simple yet instructive examples of rooted directed trees in which the associated weighted join operators take a concrete form. Both these examples of rooted directed trees have been discussed in \cite[Eqn (6.2.10)]{Jablonski} in the context of weighted shifts on directed trees.
\begin{example}[With no branching vertex] 
%\label{diagonal}
Consider the directed tree $\mathscr T_1$ with the set of vertices
$V=\mathbb{N}$ and $\mathsf{root}=0$. We further require that
$\mathsf{Chi}(n)=\{n+1\}$ for all $n \in \mathbb N$. Let $m \in V$
and $n, \mf b \in \V$. By \eqref{W-action}, the weighted join
operator $\W{b}{m}$ on ${\mathscr T}$ is given by \beqn
\W{b}{m} e_n \,=\,   \begin{cases}  \lambda_{mn}\, e_{\min\{m, n\}} & \mbox{if~} m < \mf b~\mbox{and~}n < \mf b, \\
  \lambda_{mn}\, e_m & \mbox{if~} n = \mf b, \\
  \lambda_{mn}\, e_n & \mbox{if~} m = \mf b~\mbox{and~}n \in \mathbb N, \\
  \lambda_{mn}\, e_{\max\{m, n\}} & \mbox{otherwise},
\end{cases}
\eeqn
where we used the assumption that $\lambda_{m \infty}=0$ and the convention that $\max\{m, \infty\}=\infty.$
In particular,
\beqn
\W{b}{m} e_n \,=\,   \begin{cases}
  \lambda_{mn}\, e_{\max\{m, n\}} & \mbox{if~} \mf b= 0, \\
  \lambda_{mn}\, e_{\min\{m, n\}} & \mbox{if~} \mf b = \infty.
\end{cases}
\eeqn
The matrix representations of $\W{\mf 0}{m}$ and $\W{\mf
\infty}{m}$ with respect to the ordered orthonormal basis
$\{e_n\}_{n \in \mathbb N}$ are given by
\beqn \W{\mf 0}{m} &=& \begin{psmallmatrix}
0 & \cdots &  &   &   &   &  \\
\vdots &  &  &   &   &   &  \\
0 &  \cdots &  &   &   &   &  \\
\lambda_{m0} & \cdots  & \lambda_{mm} & 0 & \cdots & &\\
0 & \cdots & 0 & \lambda_{mm+1} &   0 & \cdots   &\\
 \vdots & & \vdots &  0 & \lambda_{mm+2}  &   & \\
 & & &  \vdots & 0  &  \lambda_{mm+3} & \\
 & & &  &   \vdots &   & \ddots
 \end{psmallmatrix} \\ \\ \\
 \W{\mf \infty}{m} &=& \begin{psmallmatrix}
\lambda_{m0} & 0  & \cdots &  &  & &\\
0 & \lambda_{m1} & 0 & \cdots &    &    &\\
 \vdots & 0 & \ddots &  0 & \cdots  &   & \\
 & \vdots & & \lambda_{mm-1}  & 0  &  \cdots & \\
 & & &  0 &   \lambda_{mm} & \lambda_{mm+1}  & \cdots \\
 & & &  \vdots &   0 & 0  & \cdots \\
 & & &   &   \vdots & \vdots  &
\end{psmallmatrix}.\eeqn
%\beqn
% & \W{\mf 0}{m} = \left[
%\begin{array}{ccccccc}
%0 & \cdots &  &   &   &   &  \\
%\vdots &  &  &   &   &   &  \\
%0 &  \cdots &  &   &   &   &  \\
%\lambda_{m0} & \cdots  & \lambda_{mm} & 0 & \cdots & &\\
%0 & \cdots & 0 & \lambda_{mm+1} &   0 & \cdots   &\\
% \vdots & & \vdots &  0 & \lambda_{mm+2}  &   & \\
% & & &  \vdots & 0  &  \lambda_{mm+3} & \\
% & & &  &   \vdots &   & \ddots \\
%\end{array}
%\right],\\
% & \W{\mf \infty}{m} = \left[
%\begin{array}{ccccccc}
%\lambda_{m0} & 0  & \cdots &  &  & &\\
%0 & \lambda_{m1} & 0 & \cdots &    &    &\\
% \vdots & 0 & \ddots &  0 & \cdots  &   & \\
% & \vdots & & \lambda_{mm-1}  & 0  &  \cdots & \\
% & & &  0 &   \lambda_{mm} & \lambda_{mm+1}  & \cdots \\
% & & &  \vdots &   0 & 0  & \cdots \\
% & & &   &   \vdots & \vdots  &  \\
%\end{array}
%\right].
%\eeqn
Thus $\W{\mf 0}{m}$ is a at most rank one perturbation of a diagonal operator, while $\W{\mf \infty}{m}$ is a finite rank operator with range contained in the linear span of $\{e_k : k=0, \ldots, m\}.$
\eop
\end{example}

\index{$\mathscr T_2$}

\begin{example}[With one branching vertex] \label{ex-T2}
Consider the directed tree $\mathscr T_2$ with set of vertices
$$V\,=\,\{0\}\cup\{2j-1, 2j : j \Ge 1\}$$ and
$\mathsf{root}=0$. We further require that
$\mathsf{Chi}(0)=\{1,2\},$ $\mathsf{Chi}(2j-1)=\{2j+1\}$ and
$\mathsf{Chi}(2j)=\{2j+2\},$ $j \Ge 1.$ Let $m \in V$ and $n \in
\V$. By \eqref{W-action}, the weighted join operator
$\W{b}{m}$ on ${\mathscr T}$ is given by \beqn
\W{b}{m} e_n \,=\, \begin{cases} \lambda_{mn} e_{m \sqcap n} & \mbox{if~} m, n \in \asc{\mf b}, \\
 \lambda_{mn} e_m & \mbox{if~} n = \mf b, \\
\lambda_{mn}  e_n & \mbox{if~} m = \mf b~\mbox{and~}n \in \mathbb N, \\
 \lambda_{mn} e_{m \sqcup n} & \mbox{otherwise}.
\end{cases}
\eeqn
In particular, if $m$ and $n$ are positive integers, then
\beqn
\W{\mf 0}{m} e_n \,=\,   \begin{cases}
  \lambda_{mn}\, e_{\max\{m, n\}} & \mbox{if~} m, n~\mbox{are odd or} ~m, n~\mbox{are even},\\
0 & \mbox{otherwise},
\end{cases}
\eeqn
\beqn
\W{\mf \infty}{m} e_n \,=\,   \begin{cases}
  \lambda_{mn}\, e_{\min\{m, n\}} & \mbox{if~} m, n~\mbox{are odd or} ~m, n~\mbox{are even},\\
\lambda_{mn}\,e_0 & \mbox{otherwise}.
\end{cases}
\eeqn The matrix representations of $\W{\mf 0}{m}$ and $\W{\mf
\infty}{m}$ with respect to the ordered orthonormal basis
$\{e_{2n}\}_{n \in \mathbb N} \cup \{e_{2n+1}\}_{n \in \mathbb N}$
are given by \beqn
   \W{\mf 0}{m} &=& \begin{psmallmatrix}
0 & \cdots &  &   &   &   &  &\\
\vdots &  &  &   &   &   &  &\\
0 &  \cdots &  &   &   &   & & \\
\lambda_{m0} &  \lambda_{m2} & \cdots & \lambda_{mm} & 0 & \cdots & &\\
0& 0 & \cdots & 0 & \lambda_{mm+2} &   0 & \cdots   & \\
 \vdots &\vdots  & &  \vdots & 0 &  \lambda_{mm+4}   & 0 &\\
 & & & &  \vdots & 0  & \lambda_{mm+6}  & \\
 & & & &  &   \vdots &   & \ddots \end{psmallmatrix}
 \oplus {\bf 0},
\\ \\ \\
  \W{\mf \infty}{m} &=& \begin{psmallmatrix}
\lambda_{m0} & 0  & \cdots &  &  & & \lambda_{m1} & \lambda_{m3} & \cdots \\
0 & \lambda_{m2} & 0 & \cdots &    &    &\\
 \vdots & 0 & \ddots &  0 & \cdots  &   & \\
 & \vdots & & \lambda_{mm-2}  & 0  &  \cdots & \\
 & & &  0 &   \lambda_{mm} & \lambda_{mm+2}  & \cdots \\
 & & &  \vdots &   0 & 0  & \cdots \\
 & & &   &   \vdots & \vdots  &  \end{psmallmatrix} \eeqn
 (see Figure \ref{fig3}). It turns out that $\W{\mf
0}{m}$ is a at most a rank one perturbation of a diagonal operator,
while $\W{\mf \infty}{m}$ is a finite rank operator with range
contained in the linear span of $\{e_m, e_{m-2}, \ldots, e_0\}$.
\eop
\end{example}

\begin{figure}
\begin{tikzpicture}[scale=.8, transform shape]
\tikzset{vertex/.style = {shape=circle,draw, minimum size=1em}}
\tikzset{edge/.style = {->,> = latex'}}
% vertices
\node[vertex] (a) at  (0.5,0) {$0$};
\node[vertex] (b) at  (2,-1) {$1$};
\node[vertex] (c) at  (2,1) {$2$};
\node[vertex] (d) at  (4, -1) {$3$};
\node[vertex] (e) at  (4, 1) {$4$};
\node[vertex] (f) at  (6, -1) {$\ldots$};
\node[vertex] (g) at  (6, 1) {$\ldots$};
\node[vertex] (h) at  (8, -1) {$\ldots$};
\node[vertex] (j) at  (10, -1) {$\ldots$};
\node[vertex] (k) at  (10, 1) {$\ldots$};

\tikzset{vertex/.style = {shape=circle,draw, blue, minimum size=1em}}
\node[vertex] (i) at  (8, 1) {$m$};

%edges
\draw[edge] (a) to (b);
\draw[edge] (a) to (c);
\draw[edge] (b) to (d);
\draw[edge] (c) to (e);
\draw[edge] (d) to (f);
\draw[edge] (e) to (g);
\draw[edge] (f) to (h);
\draw[edge] (g) to (i);
\draw[edge] (h) to (j);
\draw[edge] (i) to (k);
%\draw[orange,rotate=45,shift={(3 cm,5 cm)}] \draw[edge] (e) to (g);
\end{tikzpicture}
\caption{A pictorial representation of $\mathscr T_2$ with prescribed vertex $m$} \label{fig3}
\end{figure}

The fact, as illustrated in the preceding examples, that weighted join operator is either diagonal,  a rank one perturbation of a diagonal operator or a finite rank operator holds in general (see Theorem \ref{dichotomy}).

\section{Closedness and boundedness}

In this section, we discuss closedness and boundedness  of weighted join operators on rooted directed trees.
Unless stated otherwise, $\mf b \in \V$ denotes the base point of the weighted join operator $\W{b}{u}$.
\begin{proposition} \label{closed}
Let $\mathscr T=(V, E)$ be a rooted directed tree with root $\rootb$ and let $\mathscr T_{\infty}=(\V, E_{\infty})$ be the extended directed tree associated with $\mathscr T$.
Let $\mf b, u \in V$ and let
$\lambdab_u = \{\lambda_{uv}\}_{v\in \V}$ be a weight system of complex numbers.
Then the weighted join operator $\W{b}{u}$ on ${\mathscr T}$
defines a densely defined closed linear operator. Moreover, $\mathscr D_V:= \mbox{span}\{e_v : v \in V\}$ forms a core for $\W{b}{u}.$
\end{proposition}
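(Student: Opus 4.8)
The plan is to reduce everything to the local finiteness of the sets $M^{(\mf b)}_u(w)$, which holds precisely because $\mf b \neq \infty$. Dense definedness is already recorded in Remark \ref{rmk-action}, since $\mathscr D_V \subseteq \mathscr D(\W{b}{u})$. The structural input I rely on is Proposition \ref{lem3.2}(ii)--(iii): for $\mf b \in V$ and every $w \in V$, the set $M^{(\mf b)}_u(w)$ is finite. Hence, for arbitrary $f \in \ell^2(V)$, each coordinate $(\varLambda^{(\mf b)}_u f)(w) = \sum_{v \in M^{(\mf b)}_u(w)} \lambda_{uv} f(v)$ is a genuine finite sum and thus depends continuously on the values $\{f(v)\}_{v \in V}$. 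If $\mf b = u$, then $\W{b}{u} = \D{u}$ by Remark \ref{rmk-action}(4), and both assertions are the classical facts recorded in Remark \ref{rmk-action}(1); so I may assume $\mf b \neq u$.

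For closedness I would argue directly. Suppose $f_n \in \mathscr D(\W{b}{u})$ with $f_n \to f$ and $\W{b}{u} f_n \to g$ in $\ell^2(V)$. Since $\ell^2(V)$ is a reproducing kernel Hilbert space, norm convergence forces pointwise convergence, so $f_n(v) \to f(v)$ and $(\W{b}{u} f_n)(w) \to g(w)$ for every $v, w \in V$. Fixing $w$ and using the finiteness of $M^{(\mf b)}_u(w)$,
\[
(\W{b}{u} f_n)(w) = \sum_{v \in M^{(\mf b)}_u(w)} \lambda_{uv} f_n(v) \longrightarrow \sum_{v \in M^{(\mf b)}_u(w)} \lambda_{uv} f(v) = (\varLambda^{(\mf b)}_u f)(w).
\]
Comparing the two limits gives $(\varLambda^{(\mf b)}_u f)(w) = g(w)$ for all $w \in V$; as $g \in \ell^2(V)$, this means $\varLambda^{(\mf b)}_u f = g \in \ell^2(V)$, whence $f \in \mathscr D(\W{b}{u})$ and $\W{b}{u} f = g$. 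Thus $\W{b}{u}$ is closed.

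For the core assertion, since $\W{b}{u}$ is now known to be closed, it suffices to approximate each $f \in \mathscr D(\W{b}{u})$ in the graph norm by elements of $\mathscr D_V$. I would use finite truncations $f_n := \sum_{v \in F_n} f(v)\, e_v$ along an increasing sequence of finite sets $F_n \uparrow V$, so that $f_n \in \mathscr D_V$ and $f_n \to f$. To handle $\W{b}{u} f_n$, I read off from Proposition \ref{lem3.2}(ii)--(iii) that, with $\mf b \neq u$, there is a subset $U \subseteq V$ with $u \notin U$ on which $M^{(\mf b)}_u(\cdot)$ is a singleton, so that $\W{b}{u}$ acts there diagonally by $(\W{b}{u} f)(w) = \lambda_{uw} f(w)$, while the only remaining nonzero coordinate is $w = u$, where $(\W{b}{u} f)(u) = \sum_{v \in A_u} \lambda_{uv} f(v)$ for the finite set $A_u$ equal to $[u, \mf b]$ or $\asc{u} \cup \{u, \mf b\}$. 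Because these coordinates are disjoint,
\[
\|\W{b}{u} f\|^2 = \sum_{w \in U} |\lambda_{uw} f(w)|^2 + |(\W{b}{u} f)(u)|^2,
\]
so in particular the diagonal sum $\sum_{w \in U} |\lambda_{uw} f(w)|^2$ is finite. Consequently the diagonal contribution to $\|\W{b}{u}(f - f_n)\|^2$ is the tail $\sum_{w \in U \setminus F_n} |\lambda_{uw} f(w)|^2$, which tends to $0$, whereas the coordinate at $u$ vanishes once $A_u \subseteq F_n$. Hence $\W{b}{u} f_n \to \W{b}{u} f$, and $\mathscr D_V$ is a core.

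I expect the core statement, rather than closedness, to be the main point: it hinges on recognizing via Proposition \ref{lem3.2} that, for a base point in $V$, the weighted join operator is a diagonal operator supported off $u$ together with a single finite-rank coordinate at $u$. Granting this decomposition, summability of the diagonal part is automatic from $f \in \mathscr D(\W{b}{u})$ and the exceptional coordinate is harmless. Closedness, by contrast, is essentially forced by the finiteness of $M^{(\mf b)}_u(w)$ together with continuity of the evaluation functionals.
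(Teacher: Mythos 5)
Your proposal is correct and takes essentially the same approach as the paper: the closedness argument (pointwise convergence in the reproducing kernel Hilbert space $\ell^2(V)$ combined with the finiteness of $M^{(\mf b)}_u(w)$ from Proposition \ref{lem3.2}) is the paper's argument verbatim, and the core property is likewise obtained by graph-norm approximation of $f \in \mathscr D(\W{b}{u})$ by truncations lying in $\mathscr D_V$. The only difference is in execution: you truncate along an arbitrary finite exhaustion $F_n \uparrow V$ and control the graph norm through the explicit diagonal-plus-$u$-coordinate structure of $\W{b}{u}$, whereas the paper truncates by depth ($\dep_v \leqslant n$); your variant is in fact slightly more careful, since the depth-level sets need not be finite when $\mathscr T$ is not locally finite, so the paper's truncations need not a priori lie in the finite span $\mathscr D_V$.
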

\begin{proof}
We have already noted that $\W{b}{u}$ is densely defined (see Remark
\ref{rmk-action}). Let $\{f_n\}_{n \in \mathbb N}$ be a sequence
converging to $f$ in $\ell^2(V)$. Suppose that $\{\W{b}{u}f_n\}_{n
\in \mathbb N}$ converges to some $g \in \ell^2(V).$ Since
$\ell^2(V)$ is a reproducing kernel Hilbert space, for every $w \in V,$
\beqn \lim_{n
\rar \infty} f_n(w) = f(w), \quad \lim_{n \rar \infty} \!\!\!\!\sum_{v \in
M^{(\mf b)}_u(w)}\!\!\!\!\!\!\Lmd{u}{v}\,  f_n(v)  = g(w), \eeqn where $M^{(\mf b)}_u(w)$ is given by \eqref{M-u-b}.
However, since $\mf b \neq \infty,$ $\mbox{card}(M^{(\mf b)}_u(w)) <
\infty$ for each $w \in V$ (see (ii) and (iii) of Proposition \ref{lem3.2}). It
follows that $f \in \mathscr D(\W{b}{u})$ and $\W{b}{u}f=g.$ Thus
$\W{b}{u}$ is a closed linear operator.

To see that $\mathscr D_V$ is a core for $\W{b}{u}$, note that by the preceding discussion,
${\W{b}{u}}|_{\mathscr D_V}$ is a closable operator such that
$\overline{{\W{b}{u}}|_{\mathscr D_V}} \subseteq \W{b}{u}$. To see
the reverse inclusion, let $f =\sum_{v \in V} f(v) e_v \in \mathscr
D(\W{b}{u})$ and let \beqn f_n\,:=\,\sum_{\substack{v \in V \\ \dep_v
\Le n}} f(v) e_v, \quad n \in \mathbb N. \eeqn Then $\{f_n\}_{n \in
\mathbb N} \subseteq \mathscr D_V$, $\|f_n-f\|_{\ell^2(V) } \rar 0$
as $n \rar \infty$ and \beqn \|\W{b}{u}f_n-\W{b}{u}f\|^2_{\ell^2(V)}
&=& \Big\|\sum_{\substack{v \in V \\ \dep_v > n}} f(v)
\lambda_{uv}\, e_{u \sqcup_{\mf b} v}\Big\|^2_{\ell^2(V)} \\ &=&
\sum_{\substack{w \in V \\ \dep_w > n}} \Big|\sum_{\substack{v \in V
\\ u \sqcup_{\mf b} v=w}}\!\!\!\!f(v) \lambda_{uv} \Big|^2,
\eeqn which converges to $0$ as $n \rar \infty,$ since $\W{b}{u}f
\in \ell^2(V).$ It follows that $f \in \mathscr D(
\overline{{\W{b}{u}}|_{\mathscr D_V}})$ and
$\W{b}{u}f=\overline{{\W{b}{u}}|_{\mathscr D_V}}f.$
This yields
$\overline{{\W{b}{u}}|_{\mathscr D_V}} = \W{b}{u}$.
\end{proof}
\begin{remark}
This result no more holds true for the weighted meet operator $\W{\infty}{u}$.
Indeed, it may be concluded from Theorem \ref{bdd} below and Lemma \ref{lem-inj-ten-unb} that $\W{\infty}{u}$ may not be even closable.
\end{remark}

 We discuss next the boundedness of weighted join operators $\W{b}{u}$.
\begin{theorem}
\label{bdd}
Let $\mathscr T=(V, E)$ be a rooted directed tree with root $\rootb$ and let $\mathscr T_{\infty}=(\V, E_{\infty})$ be the extended directed tree associated with $\mathscr T$.
Let $u \in V,$ $\mf b \in \V$ and let $\lambdab_u = \{\lambda_{uv}\}_{v\in \V}$ be a weight system
 of complex numbers.
Then the weighted join operator $\W{b}{u}$ on ${\mathscr T}$
is bounded if and only if
\beq \label{star}
 \lambdab_u   ~\mbox{belongs to~}  \begin{cases}  \ell^2(V)  & \mbox{if~}  \mf b = \infty, \\
 \ell^{\infty}(V) & \mbox{if} ~\mf b = u, \\
  \ell^{\infty}(\mathsf{Des}(u)) & \mbox{otherwise}.
\end{cases}
\eeq
\end{theorem}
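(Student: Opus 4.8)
The plan is to study $\W{b}{u}$ through its column decomposition with respect to the target vertex. By \eqref{W-action} we have $\W{b}{u} e_v = \lambda_{uv}\, e_{u \w v}$, and by Remark \ref{rmk-action} the coefficient of $e_w$ is recovered from $\inp{\W{b}{u} f}{e_w} = \inp{f}{g_w}$, where $g_w := \sum_{v \in M^{(\mf b)}_u(w)} \bar\lambda_{uv}\, e_v$. Since $u \w v$ is single-valued, the fibres $\{M^{(\mf b)}_u(w)\}_{w \in \V}$ partition $V$, so the $g_w$ have pairwise disjoint supports and, discarding the term $w = \infty$ (as $e_\infty = 0$),
\[
\|\W{b}{u} f\|^2 \,=\, \sum_{w \in V} |\inp{f}{g_w}|^2, \quad f \in \mathscr D(\W{b}{u}).
\]
The guiding principle is that, thanks to the disjoint supports, $\W{b}{u}$ is bounded if and only if $\sup_{w \in V} \|g_w\|_2 < \infty$, where $\|g_w\|_2^2 = \sum_{v \in M^{(\mf b)}_u(w)} |\lambda_{uv}|^2$; feeding in the explicit fibres from Proposition \ref{lem3.2} then produces the three cases. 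For sufficiency I would bound $\W{b}{u}$ on the core $\mathscr D_V$ and extend by closedness (Proposition \ref{closed}); for necessity I would compress $\W{b}{u}$ onto the pieces that must themselves remain bounded.

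The case $\mf b = u$ is immediate: $\W{u}{u} = \D{u}$ is diagonal with entries $\{\lambda_{uv}\}_{v \in V}$ (Remark \ref{rmk-action}), hence bounded iff these entries are bounded, i.e. $\lambdab_u \in \ell^\infty(V)$. For the meet operator $\mf b = \infty$, Proposition \ref{lem3.2}(i) shows the nonempty fibres sit over the finite set $\{u, \parent{u}, \dots, \parentn{\dep_u}{u} = \rootb\}$, so $\W{\infty}{u} = \sum_{j=0}^{\dep_u} e_{\parentn{j}{u}} \obslash g_{\parentn{j}{u}}$ is a finite sum of operators with mutually orthogonal ranges. Since the fibres, and hence the $g_w$, may now be infinite, I cannot test against coordinate vectors; instead I invoke Lemma \ref{lem-inj-ten-unb}, by which each summand $e_{\parentn{j}{u}} \obslash g_{\parentn{j}{u}}$ is bounded iff $g_{\parentn{j}{u}} \in \ell^2$. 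As the ranges are orthogonal and the sum is finite, $\W{\infty}{u}$ is bounded iff every $g_{\parentn{j}{u}} \in \ell^2$; the fibres partitioning $V$, this reads $\sum_{v \in V} |\lambda_{uv}|^2 < \infty$, i.e. $\lambdab_u \in \ell^2(V)$.

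In the remaining regime $\mf b \in \V \setminus \{u, \infty\}$, Proposition \ref{lem3.2}(ii)--(iii) shows that every fibre is a singleton $\{v\}$ (contributing the diagonal entry $\lambda_{uv}$) except $M^{(\mf b)}_u(u)$, which equals $[u, \mf b]$ or $\asc{u} \cup \{u, \mf b\}$ — finite in both cases, because $\mathscr T$ is rooted and so $\asc{u}$ is finite — and the fibre over $\infty$, which absorbs $V_u$ and is annihilated by $e_\infty = 0$. Writing $S$ for the set of singleton targets (so $u \notin S$), this presents $\W{b}{u}$ as the orthogonal direct sum of the diagonal operator on $\ell^2(S)$ with entries $\{\lambda_{uv}\}_{v \in S}$, the bounded finite-rank operator $e_u \otimes g_u$, and the zero operator on $\ell^2(V_u)$; the decomposition is genuinely orthogonal since the fibres partition $V$ and $u \notin S$. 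Hence $\W{b}{u}$ is bounded iff its diagonal part is, i.e. iff $\sup_{v \in S} |\lambda_{uv}| < \infty$. I would finish by recording that $S = \asc{u} \cup \desb{u}$ when $\mf b \in \des{u}$ (see \eqref{desb}) and $S = \des{u} \setminus \{u\}$ otherwise, so that $S$ differs from $\des{u}$ only by the finite sets $\asc{u}$ and $[u, \mf b]$; consequently $\sup_{v \in S} |\lambda_{uv}| < \infty$ holds iff $\lambdab_u \in \ell^\infty(\des{u})$, as claimed.

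The step I expect to be most delicate is the meet case $\mf b = \infty$, since there the component operators genuinely may be unbounded with non-square-summable coefficient vectors: the argument must route through Lemma \ref{lem-inj-ten-unb} rather than a naive coordinate-vector test, and one must exploit the finiteness of the range (Proposition \ref{lem3.2}(i)) to reduce the supremum over $w$ to a finite maximum. Everywhere else it is precisely the rooted hypothesis, via finiteness of $\asc{u}$ and of the paths $[u, \mf b]$, that lets the exceptional fibres be absorbed into bounded finite-rank corrections and permits replacing $S$ by $\des{u}$.
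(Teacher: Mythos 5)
Your proposal is correct and takes essentially the same route as the paper's proof: both rest on the fibre description of Proposition \ref{lem3.2}, dispose of $\mf b=u$ as the diagonal case, reduce $\mf b\in V\setminus\{u\}$ to a diagonal operator over (essentially) $\des{u}$ by absorbing the finitely many exceptional fibres (finiteness of $\asc{u}$ and $[u,\mf b]$), and handle $\mf b=\infty$ through the partition \eqref{contend} into finitely many pieces. The only differences are in packaging: you phrase the join cases as an orthogonal direct sum (diagonal $\oplus$ finite rank $\oplus$ zero, in effect anticipating Theorem \ref{o-deco} and Remark \ref{rmk-dicho}) and settle the meet case by citing Lemma \ref{lem-inj-ten-unb}, whereas the paper estimates $\|\varLambda^{(\mf b)}_{u}f\|^{2}$ directly and concludes with the Riesz representation theorem in Case I and the closed graph theorem together with Proposition \ref{closed} in Cases II and III.
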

\begin{proof}
Let $f = \sum_{v \in V} f(v) e_v \in  \ell^2(V)$ be of norm $1$ and let $\varLambda^{(\mf b)}_{u}$ be as defined in \eqref{Lambda-u-b}.
Recall that $f \in \mathscr D(W^{(\mf b)}_{u})$ if and only if $\varLambda^{(\mf b)}_{u}f \in \ell^2(V)$.
By \eqref{W-action}, \beqn \label{action-W}
\varLambda^{(\mf b)}_{u}f \,=\,\sum_{v \in V} f(v) \lambda_{uv}\, e_{u \w v}.
\eeqn
It follows that
 $f \in \mathscr D(W^{(\mf b)}_{u})$ if and only if
\beq
\label{condition}
 \|\varLambda^{(\mf b)}_{u}f\|^2 \,=\, \sum_{w \in V}  \Big|\sum_{v \in M^{(\mf b)}_u(w)}  \lambda_{uv} \, f(v) \Big|^2
\eeq
is finite,
where $M^{(\mf b)}_u(w)$ is as given in \eqref{M-u-b}.
We divide the proof into the following three cases:
\begin{case}
$\mf b = \infty:$
\end{case}
\noindent
Let $u_j:=\parentn{j}{u}$,  $j =0, \ldots, \dep_u$.
By  Proposition \ref{lem3.2}(i) and \eqref{condition}, we obtain \beq \label{condition1}
 \|\varLambda^{(\infty)}_{u}f\|^2 &=& \notag \sum_{{w \in V}}  \Big|\sum_{\substack{v \in M^{(\infty)}_u(w)}}  \lambda_{uv} \, f(v) \Big|^2 \\
&=&  \sum_{j=0}^{\dep_u} \Big|\sum_{v \in \des{u_j} \setminus \des{u_{j-1}}}  \lambda_{uv} \, f(v) \Big|^2,
\eeq
where we used the convention that $\des{\parentn{-1}{u}}=\emptyset.$
We claim that $\W{\mf \infty}{u}$ belongs to  $B(\ell^2(V))$ if and only if
\beq \label{lambda-du}
\lambdab_u   ~\in ~ \ell^2(\des{u_j} \setminus \des{u_{j-1}}), \quad j=0, \ldots, \dep_u.
\eeq
If \eqref{lambda-du} holds, then by \eqref{condition1} and the Cauchy-Schwarz inequality,
\beqn
 \|\varLambda^{(\infty)}_{u}f\|^2 ~\Le ~\|f\|^2\, \sum_{j=0}^{\dep_u} \sum_{v \in \des{u_j} \setminus \des{u_{j-1}}}  |\lambda_{uv}|^2,
\eeqn
which shows that $\W{\mf \infty}{u} \in B(\ell^2(V))$. Conversely, if $\W{\mf \infty}{u} \in B(\ell^2(V))$, then
by \eqref{condition1},
\beqn
\sup_{\|f\|=1} \Big|\sum_{v \in \des{u_j} \setminus \des{u_{j-1}}}  \lambda_{uv} \, f(v) \Big| ~< ~\infty, \quad j=0, \ldots, \dep_u.
\eeqn
By the standard polar representation, the series above is indeed absolutely convergent, and hence
by Riesz representation theorem \cite{Co},  $\lambdab_u   \in  \ell^2(\des{u_j} \setminus \des{u_{j-1}})$, $j=0, \ldots, \dep_u.$ Thus the claim stands verified.
To complete the proof, it now suffices to check that
\beq \label{contend}
\V \,=\,  \bigcupdot_{j=0}^{\dep_u} \Big(\des{u_j} \setminus \des{u_{j-1}}\Big)
\eeq
(see Figure \ref{fig-decom}).
To see that, let $v \in \V.$ Clearly, $\des{u} =\des{u_0} \setminus \des{u_{-1}}$. Thus we may assume that $v \in \V \setminus \des{u}.$ In view of \eqref{cg-dec}, we must have $v \in \asc{u} \cupdot V_u.$ If $v \in \asc{u}$, then there exists $j \in \{1, \ldots, \dep_u\}$ such that $v= \parentn{j}{u}=u_j,$ and hence $v \in \des{u_j} \setminus \des{u_{j-1}}$. Hence we may further  assume  that $v \in V_u.$ By Lemma \ref{lem-join}, \beqn u \sqcap v \in [\rootb, u]=\{u_{\dep_u}, \ldots, u_0\}.\eeqn
Thus $u \sqcap v =u_j$ for some $j=0, \ldots, \dep_u$, and therefore $v \in \des{u_j}.$ However, by the uniqueness of the meet operation, $v \notin \des{u_{j-1}}$.
This completes the verification of \eqref{contend}.

\begin{case}
$\mf b \in V$ ~$\&$~ $\mf b \in \des{u}:$
\end{case}
\noindent
If $\mf b = u$, then by Remark \ref{rmk-action}, $\W{b}{u}=\D{u}$, and hence $\W{b}{u} \in B(\ell^2(V))$ if and only if $\lambdab_u   \in  \ell^{\infty}(V).$
Assume that $u \neq \mf b$. Then, by Proposition \ref{lem3.2}(ii),
\beqn
 \sum_{w \in V}  \Big|\sum_{\substack{v \in  M^{(\mf b)}_u(w)}}  \lambda_{uv} \, f(v) \Big|^2 &=&  \sum_{w \in \asc{u}}
\big| \lambda_{uw} \, f(w) \big|^2   ~+~
 \sum_{\substack{w \in  \desb{u}}}
 \big| \lambda_{uw} \, f(w) \big|^2 \\ &+&    \Big|\sum_{\substack{v \in [u, {\mf b}] }}  \lambda_{uv} \, f(v) \Big|^2.
\eeqn
Since $\asc{u}, [u, \mf b]$ are finite sets,
\beq
\label{domain-w-d}
\varLambda^{(\mf b)}_{u}f \in \ell^2(V) \quad \Longleftrightarrow  \quad \sum_{\substack{w \in \mathsf{Des}(u)}}
 \big| \lambda_{uw} \, f(w) \big|^2 < \infty.
\eeq
It is now clear that $\varLambda^{(\mf b)}_{u}f \in \ell^2(V)$ for every $f \in \ell^2(V)$ if and only if $\lambdab_u   \in  \ell^{\infty}(\mathsf{Des}(u)).$ This shows that \eqref{star} is a necessary condition.
Conversely, if \eqref{star} holds then $\varLambda^{(\mf b)}_{u}f \in \ell^2(V)$ for every $f \in \ell^2(V)$, and hence by the closed graph theorem together with Proposition \ref{closed}, $\W{b}{u}$ defines a bounded linear operator on $\ell^2(V).$

\begin{case}
$\mf b \in V$ ~$\&$~ $\mf b \notin \des{u}:$
\end{case}
%\beqn
% \sum_{w \in V}  \Big|\lambda_{uw} \, f(w) \Big|^2 < \infty.
%\eeqn
\noindent
By Proposition \ref{lem3.2}(iii),
\beqn
 \sum_{w \in V}  \Big|\sum_{\substack{v \in M^{(\mf b)}_u(w)}}  \lambda_{uv} \, f(v) \Big|^2 ~ = ~ \sum_{\substack{w \in \mathsf{Des}_u(u)}}  \big|\lambda_{uw} \, f(w) \big|^2  ~ +   ~ \Big| \sum_{v \in \asc{u} \cup \{u, \mf b\}}\lambda_{uv} \, f(v) \Big|^2 .
\eeqn
Once again, since $\asc{u}$ is a finite set, we must have \eqref{domain-w-d}.
%$\W{b}{u}f \in \ell^2(V)$ if and only if
%\beqn
%\sum_{\substack{w \in \mathsf{Des}(u)}}
% \Big| \lambda_{uw} \, f(w) \Big|^2 < \infty.
%\eeqn
It follows that $\varLambda^{(\mf b)}_{u}f \in \ell^2(V)$ for every $f \in \ell^2(V)$ if and only if $\lambdab_u   \in  \ell^{\infty}(\mathsf{Des}(u)).$
The verification of the remaining part in this case is now similar to that of Case II.
%Taking into considerations all the cases as discussed above, we may conclude that if
%$\W{b}{u}f \in \ell^2(V)$ for every $f \in \ell^2(V)$, then
%\eqref{star} holds.
%This shows that \eqref{star} is a necessary condition.
%Conversely, if \eqref{star} holds then $\W{b}{u}f \in \ell^2(V)$ for every $f \in \ell^2(V)$, and hence by the closed graph theorem together with Proposition \ref{closed}, $\W{b}{u}$ defines a bounded linear operator on $\ell^2(V).$
\end{proof}
\begin{remark}  Note that the weighted join operator $\W{\rootb}{u}$  on ${\mathscr T}$
is bounded if and only if
$
\lambdab_u  \in  \ell^{\infty}(\mathsf{Des}(u)).$
Further, the weighted meet operator $\W{\infty}{u}$ on ${\mathscr T}$ is bounded if and only if
$
\lambdab_u   \in  \ell^2(V).$
\end{remark}

An examination of Cases II and III of the proof of Theorem \ref{bdd} yields a neat expression for the domain of weighted join operator $\W{b}{u}$, $\mf b  \neq \infty$:
\begin{corollary}
\label{bdd-coro}
Let $\mathscr T=(V, E)$ be a rooted directed tree with root $\rootb$ and let $\mathscr T_{\infty}=(\V, E_{\infty})$ be the extended directed tree associated with $\mathscr T$.
Let $u \in V,$ $\mf b \in V$ and let $\lambdab_u = \{\lambda_{uv}\}_{v\in \V}$ be a weight system
 of complex numbers. For $u \in V$, consider the weight system
$\lambdab_u = \{\lambda_{uv}\}_{v\in \V}$ of complex numbers and let $\D{u}$ be the diagonal operator with diagonal entries $\lambdab_u$. Then, for any $\mf b \in V$,
 the domain of the weighted join operator $\W{b}{u}$ on ${\mathscr T}$
is given by
 %Let $f = \sum_{v \in V} f(v) e_v \in  \ell^2(V)$ be of norm $1$ and let $\varLambda^{(\mf b)}_{u}$ be as defined in \eqref{Lambda-u-b}. 
 $\mathscr D(\W{b}{u}) = \mathscr D(P_{_{\des{u}}}\D{u}).$
% \beqn
%\label{domain-w-d}
%\varLambda^{(\mf b)}_{u}f \in \ell^2(V) \quad \Longleftrightarrow  \quad \sum_{\substack{w \in \mathsf{Des}(u)}}
% \Big| \lambda_{uw} \, f(w) \Big|^2 < \infty.
%\eeqn
\end{corollary}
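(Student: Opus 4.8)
The plan is to observe that the corollary is essentially a reformulation of relation \eqref{domain-w-d}, which was already established inside the proof of Theorem \ref{bdd}. By definition, $f \in \mathscr D(\W{b}{u})$ if and only if $\varLambda^{(\mf b)}_{u} f \in \ell^2(V)$, that is, if and only if the sum in \eqref{condition} is finite. First I would evaluate this sum case by case, exactly as in Cases II and III of the proof of Theorem \ref{bdd}, using the explicit descriptions of $M^{(\mf b)}_u(w)$ from Proposition \ref{lem3.2}(ii)--(iii).

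The key point to extract is that, apart from a genuine square-summable tail indexed by $\des{u}$, the quantity $\|\varLambda^{(\mf b)}_{u} f\|^2$ picks up only finitely many additional terms: those indexed by the finite sets $\asc{u}$ and $[u, \mf b]$ when $\mf b \in \des{u}$, and by $\asc{u} \cup \{u, \mf b\}$ when $\mf b \notin \des{u}$. Here rootedness of $\mathscr T$ enters in an essential way: every vertex has finite depth, so $\asc{u}$ (and hence $[u, \mf b]$) is a finite set, and the contribution of these finitely many terms is automatically finite for every $f \in \ell^2(V)$. Consequently they impose no restriction on the domain, and one is left precisely with the condition recorded in \eqref{domain-w-d}, namely $\sum_{w \in \des{u}} |\lambda_{uw}|^2 |f(w)|^2 < \infty$. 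The degenerate case $\mf b = u$ needs no separate treatment, since then $\W{b}{u} = \D{u}$ by Remark \ref{rmk-action}, and the vertices outside $\des{u}$ that would otherwise enlarge the domain are exactly the ones whose join with $u$ lands at $\infty$ (where $e_\infty = 0$), so they are annihilated and cannot constrain membership in $\mathscr D(\W{b}{u})$.

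It then remains to recognise the set $\{f \in \ell^2(V) : \sum_{w \in \des{u}} |\lambda_{uw}|^2 |f(w)|^2 < \infty\}$ as $\mathscr D(P_{_{\des{u}}}\D{u})$. This is immediate from the fact that $P_{_{\des{u}}}$ is the orthogonal projection onto $\ell^2(\des{u})$ and $\D{u}$ is diagonal with entries $\{\lambda_{uv}\}_{v \in V}$: the vector $P_{_{\des{u}}}\D{u}f$ is a well-defined element of $\ell^2(V)$ precisely when the family $(\lambda_{uw} f(w))_{w \in \des{u}}$ is square-summable, which is the displayed condition. I do not expect any genuine obstacle here; the whole computational content—the splitting of $\|\varLambda^{(\mf b)}_{u} f\|^2$ into a descendant tail plus finitely many terms—has already been carried out in the proof of Theorem \ref{bdd}, so the proof of the corollary amounts to this bookkeeping together with the observation that ancestor sets are finite in the rooted setting. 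The only point that warrants a word of care, rather than a real difficulty, is being explicit that it is the $\des{u}$-supported part of the diagonal action, and not $\D{u}$ on all of $V$, that governs the domain.
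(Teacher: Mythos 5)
For $\mf b \in V \setminus \{u\}$ your argument is exactly the paper's: the paper proves this corollary in one line by citing \eqref{domain-w-d}, which is established in Cases II and III of the proof of Theorem \ref{bdd} from Proposition \ref{lem3.2}(ii)--(iii) together with the finiteness of $\asc{u}$, $[u,\mf b]$ and $\asc{u}\cup\{u,\mf b\}$; your re-derivation of that splitting, and the identification of $\{f \in \ell^2(V) : \sum_{w \in \des{u}}|\lambda_{uw}|^2|f(w)|^2 < \infty\}$ with $\mathscr D(P_{_{\des{u}}}\D{u})$, are the same bookkeeping.

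The genuine problem is your final claim that ``the degenerate case $\mf b = u$ needs no separate treatment'' because the vertices outside $\des{u}$ join with $u$ at $\infty$ and are annihilated. That description is correct only when $\mf b \neq u$: then indeed $M^{(\mf b)}_u(\infty) = V_u \setminus \{\mf b\}$ and those basis vectors are killed since $e_{\infty}=0$. When $\mf b = u$, the base point is the \emph{neutral element} of $\sqcup_{\mf b}$, so $u \sqcup_u v = v$ for every $v \in V$, $M^{(u)}_u(w)=\{w\}$ for every $w \in V$, and $\W{b}{u}=\D{u}$ acts diagonally on \emph{every} $e_v$ --- nothing is annihilated. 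Consequently $\mathscr D(\W{b}{u}) = \mathscr D(\D{u}) = \{f : \sum_{v \in V}|\lambda_{uv}|^2|f(v)|^2 < \infty\}$, which is in general a \emph{proper} subset of $\mathscr D(P_{_{\des{u}}}\D{u})$ as you read it: if $V_u$ is infinite (take $u = \mf v_4$ in Figure \ref{fig0}) and $\lambda_{uv_n}=2^n$ along an enumeration $\{v_n\}$ of $V_u$, then $f=\sum_n n^{-1}e_{v_n}$ belongs to $\mathscr D(P_{_{\des{u}}}\D{u})$ (its $\des{u}$-sum vanishes) but not to $\mathscr D(\D{u})$. So the asserted equality actually \emph{fails} at $\mf b = u$, and no argument can repair it; note also that no reading of $\mathscr D(P_{_{\des{u}}}\D{u})$ saves both cases, since under the strict composition convention that domain equals $\mathscr D(\D{u})$ and the equality would instead fail for $\mf b \neq u$. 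The corollary is really a statement for $\mf b \in V\setminus\{u\}$, which is precisely the hypothesis under which the paper derives \eqref{domain-w-d} (Case II assumes $u \neq \mf b$ after disposing of $\mf b = u$ separately, via Remark \ref{rmk-action}) and under which Theorem \ref{dichotomy} is stated. To be fair, the paper's own proof (``\eqref{domain-w-d} holds for any $\mf b \in V$'') overreaches in the same way, but it does so by omission; your proposal turns the omission into a false statement about how $\W{b}{u}$ acts when $\mf b = u$.
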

\begin{proof} This is immediate from \eqref{domain-w-d}, which holds for any $\mf b \in V.$
\end{proof}

\begin{figure}
\begin{tikzpicture}[scale=.8, transform shape]
\tikzset{vertex/.style = {shape=circle,draw, blue, minimum size=.1em}}
\tikzset{edge/.style = {->,> = latex'}}
% vertices
\node[] () at  (0,.4) {${}_{u_2\,=\,\rootb}$};
\node[] () at  (-.3,-2) {$u_0$};
\node[] () at  (-1.45,-1) {$u_1$};

\node[vertex] (a1) at  (-1,-1) {$$};
\node[vertex] (a2) at  (-2,-2) {$ $};
\node[vertex] (a4) at  (-3, -3) {$$};
\node[vertex] (a5) at  (-7/4, -3) {$$};
\node[vertex] (a8) at  (-4, -4) {$$};
\node[vertex] (a9) at  (-5/2, -4) {$$};
\node[vertex] (a10) at  (-2, -4) {$$};
\node[vertex] (a11) at  (-3/2, -4) {$$};

\tikzset{vertex/.style = {shape=circle,draw, magenta, minimum size=.1em}}

\node[vertex] (a3) at  (-3/4, -2) {$$};
\node[vertex] (a6) at  (-1, -3) {$$};
\node[vertex] (a7) at  (-1/2, -3) {$$};
\node[vertex] (a15) at (-1/2,-4) {$$};
\node[vertex] (a13) at  (-1, -4) {$$};

\tikzset{vertex/.style = {shape=circle,draw, black, minimum size=.1em}}

%\node[vertex] (a12) at  (-5/4, -4) {$$};
%\node[vertex] (a14) at (-5/8,-4) {$$};

\node[vertex] (a0) at  (0,0) {};
\node[vertex] (b1) at  (1,-1) {};
\node[vertex] (b2) at  (2,-2) {};
\node[vertex] (b3) at  (3/4, -2) {};
\node[vertex] (b4) at  (3, -3) {};
\node[vertex] (b5) at  (7/4, -3) {};
\node[vertex] (b6) at  (1, -3) {};
\node[vertex] (b7) at  (1/2, -3) {};
\node[vertex] (b8) at  (4, -4) {};
\node[vertex] (b9) at  (5/2, -4) {};
\node[vertex] (b10) at  (2, -4) {};
\node[vertex] (b11) at  (3/2, -4) {};
%\node[vertex] (b12) at  (5/4, -4) {};
\node[vertex] (b13) at  (1, -4) {};
%\node[vertex] (b14) at (5/8,-4) {};
\node[vertex] (b15) at (1/2,-4) {};
%edges
\draw[edge] (a0) to (a1);
\draw[edge] (a1) to (a2);
\draw[edge] (a1) to (a3);
\draw[edge] (a2) to (a4);
\draw[edge] (a2) to (a5);
\draw[edge] (a3) to (a6);
\draw[edge] (a3) to (a7);
\draw[edge] (a4) to (a8);
\draw[edge] (a4) to (a9);
\draw[edge] (a5) to (a10);
\draw[edge] (a5) to (a11);
\draw[edge] (a6) to (a13);
\draw[edge] (a7) to (a15);
\draw[edge] (a0) to (b1);
\draw[edge] (b1) to (b2);
\draw[edge] (b1) to (b3);
\draw[edge] (b2) to (b4);
\draw[edge] (b2) to (b5);
\draw[edge] (b3) to (b6);
\draw[edge] (b3) to (b7);
\draw[edge] (b4) to (b8);
\draw[edge] (b4) to (b9);
\draw[edge] (b5) to (b10);
\draw[edge] (b5) to (b11);
\draw[edge] (b6) to (b13);
\draw[edge] (b7) to (b15);
\end{tikzpicture}
\caption{The decomposition \eqref{contend} of $\V$ with $u=u_0$ of depth $2$ } \label{fig-decom}
\end{figure}

%\begin{assumption} \uwam{do we need?}
%In the remaining part of this text, we assume that  the weighted meet operator $\W{b}{u}$ on ${\mathscr T}$
%with weights $\lambdab_u$ always satisfies \eqref{star}.
%\end{assumption}

%\section{Decomposing as finite rank perturbations of diagonal operators}

\section{A decomposition theorem}

One of the main results of this section shows that the weighted join operator $\W{b}{u}$ on a rooted directed tree
can be one of the
following three types, viz.  a diagonal operator, a rank one perturbation of a diagonal operator or a  finite rank operator.
%is either diagonal, a rank one perturbation of a diagonal operator or a finite rank operator.
Further, we obtain an orthogonal decomposition of $\W{b}{u}$ into a diagonal operator and a rank one operator provided $\mf b \neq u.$ Among various applications, we exhibit a family of weighted join operators with large null summand. It turns out that either a weighted join operator is complex Jordan or it has bounded Borel functional calculus.

Before we state the first main result of this section, we introduce the function $e_{\mu, A}$, which appears in the decomposition of weighted join operators. For a subset $A \subseteq V$ and $\mu :=\{\mu_{v} : v \in A\} \subseteq \mathbb C$, consider the function $e_{_{\mu, A}} : V \rar \mathbb C$ given by
\beq \label{e-lam}
e_{_{\mu, A}} \,:=\, \sum_{v \in A} \bar{\mu}_{v} e_v.
\eeq
Note that $e_{_{\mu, A}} \in \ell^2(V)$  if and only if $\mu \in \ell^2(A).$

\index{$e_{_{\mu, A}}$}

\begin{theorem} \label{dichotomy}
Let $\mathscr T=(V, E)$ denote a rooted directed tree with root $\rootb$ and let $\mathscr T_{\infty}=(\V, E_{\infty})$ be the extended directed tree associated with $\mathscr T$.
For $u \in V$, consider the weight system
$\lambdab_u = \{\lambda_{uv}\}_{v\in \V}$ of complex numbers and let $\D{u}$ be the diagonal operator with diagonal entries $\lambdab_u$. Then, for any $\mf b \in V \setminus \{u\}$,
 the weighted join operator $\W{b}{u}$ on ${\mathscr T}$
is given by
\begin{align*}
   \begin{aligned}      & \mathscr D(\W{b}{u}) \,=\, \mathscr D(P_{_{\des{u}}}\D{u}), \\
\W{b}{u}  ~= ~& \begin{cases}
\displaystyle P_{_{\asc{u} \cup  \Desb{u}}}\D{u}  + e_u \otimes e_{\lambdab_u, (u, \mf b]}
 & \mbox{if~} \mf b \in \des{u},\\
 \displaystyle P_{_{\des{u}}}\D{u}  + e_u \otimes e_{\lambdab_u, \asc{u} \cup \{\mf b\}}  & \mbox{otherwise}.
\end{cases}
\end{aligned}
\end{align*}
%and $P_{\mathcal M}$ denote the orthogonal projection from a Hilbert space $H$ onto its closed subspace $\mathcal M$.
\end{theorem}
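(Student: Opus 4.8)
The plan is to prove the identity on the standard basis $\{e_v\}_{v\in V}$, which spans the core $\mathscr D_V$ of $\W{b}{u}$ guaranteed by Proposition \ref{closed}, and then promote this to the stated operator equality by a closedness argument. The domain assertion $\mathscr D(\W{b}{u}) = \mathscr D(P_{\des{u}}\D{u})$ is already at hand from Corollary \ref{bdd-coro}, so the real content is to identify the action and to see that the proposed decomposition respects this domain.

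First I would compute $\W{b}{u}e_v$ for every $v\in V$. By \eqref{W-action} one has $\W{b}{u}e_v = \lambda_{uv}\,e_{u\sqcup_{\mf b}v}$, so it suffices to read off $u\sqcup_{\mf b}v$ from the definition of $\sqcup_{\mf b}$ (equivalently from Table \ref{Table1}), organizing $v$ according to the canonical partition $V = \des{u}\cupdot\asc{u}\cupdot V_u$. When $\mf b\in\des{u}$ this yields $u\sqcup_{\mf b}v = v$ for $v\in\asc{u}\cup\desb{u}$, $u\sqcup_{\mf b}v = u$ for $v\in[u,\mf b]$, and $u\sqcup_{\mf b}v = \infty$ (hence $\W{b}{u}e_v = 0$, using $e_{\infty}=0$) for $v\in V_u$; when $\mf b\notin\des{u}$ it yields $u\sqcup_{\mf b}v = v$ for $v\in\des{u}$, $u\sqcup_{\mf b}v = u$ for $v\in\asc{u}\cup\{\mf b\}$, and $u\sqcup_{\mf b}v = \infty$ for $v\in V_u\setminus\{\mf b\}$. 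Here I would record that these vertex classes genuinely partition $V$, namely $V = \asc{u}\cupdot[u,\mf b]\cupdot\desb{u}\cupdot V_u$ in the first case.

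Next I would evaluate the candidate right-hand side on $e_v$. Using the definition of the injective tensor product together with \eqref{e-lam}, the conjugates cancel and $(e_u\otimes e_{\lambdab_u,B})e_v = \lambda_{uv}e_u$ if $v\in B$ and $0$ otherwise, while the diagonal summand satisfies $P_A\D{u}e_v = \lambda_{uv}e_v$ if $v\in A$ and $0$ otherwise. Taking $A = \asc{u}\cup\Desb{u}$, $B = (u,\mf b]$ in the first case and $A = \des{u}$, $B = \asc{u}\cup\{\mf b\}$ in the second, a term-by-term comparison against the values of $\W{b}{u}e_v$ from the previous step confirms agreement on each basis vector. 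The bookkeeping is cleanest once one observes $\Desb{u} = \{u\}\cupdot\desb{u}$ and $\des{u} = \{u\}\cupdot(u,\mf b]\cupdot\desb{u}$, which show that the coordinates killed by $P_{\asc{u}\cup\Desb{u}}$ are exactly those of $(u,\mf b]\cupdot V_u$, precisely the places where the rank-one correction or the value $\infty$ takes over.

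Finally I would upgrade to operator equality. The rank-one summand is bounded since $B$ is finite (a finite directed path, respectively $\asc{u}\cup\{\mf b\}$ with $\asc{u}$ finite in a rooted tree), so $e_{\lambdab_u,B}\in\ell^2(V)$; thus each candidate is a diagonal operator plus a bounded operator, hence closed. I would then check that the diagonal summand has domain exactly $\mathscr D(P_{\des{u}}\D{u})$: in the second case this is immediate, while in the first case it holds because $\asc{u}\cup\Desb{u}$ and $\des{u}$ differ only by the finite sets $\asc{u}$ and $(u,\mf b]$, over which the weighted coordinates are automatically square-summable. Consequently both $\W{b}{u}$ and the candidate are closed operators with the common domain $\mathscr D(P_{\des{u}}\D{u})$ that agree on the core $\mathscr D_V$; taking closures of the common restriction forces them to coincide. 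The step I expect to be the main obstacle is exactly this last domain-matching when $\mf b\in\des{u}$, where the support $\asc{u}\cup\Desb{u}$ of the diagonal part is neither $\des{u}$ nor its complement, and one must argue carefully that the finitely many omitted and adjoined coordinates leave the domain unchanged.
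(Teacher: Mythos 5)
Your proposal is correct and follows essentially the same route as the paper: the domain identity is quoted from Corollary \ref{bdd-coro}, and the action is identified by evaluating $u \sqcup_{\mf b} v$ class-by-class via Table \ref{Table1} on the basis vectors $e_v$, exactly as the paper does inside its block-matrix bookkeeping for $W^{(\mf b)}_{11}$, $W^{(\mf b)}_{21}$, $W^{(\mf b)}_{22}$. The one point where you go beyond the paper is the explicit closure/core upgrade (both operators closed, common domain since the supports $\asc{u}\cup\Desb{u}$ and $\des{u}$ differ only by finite sets, agreement on the core $\mathscr D_V$ from Proposition \ref{closed}), which makes rigorous the passage from basis-vector identities to operator equality that the paper leaves implicit in its orthogonal decomposition \eqref{deco-l2-V}.
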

\begin{remark} \label{rmk-dicho}
In case $\mf b=u$, by Remark \ref{rmk-action}(iv), $\W{b}{u}$ is the diagonal operator $\D{u}$.
%In this case, the spectral picture
 %of $\W{b}{u}$ coincides with that of the diagonal operator $\D{u}$.
 %We discuss here the spectral picture of $\W{\mf \infty}{u}$.
In case $\mf b = \infty,$  by \eqref{W-action}, \beq \label{meet-action}
\W{\mf \infty}{u} e_v \,=\,  \lambda_{uv}\, e_{u \sqcap v}, \quad v \in V.
\eeq
It now follows from Lemma \ref{lem-join} that $\W{\mf \infty}{u}$ is a finite rank operator.
Let us find an explicit expression for $\W{\mf \infty}{u}$. By \eqref{contend}, $\ell^2(V)$ admits the orthogonal decomposition
\beqn
\ell^2(V) \,=\,   \bigoplus_{j=0}^{\dep_u} \ell^2(\des{u_j} \setminus \des{u_{j-1}}),
\eeqn
where $\des{u_{-1}}=\emptyset$ and $u_j:=\parentn{j}{u}$ for $j =0, \ldots, \dep_u.$
By \eqref{meet-action}, with respect to the above decomposition, $\W{\mf \infty}{u}$ decomposes as
\beq \label{meet-deco}
\left.
\begin{array}{ccc}
\mathscr D(\W{\mf \infty}{u}) & = &  \displaystyle \bigoplus_{j=0}^{\dep_u} \mathscr D\big(e_{u_j} \obslash e_{_{\lambdab_u,  \des{u_j} \setminus \des{u_{j-1}}}}\big),  \\
\W{\mf \infty}{u} & = &  \!\!\!\!\!\!\!\!\!\!\displaystyle \bigoplus_{j=0}^{\dep_u} e_{u_j} \obslash e_{_{\lambdab_u,  \des{u_j} \setminus \des{u_{j-1}}}}.
\end{array}
\right\}
\eeq
Thus $\W{\mf \infty}{u}$ is an orthogonal direct sum of rank one operators
\beqn e_{u_j} \obslash e_{_{\lambdab_u,  \des{u_j} \setminus \des{u_{j-1}}}}, \quad j =0, \ldots, \dep_u. \eeqn
% $\des{u_{-1}}=\emptyset$ and $u_j:=\parentn{j}{u}$ for $$
% In case $\lambdab_u \in \ell^2(V)$, $\W{\mf \infty}{u} \in B(\ell^2(V)),$ and hence by Lemma \ref{lem-inj-ten}(iii),
%\beqn
% \sigma(\W{\mf \infty}{u}) = \{0\} \cup \{\lambda_{u_ju_j} : j=0, \ldots, \dep_u\} = \sigma_p(\W{\mf \infty}{u}).
%\eeqn
%Assume now that $\lambdab_u \notin \ell^2(V).$
%By Lemma \ref{lem-inj-ten-unb}, $\sigma_p(\W{\mf \infty}{u})$ is given by the same formula as above.
%Further, another application of Lemma \ref{lem-inj-ten-unb} shows that $\W{\mf \infty}{u}$ is not closed, and hence $\sigma(\W{\mf \infty}{u})=\mathbb C.$
%We will see the spectral picture of any weighted join operator $\W{b}{u}$ later in Chapter 4.
\end{remark}
\begin{proof} Let  $\mf b \in V \setminus \{u\}$. 
%By \eqref{domain-w-d} (which holds in Cases II and III of the proof of Theorem \ref{bdd}),
By Corollary \ref{bdd-coro}, $\mathscr D(\W{b}{u}) = \mathscr D(P_{_{\des{u}}}\D{u}).$
To see the decomposition of $\W{b}{u}$, consider the subset $M^{(\mf b)}_u(\infty)$ of $V$ as given in \eqref{M-u-b}.
By Proposition \ref{lem3.2}(iv),  
\beq \label{deco}  V \setminus M^{(\mf b)}_u(\infty)\,=\, \asc{u} \cup \des{u} \cup \{\mf b\} = (V \setminus V_u) \cup \{\mf b\}, \eeq
where $V_u$ is as given in \eqref{cg-dec0}.
Note that \eqref{deco} induces the orthogonal decomposition
\beq \label{deco-l2-V}
\ell^2(V) \,=\, \begin{cases} \ell^2(\asc{u}) \oplus  \ell^2(\des{u})  \oplus  \ell^2(M^{(\mf b)}_u(\infty)) & \mbox{if~} \mf b \in V \setminus V_u, \\
\ell^2(\asc{u}) \oplus  \ell^2(\des{u}) \oplus \ell^2(M^{(\mf b)}_u(\infty)) \oplus  \ell^2(\{\mf b\}) &  \mbox{otherwise}.
\end{cases}
\eeq
It may be concluded from Table \ref{Table1} that \beqn \W{b}{u}(\ell^2(\asc{u})) &\subseteq & \ell^2(V) \ominus \ell^2(M^{(\mf b)}_u(\infty)), \\ \W{b}{u}(\ell^2(\des{u})) & \subseteq & \ell^2(\des{u}), \\
\W{b}{u}(\ell^2(M^{(\mf b)}_u(\infty))) &=& \{0\}, \\
\W{b}{u}(\ell^2(\{\mf b\})) & \subseteq & \ell^2(\{u\}).
\eeqn
With respect to the orthogonal decomposition \eqref{deco-l2-V} of $\ell^2(V)$, the weighted join operator $\W{b}{u}$ decomposes as follows:
\beq \label{m-deco}
 & \W{b}{u} \,=\, \left[
\begin{array}{ccc}
W^{(\mf b)}_{11} & 0 & 0 \\
W^{(\mf b)}_{21} & W^{(\mf b)}_{22} & 0 \\
0 & 0 & 0 \\
\end{array}
\right]~& \mbox{if~}\mf b \in V \setminus V_u, \\ \label{m-deco1}
  & \W{b}{u} \,=\,
\left[
\begin{array}{cccc}
W^{(\mf b)}_{11} & 0 & 0  &  0\\
W^{(\mf b)}_{21} & W^{(\mf b)}_{22} & 0 &  \lambda_{u\mf b} e_u \otimes  e_{\mf b}\\
0 & 0 & 0 & 0\\
0 & 0 & 0 & 0 \\
\end{array}
\right]~&\mbox{if~}\mf b \in V_u.
\eeq
Note that for any $v \in \asc{u}$, by  Table \ref{Table1},
\beq \label{W11}
W^{(\mf b)}_{11} e_v &=&  P_{_{\asc{u}}}\W{b}{u} e_v \notag \\ &=&\lambda_{uv}\, P_{_{\asc{u}}} e_{u \sqcup_{\mf b} v} \notag \\ & = & \begin{cases}  0 & \mbox{if~} \mf b \in \asc{u}~\mbox{or~} \mf b \in V_u,  \\
\D{u} e_v & \mbox{if~} \mf b \in \des{u}.
\end{cases}
\eeq
A similar argument using Table \ref{Table1} shows that for any $v \in \asc{u}$, 
\beq \label{W21}
W^{(\mf b)}_{21} e_v &=& \lambda_{uv}\, P_{_{\des{u}}} e_{u \sqcup_{\mf b} v} \notag \\ &=& \begin{cases} \Big(\displaystyle \sum_{w \in \asc{u}}\lambda_{uw}\, e_u \otimes e_w\Big) e_v & \mbox{if~} \mf b \in \asc{u}~\mbox{or~} \mf b \in V_u, \\
 0 & \mbox{if~}\mf b \in \des{u}.
\end{cases}
\eeq
Further, for any $v \in \des{u}$, by Table \ref{Table1},
\beq \label{W22}
W^{(\mf b)}_{22} e_v &=& \lambda_{uv}\, e_{u \sqcup_{\mf b} v} \notag \\ &=& \begin{cases}  \D{u}e_v & \mbox{if~} \mf b \in \asc{u}~\mbox{or}~\mf b \in V_u, \\
\displaystyle \Big(\sum_{w \in [u, \mf b]}\lambda_{uw}\, e_u \otimes e_w\Big)e_v & \mbox{if~} \mf b \in \des{u}~\mbox{and~}v \in [u, \mf b], \\
 \D{u}e_v & \mbox{if~} \mf b \in \des{u}~\mbox{and~}v \notin [u, \mf b].
\end{cases}
\eeq
It is now easy to see that \beqn W^{(\mf b)}_{22} \,=\, \begin{cases} \D{u}|_{_{\ell^{2}(\des{u})}} & \mbox{if~} \mf b \in \asc{u}~\mbox{or}~\mf b \in V_u,
\\
\D{u}|_{_{\ell^{2}(\desb{u})}} ~ +  ~  \displaystyle\sum_{w \in [u, \mf b]}\lambda_{uw}\, e_u \otimes e_w & \mbox{if~} \mf b \in \des{u}.
\end{cases}
\eeqn
In view of \eqref{W11}, \eqref{W21}, \eqref{W22}, one may now deduce the desired decomposition from \eqref{m-deco} and \eqref{m-deco1}.
\end{proof}

Theorem \ref{dichotomy} together with Remark \ref{rmk-dicho} yields the following:

\begin{corollary}[Dichotomy]
Let $\mathscr T=(V, E)$ be a rooted directed tree with root $\rootb$ and let $\mathscr T_{\infty}=(\V, E_{\infty})$ be the extended directed tree associated with $\mathscr T$.
For $\mf b, u \in V$, consider the weight system
$\lambdab_u = \{\lambda_{uv}\}_{v\in \V}$ of complex numbers.
Then the weighted join operator $\W{b}{u}$ on ${\mathscr T}$ is at most rank one perturbation of a diagonal operator, while the weighted meet operator $\W{\mf \infty}{u}$ on ${\mathscr T}$ is a finite rank operator.
\end{corollary}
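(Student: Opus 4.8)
The plan is to obtain both assertions as immediate consequences of Theorem \ref{dichotomy} and Remark \ref{rmk-dicho}, organized by a case analysis on the base point. First I would dispose of the trivial case $\mf b = u$: by Remark \ref{rmk-action}(iv) (recorded again in Remark \ref{rmk-dicho}), $\W{b}{u}$ equals the diagonal operator $\D{u}$, which is a rank one perturbation of a diagonal operator with the zero perturbation.

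For the remaining case $\mf b \in V \setminus \{u\}$ I would simply invoke Theorem \ref{dichotomy}, which exhibits $\W{b}{u}$ as the sum of $P_{_{\asc{u} \cup \Desb{u}}}\D{u}$ (resp. $P_{_{\des{u}}}\D{u}$) and the single injective tensor product $e_u \otimes e_{\lambdab_u, (u, \mf b]}$ (resp. $e_u \otimes e_{\lambdab_u, \asc{u} \cup \{\mf b\}}$), according as $\mf b \in \des{u}$ or not. Two elementary observations then finish the first claim. First, a compression $P_{_W}\D{u}$ of $\D{u}$ onto the span of a sub-family $\{e_v : v \in W\}$ of the orthonormal basis is again diagonal, since $\D{u} e_v = \lambda_{uv} e_v$ yields $P_{_W}\D{u} e_v = \lambda_{uv} e_v$ for $v \in W$ and $0$ otherwise; that is, it is obtained from $\D{u}$ by setting the entries outside $W$ equal to $0$. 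Second, the tensor product summand is a bounded operator of rank one: in each case the index set ($(u, \mf b]$, or $\asc{u} \cup \{\mf b\}$) is finite---the former being a finite sub-path and $\asc{u}$ being finite because $\mathscr T$ is rooted---so $e_{\lambdab_u, (u, \mf b]}$ and $e_{\lambdab_u, \asc{u} \cup \{\mf b\}}$ lie in $\ell^2(V)$ by the remark following \eqref{e-lam}, and hence $e_u \otimes (\cdot)$ is a bounded rank one operator. Thus in every case $\W{b}{u}$ is at most a bounded rank one perturbation of a diagonal operator.

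For the weighted meet operator $\W{\mf \infty}{u}$ I would use the second half of Remark \ref{rmk-dicho}: the identity \eqref{meet-action} gives $\W{\mf \infty}{u} e_v = \lambda_{uv}\, e_{u \sqcap v}$ for all $v \in V$, and by Lemma \ref{lem-join} the meet $u \sqcap v$ always lies in the finite set $[\rootb, u]$. Therefore $\ran \W{\mf \infty}{u} \subseteq \ell^2([\rootb, u])$, which is finite-dimensional, so $\W{\mf \infty}{u}$ is a finite rank operator.

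No real obstacle arises here, since the statement is essentially a repackaging of Theorem \ref{dichotomy} together with Remark \ref{rmk-dicho}; the only substantive points are the two routine observations above---that a basis-projection of a diagonal operator stays diagonal, and that the second tensor factor is square-summable because its support is finite.
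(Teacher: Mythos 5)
Your proposal is correct and follows essentially the same route as the paper: the paper derives the corollary directly from Theorem \ref{dichotomy} (for $\mf b \in V \setminus \{u\}$) together with Remark \ref{rmk-dicho}, which covers the diagonal case $\mf b = u$ and deduces finite rank of $\W{\mf \infty}{u}$ from \eqref{meet-action} and Lemma \ref{lem-join}, exactly as you do. Your added verifications (that a basis compression of $\D{u}$ is diagonal, and that the perturbation is bounded of rank one because $(u,\mf b]$ and $\asc{u}\cup\{\mf b\}$ are finite) are the routine points the paper leaves implicit.
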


By Theorem \ref{dichotomy}, any $\W{b}{u} \in B(\ell^2(V))$ can be rewritten as $C + M$, where $C$ is a diagonal operator and $M$ is a nilpotent operator of nilpotency index $2$ given by
\beqn
C &=& \begin{cases}
\displaystyle P_{_{\asc{u} \cup  \Desb{u}}}\D{u}
 & \mbox{if~} \mf b \in \des{u}, \\
 \displaystyle P_{_{\des{u}}}\D{u}   & \mbox{otherwise},
\end{cases}
\\
M &=& \begin{cases}
e_u \otimes e_{\lambdab_u, (u, \mf b]}
 & \mbox{if~} \mf b \in \des{u}, \\
 e_u \otimes e_{\lambdab_u, \asc{u} \cup \{\mf b\}}  & \mbox{otherwise}.
\end{cases}
\eeqn
However, $\W{b}{u}$ is not a complex Jordan operator unless $\lambda_{uu}=0$. Indeed,
\beqn
CM-MC \,=\, \lambda_{uu} M.
\eeqn
It is worth noting that $(CM-MC)^2=0.$
Unfortunately, the above decomposition is not orthogonal.
Here is a way to get such a decomposition of $\W{b}{u}$.

\index{$\Db{b}{u}$}
\index{$\N{b}{u}$}

\begin{theorem} \label{o-deco}
Let $\mathscr T=(V, E)$ be a rooted directed tree with root $\rootb$ and let $\mathscr T_{\infty}=(\V, E_{\infty})$ be the extended directed tree associated with $\mathscr T$.
For $u \in V$, $\mf b \in V \setminus \{u\}$ and the weight system
$\lambdab_u = \{\lambda_{uv}\}_{v\in \V}$ of complex numbers,
let $\D{u}$ be the diagonal operator on $\mathscr T$ and let $\W{b}{u}$ be a weighted join operator on $\mathscr T$.
Consider
the closed subspace $\Hi{b}{u}$ of $\ell^2(V)$, given by
\beq \label{H-u}
\Hi{b}{u} &=& \begin{cases}
\displaystyle \ell^2(\asc{u} \cup  \desb{u})
 & \mbox{if~} \mf b \in \des{u},  \\
 \displaystyle \ell^2(\mathsf{Des}_u(u))   & \mbox{otherwise}.
\end{cases}
\eeq
Then the following statements hold:
\begin{enumerate}
\item The weighted join operator $\W{b}{u}$ admits the decomposition \beq \label{deco-W} \W{b}{u}\,=\,\Db{b}{u} \oplus \N{b}{u}~ \mbox{on~} \ell^2(V)\,=\,\Hi{b}{u} \oplus \big(\ell^2(V) \ominus \Hi{b}{u}\big),\eeq where $\Db{b}{u}$ is a densely defined diagonal operator in $\Hi{b}{u}$ and $\N{b}{u}$ is a bounded linear rank one operator on $\ell^2(V) \ominus \Hi{b}{u}$.
\item $\Db{b}{u}$ and $\N{b}{u}$ are given by
\beq \label{til-D}
\Db{b}{u} &=& \D{u}|_{_{\Hi{b}{u}}}, \quad \mathscr D(\Db{b}{u})\,=\,\big\{f \in \Hi{b}{u} : \D{u}(f \oplus 0) \in \Hi{b}{u}\big\},
\\
\label{til-N}
\N{b}{u} &=&
e_u \otimes e_{_{\lambdab_u, A_u}},
\eeq
where $e_{_{\lambdab_u, A_u}}$ is as given in \eqref{e-lam} and the subset $A_u$ of $V$ is given by
\beq
\label{A-u}
A_u &=& \begin{cases}
 [u, \mf b]
 & \mbox{if~} \mf b \in \des{u}, \\
\asc{u} \cup \{\mf b, u\}  & \mbox{otherwise}.
\end{cases}
\eeq
%\item A bounded linear operator $A \in \ell^2(V)$ commutes with $\W{b}{u}$ if and only if
\end{enumerate}
\end{theorem}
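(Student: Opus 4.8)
The plan is to derive the orthogonal decomposition \eqref{deco-W} from the already established (non-orthogonal) one in Theorem \ref{dichotomy}; the only genuinely new manipulation is to move the single diagonal entry at the vertex $u$ out of the diagonal summand and into the rank one summand. I fix the domain at the outset by Corollary \ref{bdd-coro}, namely $\mathscr D(\W{b}{u}) = \mathscr D(P_{_{\des{u}}}\D{u})$, and I treat separately the two cases $\mf b \in \des{u}$ and $\mf b \notin \des{u}$ appearing in \eqref{H-u} and \eqref{A-u}.

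First I would record the support picture. Using the canonical decomposition \eqref{cg-dec} together with \eqref{desb}: when $\mf b \in \des{u}$ one has $V = (\asc{u} \cup \desb{u}) \cupdot ([u, \mf b] \cup V_u)$, so $\Hi{b}{u} = \ell^2(\asc{u} \cup \desb{u})$ has complement $\ell^2([u, \mf b] \cup V_u)$; when $\mf b \notin \des{u}$ one has $\Hi{b}{u} = \ell^2(\mathsf{Des}_u(u))$ with complement $\ell^2(\asc{u} \cup V_u \cup \{u\})$. In both cases the vertex $u$ lies in the support of $\ell^2(V) \ominus \Hi{b}{u}$, and the whole set $A_u$ of \eqref{A-u} is contained in that support (for $\mf b \notin \des{u}$ this uses $\mf b \in \asc{u} \cup V_u$, valid since $\mf b \neq u$ and $\mf b \notin \des{u}$).

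The key step is the algebraic rearrangement. Peeling the term $\lambda_{uu}\,e_u \otimes e_u$ off the diagonal summand of Theorem \ref{dichotomy} and invoking the elementary identity $\lambda_{uu}\,e_u \otimes e_u + e_u \otimes e_{_{\lambdab_u, B}} = e_u \otimes e_{_{\lambdab_u, B \cup \{u\}}}$ for $u \notin B$ (immediate from \eqref{e-lam} and \eqref{adjoint-ro}), I obtain, when $\mf b \in \des{u}$,
\[
\W{b}{u} \,=\, \Big(\textstyle\sum_{v \in \asc{u} \cup \desb{u}} \lambda_{uv}\, e_v \otimes e_v\Big) + e_u \otimes e_{_{\lambdab_u, [u, \mf b]}},
\]
since $\Desb{u} = \desb{u} \cupdot \{u\}$ and $\{u\} \cup (u, \mf b] = [u, \mf b]$; the case $\mf b \notin \des{u}$ is identical with $\asc{u} \cup \{\mf b\}$ replacing $(u, \mf b]$, yielding $A_u = \asc{u} \cup \{\mf b, u\}$. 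This is an equality of operators on the fixed domain $\mathscr D(P_{_{\des{u}}}\D{u})$, and it exhibits $\N{b}{u} = e_u \otimes e_{_{\lambdab_u, A_u}}$, a bounded rank one operator because $A_u$ is finite ($\asc{u}$ being finite in a rooted tree and $[u, \mf b]$ a finite directed path).

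It then remains to read off \eqref{deco-W}. The diagonal summand is supported on $\supp \Hi{b}{u}$, hence restricts to the diagonal operator $\Db{b}{u} = \D{u}|_{\Hi{b}{u}}$, densely defined in $\Hi{b}{u}$ (its domain contains $\mbox{span}\{e_v : v \in \supp \Hi{b}{u}\}$) with domain as in \eqref{til-D}, since $\D{u}(f \oplus 0)$ is automatically supported on $\supp \Hi{b}{u}$. By the support picture, both the range vector $e_u$ and the reading set $A_u$ of $\N{b}{u}$ lie in $\supp(\ell^2(V) \ominus \Hi{b}{u})$, so $\N{b}{u}$ maps $\ell^2(V) \ominus \Hi{b}{u}$ into itself; thus $\Hi{b}{u}$ and its complement both reduce $\W{b}{u}$, and \eqref{deco-W} follows. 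The orthogonal splitting of the domain, $\mathscr D(\W{b}{u}) = \mathscr D(\Db{b}{u}) \oplus (\ell^2(V) \ominus \Hi{b}{u})$, is immediate from Corollary \ref{bdd-coro}: the summability condition cutting out $\mathscr D(P_{_{\des{u}}}\D{u})$ differs from the one over $\supp \Hi{b}{u}$ only on the finite set $[u, \mf b]$ (resp. $\{u\}$), which imposes no constraint, so the complementary coordinates remain free, consistent with the boundedness of $\N{b}{u}$. I expect the main obstacle to be exactly this bookkeeping that makes the decomposition genuinely orthogonal: one must verify that the vertex $u$ together with all of $A_u$ is relocated into the complement of $\Hi{b}{u}$, which hinges on the parenthesis-versus-bracket distinction between $\desb{u}$ and $\Desb{u}$; conceptually the whole argument is just the transfer of a single diagonal entry into the rank one block.
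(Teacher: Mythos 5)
Your proof is correct and takes essentially the same route as the paper's: both deduce the theorem from Theorem \ref{dichotomy}, the only genuinely new content being the transfer of the diagonal entry $\lambda_{uu}\, e_u \otimes e_u$ into the rank one block and the check that $\Hi{b}{u}$ and its orthogonal complement then reduce $\W{b}{u}$. The paper compresses this into the observation that $P_{\Hi{b}{u}}$ commutes with $\D{u}$ (citing \cite[Proposition 1.15]{Sc}) followed by the assertion that the conclusions now follow from Theorem \ref{dichotomy}; your support bookkeeping and the identity $\lambda_{uu}\,e_u\otimes e_u + e_u\otimes e_{_{\lambdab_u, B}} = e_u\otimes e_{_{\lambdab_u, B\cup\{u\}}}$ simply make explicit what that sentence leaves implicit.
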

\begin{proof} Since the orthogonal projection $P_{\Hi{b}{u}}$ commutes with $\D{u}$, one may appeal to \cite[Proposition 1.15]{Sc} to conclude that
$\Hi{b}{u}$ (identified with a subspace of $\ell^2(V)$) is a reducing subspace for $\D{u}$. The desired conclusions in (i) and (ii) now follow from Theorem \ref{dichotomy}.
%This completes the proof.
\end{proof}

\index{$(\Db{b}{u}, \N{b}{u}, \Hi{b}{u})$}
%\index{$\supp\,\Hi{b}{u}$}
\index{$\Hi{b}{u}$}
\index{$\Hi{\mf u}{u}$}

We find it convenient to denote the orthogonal decomposition \eqref{deco-W} of $\W{b}{u},$ $\mf b \neq u$, as ensured by Theorem \ref{o-deco}, by the triple $(\Db{b}{u}, \N{b}{u}, \Hi{b}{u})$, where $\Hi{b}{u}$, $\Db{b}{u}$ and $\N{b}{u}$ are given by \eqref{H-u}, \eqref{til-D} and \eqref{til-N} respectively. For the sake of convenience, 
we set 
\beq \label{H-u-1}
\Hi{\mf u}{u} \,:=\,\ell^2(V \setminus \{u\}). \eeq 
Note that $\W{\bf u}{u}$ admits the decomposition \eqref{deco-W} with $\N{\bf u}{u}=\lambda_{uu} e_u \otimes e_u.$ 
%Further, by $\supp\,\Hi{b}{u}$, we understand the subset $U_u$ of $V$ defined by \eqref{U-u}.
{\it In what follows, we will be interested in only those vertices $u \in V$ for which $\Hi{b}{u}$ is of infinite dimension.}

In the remaining part of this section, we present some immediate consequences of Theorem \ref{o-deco}.
%(recall that $S \in B(\mathcal H)$ is a {\it contraction} if $\|S\| \Le 1$).
\begin{corollary}
Let $\mathscr T=(V, E)$ be a rooted directed tree with root $\rootb$ and let $\mathscr T_{\infty}=(\V, E_{\infty})$ be the extended directed tree associated with $\mathscr T$. For $u \in V$, $\mf b \in V \setminus \{u\}$ and the weight system
$\lambdab_u = \{\lambda_{uv}\}_{v\in \V}$ of complex numbers, let $\W{b}{u}$ denote
 the weighted join operator  on ${\mathscr T}$ and
let $(\Db{b}{u}, \N{b}{u}, \Hi{b}{u})$ denote the orthogonal decomposition of $\W{b}{u}.$
If $\lambdab_u \in \ell^{\infty}(\des{u})$, then $\|\W{b}{u}\| = \max \big \{\|\Db{b}{u}\|, \|\N{b}{u}\|\big\}.$ Further,
\beqn  \|\Db{b}{u}\|  ~=
\displaystyle \sup_{v \in \supp\,\Hi{b}{u}} |\lambda_{uv}|,
\quad
\|\N{b}{u}\| \,=\,
\displaystyle \Big(\sum_{v \in A_u}|\lambda_{uv}|^2\Big)^{1/2},
\eeqn
where $A_u$ is given by \eqref{A-u}.
%In particular, $\W{b}{u}$ is a contraction if and only if so are $\Db{b}{u}$ and $\N{b}{u}$.
\end{corollary}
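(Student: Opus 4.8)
The plan is to read all three norm formulas directly off the orthogonal decomposition $\W{b}{u} = \Db{b}{u} \oplus \N{b}{u}$ on $\ell^2(V) = \Hi{b}{u} \oplus (\ell^2(V) \ominus \Hi{b}{u})$ supplied by Theorem \ref{o-deco}, together with the elementary fact that the operator norm of an orthogonal direct sum of two bounded operators is the maximum of their norms. First I would note that the hypothesis $\lambdab_u \in \ell^{\infty}(\des{u})$ forces $\W{b}{u}$ to be bounded by Theorem \ref{bdd}, so both summands are bounded. The rank one summand $\N{b}{u} = e_u \otimes e_{\lambdab_u, A_u}$ is in any case bounded, since $A_u$ is finite (it is either the directed path $[u, \mf b]$ or $\asc{u} \cup \{\mf b, u\}$, both finite in a rooted tree), whence $e_{\lambdab_u, A_u} \in \ell^2(V)$.

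For the max-of-norms identity, I would use that $\W{b}{u}$ acts as $\Db{b}{u}$ on $\Hi{b}{u}$ and as $\N{b}{u}$ on the complementary summand: writing $f = f_1 \oplus f_2$ gives $\|\W{b}{u}f\|^2 = \|\Db{b}{u}f_1\|^2 + \|\N{b}{u}f_2\|^2 \le \max\{\|\Db{b}{u}\|^2, \|\N{b}{u}\|^2\}\,\|f\|^2$, and testing on vectors supported in a single summand shows this bound is sharp, yielding $\|\W{b}{u}\| = \max\{\|\Db{b}{u}\|, \|\N{b}{u}\|\}$. Next, since $\Db{b}{u} = \D{u}|_{\Hi{b}{u}}$ is diagonal with entries $\{\lambda_{uv} : v \in \supp \Hi{b}{u}\}$, its norm is the supremum of the moduli of those entries, namely $\sup_{v \in \supp \Hi{b}{u}} |\lambda_{uv}|$. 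For the rank one summand, the identity $\|x \otimes y\| = \|x\|\|y\|$ recorded in Section 3, combined with $\|e_u\| = 1$ and the definition \eqref{e-lam} of $e_{\lambdab_u, A_u}$, gives $\|\N{b}{u}\| = \|e_{\lambdab_u, A_u}\| = \big(\sum_{v \in A_u} |\lambda_{uv}|^2\big)^{1/2}$.

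The only point requiring real care is the finiteness of $\sup_{v \in \supp \Hi{b}{u}} |\lambda_{uv}|$: here I would observe that $\supp \Hi{b}{u}$ is contained in $\des{u} \cup \asc{u}$ in both cases of \eqref{H-u}, that $\asc{u}$ is finite for a rooted tree, and that $\lambdab_u$ is bounded on $\des{u}$ by hypothesis, so the supremum is indeed finite and the diagonal summand is bounded. Beyond this bookkeeping, the argument is a direct application of standard norm identities, and I anticipate no substantive obstacle.
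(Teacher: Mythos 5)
Your proposal is correct and takes essentially the same approach as the paper: the paper states this corollary without any written proof, presenting it as an immediate consequence of the orthogonal decomposition $\W{b}{u}=\Db{b}{u}\oplus\N{b}{u}$ from Theorem \ref{o-deco}, and your argument supplies exactly the routine verifications that are left to the reader (boundedness via Theorem \ref{bdd}, the max-norm identity for an orthogonal direct sum of bounded operators, the diagonal norm formula, and $\|x\otimes y\|=\|x\|\,\|y\|$ applied to $\N{b}{u}=e_u\otimes e_{_{\lambdab_u, A_u}}$). Your additional care about the finiteness of $\sup_{v\in\supp\,\Hi{b}{u}}|\lambda_{uv}|$, using that $\asc{u}$ is finite in a rooted tree, is a sound piece of bookkeeping consistent with the paper's setup.
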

%\begin{remark}
%If $\lambdab_u$ is unimodular, then $\W{b}{u}$ is a rank one perturbation of a unitary operator.
%\end{remark}
%

The following two corollaries give more insight into the structure of weighted join operators on rooted directed trees. The first of which is motivated by the work \cite{An}. We say that a densely defined linear operator $T$ in $\mathcal H$ admits {\it a large null summand} if it has an infinite dimensional reducing subspace contained in its kernel.
\begin{corollary} \label{lns}
Let $\mathscr T=(V, E)$ be a leafless, rooted directed tree with root $\rootb$ and let $\mathscr T_{\infty}=(\V, E_{\infty})$ be the extended directed tree associated with $\mathscr T$.
For $u \in V$, $\mf b \in V \setminus \{u\}$ and the weight system
$\lambdab_u = \{\lambda_{uv}\}_{v\in \V}$ of complex numbers, let $\W{b}{u}$ denote
 the weighted join operator  on ${\mathscr T}$.
If there exists a branching vertex $w \in \asc{u},$ then
 $\W{b}{u}$ has a large null summand.
\end{corollary}
\begin{proof}
Suppose there exists a branching vertex $w \in \asc{u}.$
By Theorem \ref{o-deco},
%$\ell^2(\supp\, {\Hi{b}{u}}^{\perp} \setminus A_u)$
\beqn (\ell^2(V) \ominus \Hi{b}{u}) \ominus \ell^2(A_u) ~\subseteq ~
\ker \W{b}{u} \cap \ker \big(\W{b}{u}\big)^*, \eeqn where $A_u$ is a finite set given by \eqref{A-u}. In view of \eqref{cg-dec0} and \eqref{H-u}, it suffices to check that $\mbox{card}(V_u)= \aleph_0.$ However, since $\mathscr T$ is leafless, this is immediate from Proposition \ref{lem-lns}(ii).
\end{proof}

%\index{$\mathcal B(\Omega)$}
%\index{$\mathcal C(\mathcal H)$}

Complex Jordan weighted join operators exist in abundance.
\begin{corollary} \label{coro-c-Jordan}
Let $\mathscr T=(V, E)$ be a rooted directed tree with root $\rootb$ and let $\mathscr T_{\infty}=(\V, E_{\infty})$ be the extended directed tree associated with $\mathscr T$. For $u \in V$, $\mf b \in V \setminus \{u\}$ and the weight system
$\lambdab_u = \{\lambda_{uv}\}_{v\in \V}$ of complex numbers, let $\W{b}{u}$ denote
 the weighted join operator  on ${\mathscr T}$.
 Then the following holds:
\begin{enumerate}
\item If $\lambda_{uu}=0$, then $\W{b}{u}$ is a complex Jordan operator of index $2.$
\item If $\lambda_{uu} \neq 0$, then there exists a bounded homomorphism $\Phi : \mathcal B_{\infty}(\sigma(\W{b}{u})) \rar \mathcal B(\ell^2(V))$ given by \beqn \Phi(f)\,=\,f(\W{b}{u}), \quad f \in  \mathcal B_{\infty}(\sigma(\W{b}{u})), \eeqn where $\mathcal B_{\infty}(\Omega)$ denotes the algebra of bounded Borel functions from a closed subset $\Omega$ of $\mathbb C$ into $\mathbb C$.  In this case,  $\Phi$ extends the polynomial functional calculus.
%\item If $\lambda_{uu} \neq 0$, then there exists a homomorphism $\Phi : \mathcal B(\sigma(\W{b}{u})) \rar \mathcal C(\ell^2(V))$ given by \beqn \Phi(f)\,=\,f(\W{b}{u}), \quad f \in  \mathcal B(\sigma(\W{b}{u})), \eeqn where $\mathcal B(\Omega)$ denotes the algebra of Borel functions from a closed subset $\Omega$ of $\mathbb C$ into $\mathbb C$ and $\mathcal C(\mathcal H)$ denotes the set of closed densely defined linear operators in $\mathcal H.$ In this case, we have the following:
%\begin{enumerate}
%\item $\Phi$ extends the polynomial functional calculus.
%\item If $\mathcal B_{\infty}(\Omega)$ denotes the algebra of bounded Borel functions from a compact space $\Omega$ into $\mathbb C$, then $\Phi|_{\mathcal B_{\infty}(\sigma(\W{b}{u}))}$ is a bounded homomorphism.
%\end{enumerate}
\end{enumerate}
\end{corollary}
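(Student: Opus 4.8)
The plan is to deduce both statements from the orthogonal decomposition $\W{b}{u} = \Db{b}{u} \oplus \N{b}{u}$ on $\ell^2(V) = \Hi{b}{u} \oplus (\ell^2(V) \ominus \Hi{b}{u})$ provided by Theorem \ref{o-deco}, in which $\Db{b}{u}$ is a (possibly unbounded) diagonal, hence normal, operator on $\Hi{b}{u}$ and $\N{b}{u} = e_u \otimes e_{_{\lambdab_u, A_u}}$ is a bounded rank one operator on $\ell^2(V) \ominus \Hi{b}{u}$. The single computation underlying everything is that $u \in A_u$ for both choices of $A_u$ in \eqref{A-u}, so that $\inp{e_u}{e_{_{\lambdab_u, A_u}}} = \sum_{v \in A_u} \lambda_{uv} \inp{e_u}{e_v} = \lambda_{uu}$; combined with the rule $(x \otimes y)^2 = \inp{x}{y}\, x \otimes y$ from \eqref{adjoint-ro}, this yields $(\N{b}{u})^2 = \lambda_{uu}\, \N{b}{u}$.

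For (i), suppose $\lambda_{uu} = 0$, so that $(\N{b}{u})^2 = 0$. Put $M := \Db{b}{u} \oplus 0$ and $N := 0 \oplus \N{b}{u}$; then $M$ is normal, $N$ is bounded with $N^2 = 0$, and $\W{b}{u} = M + N$. Since $\Hi{b}{u}$ reduces both and $M$, $N$ are supported on complementary summands, the products $MN$ and $NM$ are both the zero operator, whence $N \in \{M\}'$. This exhibits $\W{b}{u}$ as a complex Jordan operator (case (b) of the definition), with bounded nilpotent part $N$; the nilpotency index is exactly $2$ provided $\N{b}{u} \neq 0$, i.e. provided some weight $\lambda_{uv}$ with $v \in A_u$ is nonzero (otherwise $\W{b}{u} = \Db{b}{u}$ is diagonal).

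For (ii), suppose $\lambda_{uu} \neq 0$. Then $\N{b}{u} e_u = \lambda_{uu} e_u$ and $\N{b}{u}$ annihilates $[e_{_{\lambdab_u, A_u}}]^{\perp}$, so $\N{b}{u}$ has the two distinct eigenvalues $\lambda_{uu}$ and $0$ with eigenspaces spanning $\ell^2(V) \ominus \Hi{b}{u}$; equivalently $P := \lambda_{uu}^{-1}\N{b}{u}$ is a bounded idempotent with $\N{b}{u} = \lambda_{uu} P$. I would then define a two-point Borel calculus on the rank one summand by $f(\N{b}{u}) := f(\lambda_{uu}) P + f(0)(I - P)$, which is an algebra homomorphism (using $P^2 = P$ and $P(I-P) = 0$) bounded by $(\|P\| + \|I - P\|) \sup |f|$, and for which $p(\N{b}{u}) = p(0)(I - P) + p(\lambda_{uu}) P$ for every polynomial $p$, since $(\N{b}{u})^k = \lambda_{uu}^k P$ for $k \ge 1$. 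Setting $\Phi(f) := f(\Db{b}{u}) \oplus f(\N{b}{u})$, where $f(\Db{b}{u})$ is the Borel calculus of the normal operator $\Db{b}{u}$ and $\sigma(\W{b}{u}) = \sigma(\Db{b}{u}) \cup \{0, \lambda_{uu}\}$, produces a bounded homomorphism $\Phi : \mathcal B_{\infty}(\sigma(\W{b}{u})) \rar B(\ell^2(V))$ that sends each polynomial $p$ to $p(\W{b}{u})$.

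The step requiring the most care — and the main obstacle — is the final clause of (ii). When $\Db{b}{u}$ is unbounded, $p(\W{b}{u})$ is an unbounded operator for nonconstant $p$, so it is not literally in the range of $\Phi$; the precise assertion is that $\Phi(p)$ and $p(\W{b}{u})$ agree on the invariant core $\mathscr D_V$ of \eqref{inv-dom} (and agree as bounded operators exactly when $\W{b}{u}$ is bounded, i.e. when $\sigma(\W{b}{u})$ is bounded). I would therefore phrase the extension statement on $\mathscr D_V$ and verify it summand by summand, using that the diagonal calculus on $\Db{b}{u}$ already restricts correctly to polynomials and that the two-point formula above reproduces $p(\N{b}{u})$. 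The existence and boundedness of $\Phi$ itself require nothing beyond the diagonalizability of $\N{b}{u}$, which is exactly what $\lambda_{uu} \neq 0$ guarantees.
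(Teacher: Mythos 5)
Your proof takes essentially the same route as the paper: both parts are read off the orthogonal decomposition $\W{b}{u}=\Db{b}{u}\oplus \N{b}{u}$ of Theorem \ref{o-deco}, and your part (i) is the paper's argument verbatim in substance --- the identity $(\N{b}{u})^2=\lambda_{uu}\N{b}{u}$ gives nilpotency of index $2$ when $\lambda_{uu}=0$, and the vanishing of both cross products of $M=\Db{b}{u}\oplus 0$ and $N=0\oplus\N{b}{u}$ gives $N\in\{M\}'$, i.e. case (b) of the definition of a complex Jordan operator (your parenthetical that the index is exactly $2$ only when $\N{b}{u}\neq 0$ is a caveat the paper glosses over). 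Where you genuinely diverge is the formula in part (ii): the paper sets $f(\N{b}{u})=\frac{f(\lambda_{uu})}{\lambda_{uu}}\,e_u\otimes e_{_{\lambdab_u, A_u}}=f(\lambda_{uu})P$, with $P:=\lambda_{uu}^{-1}\N{b}{u}$, which is your formula with the $f(0)(I-P)$ term dropped. Your version $f(\N{b}{u})=f(\lambda_{uu})P+f(0)(I-P)$ is the sharper one: it is unital and reproduces $p(\N{b}{u})=p(0)(I-P)+p(\lambda_{uu})P$ for \emph{every} polynomial, whereas the paper's map is a non-unital homomorphism that agrees with the polynomial calculus only on polynomials with $p(0)=0$ (constants already fail). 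You are also right to flag that when $\Db{b}{u}$ is unbounded, $\sigma(\W{b}{u})$ is unbounded, so nonconstant polynomials do not belong to $\mathcal B_{\infty}(\sigma(\W{b}{u}))$ and the clause ``$\Phi$ extends the polynomial functional calculus'' must be interpreted either in the bounded case or as agreement on the invariant core $\mathscr D_V$; the paper passes over this silently. In short, your proposal is correct, structurally identical to the paper's proof, and in part (ii) it actually repairs the paper's formula so that the extension claim holds as stated.
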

\begin{proof} Let $(\Db{b}{u}, \N{b}{u}, \Hi{b}{u})$ denote the orthogonal decomposition of $\W{b}{u}.$
%Without loss of generality,
%we may assume that $\mf b \neq u$ (see Remark \ref{rmk-dicho}).
To verify (i),
assume that $\lambda_{uu}=0.$
By Theorem \ref{dichotomy},
\beqn
\W{b}{u} ~ = ~
\displaystyle P_{\!\!\Hi{b}{u}}\D{u}  + \N{b}{u}.
\eeqn
By a routine inductive argument, we obtain 
\beq \label{nilp-powers}
{\N{b}{u}}^k \,=\,
 \lambdab^{k-1}_{uu} e_u \otimes e_{_{\lambdab_u, A_u}}, \quad k \geqslant 1.
\eeq
It follows that
$\N{b}{u}$ is nilpotent of nilpotency index $2$. Also, it is easily seen that \beqn \N{b}{u}(\W{b}{u}-\N{b}{u}) \subseteq (\W{b}{u}-\N{b}{u})\N{b}{u}=0, \eeqn
which completes the verification of (i).

\index{$\mathcal B_{\infty}(\Omega)$}

Assume next that $\lambda_{uu} \neq 0$.
In view of \eqref{deco-W} and \cite[Theorem 13.24]{R}, it suffices to check that $\N{b}{u}$ given by \eqref{til-N} admits a Borel functional calculus.
By \eqref{nilp-powers},
%$\N{b}{u}e_u=\lambda_{uu}e_u,$ and hence
for any Borel measurable function $f : \sigma(\W{b}{u}) \rar \mathbb C$, $f(\N{b}{u})$ is a well-defined bounded linear operator given by
\beqn
f(\N{b}{u}) \,=\,
 \frac{f(\lambda_{uu})}{\lambda_{uu}} e_u \otimes e_{_{\lambdab_u, A_u}}.
\eeqn
Further, for any bounded Borel measurable function $f$ on $\sigma(\W{b}{u})$, \beqn \|f(\N{b}{u})\| ~\Le ~ \frac{|f(\lambda_{uu})|}{|\lambda_{uu}|}  \|e_{_{\lambdab_u, A_u}}\| ~\Le ~\|f\|_{\infty}\, \frac{\|e_{_{\lambdab_u, A_u}}\|}{|\lambda_{uu}|}.\eeqn
This completes the verification of (ii).
\end{proof}

The following is immediate from Theorem \ref{dichotomy} and the well-known characterization of diagonal compact operators \cite{Co}, once it is
observed that the class of bounded finite rank operators  is a  subset of  compact  operators, dense in the operator norm (in fact, it is also dense in Schatten $p$-class in its norm for every $p \geqslant 1$) 
(cf. \cite[Corollary 3.4.5]{Jablonski}). The
reader is referred to \cite{Si1} for the basics of operators in
Schatten classes.
\begin{proposition} \label{cpt}
Let $\mathscr T=(V, E)$ be a rooted directed tree with root $\rootb$ and let $\mathscr T_{\infty}=(\V, E_{\infty})$ be the extended directed tree associated with $\mathscr T$. For $u \in V,$ $\mf b \in V \setminus \{u\}$ and the weight system
$\lambdab_u = \{\lambda_{uv}\}_{v\in \V}$ of complex numbers,
let $\W{b}{u}$ denote
 the weighted join operator  on ${\mathscr T}$. Then, for any $p \in [1, \infty)$, the following hold:
\begin{enumerate}
\item $\W{b}{u}$ is compact if and only if $\displaystyle \lim_{v \in \des{u}} \lambda_{uv} =0$.
%\item $\W{b}{u}$ has compact resolvent if and only there exists $\mu$ in the regularity domain of $\W{b}{u}$ such that $\displaystyle \lim_{v \in \des{u}} \frac{1}{\lambda_{uv}-\mu} =0$.
\item $\W{b}{u}$ is Schatten $p$-class if and only if $\displaystyle \sum_{v \in \des{u}} |\lambda_{uv}|^p < \infty$.
\end{enumerate}
Here the limit and sum are understood in a generalized sense (see \eqref{p-order}$)$.
\end{proposition}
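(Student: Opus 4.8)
The plan is to read off both statements from the orthogonal decomposition of $\W{b}{u}$ supplied by Theorem \ref{o-deco}, thereby reducing the whole question to a single diagonal operator. Recall from \eqref{deco-W} that $\W{b}{u} = \Db{b}{u} \oplus \N{b}{u}$ with respect to $\ell^2(V) = \Hi{b}{u} \oplus (\ell^2(V) \ominus \Hi{b}{u})$, where $\Db{b}{u}$ is a (possibly unbounded) diagonal operator and $\N{b}{u} = e_u \otimes e_{_{\lambdab_u, A_u}}$ is bounded of rank at most one (see \eqref{til-D} and \eqref{til-N}). Since every bounded finite rank operator is compact and lies in the Schatten $p$-class for each $p \in [1, \infty)$, and since these two classes are stable under forming an orthogonal direct sum with such an operator, $\W{b}{u}$ is compact (resp. Schatten $p$-class) if and only if $\Db{b}{u}$ is. In particular compactness of $\W{b}{u}$ forces boundedness, which is harmless: each of the conditions in (i) and (ii) already implies $\lambdab_u \in \ell^{\infty}(\des{u})$, the boundedness criterion of Theorem \ref{bdd}.

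Next I would apply the standard description of diagonal operators with respect to the orthonormal basis $\{e_v\}_{v \in \supp \Hi{b}{u}}$ of $\Hi{b}{u}$ (see \cite{Co}): a diagonal operator is compact exactly when, for every $\varepsilon > 0$, only finitely many of its diagonal entries have modulus $\Ge \varepsilon$, and it belongs to the Schatten $p$-class exactly when the $p$-th powers of the moduli of its entries are summable. By \eqref{til-D} the diagonal entries of $\Db{b}{u}$ are precisely $\{\lambda_{uv} : v \in \supp \Hi{b}{u}\}$, and by \eqref{H-u} the support equals $\asc{u} \cup \desb{u}$ when $\mf b \in \des{u}$ and equals $\mathsf{Des}_u(u) = \des{u} \setminus \{u\}$ otherwise.

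It then remains to trade $\supp \Hi{b}{u}$ for $\des{u}$ in both criteria. The symmetric difference of these two sets is finite in either case: it is contained in $\asc{u} \cup [u, \mf b]$ when $\mf b \in \des{u}$ (as $\desb{u}$ differs from $\des{u}$ by a finite subset of $[u, \mf b]$, and $\asc{u}$ is finite), and it is $\{u\}$ otherwise. Because modifying a diagonal operator at finitely many entries alters it only by a finite rank operator, neither compactness nor Schatten $p$-class membership is sensitive to this difference; hence the criterion over $\supp \Hi{b}{u}$ holds if and only if the corresponding criterion over $\des{u}$ holds. This gives exactly $\lim_{v \in \des{u}} \lambda_{uv} = 0$ in (i) and $\sum_{v \in \des{u}} |\lambda_{uv}|^p < \infty$ in (ii), with the limit and sum understood in the generalized (net) sense induced by the depth order \eqref{p-order}.

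The argument carries no hard analytic content; the only place demanding attention is the bookkeeping in the last paragraph — correctly identifying $\supp \Hi{b}{u}$ from \eqref{H-u} in each of the two cases of Theorem \ref{o-deco}, and confirming that the finitely many vertices of $\asc{u}$ and of the path $[u, \mf b]$ are immaterial to both ideal memberships — together with interpreting the compactness criterion for the diagonal operator $\Db{b}{u}$ in the generalized sense dictated by \eqref{p-order}. The degenerate situation in which $\des{u}$ (hence $\Hi{b}{u}$) is finite is automatically consistent, since then $\W{b}{u}$ is finite rank and both conditions hold vacuously.
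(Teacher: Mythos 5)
Your proof is correct and follows essentially the same route as the paper: the paper's own proof simply cites Theorem \ref{dichotomy} (the diagonal-plus-rank-one decomposition) together with the well-known characterization of compact and Schatten $p$-class diagonal operators, observing that bounded finite rank operators are compact and lie in every Schatten class. Your substitution of the orthogonal decomposition of Theorem \ref{o-deco} for Theorem \ref{dichotomy}, and your explicit bookkeeping of the finite symmetric difference between $\supp\,\Hi{b}{u}$ and $\des{u}$, merely spell out details the paper leaves implicit.
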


%\index{$[S, T]$}

We conclude this section with an application to the theory of commutators of compact operators. 
%Recall that the {\it cross commutator} of operators $S$ and $T$ in $B(\mathcal H)$ is given by $[S, T]=ST-TS.$
%\uwam{trace?}
\begin{corollary} \label{cpt-comm}
Let $\mathscr T=(V, E)$ be a rooted directed tree with root $\rootb$ and let $\mathscr T_{\infty}=(\V, E_{\infty})$ be the extended directed tree associated with $\mathscr T$. For $u \in V,$ $\mf b \in V \setminus \{u\},$ and the weight system
$\lambdab_u = \{\lambda_{uv}\}_{v\in \V}$ of complex numbers,
 let $\W{b}{u}$ denote
 the weighted join operator  on ${\mathscr T}$. Then the following statements hold:
\begin{enumerate}
\item If $\displaystyle \lim_{v \in \des{u}} \lambda_{uv} =0$, then there exist compact operators $K, L \in B(\ell^2(V))$ such that
$\W{b}{u} = KL-LK.$
\item If $\displaystyle \sum_{v \in \des{u}} |\lambda_{uv}|^p < \infty$ for some $p \in [1, \infty)$, then there exist Schatten class operators $K, L \in B(\ell^2(V))$  such that
$\W{b}{u} =KL-LK.$
\end{enumerate}
\end{corollary}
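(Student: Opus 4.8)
The plan is to deduce the statement from Proposition \ref{cpt} together with the classical theory of commutators in ideals of compact operators. First I would observe that the two hypotheses are precisely the membership conditions appearing in Proposition \ref{cpt}: under the hypothesis of (i), Proposition \ref{cpt}(i) gives that $\W{b}{u}$ is compact, while under the hypothesis of (ii), Proposition \ref{cpt}(ii) gives that $\W{b}{u}$ belongs to the Schatten $p$-class $\mathcal{S}_p$. Thus both parts reduce to purely operator-theoretic assertions: every compact operator is a commutator $KL-LK$ of two compact operators, and every operator in $\mathcal{S}_p$ is a commutator of two operators lying in a Schatten class. I would then invoke the known commutator theorems of this type (see, e.g., \cite{An, Si1} and the references therein) to produce the required $K, L \in B(\ell^2(V))$, completing both (i) and (ii).

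For part (i), since a general compact operator need not be trace class, there is no obstruction and the representation $\W{b}{u}=KL-LK$ with compact $K, L$ follows directly. For part (ii) I would be careful to take $K$ and $L$ in a Schatten class $\mathcal{S}_q$ with $q>2p$; by the Schatten--H\"older inequality $KL$ and $LK$ then lie in $\mathcal{S}_{q/2}$ with $q/2>p$, so their difference a priori lies in $\mathcal{S}_{q/2}\supseteq \mathcal{S}_p$, and the construction arranges this difference to equal the prescribed operator, which indeed lies in $\mathcal{S}_p$. In particular the factors $K, L$ are again Schatten class operators, as required.

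The main obstacle is the trace obstruction, and it is exactly what dictates the choice of exponent in (ii). If one attempted to write $\W{b}{u}$ as $KL-LK$ with $KL$ and $LK$ individually trace class---for instance with $K, L$ Hilbert--Schmidt in the case $p=1$---then cyclicity of the trace would force $\mathrm{tr}(KL-LK)=0$. However, by Theorem \ref{dichotomy} and Remark \ref{rmk-dicho} the diagonal part of $\W{b}{u}$ contributes $\sum_{v\in\supp\,\Hi{b}{u}}\lambda_{uv}$ and the rank one part contributes $\lambda_{uu}$, so $\mathrm{tr}(\W{b}{u})$ is in general nonzero. Hence the factors cannot have trace class products, which is precisely why $q$ must be taken strictly larger than $2p$ (so that $KL, LK\notin \mathcal{S}_1$ and the cyclicity argument does not apply); the same phenomenon underlies (i) in the special case where $\W{b}{u}$ happens to be trace class. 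Verifying that a commutator representation nonetheless exists in this non-trace-class regime is the substantive point, and it is supplied by the cited commutator theorems; the remainder of the argument is the elementary reduction via Proposition \ref{cpt}.
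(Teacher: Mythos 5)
There is a genuine gap, and it sits exactly at the point where your argument leaves Proposition \ref{cpt}. The reduction itself is fine: the hypotheses of (i) and (ii) are, by Proposition \ref{cpt}, equivalent to $\W{b}{u}$ being compact, respectively Schatten $p$-class. But the operator-theoretic ``black box'' you then invoke --- that \emph{every} compact operator is a commutator of two compact operators, and every Schatten-class operator is a commutator of Schatten-class operators --- is not a theorem available in the literature you cite (or anywhere). Whether every compact operator is a single commutator $KL-LK$ of compact operators is a well-known open problem going back to Pearcy and Topping; what Anderson's paper \cite{An} actually proves (Theorems 1 and 3 there, which are what this corollary rests on) is that a compact, respectively Schatten-class, operator \emph{which has a large null summand} --- an infinite-dimensional reducing subspace contained in its kernel --- is such a commutator. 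Your proposal never verifies this hypothesis, and without it Anderson's theorems simply do not apply. Note also that your argument for (i), ``there is no trace obstruction, hence the representation follows,'' is a non sequitur: absence of an obstruction is not an existence proof, and existence is precisely the hard content supplied by Anderson under the large-null-summand assumption.

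The missing ingredient is exactly what the paper inserts between Proposition \ref{cpt} and Anderson's theorems, namely Corollary \ref{lns} and the orthogonal decomposition $(\Db{b}{u}, \N{b}{u}, \Hi{b}{u})$ of Theorem \ref{o-deco}: since $\N{b}{u}=e_u \otimes e_{_{\lambdab_u, A_u}}$ is rank one with $A_u$ finite, the subspace $\big(\ell^2(V) \ominus \Hi{b}{u}\big) \ominus \ell^2(A_u)$ reduces $\W{b}{u}$ and is annihilated by both $\W{b}{u}$ and $\big(\W{b}{u}\big)^*$, so it furnishes the required large null summand whenever it is infinite-dimensional (which is the content of Corollary \ref{lns}, via $\mbox{card}(V_u)=\aleph_0$). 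A corrected version of your proof must establish this reducing null subspace before citing \cite{An}; once that is done, both (i) and (ii) follow as in the paper. As a smaller point, your discussion of the exponent in (ii) conflates two thresholds: the trace obstruction concerns only whether $KL, LK$ are trace class, i.e.\ whether the factors lie in a Schatten $q$-class with $q \leqslant 2$; the condition $q>2p$ you impose is not what the obstruction dictates, and in Anderson's Theorem 3 the classes of the factors come out of his construction rather than from a H\"older-type bookkeeping argument.
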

\begin{proof}
This is immediate from Proposition \ref{cpt}, Corollary
\ref{lns} and Anderson's Theorems \cite[Theorems 1 and 3]{An}.
\end{proof}

\section{Commutant}

The main result of this section describes commutants of weighted join operators (cf. \cite[Proposition 5.4]{I}, \cite[Theorem 1.8]{FJKP-1}). In general,  the weighted join operators $\W{b}{u}$ do not belong to the class ($\mathcal R \mathcal O$) as introduced in \cite{FJKP-1}. Indeed, in contrast with \cite[Theorem 1.8]{FJKP-1}, the commutant of $\W{b}{u}$ need not be abelian (see Corollary \ref{coro-commutant}). 

\begin{theorem}[Commutant] \label{commutant}
Let $\mathscr T=(V, E)$ be a rooted directed tree with root $\rootb$ and let $\mathscr T_{\infty}=(\V, E_{\infty})$ be the extended directed tree associated with $\mathscr T$.
For $u \in V$ and $\mf b \in V \setminus \{u\}$, consider the weight system
$\lambdab_u = \{\lambda_{uv}\}_{v\in \V}$ of  complex numbers and assume that the weighted join operator $\W{b}{u}$ on $\mathscr T$ belongs to $B(\ell^2(V))$. Let $(\Db{b}{u}, \N{b}{u}, \Hi{b}{u})$ denote the orthogonal decomposition of $\W{b}{u}$, $A_u$  be as given in \eqref{A-u},  and let $W_u$ be given by
\beq
 \label{W-u} W_u &:=& \big\{v \in  \supp\, \Hi{b}{u} : \lambda_{uv} = \lambda_{uu} \big\}. \eeq
If $\ker \Db{b}{u}=\{0\}$ and $\N{b}{u} \neq 0$, then the following statements are equivalent:
\begin{enumerate}
\item $X \in B(\ell^2(V))$ belongs to the commutant $\{\W{b}{u}\}'$ of $\W{b}{u}$.
\item $X \in B(\ell^2(V))$ admits the orthogonal decomposition
\beqn
X \,=\, \left[
\begin{array}{cc}
P & f_0 \otimes e_{_{\lambdab_u, A_u}}  \\
e_u \otimes e_{{\mu}_u, W_u}  & S \\
\end{array}
\right]~\mbox{on~}\ell^2(V) = \Hi{b}{u} \oplus \big(\ell^2(V) \ominus \Hi{b}{u}\big),
\eeqn
where $P$ is a block diagonal operator in $\{\Db{b}{u}\}'$, $f_0 \in \ker(\Db{b}{u}-\lambda_{uu})$,
%$f_0 \in \Hi{b}{u}$ such that $\Db{b}{u}f_0 = \lambda_{uu}f_0$,
${\mu}_u:=\{\mu_{uv}\}_{v \in \supp\, \Hi{b}{u}}$ belongs to $\ell^2(\supp\, \Hi{b}{u}),$  and $S$ is any operator in  $B(\ell^2(V) \ominus \Hi{b}{u})$ such that
$Se_u= \inp{Se_{u}}{e_{u}}e_u$ and $S^*e_{_{\lambdab_u, A_u}}=\overline{\inp{Se_{u}}{e_{u}}}\, e_{_{\lambdab_u, A_u}}$.
%$e_u$ is an eigenvector for $S$ corresponding to the eigenvalue $\inp{Se_{u}}{e_{u}}$ and $e_{\lambdab_u, A_u}$ is an eigenvector for $S^*$ corresponding to the eigenvalue $\overline{\inp{Se_{u}}{e_{u}}}$.
\end{enumerate}
%In this case,  $P$ is a block diagonal operator and $S$ is any operator in  $B(\ell^2(V) \ominus \Hi{b}{u})$ such that $e_u$ is an eigenvector for $S$ corresponding to the eigenvalue $\inp{Se_{u}}{e_{u}}$ and $e_{\lambdab_u, A_u}$ is an eigenvector for $S^*$ corresponding to the eigenvalue $\overline{\inp{Se_{u}}{e_{u}}}$.
\end{theorem}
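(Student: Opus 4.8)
The plan is to work with the orthogonal decomposition $\W{b}{u}=\Db{b}{u}\oplus\N{b}{u}$ on $\ell^2(V)=\Hi{b}{u}\oplus(\ell^2(V)\ominus\Hi{b}{u})$ furnished by Theorem \ref{o-deco}, and to write an arbitrary $X\in B(\ell^2(V))$ as a block matrix $X=\left[\begin{smallmatrix}A & B \\ C & S\end{smallmatrix}\right]$ relative to this decomposition, where $A\in B(\Hi{b}{u})$, $S\in B(\ell^2(V)\ominus\Hi{b}{u})$, $B:\ell^2(V)\ominus\Hi{b}{u}\to\Hi{b}{u}$ and $C:\Hi{b}{u}\to\ell^2(V)\ominus\Hi{b}{u}$. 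Since $\W{b}{u}$ is block diagonal, the identity $X\W{b}{u}=\W{b}{u}X$ is equivalent to the four \emph{decoupled} relations $A\Db{b}{u}=\Db{b}{u}A$, $B\N{b}{u}=\Db{b}{u}B$, $C\Db{b}{u}=\N{b}{u}C$ and $S\N{b}{u}=\N{b}{u}S$. I would then treat these one at a time, the first and last being immediate and the two off-diagonal ones carrying the real content.

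The relation $A\Db{b}{u}=\Db{b}{u}A$ says precisely that $A=:P$ lies in $\{\Db{b}{u}\}'$; since $\Db{b}{u}$ is diagonal, $P$ is automatically block diagonal with respect to the eigenspace decomposition of $\Db{b}{u}$. For the relation $S\N{b}{u}=\N{b}{u}S$, I would recall that $\N{b}{u}=e_u\otimes e_{_{\lambdab_u, A_u}}$ with $e_u$ a unit vector and $e_{_{\lambdab_u, A_u}}\neq 0$ (here the hypothesis $\N{b}{u}\neq 0$ enters); after normalizing the second factor, Lemma \ref{lem-inj-ten}(iv) yields exactly the two stated conditions $Se_u=\inp{Se_u}{e_u}e_u$ and $S^*e_{_{\lambdab_u, A_u}}=\overline{\inp{Se_u}{e_u}}\,e_{_{\lambdab_u, A_u}}$, the commutant being unchanged under scaling of $e_{_{\lambdab_u, A_u}}$.

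The heart of the argument is the off-diagonal blocks, where the hypothesis $\ker\Db{b}{u}=\{0\}$ (equivalently, $\lambda_{uv}\neq 0$ for every $v\in\supp\Hi{b}{u}$) is decisive. Writing $\phi:=e_{_{\lambdab_u, A_u}}$ and noting $\N{b}{u}h=\inp{h}{\phi}e_u$ together with $\N{b}{u}e_u=\lambda_{uu}e_u$, the relation $B\N{b}{u}=\Db{b}{u}B$ reads $\Db{b}{u}(Bh)=\inp{h}{\phi}\,Be_u$ for all $h$. For $h\perp\phi$ this forces $\Db{b}{u}(Bh)=0$, hence $Bh=0$ by injectivity; thus $B$ annihilates $[\phi]^\perp$ and therefore $B=f_0\otimes\phi$ with $f_0:=\|\phi\|^{-2}B\phi$. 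Substituting back gives $\Db{b}{u}f_0=\lambda_{uu}f_0$, i.e. $f_0\in\ker(\Db{b}{u}-\lambda_{uu})$, which is the claimed form of $B$. For $C$ I would instead evaluate $C\Db{b}{u}=\N{b}{u}C$ on each basis vector $e_v$, $v\in\supp\Hi{b}{u}$: since $\Db{b}{u}e_v=\lambda_{uv}e_v$, this gives $\lambda_{uv}\,Ce_v=\inp{Ce_v}{\phi}e_u$, so $Ce_v=\alpha_v e_u$ is a scalar multiple of $e_u$ (using $\lambda_{uv}\neq 0$). Hence $C=e_u\otimes\nu$ with $\nu=\sum_v\bar\alpha_v e_v$, and feeding this back into the same identity produces $(\lambda_{uv}-\lambda_{uu})\alpha_v=0$; thus $\alpha_v=0$ unless $\lambda_{uv}=\lambda_{uu}$, i.e. $\nu$ is supported on $W_u$. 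Boundedness of $X$ forces $\nu\in\ell^2$, giving $C=e_u\otimes e_{_{\mu_u, W_u}}$ with $\mu_u\in\ell^2(\supp\Hi{b}{u})$.

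Assembling the four blocks proves the implication (i)$\Rightarrow$(ii). The converse (ii)$\Rightarrow$(i) is then a direct verification: each prescribed block satisfies its corresponding relation — for instance $C\Db{b}{u}h=\lambda_{uu}\inp{h}{e_{_{\mu_u, W_u}}}e_u=\N{b}{u}Ch$ using $\lambda_{uv}=\lambda_{uu}$ on $W_u$, and similarly $B\N{b}{u}=\Db{b}{u}B$ follows from $\Db{b}{u}f_0=\lambda_{uu}f_0$ — so the resulting $X$ commutes with $\W{b}{u}$. I expect the extraction of the support condition $v\in W_u$ for $C$ (and the dual eigenvector condition $f_0\in\ker(\Db{b}{u}-\lambda_{uu})$ for $B$) to be the only genuinely delicate point; everything else is bookkeeping once the commutation identity is split into the four decoupled block equations and Lemma \ref{lem-inj-ten}(iv) is invoked for the rank one summand $\N{b}{u}$.
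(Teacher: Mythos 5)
Your proposal is correct and follows essentially the same route as the paper's proof: the same block decomposition of $X$ relative to $\ell^2(V)=\Hi{b}{u}\oplus(\ell^2(V)\ominus\Hi{b}{u})$, the same four decoupled commutation equations, the same use of the injectivity of $\Db{b}{u}$ to pin down the two off-diagonal blocks, and the same appeal to Lemma \ref{lem-inj-ten}(iv) for the lower-right entry $S$; your treatment of the lower-left block (evaluating on basis vectors $e_v$ and extracting $(\lambda_{uv}-\lambda_{uu})\alpha_v=0$) is exactly the paper's treatment of $R$. The one genuine local difference is the upper-right block: the paper first shows $Qe_u\in\ell^2(W_u)$, deduces that $Q$ is rank one, writes $Q=f_0\otimes g_0$, and then forces $g_0$ to be a multiple of $e_{_{\lambdab_u, A_u}}$ through an equality-case-of-Cauchy--Schwarz argument, whereas you obtain $B=f_0\otimes e_{_{\lambdab_u, A_u}}$ in one stroke by observing that $B$ annihilates $[e_{_{\lambdab_u, A_u}}]^\perp$ (injectivity of $\Db{b}{u}$ again) and then read off $\Db{b}{u}f_0=\lambda_{uu}f_0$ from $\inp{e_u}{e_{_{\lambdab_u, A_u}}}=\lambda_{uu}$; this is shorter than the paper's argument and equally rigorous.
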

\begin{proof}
%Consider the orthogonal decomposition $(\Db{b}{u}, \N{b}{u}, \Hi{b}{u})$ of $\W{b}{u}$ as ensured by Theorem \ref{o-deco}.
%After replacing the basis $\{e_v\}_{v \in V}$ by $\{e_{\alpha(v)}\}_{v \in V}$ for some permutation $\alpha : V \rar V$, we may assume without loss of generality that all diagonal entries of $\Db{b}{u}$ are non-zero. {\red We also assume that $\N{b}{u} \neq 0$.}
Assume that $\ker \Db{b}{u}=\{0\}$ and $\N{b}{u} \neq 0$.
Let $X \in B(\ell^2(V))$ be such that $X\W{b}{u}=\W{b}{u}X.$
We decompose $X$ as follows:
\beqn
X = \left[
\begin{array}{cc}
P & Q  \\
R & S \\
\end{array}
\right]~\mbox{on~}\ell^2(V) = \Hi{b}{u} \oplus \big(\ell^2(V) \ominus  \Hi{b}{u}\big).
\eeqn
A simple calculation shows that $X\W{b}{u}=\W{b}{u}X$ is equivalent to
\beq
\label{four-eq}
P\Db{b}{u}=\Db{b}{u} P, \quad \N{b}{u}S = S \N{b}{u}, \quad \N{b}{u}R=R\Db{b}{u}, \quad \Db{b}{u}Q=Q\N{b}{u}.
\eeq
We contend that $R$ is a finite rank operator with range contained in $[e_u].$ To see that, let $f \in \Hi{b}{u}$.
%$A_u=[u, \mf b]$ or $A_u=\asc{u} \cup \{\mf b, u\}$.
By \eqref{til-D} and \eqref{til-N},
\beqn
\N{b}{u}Rf=\inp{Rf}{e_{_{\lambdab_u, A_u}}} e_u, \quad  R\Db{b}{u}f =  \sum_{v \in \supp\, \Hi{b}{u}} \lambda_{uv} f(v) Re_v.
\eeqn
Thus, by the third equation of \eqref{four-eq}, we obtain
\beq
\label{5.10}
\inp{Rf}{e_{_{\lambdab_u, A_u}}} e_u ~=  \sum_{v \in \supp\, \Hi{b}{u}} \lambda_{uv} f(v) Re_v.
\eeq
Since \eqref{5.10} holds for arbitrary $f \in \Hi{b}{u}$ and $\lambda_{uv} \neq 0$ for every $v \in \supp\, \Hi{b}{u}$,
there exists a system ${\mu}_u:=\{\mu_{uv}\}_{\supp\, \Hi{b}{u}} \subseteq \mathbb C$ such that \beq \label{R} Re_v \,=\, \mu_{uv} e_u, \quad v \in \supp\, \Hi{b}{u}.\eeq
%It follows from \eqref{R} that
This immediately yields
\beqn
%\label{l-uu-l-uv}
\inp{Rf}{e_{_{\lambdab_u, A_u}}} e_u ~= \sum_{v \in \supp\, \Hi{b}{u}} \lambda_{uu} f(v) \mu_{uv}   e_u.
\eeqn
Combining this with \eqref{5.10}, we obtain
\beqn
 \sum_{v \in \supp\, \Hi{b}{u}} \lambda_{uu} f(v) \mu_{uv}   e_u   ~= \sum_{v \in \supp\, \Hi{b}{u}} \lambda_{uv} f(v) \mu_{uv} e_u.
\eeqn
Since $f \in \Hi{b}{u}$ is arbitrary, this yields that $\mu_{uv}(\lambda_{uu}-\lambda_{uv})=0$ for every $v \in \supp\, \Hi{b}{u}.$
If $v \in \supp\,\Hi{b}{u} \setminus W_u$, then $\lambda_{uu} \neq \lambda_{uv}$ (see \eqref{W-u}), and hence $\mu_{uv}=0$. It may be now concluded from \eqref{R} that
%$Rf=0$, $f \in \Hi{b}{u} \ominus \ell^2(W_u)$ and
$R = e_u \otimes e_{{\mu_u}, W_u}.$ Thus the claims stands verified.
%If $\lambda_{uu}=0$, then since $\lambda_{uv}$ is non-zero,
%$\mu_{uv}=0$ for every $v \in \supp\,\Hi{b}{u}$. Thus $R=0$ in this case.
%Assume that $\lambda_{uu} \neq 0$ and let $f \in \Hi{b}{u} \ominus \ell^2(W_u)$. Then by \eqref{R} and \eqref{l-uu-l-uv},
%\beqn
%Rf &=& \sum_{v \in \supp\,\Hi{b}{u} \setminus W_u} f(v)Re_v \\ &=& \Big(\sum_{v \in \supp\,\Hi{b}{u} \setminus W_u} f(v) \mu_{uv}\Big)e_u \\ &=& \frac{1}{\lambda_{uu}} \Big(\sum_{v \in \supp\,\Hi{b}{u} \setminus W_u} f(v)  \lambda_{uv} \mu_{uv}\Big)e_u
%\eeqn
%However, $\inp{f}{e_v}=0$ for every $v \in W_u.$
% {\red $Rf=0$ for every $f \in \Hi{b}{u} \ominus \ell^2(W_u)$ (see \eqref{W-u}).}

Next we consider the equation $\Db{b}{u}Q=Q\N{b}{u}$ (see \eqref{four-eq}). Note that by \eqref{til-N}
 \beqn
 \Db{b}{u}Qe_u = Q\N{b}{u}e_u = \lambda_{uu} Qe_u,
 \eeqn
 which simplifies to $(\Db{b}{u}-\lambda_{uu})Qe_u=0$.
Since $\Db{b}{u}-\lambda_{uu}=0$ on $\ell^2(W_u)$ and injective on $\Hi{b}{u} \ominus \ell^2(W_u)$,
$Qe_u \in \ell^2(W_u).$ Moreover, \beqn \Db{b}{u}Q=Q(e_u \otimes e_{_{\lambdab_u, A_u}}) = (Qe_u) \otimes e_{_{\lambdab_u, A_u}}. \eeqn
% Assume further that $\Db{b}{u}$ is injective.
 If $Qe_u=0$, then so is $\Db{b}{u}Q$, and hence $Q=0$ (since, by assumption, $\Db{b}{u}$ is injective). Suppose $Qe_u \neq 0.$ Then $Q : \ell^2(V) \ominus \Hi{b}{u} \rar \Hi{b}{u}$ is of rank one, since so is $\Db{b}{u}Q$ and $\Db{b}{u}$ is injective. Thus $Q=f_0 \otimes g_0$ for some $f_0 \in \Hi{b}{u}$ and $g_0 \in \ell^2(V) \ominus \Hi{b}{u}.$ It follows from $\Db{b}{u}Q=Q\N{b}{u}$ and \eqref{adjoint-ro} that
 \beq
 \label{Df0}
 (\Db{b}{u}f_0) \otimes g_0 = f_0 \otimes g_0\, \N{b}{u} = (f_0 \otimes g_0)  (e_u \otimes e_{_{\lambdab_u, A_u}})=
 \overline{g_0(u)}\, f_0 \otimes e_{_{\lambdab_u, A_u}}.
 \eeq
 Thus for any $h \in  \ell^2(V) \ominus \Hi{b}{u},$ $$\inp{h}{g_0} \Db{b}{u}f_0 \,=\, \overline{g_0(u)}\,  \inp{h}{e_{_{\lambdab_u, A_u}}} f_0.$$ Letting $h=g_0 - \frac{\inp{g_0}{e_{_{\lambdab_u, A_u}}}}{\|e_{_{\lambdab_u, A_u}}\|^2} e_{_{\lambdab_u, A_u}} \in \ell^2(V) \ominus \Hi{b}{u}$, we get
 $\inp{h}{g_0} \Db{b}{u}f_0 =0.$ However, since $\Db{b}{u}$ is assumed to be injective and $f_0 \neq 0$ (since $Q \neq 0$), $\inp{h}{g_0}=0.$ It follows that  $ |\inp{g_0}{e_{_{\lambdab_u, A_u}}}|=\|g_0\| \|e_{_{\lambdab_u, A_u}}\|.$
By the Cauchy-Schwarz inequality, we must have $g_0 = \bar{\alpha}\, e_{_{\lambdab_u, A_u}}$ for some  $\alpha \in \mathbb C.$  Further, $\alpha \neq 0$, since $g_0 \neq 0$ (otherwise $Q=0$). This also shows that $Q=\alpha f_0 \otimes e_{_{\lambdab_u, A_u}}.$
One may now infer from \eqref{Df0} (evaluated at $g_0= \bar{\alpha}\, e_{_{\lambdab_u, A_u}}$) that $\Db{b}{u}f_0 = \lambda_{uu}f_0.$
Further,
by \eqref{four-eq}, $P \in \{\Db{b}{u}\}'$ and $S \in \{\N{b}{u}\}'.$
The fact that $P$ is a block diagonal operator is a routine verification (see \cite[Proposition 6.1, Chapter IX]{Co}). The remaining part now follows from Lemma \ref{lem-inj-ten}(vi). This completes the proof of (i) $\Rightarrow$ (ii). The reverse implication is a routine verification using \eqref{four-eq}.
\end{proof}
\begin{remark} The injectivity of $\Db{b}{u}$ can be relaxed by replacing the basis $\{e_v\}_{v \in V}$ by $\{e_{\alpha(v)}\}_{v \in V}$ for some permutation $\alpha : V \rar V$. We leave the details to the reader.
\end{remark}

The following result is applicable to the case when the weight system $\lambdab_u : V \rar \mathbb C$ of the weighted join operator $\W{b}{u}$ is injective.
\begin{corollary} \label{coro-commutant}
Let $\mathscr T=(V, E)$ be a rooted directed tree with root $\rootb$ and let $\mathscr T_{\infty}=(\V, E_{\infty})$ be the extended directed tree associated with $\mathscr T$.
For $u \in V$ and $\mf b \in V \setminus \{u\}$, consider the weight system
$\lambdab_u = \{\lambda_{uv}\}_{v\in \V}$ of complex numbers and assume that the weighted join operator $\W{b}{u}$ on $\mathscr T$ belongs to $B(\ell^2(V))$. Let $(\Db{b}{u}, \N{b}{u}, \Hi{b}{u})$ denote the orthogonal decomposition of $\W{b}{u}$. Assume that $\ker \Db{b}{u}=\{0\}$ and $\N{b}{u} \neq 0$. If $\lambda_{uu} \notin \sigma_p(\Db{b}{u})$,
%$($this happens, in particular, if $\lambdab_u$ is injective on $\supp \, \Hi{b}{u})$,
then
\beqn
\{\W{b}{u}\}' \,=\, \big\{P \oplus S : P \in \{\Db{b}{u}\}', ~S \in \{\N{b}{u}\}' \big\}.
\eeqn
\end{corollary}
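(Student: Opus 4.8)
The plan is to deduce this from the commutant description in Theorem~\ref{commutant}, by showing that the extra hypothesis $\lambda_{uu} \notin \sigma_p(\Db{b}{u})$ forces both off-diagonal blocks of every $X \in \{\W{b}{u}\}'$ to vanish. The inclusion $\big\{P \oplus S : P \in \{\Db{b}{u}\}', ~S \in \{\N{b}{u}\}'\big\} \subseteq \{\W{b}{u}\}'$ is immediate: since $\W{b}{u} = \Db{b}{u} \oplus \N{b}{u}$ with respect to the orthogonal decomposition $\ell^2(V) = \Hi{b}{u} \oplus (\ell^2(V) \ominus \Hi{b}{u})$, any operator $P \oplus S$ whose summands commute with the respective summands of $\W{b}{u}$ commutes with the direct sum. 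So the content lies in the reverse inclusion.

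First I would record the point spectrum of $\Db{b}{u}$. Since $\Db{b}{u} = \D{u}|_{\Hi{b}{u}}$ is a diagonal operator on $\Hi{b}{u}$ with diagonal entries $\{\lambda_{uv}\}_{v \in \supp\, \Hi{b}{u}}$, each basis vector $e_v$ is an eigenvector with eigenvalue $\lambda_{uv}$, and every eigenvalue is one of these entries; hence $\sigma_p(\Db{b}{u}) = \{\lambda_{uv} : v \in \supp\, \Hi{b}{u}\}$. Consequently the hypothesis $\lambda_{uu} \notin \sigma_p(\Db{b}{u})$ yields two facts at once: (a) $\ker(\Db{b}{u} - \lambda_{uu}) = \{0\}$, because $\lambda_{uu}$ is not an eigenvalue of $\Db{b}{u}$; and (b) the set $W_u = \{v \in \supp\, \Hi{b}{u} : \lambda_{uv} = \lambda_{uu}\}$ from \eqref{W-u} is empty, since no diagonal entry equals $\lambda_{uu}$.

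Next I would feed this into Theorem~\ref{commutant}. Let $X \in \{\W{b}{u}\}'$. By that theorem, $X$ admits the asserted block form, whose diagonal entries already satisfy $P \in \{\Db{b}{u}\}'$ (indeed block diagonal) and $S \in \{\N{b}{u}\}'$ (by the commutation conditions on $S$, which are exactly the commutant conditions via Lemma~\ref{lem-inj-ten}(iv)). The upper-right block equals $f_0 \otimes e_{_{\lambdab_u, A_u}}$ with $f_0 \in \ker(\Db{b}{u} - \lambda_{uu})$, which is $\{0\}$ by (a); hence $f_0 = 0$ and this block vanishes. The lower-left block equals $e_u \otimes e_{{\mu}_u, W_u}$, and by \eqref{e-lam} the vector $e_{{\mu}_u, W_u} = \sum_{v \in W_u} \bar{\mu}_{uv}\, e_v$ is the zero vector because $W_u = \emptyset$ by (b); hence this block vanishes too. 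Therefore $X = P \oplus S$ with $P \in \{\Db{b}{u}\}'$ and $S \in \{\N{b}{u}\}'$, which is the desired inclusion.

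I do not anticipate a genuine obstacle: the heavy lifting has already been carried out in Theorem~\ref{commutant}, and the corollary reduces to the single observation that the spectral hypothesis simultaneously kills both off-diagonal pieces. The only point requiring care is the clean identification of $\sigma_p(\Db{b}{u})$ with the set of diagonal entries, so that $\lambda_{uu} \notin \sigma_p(\Db{b}{u})$ can be translated at the same time into the emptiness of $W_u$ and the triviality of $\ker(\Db{b}{u} - \lambda_{uu})$.
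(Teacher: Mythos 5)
Your proposal is correct and follows essentially the same route as the paper: invoke Theorem~\ref{commutant}, then observe that $\lambda_{uu} \notin \sigma_p(\Db{b}{u})$ forces $W_u = \emptyset$ (killing the lower-left block $e_u \otimes e_{\mu_u, W_u}$) and $\ker(\Db{b}{u}-\lambda_{uu}) = \{0\}$ (killing the upper-right block $f_0 \otimes e_{_{\lambdab_u, A_u}}$). Your explicit identification of $\sigma_p(\Db{b}{u})$ with the diagonal entries and your verification of the easy inclusion are fine elaborations of what the paper leaves implicit, but they do not constitute a different argument.
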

\begin{proof}
Assume that $\lambda_{uu} \notin \sigma_p(\Db{b}{u})$ and let $X \in \{\W{b}{u}\}'$. Thus $X$ admits the decomposition as given in (ii) of Theorem \ref{commutant}. However, by \eqref{W-u}, $W_u= \emptyset,$ and hence $e_u \otimes e_{{\mu}_u, W_u} =0.$ Also, since $f_0 \in \ker(\Db{b}{u}-\lambda_{uu}),$
%$\Db{b}{u}f_0=\lambda_{uu}f_0,$  
by our assumption, $f_0=0$. This completes the proof.
\end{proof}

Even under the assumptions of the preceding corollary, the commutant of a weighted meet operator can be non-abelian.
\begin{example}
Let $\mathscr T=(V, E)$ be a leafless, rooted directed tree and let $u \in V$ be such that $V_u=\emptyset$ (for example, take the directed tree $\mathscr T$ as shown in Figure \ref{fig0} and let $u=v_0$), where $V_u$ is as given in \eqref{cg-dec0}.
For $\mf b \in \des{u}$, consider the weight system
$\lambdab_u = \{\lambda_{uv}\}_{v\in \V}$ of distinct positive numbers and assume that the weighted join operator $\W{b}{u}$ on $\mathscr T$ belongs to $B(\ell^2(V))$. Let $(\Db{b}{u}, \N{b}{u}, \Hi{b}{u})$ denote the orthogonal decomposition of $\W{b}{u}$. Since $V_u = \emptyset,$  by \eqref{H-u} and \eqref{A-u}, we have
\beq \label{dim-A-u}
\ell^2(V) \ominus \Hi{b}{u} \,=\, \ell^2(A_u), \quad \mbox{where}~A_u=[u, \mf b].
\eeq
We claim that $\{\W{b}{u}\}'$ is non-abelian if and only if $\dim \big(\ell^2(V) \ominus \Hi{b}{u}\big) \Ge 3.$
By the preceding corollary, it suffices to check that $\{\N{b}{u}\}'$ is non-abelian if and only if $\dim \big(\ell^2(V) \ominus \Hi{b}{u}\big) \Ge 3.$
%Also, since $\mf b \in \des{u},$ by \eqref{A-u},
%\beq
%\label{dim-A-u}
%\dim \ell^2(V) \ominus \Hi{b}{u} =
% \mbox{card}([u, \mf b]).
%\eeq
We consider the following cases:

\begin{cas1}
$\dim \big(\ell^2(V) \ominus \Hi{b}{u}\big) = 1:$
\end{cas1}
\noindent
By \eqref{dim-A-u},  $\mf b = u$, and hence $\W{b}{u}$ is the diagonal operator $\D{u}$ with distinct diagonal entries. In this case, $\{\W{b}{u}\}'$ is indeed maximal abelian \cite{Co}.

\begin{cas1}
$\dim \big(\ell^2(V) \ominus \Hi{b}{u}\big) = 2:$
\end{cas1}
\noindent
Consider the basis $\{e_u, e_{_{\lambdab_u, A_u}}\}$ of $\ell^2([u, \mf b])$ and
%and note that the matrix representation (denoted by $\N{b}{u}$ itself) of $\N{b}{u}=e_u \otimes e_{_{\lambdab_u, A_u}} $ is given by
%\beqn
%\N{b}{u} = \left[
%\begin{array}{cc}
%\lambda_{uu} &  \|e_{_{\lambdab_u, A_u}}\|^2  \\
%0  & 0 \\
%\end{array}
%\right].
%\eeqn
let $T \in \{\N{b}{u}\}'$. By Lemma \ref{lem-inj-ten}(iv), 
\beq \label{comm-eqn1} Te_u &=& \inp{Te_u}{e_u}e_u, \\ \label{comm-eqn2} T^*e_{_{\lambdab_u, A_u}} &=&\overline{\inp{Te_u}{e_u}}\,e_{_{\lambdab_u, A_u}}.\eeq  Let $\alpha_{_T}$ and $\beta_{_T}$ be scalars such that  $Te_{_{\lambdab_u, A_u}}=\alpha_{_T} e_u + \beta_{_T} e_{_{\lambdab_u, A_u}}.$ Then 
\beqn \inp{Te_{_{\lambdab_u, A_u}}}{e_{_{\lambdab_u, A_u}}}&=& \alpha_{_T} \lambda_{uu} + \beta_{_T} \|e_{_{\lambdab_u, A_u}}\|^2, \\ \inp{e_{_{\lambdab_u, A_u}}}{T^*e_{_{\lambdab_u, A_u}}} &\overset{\eqref{comm-eqn2}}=& \inp{Te_u}{e_u}  \|e_{_{\lambdab_u, A_u}}\|^2.\eeqn
It follows  that \beq \label{comm-dim2} \alpha_{_T} = \frac{\|e_{_{\lambdab_u, A_u}}\|^2}{\lambda_{uu}}(\inp{Te_u}{e_u}-\beta_{_T}).\eeq Hence, for any $S \in \{\N{b}{u}\}'$, we have
\beqn
STe_{_{\lambdab_u, A_u}} &=& \alpha_{_T} Se_u + \beta_{_T} Se_{_{\lambdab_u, A_u}} \\
%= \alpha_T \inp{Se_u}{e_u}e_u + \beta_T (\alpha_S e_u + \beta_S e_{_{\lambdab_u, A_u}})
&\overset{\eqref{comm-eqn1}}=& (\alpha_{_T} \inp{Se_u}{e_u} + \beta_{_T} \alpha_{_S}) e_u + \beta_{_T} \beta_S e_{_{\lambdab_u, A_u}}. \eeqn
By symmetry, $STe_{_{\lambdab_u, A_u}}=TSe_{_{\lambdab_u, A_u}}$ if and only if
%$$\alpha_T \inp{Se_u}{e_u} + \beta_T \alpha_S = \alpha_S \inp{Te_u}{e_u} + \beta_S \alpha_T.$$
$$\alpha_{_T} (\inp{Se_u}{e_u}-\beta_{_S}) \,=\,  \alpha_{_S} (\inp{Te_u}{e_u}-\beta_{_T}).$$
The later equality is immediate from \eqref{comm-dim2}. On the other hand, for any $S \in \{\N{b}{u}\}'$, by \eqref{comm-eqn1}, $STe_u=TSe_u$ always holds.
This shows that $\{\N{b}{u}\}'$ is abelian.
%Thus
%the matrix representation of $T$ with respect to $\{e_u, e_{_{\lambdab_u, A_u}}\}$ takes the form
%\beqn
%T = \left[
%\begin{array}{cc}
%\inp{Te_u}{e_u} &  \mu_T  \\
%0  &  \inp{Te_u}{e_u} \\
%\end{array}
%\right]
%\eeqn
%for some $\mu_T \in \mathbb C.$ However, since $T \N{b}{u} = \N{b}{u} T,$ we must have $\mu_T=0$, and hence $\{\N{b}{u}\}'$ is abelian.
%\beqn
%\left[
%\begin{array}{cc}
%\inp{Te_u}{e_u} &  \mu_T  \\
%0  &  \inp{Te_u}{e_u} \\
%\end{array}
%\right]\left[
%\begin{array}{cc}
%\lambda_{uu} &  \|e_{\lambdab_u, A_u}\|^2  \\
%0  & 0 \\
%\end{array}
%\right] = \left[
%\begin{array}{cc}
%\inp{Te_u}{e_u} \lambda_{uu} &  \inp{Te_u}{e_u} \|e_{\lambdab_u, A_u}\|^2  \\
%0  & 0 \\
%\end{array}
%\right]
%\eeqn
%\beqn
%\left[
%\begin{array}{cc}
%\lambda_{uu} &  \|e_{\lambdab_u, A_u}\|^2  \\
%0  & 0 \\
%\end{array}
%\right] \left[
%\begin{array}{cc}
%\inp{Te_u}{e_u} &  \mu_T  \\
%0  &  \inp{Te_u}{e_u} \\
%\end{array}
%\right] = \left[
%\begin{array}{cc}
%\inp{Te_u}{e_u} \lambda_{uu} &  \lambda_{uu} \mu_T + \inp{Te_u}{e_u} \|e_{\lambdab_u, A_u}\|^2  \\
%0  & 0 \\
%\end{array}
%\right]
%\eeqn

\begin{cas1}
$\dim \big(\ell^2(V) \ominus \Hi{b}{u}\big) = 3:$
\end{cas1}
\noindent
Let $f \in \ell^2(A_u)$ be orthogonal to $\{e_u, e_{_{\lambdab_u, A_u}}\}$.
Consider a bounded linear operator $T$ on $\ell^2(V) \ominus \Hi{b}{u}$ governed by
\beqn Te_u=\alpha e_u, \quad T e_{_{\lambdab_u, A_u}}=\beta e_u + \gamma f, \quad Tf=0, \eeqn where $\alpha, \beta, \gamma$ are complex numbers. 
Clearly, $T\N{b}{u}e_u =  \N{b}{u}Te_u,$ $T\N{b}{u}f =  \N{b}{u}Tf.$ Moreover, $T\N{b}{u}e_{_{\lambdab_u, A_u}}=\N{b}{u}T e_{_{\lambdab_u, A_u}}$ if and only if
$$\beta  = (\|e_{_{\lambdab_u, A_u}}\|^2 \inp{Te_u}{e_u})/\lambda_{uu}.$$
Consider another bounded linear operator $S$ on $\ell^2(V) \ominus \Hi{b}{u}$ governed by
\beqn Se_u=a e_u, \quad S e_{_{\lambdab_u, A_u}}=b e_u, \quad Sf=c e_u + de_{_{\lambdab_u, A_u}}, \eeqn where $a, b, c, d$ are complex numbers. A routine calculation shows that $S \in \{\N{b}{u}\}'$ if and only if
\beqn
b \lambda_{uu} - a \|e_{_{\lambdab_u, A_u}}\|^2 =0, \quad c \lambda_{uu} + d \|e_{_{\lambdab_u, A_u}}\|^2 =0.
\eeqn
On the other hand,
 $STf=TSf$ implies that $d \gamma  =0$, and hence for non-zero choices of $\gamma$ and $d,$ $S$ and $T$ do not commute. This shows that $\{\N{b}{u}\}'$ is not abelian.
 
%\beqn
%b\alpha = \beta a+ \gamma c,  \quad \gamma d =0, \quad c\alpha +d \beta =0.
%\eeqn
%Thus if $S \in \{\N{b}{u}\}'$ is given by the above formula, then $ST e_{_{\lambdab_u, A_u}} =TSe_{_{\lambdab_u, A_u}}$ if and only if
%\beqn
%\beta_T \alpha_S =\beta_S \alpha_T.
%\eeqn
%Then the matrix representation of $\N{b}{u}=e_u \otimes e_{_{\lambdab_u, A_u}} $ with respect to $\{e_u, e_{_{\lambdab_u, A_u}}, f\}$ is given by
%\beqn
%\N{b}{u} = \left[
%\begin{array}{ccc}
%\lambda_{uu} &  \|e_{_{\lambdab_u, A_u}}\|^2  & 0 \\
%0  & 0 & 0 \\
%0 & 0 & 0
%\end{array}
%\right].
%\eeqn
%Let $T \in \{\N{b}{u}\}'$. By Lemma \ref{lem-inj-ten}(vi), the matrix representation of $T$ with respect to $\{e_u, e_{_{\lambdab_u, A_u}}, f\}$ is given by
%\beqn
%T = \left[
%\begin{array}{ccc}
%\inp{Te_u}{e_u} &  a_T  & b_T \\
%0  &  \inp{Te_u}{e_u} & 0\\
%0 & c_T & d_T
%\end{array}
%\right]
%\eeqn
%for some scalars $a_T, b_T, c_T, d_T \in \mathbb C.$
%However, since $T \N{b}{u} = \N{b}{u} T,$ we must have $a_T=0=b_T.$
%Thus $T$ takes the form
%\beqn
%T = \left[
%\begin{array}{ccc}
%\inp{Te_u}{e_u} &  0  & 0 \\
%0  &  \inp{Te_u}{e_u} & 0\\
%0 & c_T & d_T
%\end{array}
%\right]
%\eeqn
%It is now easy to see that $\{\N{b}{u}\}'$ is not abelian.
Finally, in case $\dim \big(\ell^2(V) \ominus \Hi{b}{u}\big) \Ge 4,$
$\{\N{b}{u}\}'$ contains a copy of $B(\mathbb C^2)$, and hence it is
not abelian. \eop
\end{example}

\chapter{Rank one extensions of weighted join operators}

In this chapter, we introduce and study the class of rank one extensions of weighted join operators. We introduce the so-called compatibility conditions and discuss their roles in the closedness of these operators. We also discuss the problem of determining Hilbert space adjoint of these operators.
Further, we provide a complete spectral picture for members in this class and discuss some of its applications.
%and discuss the problem of classification hyponormal and $n$-symmetric operators within this class.

\index{$\W{b}{u}[f, g]=W_{f, g}$}
%\index{$W_{f, g}$}

\begin{definition} \label{def-r}
Let $\mathscr T=(V, E)$ be a rooted directed tree with root $\rootb$ and let $\mathscr T_{\infty}=(\V, E_{\infty})$ be the extended directed tree associated with $\mathscr T$.
For $\mf b, u \in V$ and the weight system
$\lambdab_u = \{\lambda_{uv}\}_{v\in \V}$ of complex numbers,
%let $\D{u}$ be the diagonal operator on $\mathscr T$ and
let $\W{b}{u}$ be a weighted join operator on $\mathscr T$.
Consider the orthogonal decomposition $(\Db{b}{u}, \N{b}{u}, \Hi{b}{u})$ of $\W{b}{u}$.
%, where $\Hi{b}{u}$, $\Db{b}{u}$ and $\N{b}{u}$ are given by \eqref{H-u}, \eqref{til-D} and \eqref{til-N} respectively.
By {\it a rank one extension of $\W{b}{u}$ on $\mathscr T$}, we understand the linear operator $\W{b}{u}[f, g]$ in $\ell^2(V)$ given by
\beq
\label{rone-extn}
\left.
\begin{array}{lll}
\mathscr D(\W{b}{u}[f, g]) &=& \big\{(h, k) : h \in \mathscr D(\Db{b}{u}) \cap \mathscr D(f \obslash g), ~k \in \ell^2(V) \ominus \Hi{b}{u}\big\} \\ && \\
\W{b}{u}[f, g] &=& \left[\begin{array}{cc}
\Db{b}{u} & 0 \\
f \obslash g & \N{b}{u}
\end{array}
\right],
\end{array}
\right\}
\eeq
where $f \in \ell^2(V) \ominus \Hi{b}{u}$ is non-zero and $g : \supp\,\Hi{b}{u} \rar \mathbb C$ is unspecified.
For the sake of convenience, we use the simpler notation $W_{f, g}$ in place of $\W{b}{u}[f, g]$.
\end{definition}
%\begin{remark}
%\end{remark}
\begin{remark} \label{rmk-rone-extn}
Since $\N{b}{u}$ is bounded and the domains of $\Db{b}{u}$ and $f \obslash g$ contains the dense subspace $\mathscr D_{\supp\,\Hi{b}{u}}$ of $\Hi{b}{u}$ (see \eqref{d-supp-hbu}), $W_{f, g}$ is densely defined.
Since sum of a closed operator and a bounded linear operator is closed,  the rank one extension $W_{f, g}$ of $\W{b}{u}$ is closed provided $g \in \Hi{b}{u}.$ This happens in particular when $\des{u}$ has finite cardinality (see \eqref{H-u}).
In case
$g \notin \Hi{b}{u}$, $W_{f, g}$ need not be closed (cf. Corollary \ref{coro-spectrum}).  To see this assertion, consider the situation in which $\Db{b}{u}$ is bounded and $g \notin \Hi{b}{u}$.
%$\mathscr D(\Db{b}{u}) \supseteq \mathscr D(f \otimes g)$ (this happens, for instance, in case $\Db{b}{u}$ is bounded) and $g \notin \Hi{b}{u}$.
 By Lemma \ref{lem-inj-ten-unb}, $f \obslash g$ is not closable, and hence there exists a sequence $\{h_n\}_{n \geqslant 0}$ in $\Hi{b}{u}$ such that $h_n \rar 0$, $\{(f \obslash g)(h_n)\}_{n \geqslant 0}$ is convergent but $(f \obslash g)(h_n) \nrightarrow 0$ as $n \rar \infty.$
Then $(h_n, 0) \rar (0, 0)$, $\{W_{f, g}(h_n, 0)\}_{n \geqslant 0}$ is convergent,  however, $W_{f, g}(h_n, 0) \nrightarrow 0,$ and hence $W_{f, g}$ is not even closable.
%Finally, note that $W_{f, g}$ is bounded if and only if $\Db{b}{u}$ is bounded and $g \in \Hi{b}{u}$.
\end{remark}

Here is a remark about the manner in which $W_{f, g}$ is defined.
Certainly, one could have defined the rank one extension of
$\W{b}{u}$ with the entry $f \obslash g$ appearing on the extreme
upper right corner in \eqref{rone-extn}. It turns out, however, that
the operators defined this way are closed if and only if $g \in
\ell^2(V) \ominus \Hi{b}{u}$. From the view point of spectral
theory, these operators are of little importance in case $g \notin
\ell^2(V) \ominus \Hi{b}{u}$, and otherwise, these are bounded
finite rank perturbations of diagonal operators. Needless to say,
the later class has been studied extensively in the literature
(refer, for example, to \cite{St, I, FJKP-0, FJKP-1, FJKP, FX, JL,
Kl}). Also, the way in which $W_{f, g}$ is defined (cf.
\cite{Ag-St}, \cite{R-S}), it should be referred to as {\it rank one
co-extension} of $\W{b}{u}$. However, by abuse of terminology, we
refer to it as rank one extension of $\W{b}{u}$.

In what follows, we will be particularly interested in the following family of rank one extensions of weighted join operators (cf. Proposition \ref{symm-r-one} below).

\index{$\supp(h)$}

\begin{example} Let $W_{f, g}$ be a rank one extension of the weighted join operator $\W{b}{u}$ satisfying the intertwining relation:
\beq \label{inter-r}
(f \obslash g)\Db{b}{u} + \N{b}{u}(f \obslash g) = 0,
\eeq
where the linear operator on the left hand side is defined on the space $\mathscr D_{\supp\,\Hi{b}{u}}$ (see \eqref{d-supp-hbu}).
Note that \eqref{inter-r} is equivalent to
\beq \label{(i)} \lambda_{uv} f + \inp{f}{e_{_{\lambdab_u, A_u}}}e_u=0, \quad v \in \supp(g),
\eeq
where $A_u$ is given by \eqref{A-u} and the {\it support} $\supp(h)$ of the function $h : V \rar \mathbb C$ is given by $$\supp(h)\,:=\,\{v \in V : h(v) \neq 0\}.$$
%\beq \label{supp-g-ir}
%\overline{g(v)}\big(\lambda_{uv} f + \inp{f}{e_{_{\lambdab_u, A_u}}}e_u\big) = 0, \quad v \in \supp\,\Hi{b}{u}
%\eeq
Suppose  $g \neq 0$ and note that $\supp(g)$ is non-empty.
We make the following observations:
\begin{enumerate}
%\item Since $g$ is non-zero, $\supp(g)$ is non-empty,
%where the {\it support} $\supp(h)$ of the function $h : V \rar \mathbb C$ is given by $\supp(h)=\{v \in V : h(v) \neq 0\}$.
%Thus, by \eqref{supp-g-ir}, \beq \label{(i)} \lambda_{uv} f + \inp{f}{e_{_{\lambdab_u, A_u}}}e_u=0, \quad v \in \supp(g).
%\eeq
\item If $\lambda_{uv}=0$ for some $v \in \supp(g),$ then by \eqref{(i)}, $\inp{f}{e_{_{\lambdab_u, A_u}}}=0.$ Since $f \neq 0,$ by another application of \eqref{(i)},  $\lambda_{uw}=0$ for all $w \in \supp(g).$
\item If  $\lambda_{uv} \neq 0$ for some $v \in \supp(g),$ then by \eqref{(i)}, $f \in [e_u]\setminus \{0\}$ and $\lambda_{uw} = -\lambda_{uu}$ for every $w \in \supp(g).$
%\beqn \sum_{w \in A_u} f(w) \lambda_{uw} =0.\eeqn
\end{enumerate}
The above discussion provides the following examples of $W_{f, g}$ satisfying
\eqref{inter-r}.
\begin{enumerate}
\item[(a)] $\lambda_{uv}=0$ for $v \in \supp(g)$, $\supp\,\Hi{b}{u} \setminus  \supp(g)$ is infinite and \beqn \sum_{w \in A_u} f(w) \lambda_{uw} =0.\eeqn
\item[(b)] $\lambda_{uv} =-\lambda_{uu} \neq 0$ for every $v \in \supp(g)$ and $f \in [e_u] \setminus \{0\}$.
\end{enumerate}
For example, if $\mathscr T$ is leafless and $u$ is a branching vertex, then the condition that $\supp\,\Hi{b}{u} \setminus  \supp(g)$ is infinite in (a) is ensured for any $g$ such that $\supp(g)=\des{w}$ provided
\beqn
w ~\mbox{belongs to~} \begin{cases} \child{\mf b} & \mbox{if~}\mf b  \in \des{u},\\
\child{u} & \mbox{if~}\mf b  \notin \des{u}.
  \end{cases}
\eeqn
In case (b) holds, then the rank one extension of $\W{b}{u}$ can be rewritten as the sum of a diagonal operator and the rank one operator $e_u \obslash \big(\overline{f(u)}\,g + e_{_{\lambdab_u, A_u}}\big).$
In these cases, $W_{f, g}$ satisfies
\beqn
W^2_{f, g} = (\Db{b}{u})^2 \oplus (\N{b}{u})^2~\mbox{on~}\mathscr D_{\supp\,\Hi{b}{u}} \oplus (\ell^2(V) \ominus \Hi{b}{u}).
\eeqn
Thus although $W_{f, g}$ does not have an diagonal decomposition, the intertwining relation \eqref{inter-r} ensures the same for its square.
\eop
%We list below some choices for $g$ which exploits the tree structure.
%\begin{enumerate}
%\item
%\end{enumerate}
\end{example}

The bounded rank one extensions of weighted join operators can be characterized easily.

 \begin{proposition}
Let $\mathscr T=(V, E)$ be a rooted directed tree with root $\rootb$ and let $\mathscr T_{\infty}=(\V, E_{\infty})$ be the extended directed tree associated with $\mathscr T$.
For $u, \mf b \in V,$ consider the weight system
$\lambdab_u = \{\lambda_{uv}\}_{v\in \V}$ of complex numbers and let $W_{f, g}$ be the rank one extension of the weighted join operator $\W{b}{u}$ on $\mathscr T$. Then the following are equivalent:
\begin{enumerate}
\item $W_{f, g}$ defines a bounded linear operator on $\ell^2(V)$.
\item $g \in \Hi{b}{u}$ and $\Db{b}{u}$ defines a bounded linear operator on $\Hi{b}{u}$.
\item $g \in \Hi{b}{u}$ and \beqn
 \lambdab_u   ~\mbox{belongs to~}  \begin{cases}
 \ell^{\infty}(V) & \mbox{if} ~\mf b = u, \\
  \ell^{\infty}(\mathsf{Des}(u)) & \mbox{otherwise}.
\end{cases}
\eeqn
\end{enumerate}
\end{proposition}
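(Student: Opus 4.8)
The plan is to exploit the lower-triangular block structure of $W_{f,g}$ with respect to the orthogonal decomposition $\ell^2(V) = \Hi{b}{u} \oplus (\ell^2(V) \ominus \Hi{b}{u})$ and to reduce the boundedness of $W_{f,g}$ to the \emph{separate} boundedness of its two nontrivial entries $\Db{b}{u}$ and $f \obslash g$. First I would record that the corner entry $\N{b}{u} = e_u \otimes e_{_{\lambdab_u, A_u}}$ is \emph{always} bounded, since by \eqref{A-u} the set $A_u$ is finite and hence $e_{_{\lambdab_u, A_u}} \in \ell^2(V)$. Writing a generic vector as $(h,k)$ with $h \in \Hi{b}{u}$ and $k \in \ell^2(V) \ominus \Hi{b}{u}$, we have $W_{f,g}(h,k) = (\Db{b}{u}h,\, (f\obslash g)h + \N{b}{u}k)$; projecting onto the two summands and using that $\N{b}{u}$ is bounded, I would conclude that $W_{f,g}$ is bounded (hence everywhere defined) if and only if both the diagonal entry $\Db{b}{u}$ and the rank-one entry $f \obslash g$ are bounded, equivalently everywhere defined, on $\Hi{b}{u}$. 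This separates the two mechanisms and yields the equivalence of (i) and (ii) once each entry is analysed.

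For the rank-one entry, the key observation is that the domain and boundedness of $f \obslash g$ depend only on $g$ (and on $f \neq 0$), not on the ambient range space. Fixing a unit vector $e_0 \in \Hi{b}{u}$, the operators $f \obslash g$ and $e_0 \obslash g$ share the common domain $\{h : \sum_{v} h(v)\overline{g(v)} \text{ converges}\}$, and one is bounded iff the other is. Applying Lemma \ref{lem-inj-ten-unb} to $e_0 \obslash g$ on $\Hi{b}{u}$ — whose index set $\supp\,\Hi{b}{u}$ is infinite by our standing assumption that $\dim \Hi{b}{u} = \infty$ — the equivalence of items (i), (iv), (v) there gives: $f \obslash g$ is everywhere defined (equivalently bounded) iff $g \in \ell^2(\supp\,\Hi{b}{u}) = \Hi{b}{u}$. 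On the diagonal side, $\Db{b}{u} = \D{u}|_{\Hi{b}{u}}$ is a diagonal operator, so it is bounded on $\Hi{b}{u}$ iff its diagonal entries $\{\lambda_{uv} : v \in \supp\,\Hi{b}{u}\}$ form a bounded set, i.e. iff $\lambdab_u \in \ell^{\infty}(\supp\,\Hi{b}{u})$. Combining, (i) holds iff $g \in \Hi{b}{u}$ and $\Db{b}{u}$ is bounded, which is exactly (ii).

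It then remains to translate ``$\Db{b}{u}$ bounded'' into the tree data of (iii). Here I would read off $\supp\,\Hi{b}{u}$ from \eqref{H-u} and \eqref{H-u-1} in three cases and use that $\asc{u}$ is finite in a rooted tree and that $[u,\mf b]$ is a finite path. When $\mf b=u$ we have $\supp\,\Hi{u}{u} = V \setminus \{u\}$, and deleting the single vertex $u$ does not affect $\ell^{\infty}$-membership, so the condition becomes $\lambdab_u \in \ell^{\infty}(V)$. When $\mf b \in \des{u}\setminus\{u\}$, the set $\supp\,\Hi{b}{u} = \asc{u} \cup \desb{u}$ differs from $\des{u}$ only by the finite set $\asc{u} \cup [u,\mf b]$, and when $\mf b \notin \des{u}$, $\supp\,\Hi{b}{u} = \dess{u}{u} = \des{u}\setminus\{u\}$ differs from $\des{u}$ by one vertex; in both cases $\lambdab_u \in \ell^{\infty}(\supp\,\Hi{b}{u})$ iff $\lambdab_u \in \ell^{\infty}(\des{u})$. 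This matches (iii) and closes the chain (ii) $\Leftrightarrow$ (iii).

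The main obstacle — really the only delicate point — is the cross-space application of Lemma \ref{lem-inj-ten-unb}: that lemma is stated for $f \obslash g$ acting on a \emph{single} space whose orthonormal basis also contains $f$, whereas in $W_{f,g}$ the operator $f\obslash g$ maps $\Hi{b}{u}$ into the orthogonal complement $\ell^2(V)\ominus\Hi{b}{u}$. The fix is precisely the observation that the properties ``everywhere defined'' and ``bounded'' for $f \obslash g$ are governed solely by the coefficient functional $h \mapsto \sum_v h(v)\overline{g(v)}$ on $\Hi{b}{u}$, which is insensitive to where the fixed nonzero vector $f$ lives; replacing $f$ by the auxiliary unit vector $e_0 \in \Hi{b}{u}$ brings us exactly into the hypotheses of the lemma. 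Once this is granted, the remaining steps reduce to routine bookkeeping with the finite sets $\asc{u}$ and $[u,\mf b]$.
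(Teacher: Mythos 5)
Your proof is correct and follows essentially the same route as the paper: reduce, via the lower-triangular block structure and the boundedness of $\N{b}{u}$, to the separate boundedness of $\Db{b}{u}$ and $f \obslash g$, then invoke Lemma \ref{lem-inj-ten-unb} to convert boundedness of the rank-one entry into $g \in \Hi{b}{u}$. The only differences are minor refinements: you verify (ii) $\Leftrightarrow$ (iii) by direct bookkeeping with the finite sets $\asc{u}$ and $[u,\mf b]$ where the paper simply cites Theorem \ref{bdd}, and you explicitly justify the cross-space application of Lemma \ref{lem-inj-ten-unb} (replacing $f$ by an auxiliary unit vector $e_0 \in \Hi{b}{u}$), a point the paper passes over silently.
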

\begin{proof}
In view of Theorem \ref{bdd}, it suffices to see the equivalence of (i) and (ii). Since $\N{b}{u}$ is a bounded linear operator on $\ell^2(V) \ominus \Hi{b}{u}$, (ii) implies (i). To see the reverse implication, assume that $W_{f, g}$ is bounded linear on $\ell^2(V)$. Thus $\mathscr D(\Db{b}{u}) \cap \mathscr D(f \obslash g)=\Hi{b}{u}$. By the closed graph theorem, $\Db{b}{u}$, being closed operator defined on $\Hi{b}{u}$, is a bounded linear operator on $\Hi{b}{u}$. Further, for any $h \in \mathscr D(\Db{b}{u}) \cap \mathscr D(f \obslash g)$,
%It follows from Lemma \ref{lem-inj-ten-unb} that $g \in \Hi{b}{u}$.
\beqn
\|W_{f, g}(h, 0)\|^2 &=& \Big\|(\Db{b}{u}h, \Big(\!\!\!\!\!\!\sum_{v \in \supp\,\Hi{b}{u}} \!\!\!\!\!\! h(v) \overline{g(v)}\Big)f)\Big\|^2 \\ &=& \|\Db{b}{u}h\|^2 + \|(f \obslash g)(h)\|^2.
%\Big|\!\!\!\!\!\!\sum_{v \in \supp\,\Hi{b}{u}} \!\!\!\!\!\! h(v) \overline{g(v)}\Big|^2\|f\|^2.
\eeqn
Since $f \neq 0$ and $W_{f, g}$ is bounded, $f \obslash g$ must be bounded linear, and hence by Lemma \ref{lem-inj-ten-unb}, $g \in \Hi{b}{u}$. This completes the proof.
\end{proof}

\section{Compatibility conditions and discrete Hilbert transforms}

It turns out that the inclusion $\mathscr D(\Db{b}{u}) \subseteq \mathscr D(f \obslash g)$ of domains of $\Db{b}{u}$ and $f \obslash g$ plays a central role in deciding whether or not a given rank one extension $W_{f, g}$ of a weighted join operator is closed (cf. Remark \ref{rmk-rone-extn}). Indeed, we will see that the so-called compatibility conditions on $W_{f, g}$ always ensure the above inclusion as well as closedness of $W_{f, g}$. We formally introduce these conditions below (cf. \cite[Proposition 2.4(iv)]{I}, \cite[Proposition 4.1]{Kl}).

\index{$\mbox{dist}(\mu, \lambdab_u)$}
\index{$\varGamma_{\lambdab_u}$}
\index{$g_{_{\lambdab_u, \mu_0}}$}

\begin{definition}
Let $\mathscr T=(V, E)$ be a rooted directed tree with root $\rootb$ and let $\mathscr T_{\infty}=(\V, E_{\infty})$ be the extended directed tree associated with $\mathscr T$.
For $u, \mf b \in V,$ consider the weight system
$\lambdab_u = \{\lambda_{uv}\}_{v\in \V}$ of complex numbers and let $W_{f, g}$ be the rank one extension of the weighted join operator $\W{b}{u}$ on $\mathscr T$, where $f \in \ell^2(V) \ominus \Hi{b}{u}$ is non-zero and $g : \supp\,\Hi{b}{u} \rar \mathbb C$ is given. Set \beq \label{varG-0}
\mbox{dist}(\mu, \lambdab_u) &=& \inf \big\{|\mu - \lambda_{uv}| : v \in \supp\, \Hi{b}{u}\big\}, ~\mu \in \mathbb C, \notag \\
\varGamma_{\lambdab_u} &=& \big\{\mu \in \mathbb C : \mbox{dist}(\mu, \lambdab_u) > 0 \big\}.
\eeq
%is such that $\sum_{v \in  \supp(\Hi{b}{u})}|g(v)|^2=\infty$.
\begin{enumerate}
\item We say that {\it $W_{f, g}$ satisfies compatibility condition I} ~if there exists $\mu_0 \in \varGamma_{\lambdab_u}$ such that $g_{_{\lambdab_u, \mu_0}} \in \Hi{b}{u}$, where
\beq \label{g-lambda-mu}
g_{_{\lambdab_u, \mu_0}}(v) \,:=\, \frac{g(v)}{\lambda_{uv}-\mu_0}, \quad v \in \supp\, \Hi{b}{u}.
\eeq
\item We say that {\it  $W_{f, g}$ satisfies compatibility condition II}~ if the function $g$ satisfies
\beq \label{g-lambda-1}
\sum_{v \in \supp\,\Hi{b}{u}} \frac{|g(v)|^2}{|\lambda_{uv}|^2+1} ~<~ \infty.
\eeq
\end{enumerate}
If one of the above conditions holds, then we say that {\it $W_{f, g}$ satisfies a  compatibility condition}.
\end{definition}
\begin{remark} \label{one-all}
It turns out that $g_{_{\lambdab_u, \mu_0}} \in \Hi{b}{u}$ for some $\mu_0 \in \varGamma_{\lambdab_u}$, then $g_{_{\lambdab_u, \mu}} \in \Hi{b}{u}$ for every $\mu \in \varGamma_{\lambdab_u}$. This may be derived from
\beqn
|\lambda_{uv} - \mu_0|^2 ~\Le ~2(|\lambda_{uv} - \mu|^2 + |\mu_0 - \mu|^2), \quad v \in \supp\,\Hi{b}{u}, ~\mu \in \varGamma_{\lambdab_u} \setminus \{\mu_0\},
\eeqn
and the fact that $|\mu_0 - \mu| \Le c \dist(\mu, \lambdab_u)$ for some $c > 0$ (see \eqref{varG-0}).

\end{remark}

\index{$H_{\lambdab_u, g}$}

Here we discuss the relationships between the above compatibility conditions and the domain inclusion $\mathscr D(\Db{b}{u}) \subseteq \mathscr D(f \obslash g)$.
\begin{proposition} \label{lem-H-transform}
Let $\mathscr T=(V, E)$ be a rooted directed tree with root $\rootb$ and let $\mathscr T_{\infty}=(\V, E_{\infty})$ be the extended directed tree associated with $\mathscr T$.
For $u, \mf b \in V,$ consider the weight system
$\lambdab_u = \{\lambda_{uv}\}_{v\in \V}$ of complex numbers and let $W_{f, g}$ be the rank one extension of the weighted join operator $\W{b}{u}$ on $\mathscr T$, where $f \in \ell^2(V) \ominus \Hi{b}{u}$ is non-zero and $g : \supp\,\Hi{b}{u} \rar \mathbb C$ is given.
If $W_{f, g}$ satisfies a compatibility condition, then we have the domain inclusion $\mathscr D(\Db{b}{u}) \subseteq \mathscr D(f \obslash g)$. Moreover, if $\varGamma_{\lambdab_u}$ is non-empty, then the following statements are equivalent:
\begin{enumerate}
\item $\mathscr D(\Db{b}{u}) \subseteq \mathscr D(f \obslash g)$.
\item $W_{f, g}$ satisfies compatibility condition I.
\item The discrete Hilbert transform
$H_{\lambdab_u, g}$ given by
\beqn
H_{\lambdab_u, g}(h) ~= \sum_{v \in \supp\, \Hi{b}{u}} \frac{h(v) \overline{g(v)}}{\mu-\lambda_{uv}}
\eeqn
is well-defined for every $\mu \in \varGamma_{\lambdab_u}$ and every $h \in \Hi{b}{u}.$
\item For every $\mu \in \varGamma_{\lambdab_u}$, the linear operator $L_{\lambdab_u, \mu}:=(f \obslash g)(\Db{b}{u}-\mu)^{-1}$ defines a bounded linear transformation from $\Hi{b}{u}$  into $\ell^2(V) \ominus \Hi{b}{u}$.
\end{enumerate}
\end{proposition}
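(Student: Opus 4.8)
The plan is to prove the equivalence of (i)--(iv) under the standing assumption that $\varGamma_{\lambdab_u}$ is non-empty, after first disposing of the unconditional half of the statement. The operator $\Db{b}{u}$ is a diagonal operator on $\Hi{b}{u}$ with diagonal entries $\{\lambda_{uv}\}_{v \in \supp\,\Hi{b}{u}}$, so its domain is explicitly
\beqn
\mathscr D(\Db{b}{u}) = \Big\{h \in \Hi{b}{u} : \sum_{v \in \supp\,\Hi{b}{u}} |\lambda_{uv}|^2\,|h(v)|^2 < \infty \Big\},
\eeqn
while $\mathscr D(f \obslash g)$ consists of those $h$ for which $\sum_v h(v)\overline{g(v)}$ converges. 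I would prove the cyclic chain (i) $\Rightarrow$ (ii) $\Rightarrow$ (iii) $\Rightarrow$ (iv) $\Rightarrow$ (i), since each single implication is short once the key estimate is in hand.

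\textbf{The unconditional half and the core estimate.} First I would show that either compatibility condition forces $\mathscr D(\Db{b}{u}) \subseteq \mathscr D(f \obslash g)$. Fix $\mu_0 \in \varGamma_{\lambdab_u}$ and write, for $h \in \mathscr D(\Db{b}{u})$,
\beqn
\sum_{v} h(v)\overline{g(v)} = \sum_{v} (\lambda_{uv} - \mu_0)\,h(v)\cdot \overline{g_{_{\lambdab_u, \mu_0}}(v)}.
\eeqn
Since $h \in \mathscr D(\Db{b}{u})$ exactly means $(\lambda_{uv} - \mu_0)h(v) \in \ell^2(\supp\,\Hi{b}{u})$ (here one uses $\mathrm{dist}(\mu_0,\lambdab_u)>0$, so that the map $h \mapsto (\Db{b}{u}-\mu_0)h$ is a bijection of $\mathscr D(\Db{b}{u})$ onto $\Hi{b}{u}$), the sum converges absolutely by Cauchy--Schwarz provided $g_{_{\lambdab_u, \mu_0}} \in \Hi{b}{u}$; this is compatibility condition I, and by Remark \ref{one-all} the choice of $\mu_0$ is immaterial. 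For compatibility condition II one argues the same way with the weight $(|\lambda_{uv}|^2+1)^{1/2}$ in place of $|\lambda_{uv}-\mu_0|$, using that $|h(v)|^2(|\lambda_{uv}|^2+1)$ is summable for $h \in \mathscr D(\Db{b}{u})$. This establishes the first assertion and also gives (ii) $\Rightarrow$ (i) for free.

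\textbf{Closing the equivalence.} For (i) $\Rightarrow$ (ii), assuming $\varGamma_{\lambdab_u}\neq\es$ and that $\mathscr D(\Db{b}{u})\subseteq \mathscr D(f\obslash g)$, I would test the hypothesis against the vectors $h_\mu := (\Db{b}{u}-\mu)^{-1}g_{_{\lambdab_u,\mu}}^{\,\flat}$ built from the candidate function; the cleanest route is to observe that $g_{_{\lambdab_u,\mu_0}}\in\Hi{b}{u}$ is equivalent to $g \in \mathscr D((\Db{b}{u}-\mu_0)^{-1})$ after a conjugation, and that the assumed inclusion of domains forces exactly this square-summability by a closed-graph / uniform-boundedness argument applied to the family of truncations $g_n$ of $g$ (finitely supported, converging pointwise), mirroring the technique of Lemma \ref{lem-inj-ten-unb}. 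The equivalences (ii) $\Leftrightarrow$ (iii) and (ii) $\Leftrightarrow$ (iv) are then essentially a change of notation: the discrete Hilbert transform $H_{\lambdab_u,g}$ is, up to the scalar $\inp{h}{\,\cdot\,}$ pairing, the functional $h \mapsto \inp{h}{\,g_{_{\lambdab_u,\bar\mu}}}$, so its well-definedness on all of $\Hi{b}{u}$ for a fixed $\mu$ is equivalent to $g_{_{\lambdab_u,\mu}} \in \Hi{b}{u}$, i.e. to (ii); and $L_{\lambdab_u,\mu} = (f\obslash g)(\Db{b}{u}-\mu)^{-1}$ is the rank one operator $f \otimes g_{_{\lambdab_u,\bar\mu}}$ (with the conjugation bookkeeping handled as in \eqref{adjoint-ro}), which lies in $B(\Hi{b}{u}, \ell^2(V)\ominus\Hi{b}{u})$ precisely when $g_{_{\lambdab_u,\bar\mu}}\in\Hi{b}{u}$, again (ii). Here $(\Db{b}{u}-\mu)^{-1}$ is bounded because $\mu\in\varGamma_{\lambdab_u}$ means $\mu$ is bounded away from the diagonal entries.

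\textbf{The main obstacle.} The delicate implication is (i) $\Rightarrow$ (ii): the hypothesis only gives convergence (not necessarily absolute, and not with a uniform bound) of $\sum_v h(v)\overline{g(v)}$ for each individual $h \in \mathscr D(\Db{b}{u})$, and one must upgrade this to the quantitative summability $g_{_{\lambdab_u,\mu_0}}\in\ell^2$. I expect the clean way around this is to note that $f\obslash g$ restricted to $\mathscr D(\Db{b}{u})$ is closed (as the difference of the closed $W_{f,g}$-type pairing and the bounded part), so $f\obslash g$ is closed relative to the graph norm of $\Db{b}{u}$; the closed graph theorem then yields a bound $\|(f\obslash g)h\| \le c\,\|(\Db{b}{u}-\mu_0)h\|$ on $\mathscr D(\Db{b}{u})$, which unravels to $|\inp{(\Db{b}{u}-\mu_0)^{-1}k}{f\obslash g\text{-symbol}}| \le c\|k\|$ for all $k\in\Hi{b}{u}$, forcing $g_{_{\lambdab_u,\mu_0}}\in\Hi{b}{u}$ by Riesz representation. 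Making this closedness claim precise, without circularly invoking closedness of $W_{f,g}$ (which is part of what the compatibility machinery is meant to establish), is the step that needs the most care.
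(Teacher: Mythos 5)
Your proposal follows the paper's skeleton almost exactly: the unconditional half and (ii) $\Rightarrow$ (i) via Cauchy--Schwarz against $g_{_{\lambdab_u, \mu_0}}$ (resp.\ against the weight $(|\lambda_{uv}|^2+1)^{-1/2}$ for condition II); the equivalences (ii) $\Leftrightarrow$ (iii) $\Leftrightarrow$ (iv) by recognizing $L_{\lambdab_u,\mu}=(f\obslash g)(\Db{b}{u}-\mu)^{-1}$ as a rank one operator $f\otimes w$ with $\|w\|=\|g_{_{\lambdab_u,\mu}}\|$ together with Remark \ref{one-all}; and, for the key implication (i) $\Rightarrow$ (ii), the uniform boundedness principle applied to finitely supported truncations --- which is precisely the paper's argument (applied there to the functionals $\phi_{F,g}(h)=\sum_{v\in F}h(v)\overline{g(v)}/(\lambda_{uv}-\mu)$, $F$ finite). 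Minor slips (the displayed factorization of $\sum_v h(v)\overline{g(v)}$ is off by the unimodular phases $(\lambda_{uv}-\mu_0)/\overline{(\lambda_{uv}-\mu_0)}$, and ``$g\in\mathscr D((\Db{b}{u}-\mu_0)^{-1})$'' is ill-posed since that operator is bounded on all of $\Hi{b}{u}$) do not affect the estimates, which only involve moduli.

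The genuine problem is your ``main obstacle'' paragraph, where you set aside the working truncation/UBP argument in favour of a closed-graph route. That route cannot be repaired. First, as you yourself suspect, deducing closedness of $f\obslash g$ on $\mathscr D(\Db{b}{u})$ from closedness of ``the $W_{f,g}$-type pairing'' is circular: closedness of $W_{f,g}$ is only established (Theorem \ref{closed-0}) \emph{under} a compatibility condition, i.e.\ under the very conclusion you are after. Second, and more decisively, there is no non-circular way to establish the closedness as an intermediate step: a linear functional defined on \emph{all} of a Banach space (here $\mathscr D(\Db{b}{u})$ with its graph norm, which is what hypothesis (i) gives you) has closed graph if and only if it is bounded, so ``prove closedness, then invoke the closed graph theorem'' is a restatement of the problem, not a reduction of it. Finally, the worry that motivated the pivot --- that the convergence of $\sum_v h(v)\overline{g(v)}$ is neither absolute nor uniform --- is unfounded: since the index set is unordered, convergence of the net of finite partial sums (which is how $\mathscr D(f\obslash g)$ is defined) forces absolute convergence; concretely, replace $h(v)$ by $\epsilon_v h(v)$ with unimodular $\epsilon_v$, which stays in $\mathscr D(\Db{b}{u})$ --- this is the paper's ``standard polar representation'' remark. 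Even without this, along a fixed increasing exhaustion the mere convergence of the partial sums supplies the pointwise bounds that the uniform boundedness principle requires, exactly as in Lemma \ref{lem-inj-ten-unb}. So the truncation argument you sketched first is complete as it stands; it is the paper's proof, and you should commit to it rather than to the closed-graph detour.
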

\begin{remark}
Note that $\sigma(\Db{b}{u})=\mathbb C \setminus \varGamma_{\lambdab_u}$ (see \cite[Example 3.8]{Sc}). This clarifies the expression $(\Db{b}{u}-\mu)^{-1}$ appearing in (iv).
It is worth noting  that a discrete Hilbert transform appears in \cite[Corollary 2.5]{I}, which characterizes the set of eigenvalues of a bounded rank one perturbation of a diagonal operator (see also \cite[Corollary 2.6]{I}). Also, the operator $L_{\lambdab_u, \mu}$, as appearing in Proposition \ref{lem-H-transform}(iv), is precisely the operator $G(\mu)$, as appearing in the Frobenius-Schur-type factorization in  \cite[Equation (1.6)]{ALMS}.
\end{remark}
\begin{proof}
Let $h \in \mathscr D(\Db{b}{u})$. We divide the verification of the domain inclusion $\mathscr D(\Db{b}{u}) \subseteq \mathscr D(f \obslash g)$ into the following cases:

Assume that $W_{f, g}$ satisfies the compatibility condition I. Then, for some $\mu_0 \in \varGamma_{\lambdab_u}$, by the Cauchy-Schwarz inequality,
\beqn
\Big|\sum_{v \in \supp\, \Hi{b}{u}} \!\!\!\!\!\!\!\! h(v)\overline{g(v)}\Big| \,\leqslant\, \|(\Db{b}{u}-\mu_0)h\|\|g_{_{\lambdab_u, \mu_0}}\|.
\eeqn
where we used \eqref{g-lambda-mu} and the fact that $\mathscr D(\Db{b}{u}-\mu_0) = \mathscr D(\Db{b}{u})$.
This shows that $h \in \mathscr D(f\obslash g).$

Next assume that $W_{f, g}$ satisfies the compatibility condition II. Since $\mathscr D(\Db{b}{u})=\mathscr D((\Db{b}{u})^*\Db{b}{u} +I)^{1/2})$, by the Cauchy-Schwarz inequality,
\beqn
\big|\sum_{v \in \supp\, \Hi{b}{u}} \!\!\!\!\!\!\!\! h(v)\overline{g(v)}\Big|^2 \,\leqslant\,\|(\Db{b}{u})^*\Db{b}{u} +I)^{1/2}h\|^2 \sum_{v \in \supp\,\Hi{b}{u}} \frac{|g(v)|^2}{|\lambda_{uv}|^2+1}.
\eeqn
Thus $h \in \mathscr D(f\obslash g)$ in this case, as well.

The preceding discussion also yields the implication (ii) $\Rightarrow$ (i).
To see the equivalence of (i)-(iv), assume that $\varGamma_{\lambdab_u}$ is non-empty.

(i) $\Rightarrow$ (ii): Let $\mu \in \varGamma_{\lambdab_u}$.
For $h \in \Hi{b}{u}$, consider the function $k_h : \supp\, \Hi{b}{u} \rar \mathbb C$ defined by 
\beq \label{k-h} k_h(v) \,=\, \frac{h(v)}{\lambda_{uv}-\mu}, \quad v \in \supp\, \Hi{b}{u}.
\eeq
Clearly, $k_h$
belongs to $\mathscr D(\Db{b}{u})$ for every $h \in \Hi{b}{u}.$
By assumption, $
\mathscr D(\Db{b}{u}) \subseteq \mathscr D(f \obslash g)$, and hence the linear functional $\phi_g : \Hi{b}{u} \rar \mathbb C$ given by
\beq \label{phi-g-h}
\phi_g(h)\,=\,\sum_{v \in \supp\, \Hi{b}{u}} k_h(v) \overline{g(v)}, \quad h \in \Hi{b}{u}
\eeq
is well-defined.
By the standard polar representation, the series $\sum_{v \in \supp\, \Hi{b}{u}} k_h(v) \overline{g(v)}$ is absolutely convergent for every $h \in \Hi{b}{u}.$
One may now apply the uniform boundedness principle \cite{Si} to the family of linear functionals $\phi_{F, g}(h)=\sum_{v \in F} h(v) \frac{\overline{g(v)}}{\lambda_{uv}-\mu},$ $F$ is a finite subset of $\supp\, \Hi{b}{u}$ to derive the boundedness of $\phi_g.$ By \eqref{k-h} and \eqref{phi-g-h}, the boundedness of $\phi_g$ in turn is equivalent to $g_{_{\lambdab_u, \mu}} \in \Hi{b}{u}$.

%(ii) $\Rightarrow$ (i): This is an easy consequence of the Cauchy-Schwarz inequality, in view of $\mathscr D(\Db{b}{u}-\mu) = \mathscr D(\Db{b}{u})$.

(iii) $\Rightarrow$ (ii): This may be derived from the uniform boundedness principle (see the verification of (i) $\Rightarrow$ (ii)).

(ii) $\Rightarrow$ (iii): In view of the Cauchy-Schwarz inequality, it suffices to check that $g_{_{\lambdab_u, \mu_0}} \in \Hi{b}{u}$ for some $\mu_0 \in \varGamma_{\lambdab_u}$, then $g_{_{\lambdab_u, \mu}} \in \Hi{b}{u}$ for every $\mu \in \varGamma_{\lambdab_u}$. This is observed in Remark \ref{one-all}.
%The desired conclusion follows from
%\beqn
%|\lambda_{uv} - \mu_0|^2 \Le 2(|\lambda_{uv} - \mu|^2 + |\mu_0 - \mu|^2), \quad v \in \supp\,\Hi{b}{u}, ~\mu \in \varGamma_{\lambdab_u} \setminus \{\mu_0\},
%\eeqn
%and the fact that $|\mu_0 - \mu| \Le c \dist(\mu, \lambdab_u)$ for some $c > 0$ (see \eqref{varG-0}).

(ii) $\Rightarrow$ (iv):
By the Cauchy-Schwarz inequality, for any $h \in \Hi{b}{u},$
\beqn
\|L_{\lambdab_u, \mu}\| & = & \|(f \obslash g)(\Db{b}{u}-\mu)^{-1}h\| \\ & = & \Big|\sum_{v \in \supp\, \Hi{b}{u}} \!\!\!\!\!\!\!\! h(v)\frac{\overline{g(v)}}{\lambda_{uv}-\mu}\Big| \|f\| \leqslant\|h\|\|g_{_{\lambdab_u, \mu}}\|.
\eeqn
Since $g_{_{\lambdab_u, \mu}} \in \Hi{b}{u}$, this shows that $L_{\lambdab_u, \mu}$ is bounded linear.
%It is easy to see that $(f \obslash g)(\Db{b}{u}-\mu)^{-1}$ is a closed linear operator defined on $\Hi{b}{u}$. Thus (iv) follows from the closed graph theorem.

 (iv) $\Rightarrow$ (iii): This is straight-forward.
\end{proof}

Here is an instance in which the compatibility condition II implies
the compatibility condition I.
\begin{corollary}
\label{coro-H-tr}
Under the hypotheses of Proposition \ref{lem-H-transform} and the assumption
 that $\varGamma_{\lambdab_u} \neq \emptyset$, if $W_{f, g}$ satisfies the compatibility condition II, then it satisfies the compatibility condition I.
\end{corollary}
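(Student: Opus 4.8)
The plan is to exploit that, away from the accumulation points of the weights, the two denominators $|\lambda_{uv}-\mu_0|^2$ and $|\lambda_{uv}|^2+1$ are comparable. Since $\varGamma_{\lambdab_u}\neq\emptyset$, I would first fix a point $\mu_0\in\varGamma_{\lambdab_u}$ and set $d:=\mbox{dist}(\mu_0,\lambdab_u)$, which is strictly positive by \eqref{varG-0}. The goal then reduces to producing a single constant $C>0$, independent of $v$, with $|\lambda_{uv}|^2+1\le C\,|\lambda_{uv}-\mu_0|^2$ for every $v\in\supp\,\Hi{b}{u}$: dividing by $|\lambda_{uv}-\mu_0|^2$ and multiplying through by $|g(v)|^2$ would then bound the summand defining compatibility condition I by $C$ times the summand of compatibility condition II.

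The key step is this elementary pointwise comparison. Writing $z:=\lambda_{uv}$, the triangle inequality gives $|z|\le|z-\mu_0|+|\mu_0|$, and one application of $2ab\le a^2+b^2$ yields $|z|^2\le 2|z-\mu_0|^2+2|\mu_0|^2$. Because $\mu_0\in\varGamma_{\lambdab_u}$ forces $|z-\mu_0|\ge d$ for every such $v$, both the constant $1$ and the term $2|\mu_0|^2$ are dominated by fixed multiples of $|z-\mu_0|^2$, giving $|z|^2+1\le C\,|z-\mu_0|^2$ with $C=2+(2|\mu_0|^2+1)/d^2$. This constant depends only on $\mu_0$ and $d$, which is exactly what makes the estimate uniform in $v$.

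Finally I would sum the resulting inequality $\frac{|g(v)|^2}{|\lambda_{uv}-\mu_0|^2}\le C\,\frac{|g(v)|^2}{|\lambda_{uv}|^2+1}$ over $v\in\supp\,\Hi{b}{u}$. Compatibility condition II \eqref{g-lambda-1} makes the right-hand series converge, so the left-hand one converges too; that is, $g_{_{\lambdab_u,\mu_0}}\in\Hi{b}{u}$ in view of \eqref{g-lambda-mu}, which is precisely compatibility condition I at the chosen $\mu_0$. (By Remark \ref{one-all} it suffices to verify this for a single $\mu_0$.) There is no genuine obstacle here; the only point requiring care is that the comparison constant be chosen independently of $v$---equivalently, that the function $z\mapsto(|z|^2+1)/|z-\mu_0|^2$ is bounded on the closed set $\{|z-\mu_0|\ge d\}$, which holds since it tends to $1$ at infinity. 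The explicit $C$ above records this and avoids any compactness argument.
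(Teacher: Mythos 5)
Your proof is correct, but it follows a genuinely different and more elementary route than the paper's. The paper derives the corollary by invoking Proposition \ref{lem-H-transform} twice: the first half of that proposition shows that compatibility condition II yields the domain inclusion $\mathscr D(\Db{b}{u}) \subseteq \mathscr D(f \obslash g)$, and then the implication (i) $\Rightarrow$ (ii) of the same proposition (whose proof rests on the uniform boundedness principle applied to the truncated functionals $\phi_{F,g}$) converts that domain inclusion back into compatibility condition I. You bypass both steps entirely: you compare the two defining series termwise, using only the triangle inequality and the lower bound $|\lambda_{uv}-\mu_0| \geqslant d := \mbox{dist}(\mu_0, \lambdab_u) > 0$ to obtain the uniform estimate $|\lambda_{uv}|^2+1 \leqslant C\,|\lambda_{uv}-\mu_0|^2$ with the explicit constant $C = 2 + (2|\mu_0|^2+1)/d^2$, whence
\[
\sum_{v \in \supp\,\Hi{b}{u}} \frac{|g(v)|^2}{|\lambda_{uv}-\mu_0|^2} \,\leqslant\, C \sum_{v \in \supp\,\Hi{b}{u}} \frac{|g(v)|^2}{|\lambda_{uv}|^2+1} \,<\, \infty,
\]
so $g_{_{\lambdab_u, \mu_0}} \in \Hi{b}{u}$. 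What your approach buys is self-containedness and a quantitative statement (the series of condition I is dominated by an explicit multiple of the series of condition II), with no functional-analytic machinery; what the paper's approach buys is brevity, since all the work is already embedded in Proposition \ref{lem-H-transform}. One cosmetic remark: since compatibility condition I only requires the existence of a single $\mu_0 \in \varGamma_{\lambdab_u}$ with $g_{_{\lambdab_u, \mu_0}} \in \Hi{b}{u}$, your closing appeal to Remark \ref{one-all} is not actually needed.
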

\begin{proof}
If $W_{f, g}$ satisfies the compatibility condition II, then by the first half of Proposition \ref{lem-H-transform}, we obtain $\mathscr D(\Db{b}{u}) \subseteq \mathscr D(f \obslash g)$. The desired conclusion now follows from the implication (i) $\Rightarrow$ (ii) of Proposition \ref{lem-H-transform}.
\end{proof}

\index{$ (\varGamma, g)^*$}

Following \cite{BMS},  for any $g : \supp\,\Hi{b}{u} \rar \mathbb C \setminus \{0\},$ we set
 \beq
 \label{gamma-g}
 (\varGamma, g)^* \,=\, \{\mu \in \mathbb C : g_{\lambdab_u, \mu} \in \Hi{b}{u}\}.
 \eeq
The following has been motivated by the discussion from \cite[Pg 2]{BMS}  on discrete Hilbert transforms in a slightly different context. Note that the compatibility condition II is nothing but the existence of {\it admissible weight sequence} in the sense of \cite{BMS}.
\begin{proposition} \label{CC-prop}
 Let $\mathscr T=(V, E)$ be a rooted directed tree with root $\rootb$ and let $\mathscr T_{\infty}=(\V, E_{\infty})$ be the extended directed tree associated with $\mathscr T$.
For $u, \mf b \in V,$ consider the weight system
$\lambdab_u = \{\lambda_{uv}\}_{v\in \V}$ of complex numbers and let $W_{f, g}$ be the rank one extension of the weighted join operator $\W{b}{u}$ on $\mathscr T$, where $f \in \ell^2(V) \ominus \Hi{b}{u}$ is non-zero and $g : \supp\,\Hi{b}{u} \rar \mathbb C \setminus \{0\}$ be given. Then the following statements are true:
\begin{enumerate}
\item  If $\{\lambda_{uv} : v \in \supp\,\Hi{b}{u}\}$ is closed, then $W_{f, g}$ satisfies the compatibility condition I if and only if $$\varGamma_{\lambdab_u} = \mathbb C \setminus \{\lambda_{uv} : v \in \supp\,\Hi{b}{u}\}= (\varGamma, g)^*$$
$($see \eqref{varG-0} and \eqref{gamma-g}$)$.
\item If $\{\lambda_{uv} : v \in \supp\,\Hi{b}{u}\}$ has accumulation point only at $\infty$ with each of its entries appearing finitely many times, then  $W_{f, g}$ satisfies compatibility condition II if and only if $\varGamma_{\lambdab_u}=(\varGamma, g)^*.$
\end{enumerate}
\end{proposition}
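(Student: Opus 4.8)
The plan is to reduce both parts to a single clean comparison between the set $(\varGamma, g)^*$ of \eqref{gamma-g} and the resolvent-type set $\varGamma_{\lambdab_u}$ of \eqref{varG-0}, using two facts that hold under either hypothesis. Write $\Lambda := \{\lambda_{uv} : v \in \supp\,\Hi{b}{u}\}$. First, under both hypotheses $\Lambda$ is a \emph{closed} subset of $\mathbb C$: in (i) by assumption, and in (ii) because a set accumulating only at $\infty$ has no finite limit point and is therefore closed. Since $\mbox{dist}(\mu, \lambdab_u)$ is exactly the distance from $\mu$ to $\Lambda$, closedness yields $\mbox{dist}(\mu, \lambdab_u) = 0 \iff \mu \in \Lambda$, hence $\varGamma_{\lambdab_u} = \mathbb C \setminus \Lambda$; moreover $\Lambda$ is at most countable (as $\supp\,\Hi{b}{u} \subseteq V$ is countable), so $\varGamma_{\lambdab_u} \neq \emptyset$. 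Second, since $g$ is nowhere zero, the function $g_{\lambdab_u,\mu}$ of \eqref{g-lambda-mu} fails to be well-defined whenever $\mu \in \Lambda$ (a term $g(v)/0$ occurs), so one always has $(\varGamma, g)^* \subseteq \mathbb C \setminus \Lambda = \varGamma_{\lambdab_u}$. I would record these two observations at the outset, as both parts build on them.

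For part (i), the first equality $\varGamma_{\lambdab_u} = \mathbb C \setminus \Lambda$ is then free, so it only remains to show that compatibility condition I is equivalent to $(\varGamma, g)^* = \varGamma_{\lambdab_u}$. Unwinding the definition, compatibility condition I asserts precisely that $\mu_0 \in (\varGamma, g)^*$ for some $\mu_0 \in \varGamma_{\lambdab_u}$; by Remark \ref{one-all} this single membership propagates to all of $\varGamma_{\lambdab_u}$, so it is equivalent to $\varGamma_{\lambdab_u} \subseteq (\varGamma, g)^*$ (here $\varGamma_{\lambdab_u} \neq \emptyset$ is used). Combining this with the reverse inclusion $(\varGamma, g)^* \subseteq \varGamma_{\lambdab_u}$ from the opening paragraph gives exactly $(\varGamma, g)^* = \varGamma_{\lambdab_u}$, which completes (i).

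For part (ii), the heart of the matter is a comparison estimate showing that the convergence of $\sum_{v \in \supp\,\Hi{b}{u}} |g(v)|^2 / |\lambda_{uv} - \mu|^2$ is independent of $\mu \in \mathbb C \setminus \Lambda$ and coincides with compatibility condition II of \eqref{g-lambda-1}. The two standing hypotheses, finite multiplicity of each value and accumulation only at $\infty$, together force $\{v : |\lambda_{uv}| \Le R\}$ to be finite for every $R$ (a finite union of finite fibers), i.e. $|\lambda_{uv}| \to \infty$ along the net. Fixing $\mu$ and choosing $R > 2|\mu|$, for the cofinitely many $v$ with $|\lambda_{uv}| > R$ one has $\tfrac12 |\lambda_{uv}| \Le |\lambda_{uv} - \mu| \Le 2|\lambda_{uv}|$ and $|\lambda_{uv}|^2 \Le |\lambda_{uv}|^2 + 1 \Le 2|\lambda_{uv}|^2$, so the terms $|g(v)|^2/|\lambda_{uv}-\mu|^2$ and $|g(v)|^2/(|\lambda_{uv}|^2+1)$ have uniformly bounded ratio; the finitely many remaining terms are finite (since $\mu \notin \Lambda$) and thus irrelevant to convergence. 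Hence $\mu \in (\varGamma, g)^*$ if and only if compatibility condition II holds, and this equivalence is uniform in $\mu \in \mathbb C \setminus \Lambda$.

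Consequently I would conclude (ii) by the resulting dichotomy: if compatibility condition II holds then every $\mu \in \mathbb C \setminus \Lambda$ lies in $(\varGamma, g)^*$, giving $(\varGamma, g)^* = \mathbb C \setminus \Lambda = \varGamma_{\lambdab_u}$; if it fails then no such $\mu$ lies in $(\varGamma, g)^*$, so $(\varGamma, g)^* = \emptyset \neq \varGamma_{\lambdab_u}$. This yields compatibility condition II $\iff \varGamma_{\lambdab_u} = (\varGamma, g)^*$, as required. I expect the only genuinely delicate point to be the comparison estimate in (ii) and the care needed to discard the exceptional finite index set; the rest is bookkeeping with the definitions of $\varGamma_{\lambdab_u}$ and $(\varGamma, g)^*$, the nonvanishing of $g$, and Remark \ref{one-all}.
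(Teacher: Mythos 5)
Your proof is correct. Part (i) follows the paper's own route almost verbatim: closedness gives $\varGamma_{\lambdab_u}=\mathbb C\setminus\Lambda$, where $\Lambda:=\{\lambda_{uv}:v\in\supp\,\Hi{b}{u}\}$, the nonvanishing of $g$ gives $(\varGamma,g)^*\subseteq\mathbb C\setminus\Lambda$, Remark \ref{one-all} upgrades the single membership demanded by compatibility condition I to the inclusion $\varGamma_{\lambdab_u}\subseteq(\varGamma,g)^*$, and countability of $V$ supplies the nonemptiness needed for sufficiency. In part (ii), however, you take a genuinely different route. The paper obtains the forward implication by citing Corollary \ref{coro-H-tr} (condition II implies condition I, which rests on the domain-inclusion and uniform-boundedness machinery of Proposition \ref{lem-H-transform}) combined with part (i), and the converse from the one-line bound $|\mu_0-\lambda_{uv}|^2\leqslant M(|\lambda_{uv}|^2+1)$ applied to a single $\mu_0\in(\varGamma,g)^*$. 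You instead prove, by an elementary two-sided series comparison, that for each fixed $\mu\notin\Lambda$ the membership $\mu\in(\varGamma,g)^*$ is \emph{equivalent} to condition II: the hypotheses of (ii) force every sublevel set $\{v:|\lambda_{uv}|\leqslant R\}$ to be finite, so outside a finite index set the terms $|g(v)|^2/|\lambda_{uv}-\mu|^2$ and $|g(v)|^2/(|\lambda_{uv}|^2+1)$ have bounded ratio, while the finitely many exceptional terms are finite because $\mu\notin\Lambda$. Your version is self-contained (no appeal to the discrete Hilbert transform results) and yields the sharper dichotomy that $(\varGamma,g)^*$ is either empty or all of $\mathbb C\setminus\Lambda$; the paper's version is shorter because it recycles results already proved. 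A minor observation: only the direction ``condition II implies $\mu\in(\varGamma,g)^*$'' genuinely needs your lower bound $|\lambda_{uv}-\mu|\geqslant|\lambda_{uv}|/2$; the opposite direction holds for arbitrary $\mu$ by the triangle inequality alone, which is all the paper uses at the corresponding step.
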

\begin{proof}
To see (i), assume that $\{\lambda_{uv} : v \in \supp\,\Hi{b}{u}\}$ is closed.
Clearly, \beq \label{inc-1} \varGamma_{\lambdab_u} \,=\, \mathbb C \setminus \{\lambda_{uv} : v \in \supp\,\Hi{b}{u}\}.\eeq Since $g$ is nowhere vanishing,  \beq \label{inc-2} (\varGamma, g)^* ~\subseteq ~
 \mathbb C \setminus \{\lambda_{uv} : v \in \supp\,\Hi{b}{u}\}.\eeq
 Note further that if $W_{f, g}$ satisfies the compatibility condition I, then
 $g_{_{\lambdab_u, \mu}}$ belongs to $\Hi{b}{u}$ for every $\mu \in \varGamma_{\lambdab_u}$ (see Remark \ref{one-all}).
 In this case, $\varGamma_{\lambdab_u} \subseteq  (\varGamma, g)^*,$ and hence the necessity part in (i) follows from \eqref{inc-1} and \eqref{inc-2}. Since
$ \mathbb C \setminus \{\lambda_{uv} : v \in \supp\,\Hi{b}{u}\}$ is always a nonempty set (by the assumption that $\mbox{card}(V)=\aleph_0$, $\supp\,\Hi{b}{u}$ is always countable), the sufficiency part of (i) is immediate from \eqref{gamma-g}.

To see (ii), assume that $\{\lambda_{uv} : v \in \supp\,\Hi{b}{u}\}$ has accumulation point only at $\infty$ with each of its entries appearing finitely many times. The necessity part follows from (i), \eqref{inc-1} and Corollary \ref{coro-H-tr}. To see the sufficiency part of (ii), suppose that $(\varGamma, g)^*=\varGamma_{\lambdab_u}$. By (i), for some $\mu_0 \in \mathbb C,$ we must have
\beq \label{star1}
\sum_{v \in \supp\,\Hi{b}{u}} \frac{|g(v)|^2}{|\mu_0-\lambda_{uv}|^2} ~< ~\infty.
\eeq
However, since $\infty$ is the only accumulation point for $\{\lambda_{uv} : v \in \supp\,\Hi{b}{u}\}$,
there exists (sufficiently large) $M >0$ such that $|\mu_0-\lambda_{uv}|^2 \leqslant M(|\lambda_{uv}|^2+1)$ for every $v \in \supp\,\Hi{b}{u}$. It now follows from \eqref{star1} that $W_{f, g}$ satisfies the compatibility condition II.
%In this case, $\varGamma_{\lambdab_u} \subseteq  (\varGamma, g)^*,$ where equality holds if in addition .
% It turns out that under some sparsity condition
% (e.g. $\inf_{\substack{v, w \in V \\ \dep_v=\dep_w+1}}
% |\lambda_{uv}|/|\lambda_{uw}| > 1$) and assuming that $\supp(g)=\supp\,\Hi{b}{u}$, $(\varGamma, g)^*$ is nonempty and equal to $\mathbb C \setminus \{\lambda_{uv} : v \in \supp\,\Hi{b}{u}\}=\varGamma_{\lambdab_u}$ if and only if $W_{f, g}$ satisfies compatibility condition II (see \cite[Pg 2]{BMS}).
\end{proof}

%\section{Closedness of rank one extensions of weighted join operators}

\section{Closedness and relative boundedness}

In this section, we show that any rank one extension $W_{f, g}$ of weighted join operator satisfying a compatibility condition is closed (cf. \cite[Theorem 1.1]{ALMS}, \cite[Theorems 2.5 and 2.6]{T}). This is achieved by decomposing $W_{f, g}$ as $A+B$, where $A$ is closed and $B$ is $A$-bounded.  We begin recalling some definitions from \cite{K}. Given densely defined linear operators $A$ and $B$ in $\mathcal H,$ we say that $B$ is {\it $A$-bounded} if $\mathscr D(B) \supseteq \mathscr D(A)$ and there
exist non-negative real numbers $a$ and $b$ such that
\beqn \|Bx\|^2 \,\leqslant\, a\|Ax\|^2 + b\|x\|^2, \quad x \in \mathscr D(A). \eeqn
The infimum of all $a \geqslant 0$ for which there exists a number $b \geqslant 0$ such that the above inequality holds is called the {\it $A$-bound of $B$}. 
Note that $B$ is $A$-bounded if and only if $\mathscr D(B) \supseteq \mathscr D(A)$ and there
exist non-negative real numbers $a$ and $b$ such that
\begin{eqnarray}
\label{rel-bdd-2}
\|Bx\| \,\leqslant\, a \|Ax\| + b\|x\|,\quad x \in \mathscr D(S).
\end{eqnarray}
For basic facts pertaining to $A$-bounded operators, the reader is referred to \cite{K, SS, Sc}. For the sake of convenience, we recall here the statement of Kato-Rellich theorem from \cite{K}. 
Suppose that $A$ is a closed operator in $\mathcal H$. Let $B$ be a linear operator such that $\mathscr D(A) \subseteq \mathscr D(B)$ and there exist $a \in (0,1)$ and $b \in (0, \infty)$ with the property \eqref{rel-bdd-2},
then  the linear operator $A+B$ with domain $\mathscr D(A)$ is a closed operator in $\mathcal H$ (see \cite[Theorem 1.1, Chapter IV]{K}).

\begin{theorem}
\label{closed-0}
Let $\mathscr T=(V, E)$ be a rooted directed tree with root $\rootb$ and let $\mathscr T_{\infty}=(\V, E_{\infty})$ be the extended directed tree associated with $\mathscr T$.
For $u, \mf b \in V,$ consider the weight system
$\lambdab_u = \{\lambda_{uv}\}_{v\in \V}$ of complex numbers and let $W_{f, g}$ be the rank one extension of the weighted join operator $\W{b}{u}$ on $\mathscr T$, where $f \in \ell^2(V) \ominus \Hi{b}{u}$ is non-zero and $g : \supp\,\Hi{b}{u} \rar \mathbb C$ is given.
%is such that $\sum_{v \in  \supp(\Hi{b}{u})}|g(v)|^2=\infty$.
Suppose that $W_{f, g}$ satisfies a compatibility condition.
%\begin{enumerate}
%\item There exists a $\mu \in \mathbb C$ such that $(\inf \{|\lambda_{uv}-\mu| : v \in \supp\,\Hi{b}{u}\} >0$ and$)$ $g_{\lambdab_u, \mu} \in \Hi{b}{u}$, where
%\beq \label{g-lambda-mu}
%g_{\lambda_u, \mu}(v) = \frac{g(v)}{\lambda_{uv}-\mu}, \quad v \in \supp\, \Hi{b}{u}.
%\eeq
%\item The function $g : \supp(\Hi{b}{u}) \rar \mathbb C$ satisfies
%\beq \label{g-lambda-1}
%\sum_{v \in \supp\,\Hi{b}{u}} \frac{|g(v)|^2}{|\lambda_{uv}|^2+1} < \infty.
%\eeq
%\end{enumerate}
Then $W_{f, g}$ defines a closed linear operator with domain given by
\beq
\label{domain-W-fg}
\mathscr D(W_{f, g})\,=\, \big\{(h, k) : h \in \mathscr D(\Db{b}{u}), ~k \in \ell^2(V) \ominus \Hi{b}{u}\big\}.
\eeq
Moreover, $\mathscr D_{V}$, as given by \eqref{d-supp-hbu}, forms a core for $W_{f, g}.$
\end{theorem}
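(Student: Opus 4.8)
The plan is to realize $W_{f,g}$ as a sum $A+B$ of a closed operator $A$ and an $A$-bounded perturbation $B$ with $A$-bound strictly less than $1$, so that the Kato--Rellich theorem (recalled just above the statement) applies directly. The natural choice is to let $A$ be the closed diagonal part
\beqn
A \,=\, \left[\begin{array}{cc} \Db{b}{u} & 0 \\ 0 & \N{b}{u} \end{array}\right]
\eeqn
acting on $\mathscr D(\Db{b}{u}) \oplus (\ell^2(V) \ominus \Hi{b}{u})$, and to let $B$ be the off-diagonal rank one piece
\beqn
B \,=\, \left[\begin{array}{cc} 0 & 0 \\ f \obslash g & 0 \end{array}\right].
\eeqn
Here $A$ is closed because $\Db{b}{u}$ is a diagonal (hence closed) operator by Remark \ref{rmk-action}(i) and Theorem \ref{o-deco}, while $\N{b}{u}$ is bounded; an orthogonal direct sum of a closed operator and a bounded operator is closed. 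The domain of $A$ is exactly the set on the right-hand side of \eqref{domain-W-fg}, which is also what we must show is $\mathscr D(W_{f,g})$.

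The first key step is to verify the domain inclusion $\mathscr D(A) \subseteq \mathscr D(B)$, i.e. $\mathscr D(\Db{b}{u}) \subseteq \mathscr D(f \obslash g)$. This is precisely the first conclusion of Proposition \ref{lem-H-transform}, available because $W_{f,g}$ is assumed to satisfy a compatibility condition. Granting this inclusion, the domain of $A+B$ equals $\mathscr D(A)$, establishing \eqref{domain-W-fg}. The second and central step is to produce the relative bound. For $h \in \mathscr D(\Db{b}{u})$, I would estimate $\|(f\obslash g)(h)\|$ using the two cases of the compatibility condition separately, reusing the Cauchy--Schwarz estimates already carried out in the proof of Proposition \ref{lem-H-transform}. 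Under compatibility condition II one has
\beqn
\|(f\obslash g)(h)\| ~\Le~ \|f\|\,\Big(\sum_{v \in \supp\,\Hi{b}{u}} \tfrac{|g(v)|^2}{|\lambda_{uv}|^2+1}\Big)^{1/2}\,\|((\Db{b}{u})^*\Db{b}{u}+I)^{1/2}h\|,
\eeqn
and since $\|((\Db{b}{u})^*\Db{b}{u}+I)^{1/2}h\|^2 = \|\Db{b}{u}h\|^2 + \|h\|^2$, this gives an inequality of the form $\|Bx\| \Le c(\|Ax\|^2+\|x\|^2)^{1/2}$ with $c$ the finite constant from \eqref{g-lambda-1}. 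The subtlety is that Kato--Rellich requires $A$-bound strictly less than $1$ in the form \eqref{rel-bdd-2}, whereas the crude constant $c$ need not be small.

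The main obstacle, therefore, is turning an arbitrary finite relative bound into one with $a<1$. The standard remedy is to exploit the freedom in the compatibility condition: by choosing $\mu_0 \in \varGamma_{\lambdab_u}$ with $|\mu_0|$ large (or, equivalently, truncating the weight system), one can make the tail sum $\sum_{|\lambda_{uv}| \ge R} |g(v)|^2/|\lambda_{uv}-\mu_0|^2$ arbitrarily small while the finite remaining part contributes only to the $\|x\|$ term. Concretely, I would split $\supp\,\Hi{b}{u}$ into a finite set $F$ and its complement, bound the $F$-part by a constant times $\|h\|$ and the complement-part by $\varepsilon\,\|(\Db{b}{u}-\mu_0)h\|$ for a prescribed $\varepsilon \in (0,1)$, using compatibility condition I in the form $g_{\lambdab_u,\mu_0} \in \Hi{b}{u}$ together with Remark \ref{one-all}. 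Since $\|(\Db{b}{u}-\mu_0)h\| \Le \|\Db{b}{u}h\| + |\mu_0|\,\|h\|$, this yields \eqref{rel-bdd-2} with $a=\varepsilon<1$, and Kato--Rellich gives closedness of $A+B=W_{f,g}$ on $\mathscr D(A)$. Finally, for the core assertion I would argue as in Proposition \ref{closed}: the truncations $f_n := \sum_{\dep_v \le n} f(v) e_v \in \mathscr D_V$ satisfy $f_n \to f$ and, using the just-established relative bound to control $(f\obslash g)(f-f_n) \to 0$ together with $\Db{b}{u}$-convergence of the diagonal part, one gets $W_{f,g}f_n \to W_{f,g}f$, so that $\mathscr D_V$ is a core.
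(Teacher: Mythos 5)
Your overall architecture---Kato--Rellich applied to $W_{f,g}=A+B$, with the domain inclusion $\mathscr D(\Db{b}{u})\subseteq\mathscr D(f\obslash g)$ supplied by Proposition \ref{lem-H-transform} and a finite-set/tail splitting to push the relative bound below $1$---is essentially the paper's strategy (the paper's decomposition differs only cosmetically: it places $\N{b}{u}$ and the finitely supported pieces into a bounded summand $C$), and your core argument via the established relative bound is sound. The gap is in how you manufacture the small relative bound. You correctly note that compatibility condition II only yields the crude estimate $\|Bx\|\leqslant c\,(\|Ax\|^2+\|x\|^2)^{1/2}$ with $c=\|f\|\big(\sum_{v}|g(v)|^2/(|\lambda_{uv}|^2+1)\big)^{1/2}$, which need not be $<1$, and you then repair this by choosing $\mu_0\in\varGamma_{\lambdab_u}$ and invoking condition I in the form $g_{_{\lambdab_u,\mu_0}}\in\Hi{b}{u}$ together with Remark \ref{one-all}. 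This reduction fails in general: the theorem assumes condition I \emph{or} condition II, and condition II can hold while $\varGamma_{\lambdab_u}=\emptyset$ (take $\{\lambda_{uv}: v\in \supp\,\Hi{b}{u}\}$ dense in $\mathbb C$ and $g$ decaying fast enough that \eqref{g-lambda-1} converges). In that case there is no admissible $\mu_0$, condition I fails by definition, and Remark \ref{one-all} and Corollary \ref{coro-H-tr} (which explicitly requires $\varGamma_{\lambdab_u}\neq\emptyset$) give you nothing. So your proof covers condition I but not the condition-II-only case---which is exactly the case where the theorem has content beyond Corollary \ref{coro-spectrum}, since there $\sigma(W_{f,g})=\mathbb C$ and yet the operator is closed.

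The gap is easy to close, in either of two ways. First, you can truncate the condition II sum itself: given $\varepsilon>0$, choose a finite $F\subseteq\supp\,\Hi{b}{u}$ with $\sum_{v\notin F}|g(v)|^2/(|\lambda_{uv}|^2+1)<\varepsilon$; Cauchy--Schwarz then gives $\big|\sum_{v\notin F}h(v)\overline{g(v)}\big|^2\leqslant \varepsilon\,(\|\Db{b}{u}h\|^2+\|h\|^2)$, while the $F$-part is bounded by a constant times $\|h\|$---no resolvent point is needed. Second, the paper's own route: set $G_m=\sum_{v}|g(v)|^2/(|\lambda_{uv}|^2+m^2)$, use Kato's second representation theorem to get $\big|\sum_{v}h(v)\overline{g(v)}\big|^2\leqslant G_m\big(\|\Db{b}{u}h\|^2+m^2\|h\|^2\big)$, and let $m\to\infty$; dominated convergence forces $G_m\to 0$, so the $A$-bound is in fact $0$. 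With either patch inserted in place of your appeal to $\mu_0$, your argument becomes complete.
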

\begin{proof} 
Suppose that $W_{f, g}$ satisfies the compatibility condition I for some $\mu \in  \varGamma_{\lambdab_u}$. Let $a$ be a positive real number less than $1.$
Since $g_{_{\lambdab_u, \mu}} \in \Hi{b}{u}$, there exists a finite subset $F$ of $\supp\,\Hi{b}{u}$ such that
\beq \label{tail}
\sum_{\supp\, \Hi{b}{u} \setminus F}\frac{|g(v)|^2}{|\lambda_{uv}-\mu|^2}
\,\leqslant\, \frac{a}{4\,\|f\|^2}.
\eeq
Define closed linear operators $N_{F}$ and $D_F$ in $\Hi{b}{u}$ by \beqn N_F=\sum_{v \in F}\lambda_{uv} e_v \otimes e_v, \quad  D_F=\Db{b}{u}-N_F.\eeqn
Further let $g_{_F}= \sum_{v \in \supp\, \Hi{b}{u} \setminus F}g(v)e_v$.
We rewrite $W_{f, g}$ as $A + B + C,$ where $A, B, C$ (with their natural domains) are densely defined operators in $\ell^2(V)$ given by
 \beqn
%\left.
\begin{array}{lll}
A &=& \left[\begin{array}{cc}
D_F & 0 \\
0 &  0
\end{array}
\right],
\end{array}
%\right\}
%\left.
\begin{array}{lll}
B &=& \left[\begin{array}{cc}
0 & 0 \\
f \obslash g_{_F} &  0
\end{array}
\right],
\end{array}
%\right\}
%\left.
\begin{array}{lll}
C &=& \left[\begin{array}{cc}
N_F & 0 \\
f \otimes (g-g_{_F}) &  \N{b}{u}
\end{array}
\right].
\end{array}
\eeqn
Note that $\mathscr D(D_F)=\mathscr D(\Db{b}{u})$ and $\mathscr D(f \obslash g_{_F})=\mathscr D(f \obslash g)$. Furthermore, $A$ is a closed linear operator in $\ell^2(V)$ and $C$ is bounded linear operator on $\ell^2(V).$ Moreover,  $\mathscr D(D_F)
\subseteq \mathscr D(f \obslash g_{_F})$ (cf. Proposition \ref{lem-H-transform}).
Indeed, by the Cauchy-Schwarz inequality and \eqref{tail}, for any $h \in \mathscr D(D_F),$
\beq
\label{f-g-D-domain}
\Big|\sum_{v \in \supp\, \Hi{b}{u} \setminus F} \!\!\!\!\!\!\!\! h(v)\overline{g(v)}\,\Big| \,\leqslant\,\frac{\sqrt{a}}{2}\, \frac{\|(D_F-\mu)h\|}{\|f\|}.
\eeq
Since $C$ is a bounded operator, we obtain that $\mathscr D(A)
\subseteq \mathscr D(B+C)$.
We claim that for all $h \in \mathscr D(A),$
\beq
\label{kbs-estimate}
\|(B+C)h\|^2\,\leqslant\,a \, \|Ah\|^2 + b(\mu)\,\|h\|^2,
\eeq
where $b(\mu)=a|\mu|^2 + 2\|C\|^2.$
To see the claim, let $h=(h_1, h_2) \in \mathscr D(A)$. By repeated applications of $|\alpha+\beta|^2 \leqslant2(|\alpha|^2+|\beta|^2)$, $\alpha, \beta \in \mathbb C$, we obtain
\beqn
\|(B+C)h\|^2 & \leqslant & 2\|(f \obslash g_{_F})h_1\|^2 + 2\|C\|^2\|h\|^2 \notag \\
&  \overset{\eqref{f-g-D-domain}} \leqslant & \frac{a}{2} \|(D_F-\mu)h_1\|^2  + 2\|C\|^2\|h\|^2 \notag \\
& \leqslant & a \|D_F h_1\|^2 + a |\mu|^2 \|h_1\|^2  + 2\|C\|^2\|h\|^2 \notag \\
& \leqslant & a\|Ah\|^2 + \big(a |\mu|^2 + 2\|C\|^2\big)\|h\|^2.
\eeqn
This completes the verification of \eqref{kbs-estimate}. Thus $B+C$ is $A$-bounded with $A$-bound less than $1.$ Hence, by Kato-Rellich Theorem, $W_{f, g}$ is a closed operator with domain given by \eqref{domain-W-fg}.

Next suppose that $W_{f, g}$ satisfies the compatibility condition II.
Let \beq \label{Gm} G_{m}~:=\sum_{v \in \supp\,\Hi{b}{u}} \frac{|g(v)|^2}{|\lambda_{uv}|^2+m^2}, \quad m \geqslant 1. \eeq Note that
\beq \label{Gm-lt-G1}
0 \leqslant G_m \leqslant G_1 < \infty, \quad m \geqslant 1.
\eeq
We rewrite $W_{f, g}$ as $A + B + C,$ where $A, B, C$ (with their natural domains) are densely defined operators in $\ell^2(V)$ given by
 \beq \label{ABC}
%\left.
\begin{array}{lll}
A &=& \left[\begin{array}{cc}
\Db{b}{u} & 0 \\
0 &  0
\end{array}
\right],
\end{array}
%\right\}
%\left.
\begin{array}{lll}
B &=& \left[\begin{array}{cc}
0 & 0 \\
f \obslash g &  0
\end{array}
\right],
\end{array}
%\right\}
%\left.
\begin{array}{lll}
C &=& \left[\begin{array}{cc}
0 & 0 \\
0 &  \N{b}{u}
\end{array}
\right].
\end{array}
\eeq
Note that $A$ is a closed linear operator in $\ell^2(V)$ and $C$ is bounded linear operator on $\ell^2(V).$ Given a positive integer $m,$ consider the inner-product space $\mathscr D(A)$ endowed with the inner-product \beqn \inp{x}{y}_{A, m} \,=\, \inp{Ax}{Ay} + m^2\inp{x}{y}, \quad x, y \in \mathscr D(A).\eeqn
Since $A$ is a closed linear operator, $\mathcal H_{A, m}=(\mathscr D(A), \inp{\cdot}{\cdot}_{A, m})$ is a Hilbert space. Further, by Kato's second representation theorem \cite[Theorem 2.23, Chapter VI]{K},
\beq
\label{KRT}
\left.
\begin{array}{lll}
\mathscr D(A) &=& \mathscr D((A^*A+m^2I)^{1/2}), \\
\inp{x}{y}_{A, m} &=& \inp{(A^*A+m^2I)^{1/2}x}{(A^*A+m^2I)^{1/2}y}, \quad x, y \in \mathscr D(A).
\end{array}
\right\}
\eeq
By Proposition \ref{lem-H-transform},  $\mathscr D(A) \subseteq \mathscr D(B).$
Moreover, if $h \in \mathscr D(A)$, then by \eqref{KRT}, we have $$\|h\|^2_{A, m}=\sum_{v \in \supp\,\Hi{b}{u}}(|\lambda_{uv}|^2+m^2)|h(v)|^2 < \infty,$$ and hence by \eqref{g-lambda-1} and \eqref{Gm-lt-G1},
\beq
\label{estimate-m}
\Big|\sum_{v \in \supp\, \Hi{b}{u}} \!\!\!\!\!\!\!\! h(v)\overline{g(v)}\Big|^2 \,\leqslant\,\|h\|^2_{A, m} G_m \,\leqslant\, G_1 \|h\|^2_{A, m}
\eeq
(see \eqref{Gm}). Since $C$ is a bounded linear operator, $\mathscr D(A)
\subseteq \mathscr D(B+C)$.
We claim that for any (arbitrarily small) $a >0$, there exists (large enough) $b$ such that
\beq
\label{kbs-estimate-1}
\|(B+C)h\|^2 \,\leqslant\,a \|Ah\|^2 + b\|h\|^2, \quad h \in \mathscr D(A).
\eeq
To see the claim, let $h=(h_1, h_2) \in \mathscr D(A)$. By repeated applications of $|\alpha+\beta|^2 \leqslant2(|\alpha|^2+|\beta|^2)$, $\alpha, \beta \in \mathbb C$, we obtain
\beqn
\|(B+C)h\|^2 & \leqslant & 2\|(f \obslash g)h_1\|^2 + 2\|C\|^2\|h_2\|^2 \notag \\
&  \overset{\eqref{estimate-m}} \leqslant & 2 \|f\|^2\|h\|^2_{A, m} G_m  + 2\|C\|^2\|h_2\|^2 \notag \\
%& \leqslant & 2\|f\|^2(\|Ah\|^2+m^2\|h\|^2)G_m + 2\|C\|^2\|h\|^2 \notag \\
& \leqslant & 2\|f\|^2\|Ah\|^2G_m + (2m^2 \|f\|^2G_m + 2\|C\|^2)\|h\|^2.
\eeqn
However, an application of Lebesgue dominated convergence theorem together with \eqref{Gm-lt-G1} shows that $G_m \rar 0$ as $m \rar \infty.$ This completes the proof of the claim. Another application of Kato-Rellich Theorem shows that $W_{f, g}$ is closed.

To prove that $D_V$ forms a core for $W_{f, g}$, it suffices to check that $W_{f, g} \subseteq \overline{W_{f, g}|_{\mathscr D_V}}.$ To see that, let $(h, k) \in \mathscr D(W_{f, g}).$ By \eqref{domain-W-fg}, $h \in \mathscr D(\Db{b}{u})$ and $k \in \ell^2(V) \ominus \Hi{b}{u}$. Since $\mathscr D_{\supp\,\Hi{b}{u}}$ is a core for $\Db{b}{u}$, there exists a sequence $\{h_n\}_{n \in \mathbb N} \subseteq \mathscr D_{\supp\,\Hi{b}{u}}$ such that $h_n \rar h$ and $\Db{b}{u}h_n \rar \Db{b}{u}h$ as $n \rar \infty.$ Let $\{k_n\}_{n \in \mathbb N}$ be any sequence in $\mathscr D_{V \setminus \supp\,\Hi{b}{u}}$ converging to $k.$ Note that by Proposition \ref{lem-H-transform}, $h \in \mathscr D(f \obslash g)$. Further, since $g_{_{\lambdab_u, \mu}} \in \Hi{b}{u}$ and $(\Db{b}{u}-\mu)h_n \rar (\Db{b}{u}-\mu)h$ as $n \rar \infty,$ by the Cauchy-Schwarz inequality,
$f \obslash g(h_n) \rar (f \obslash g)(h)$ as $n \rar \infty$.
It is now easy to see that
\beqn
(h_n, k_n) \rar (h, k), \quad W_{f, g}(h_n, k_n) \rar W_{f, g}(h, k)~\mbox{as~}n\rar \infty.
\eeqn
Thus $(h, k) \in \mathscr D(\overline{W_{f, g}|_{\mathscr D_V}})$ and $W_{f, g}(h, k)= \overline{W_{f, g}|_{\mathscr D_V}}\,(h, k)$, as desired.
\end{proof}

The following is immediate from the proof of Theorem \ref{closed-0}.
\begin{corollary}
\label{bdd-rel}
Let $\mathscr T=(V, E)$ be a rooted directed tree with root $\rootb$ and let $\mathscr T_{\infty}=(\V, E_{\infty})$ be the extended directed tree associated with $\mathscr T$.
For $u, \mf b \in V,$  consider the weight system
$\lambdab_u = \{\lambda_{uv}\}_{v\in \V}$ of complex numbers and let $W_{f, g}$ be the rank one extension of the weighted join operator $\W{b}{u}$ on $\mathscr T$, where $f \in \ell^2(V) \ominus \Hi{b}{u}$ is non-zero and $g : \supp\,\Hi{b}{u} \rar \mathbb C$ is given.
%is such that $\sum_{v \in  \supp(\Hi{b}{u})}|g(v)|^2=\infty$.
Suppose that $W_{f, g}$ satisfies the compatibility condition II. Then
$W_{f, g}$ decomposes as $A + B + C,$ where $A, B, C$ are densely defined operators given by
\eqref{ABC} such that $B+C$ is $A$-bounded with $A$-bound equal to $0.$
\end{corollary}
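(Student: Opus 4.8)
The plan is to read the conclusion directly off the second half of the proof of Theorem \ref{closed-0}, namely the case in which the compatibility condition II holds. There $W_{f,g}$ was already written as $A+B+C$ with $A,B,C$ as in \eqref{ABC}; recall that $A$ is a closed diagonal operator, while $C=\N{b}{u}$ (sitting in the lower-right corner) is bounded. Moreover, the domain inclusion $\mathscr D(A)\subseteq \mathscr D(B+C)$ is exactly what Proposition \ref{lem-H-transform} provides under the compatibility condition II (and $C$ being bounded imposes no domain restriction), so $B+C$ is already known to be $A$-bounded. The only additional point to extract is that the $A$-bound is not merely $<1$ (as used to invoke the Kato--Rellich theorem) but in fact equal to $0$.

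First I would recall the quantities $G_m$ from \eqref{Gm} together with the monotonicity bound \eqref{Gm-lt-G1}, and the key inequality \eqref{estimate-m}, which holds for every integer $m\geqslant 1$ and every $h\in\mathscr D(A)$ after identifying $\|h\|^2_{A,m}=\|Ah\|^2+m^2\|h\|^2$. Chaining these three ingredients precisely as in the proof of Theorem \ref{closed-0} yields, for each $m\geqslant 1$,
\[
\|(B+C)h\|^2 \,\leqslant\, 2\|f\|^2 G_m\,\|Ah\|^2 + \big(2m^2\|f\|^2 G_m + 2\|C\|^2\big)\|h\|^2, \quad h\in\mathscr D(A).
\]

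Next I would invoke the fact, already established in that proof via the Lebesgue dominated convergence theorem and \eqref{Gm-lt-G1}, that $G_m\rar 0$ as $m\rar\infty$. Consequently, given any $a>0$, one may choose $m$ large enough that $2\|f\|^2 G_m\leqslant a$; setting $b:=2m^2\|f\|^2 G_m + 2\|C\|^2$ then gives $\|(B+C)h\|^2\leqslant a\|Ah\|^2 + b\|h\|^2$ for all $h\in\mathscr D(A)$, which is exactly \eqref{kbs-estimate-1}. Since $a>0$ was arbitrary, the infimum of all admissible constants $a$ equals $0$, that is, $B+C$ is $A$-bounded with $A$-bound $0$. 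There is no genuine obstacle here: the whole content is the observation that the coefficient $2\|f\|^2 G_m$ of $\|Ah\|^2$ can be driven to $0$ by letting $m\rar\infty$, a feature present in the estimate of Theorem \ref{closed-0} but not explicitly recorded there.
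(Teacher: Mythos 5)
Your proof is correct and follows exactly the paper's route: the paper itself derives this corollary as ``immediate from the proof of Theorem \ref{closed-0}'', whose second half contains precisely the chain of estimates you reproduce (via \eqref{Gm}, \eqref{Gm-lt-G1}, \eqref{estimate-m}) together with $G_m \rar 0$ from dominated convergence, establishing \eqref{kbs-estimate-1} with arbitrarily small $a$. Nothing is missing; your only addition is making explicit the observation that ``$A$-bound equal to $0$'' is what claim \eqref{kbs-estimate-1} already asserts.
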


\section{Adjoints and Gelfand-triplets}

In this section, we try to unravel the structure of the Hilbert
space adjoint of the rank one extension $W_{f, g}$ of weighted join
operators. In particular, we discuss the question of determining the action of the Hilbert space adjoint of $W_{f, g}$. For an interesting discussion on the relationship between
Hilbert space adjoint and formal adjoint of an unbounded operator
matrix, the reader is referred to \cite{MS}. Unfortunately, the
situation in our context suggests that there is no obvious way in
which one can identify the Hilbert space adjoint of $W_{f, g}$ with
its {\it formal adjoint} (that is, the transpose of the matrix
formed after taking entry-wise adjoint) unless $g \in \Hi{b}{u}.$ We
conclude this section with a brief discussion on the role of a
Gelfand-triplet naturally associated with $W_{f, g}$ in the
realization of $W^*_{f, g}$.

We begin with the following proposition which shows that the adjoint
of $W_{f, g}$ coincides with the adjoint of the associated weighted
join operator $\W{b}{u}$ on a possibly non-dense subspace of
$\mathscr D(W^*_{f, g})$.
\begin{proposition}
Let $\mathscr T=(V, E)$ be a rooted directed tree with root $\rootb$ and let $\mathscr T_{\infty}=(\V, E_{\infty})$ be the extended directed tree associated with $\mathscr T$.
For $u, \mf b \in V,$ consider the weight system
$\lambdab_u = \{\lambda_{uv}\}_{v\in \V}$ of complex numbers and let $W_{f, g}$ be the rank one extension  of the weighted join operator $\W{b}{u}$ on $\mathscr T$, where $f \in \ell^2(V) \ominus \Hi{b}{u}$ is non-zero and $g : \supp\,\Hi{b}{u} \rar \mathbb C$ is given.
%is such that $\sum_{v \in  \supp(\Hi{b}{u})}|g(v)|^2=\infty$.
%Suppose that $W_{f, g}$ satisfies a compatibility condition.
Then $W^*_{f, g}$ is a closed operator.  Moreover,
\beqn
&& \mathscr D \,:=\,\{k =(k_1, k_2) \in \mathscr D(\Db{b}{u}) \oplus (\ell^2(V) \ominus \Hi{b}{u}) : \inp{k_2}{f}=0\} \subseteq \mathscr D(W^*_{f, g}), \\
&& W^*_{f, g}k \,=\, (\Db{b}{u})^*k_1 + (\N{b}{u})^*k_2, \quad k=(k_1, k_2) \in \mathscr D.
\eeqn
Further,  if $W_{f, g}$ satisfies a compatibility condition, then $W^*_{f, g}$ is densely defined.
\end{proposition}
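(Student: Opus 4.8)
The proposition contains three assertions, and I would treat them in order. First, closedness of $W^*_{f, g}$ is immediate: by Remark~\ref{rmk-rone-extn} the operator $W_{f, g}$ is densely defined, so its Hilbert space adjoint $W^*_{f, g}$ is well-defined, and the adjoint of any densely defined operator is automatically closed (see \cite[Theorem 1.8]{Sc}).

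Second, for the inclusion $\mathscr D \subseteq \mathscr D(W^*_{f, g})$ together with the asserted formula, I would fix $k = (k_1, k_2) \in \mathscr D$ and an arbitrary $(h, k') \in \mathscr D(W_{f, g})$ and expand the pairing against the block form \eqref{rone-extn}:
\beqn
\inp{W_{f, g}(h, k')}{k} &=& \inp{\Db{b}{u}h}{k_1} + \inp{(f \obslash g)(h)}{k_2} + \inp{\N{b}{u}k'}{k_2}.
\eeqn
The cross term is the crux: since $(f \obslash g)(h) = \big(\sum_{v} h(v)\overline{g(v)}\big) f$, it equals $\big(\sum_{v} h(v)\overline{g(v)}\big) \inp{f}{k_2}$, which vanishes because $\inp{k_2}{f} = 0$ on $\mathscr D$. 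As $\Db{b}{u}$ is a normal diagonal operator we have $k_1 \in \mathscr D(\Db{b}{u}) = \mathscr D((\Db{b}{u})^*)$ by Remark~\ref{rmk-action}(i), so $\inp{\Db{b}{u}h}{k_1} = \inp{h}{(\Db{b}{u})^*k_1}$, while $\N{b}{u}$ is bounded, giving $\inp{\N{b}{u}k'}{k_2} = \inp{k'}{(\N{b}{u})^*k_2}$. Hence $\inp{W_{f, g}(h, k')}{k} = \inp{(h, k')}{((\Db{b}{u})^*k_1, (\N{b}{u})^*k_2)}$ for every $(h, k') \in \mathscr D(W_{f, g})$, which yields at once both $k \in \mathscr D(W^*_{f, g})$ and the stated action. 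Note this half uses no compatibility hypothesis.

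Third, for density under a compatibility condition, I would first record that $\overline{\mathscr D} = \Hi{b}{u} \oplus \big((\ell^2(V) \ominus \Hi{b}{u}) \ominus [f]\big)$ is of codimension one (using that $\mathscr D(\Db{b}{u})$ is dense in $\Hi{b}{u}$), so it suffices to exhibit a single $k = (k_1, k_2) \in \mathscr D(W^*_{f, g})$ with $\inp{k_2}{f} \neq 0$. Taking $k_2 = f$, the computation of the previous paragraph shows that $k \in \mathscr D(W^*_{f, g})$ as soon as the functional $h \mapsto \sum_{v} h(v)\, a_v$, with $a_v := \lambda_{uv}\overline{k_1(v)} + \|f\|^2\overline{g(v)}$, is $\ell^2$-bounded on $\mathscr D(\Db{b}{u})$, and by Cauchy--Schwarz this holds whenever $\{a_v\} \in \ell^2(\supp\, \Hi{b}{u})$. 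The task is then to choose $k_1 \in \Hi{b}{u}$ making $\{a_v\}$ square-summable. Under compatibility condition I, I would pick $\mu_0 \in \varGamma_{\lambdab_u}$ with $g_{_{\lambdab_u, \mu_0}} \in \Hi{b}{u}$ and set $k_1 = -\|f\|^2 g_{_{\lambdab_u, \mu_0}}$, so that $a_v = -\|f\|^2 \mu_0\, \overline{g_{_{\lambdab_u, \mu_0}}(v)}$ (see \eqref{g-lambda-mu}); under compatibility condition II, I would set $k_1(v) = -\|f\|^2 \lambda_{uv} g(v)/(|\lambda_{uv}|^2 + 1)$, so that $|a_v|^2 \Le \|f\|^4 |g(v)|^2/(|\lambda_{uv}|^2 + 1)$ (see \eqref{g-lambda-1}). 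In each case both $k_1$ and $\{a_v\}$ lie in $\ell^2$ by the relevant condition, so $k \in \mathscr D(W^*_{f, g})$ with $\inp{k_2}{f} = \|f\|^2 \neq 0$. This vector restores the missing $[f]$-direction, whence $\mathscr D(W^*_{f, g}) \supseteq \mathscr D + [k]$ is dense.

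The genuinely substantive step is the construction in the third paragraph, where the compatibility condition is used precisely to supply one vector outside the codimension-one space $\overline{\mathscr D}$; the vanishing of the cross term (hence all of the second paragraph) is routine once $\inp{k_2}{f} = 0$ is exploited, and closedness is free.
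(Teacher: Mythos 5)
Your first two paragraphs reproduce the paper's own argument: closedness of $W^*_{f,g}$ is the standard fact that adjoints of densely defined operators are closed, and the inclusion $\mathscr D\subseteq \mathscr D(W^*_{f,g})$ with the stated action is exactly the pairing computation in the paper, the cross term dying because $\inp{k_2}{f}=0$ and $\mathscr D((\Db{b}{u})^*)=\mathscr D(\Db{b}{u})$. Your third paragraph, however, takes a genuinely different route from the paper. The paper deduces density of $\mathscr D(W^*_{f,g})$ in two lines: a compatibility condition makes $W_{f,g}$ closed by Theorem \ref{closed-0}, and a densely defined closed operator has densely defined adjoint by \cite[Theorem 1.8(i)]{Sc}. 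You instead observe that $\overline{\mathscr D}$ has codimension one (orthogonal complement $\{0\}\oplus[f]$) and construct one explicit vector of $\mathscr D(W^*_{f,g})$ outside it. This is more self-contained -- it bypasses the Kato--Rellich machinery behind Theorem \ref{closed-0} -- and it shows concretely how the adjoint recovers the $[f]$-direction; the paper's route is shorter and treats both compatibility conditions at once. Your codimension-one bookkeeping and the reduction to $\{a_v\}\in\ell^2(\supp\,\Hi{b}{u})$ are correct.

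There is, though, a conjugation error in your compatibility-condition-I construction which, as written, breaks the argument. With $k_1=-\|f\|^2 g_{_{\lambdab_u,\mu_0}}$, i.e. $k_1(v)=-\|f\|^2 g(v)/(\lambda_{uv}-\mu_0)$, one gets
\beqn
a_v \,=\, \lambda_{uv}\overline{k_1(v)} + \|f\|^2\overline{g(v)} \,=\, \|f\|^2\,\overline{g(v)}\,
\frac{\overline{\lambda_{uv}}-\overline{\mu_0}-\lambda_{uv}}{\overline{\lambda_{uv}}-\overline{\mu_0}},
\eeqn
which agrees with your claimed $-\|f\|^2\mu_0\,\overline{g_{_{\lambdab_u,\mu_0}}(v)}$ only when $\Im\lambda_{uv}=\Im\mu_0$. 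In general $\{a_v\}\notin\ell^2$: take $\lambda_{uv_n}=in$, $g(v_n)=1$, $\mu_0=-1$; condition I holds, yet $|a_{v_n}|\rar 2\|f\|^2$, so the associated functional is unbounded on finitely supported vectors and that $k$ does not lie in $\mathscr D(W^*_{f,g})$. The repair is a single conjugation: choose instead $k_1(v)=-\|f\|^2\, g(v)\big/\,\overline{(\lambda_{uv}-\mu_0)}$, so that $\overline{k_1(v)}=-\|f\|^2\overline{g(v)}/(\lambda_{uv}-\mu_0)$ and hence $a_v=-\|f\|^2\mu_0\,\overline{g(v)}/(\lambda_{uv}-\mu_0)$; then $|k_1(v)|=\|f\|^2|g_{_{\lambdab_u,\mu_0}}(v)|$ and $|a_v|=\|f\|^2|\mu_0|\,|g_{_{\lambdab_u,\mu_0}}(v)|$ are both square-summable by \eqref{g-lambda-mu} and condition I. Your compatibility-condition-II choice is correct as stated, since there $a_v=\|f\|^2\overline{g(v)}/(|\lambda_{uv}|^2+1)$ and $|k_1(v)|\Le \|f\|^2|g(v)|/\sqrt{|\lambda_{uv}|^2+1}$, both controlled by \eqref{g-lambda-1}. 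With this one-line fix your argument is complete.
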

\begin{proof} Since $W_{f, g}$ is densely defined in $\ell^2(V),$ the Hilbert space adjoint  $W^*_{f, g}$ is a closed linear operator \cite{Sc}.
Let $k =(k_1, k_2) \in \mathscr D$. Then, for any $h=(h_1, h_2) \in \mathscr D(W_{f, g})$,
\beqn
\inp{W_{f, g}h}{k} &=& \inp{\Db{b}{u}h_1}{k_1} + \inp{(f \obslash g)h_1}{k_2} + \inp{\N{b}{u}h_2}{k_2} \\
&=& \inp{h_1}{(\Db{b}{u})^*k_1} +  \inp{h_2}{(\N{b}{u})^*k_2},
\eeqn
where we used the fact that $\mathscr D((\Db{b}{u})^*) =\mathscr D(\Db{b}{u}).$
It follows that $k \in \mathscr D(W^*_{f, g})$ and $W^*_{f, g}k=(\Db{b}{u})^*k_1 + (\N{b}{u})^*k_2.$ Finally, if $W_{f, g}$ satisfies a compatibility condition, then by Theorem \ref{closed-0}, $W_{f, g}$ is closed, and hence, by \cite[Theorem 1.8(i)]{Sc}, $W^*_{f, g}$ is densely defined. This completes the proof.
\end{proof}
\begin{remark}
Note that for any $k \in \ell^2(V) \ominus \Hi{b}{u}$, $(0, k) \notin \mathscr D(W^*_{f, g})$. Otherwise,
\beqn
\phi(h)=\inp{W_{f, g}h}{(0, k)}= \inp{(f \obslash g)h_1}{k} + \inp{\N{b}{u}h_2}{k}
, \quad h=(h_1, h_2) \in \mathscr D(W_{f, g})
\eeqn
extends as a bounded linear functional, which is not possible since $g \notin \Hi{b}{u}$ (see Lemma \ref{lem-inj-ten-unb}). In particular, $(0, f) \notin \mathscr D(W^*_{f, g})$.
\end{remark}
%\begin{proof} For a non-negative integer $l,$ let $V_l=\{v \in V: \dep_v \leqslant l\}$.
%Let $h =(h_1, h_2)\in \mathscr D_{V_l}$ (see \eqref{D-V}). Then, for any $k=(k_1, k_2) \in \mathscr D_V$, there exists a non-negative integer $l_v$ such that $k \in V_{l_v}$, and hence
%\beqn
%|\inp{W_{f, g}k}{h}| &=& |\inp{\Db{b}{u}k_1}{h_1} + \inp{(f\otimes g)k_1+\N{b}{u}k_2}{h_2}| \\ &=& |\inp{k_1}{(\Db{b}{u})^*h_1} + \inp{k_1}{g} \inp{f}{h_2} + \inp{k_2}{(\N{b}{u})^*h_2}| \\
%&=& |\inp{k_1}{(\Db{b}{u})^*h_1 + \inp{h_2}{f} P_{l} g} + \inp{k_2}{(\N{b}{u})^*h_2}|,
%\eeqn
%where $P_l$ denotes the orthogonal projection onto the finite dimensional space $\mathscr D_{V_l}.$
%It follows that $
%\end{proof}
%Suppose that $W_{f, g}$ satisfies compatibili
%In view of the previous proposition, one may ask whether $f \in \mathscr D(W^*_{f, g})$ ?
Note that the closability of $W_{f, g}$ is equivalent to the density
of the domain $\mathscr D(W^*_{f, g})$ of $W^*_{f, g}$ (see
\cite[Theorem 1.8]{Sc}). In particular, it would be interesting to
obtain conditions ensuring the density of  $\mathscr D(W^*_{f, g})$ (with or without a
compatibility condition). In view of the decomposition $W_{f,
g}=A+B+C$ as given in \eqref{ABC}, it is tempting to ask whether
$W^*_{f, g}$ can be decomposed as $A^*+B^*+C^*.$ It may be concluded
from \cite[Proposition]{Be} that if $W_{f, g}$ is Fredholm such that
$B$ is $A$-compact and $B^*$ is $A^*$-compact, then $W^*_{f, g}=A^*+
B^* +C^*$ (see \cite[Theorem 2.2]{Mo} for a variant). We will see in
the proof of Theorem \ref{spectrum}(iv) that under the assumption of
compatibility condition I, $A$-compactness of $B$ can be ensured
(Recall that $B$ is {\it $A$-compact} if $\mathscr D(A) \subseteq
\mathscr D(B)$ and $B$ maps $\{h \in D(A) : \|h\|+ \|Ah\| \leqslant
1\}$ into a pre-compact set).

Another natural problem which arises in finding $W^*_{f, g}$ is
whether it is possible to have a matrix decomposition of $W^*_{f,
g}$ similar to the one we have it for $W_{f, g}$. One possible
candidate for $W^*_{f, g}$ is its formal adjoint, that is, the
transpose of the operator matrix $W^{\times}_{f, g}$ obtained by
taking Hilbert space adjoint
 of each entry of $W_{f, g}$.
A direct application of  \cite[Theorem 6.1]{MS} shows the following:
\begin{enumerate}
\item[(i)] $W^{\times}_{f, g}$ is a closable operator such that $W^{\times}_{f, g} \subseteq W^{*}_{f, g}.$
\item[(ii)] If $W^{\times}_{f, g}$ is densely defined, then $W_{f, g}
$ is closable.
\end{enumerate}
It is evident that there is no natural way to recover $W^{*}_{f, g}$ from
$W^{\times}_{f, g}$. One such way has been shown in
\cite[Proposition 6.3]{MS}, which provides a sufficient condition
for the equality Hilbert space adjoint and formal adjoint of an
unbounded operator matrix. Unfortunately, this result is not
applicable to $W_{f, g}$ unless all its entries are closable
operators. Recall that
 $f \obslash g$ is not even closable in case $g \notin \Hi{b}{u}$ (see Lemma \ref{lem-inj-ten-unb}).
 That's why to understand the action of $W^*_{f, g}$, we need to replace $\Hi{b}{u}$ by a larger Hilbert space. We will see below that the notion of Hilbert rigging turns out to be handy in this context.

\index{$\mathcal H_\circ$}
\index{$\mathcal H^\circ$}
\index{$(\mathcal H_\circ, \Hi{b}{u}, \mathcal H^\circ)$}

Consider the inner-product space $\mathscr D(\Db{b}{u})$ endowed with the inner-product \beqn \inp{x}{y}_{\circ} \,:=\, \inp{\Db{b}{u}x}{\Db{b}{u}y} + \inp{x}{y}, \quad x, y \in \mathscr D(\Db{b}{u}).\eeqn
Note that $\mathcal H_{\circ}:=\mathscr D(\Db{b}{u})$ endowed with the inner-product $\inp{\cdot}{\cdot}_{\circ}$ is a Hilbert space. Clearly, the inclusion map $i : \mathcal H_{\circ} \hookrightarrow \Hi{b}{u}$ is contractive.  Consider further the topological dual $\mathcal H^*_{\circ}$ of $\mathcal H_{\circ}$, which we denote by $\mathcal H^{\circ}$.
We claim that any element $h \in \Hi{b}{u}$ can be realized as a bounded conjugate-linear functional in $\mathcal H^\circ$. To see this, consider the mapping $j : \Hi{b}{u} \rar \mathcal H^\circ$ given by $j(h) = \phi_h,$ where
\beqn
\phi_h(k)~=\sum_{v \in \supp\,\Hi{b}{u}}\overline{k(v)}{h(v)}, \quad k \in \mathcal H_\circ.
\eeqn
The contractivity of $j$ follows from
%Note that for any $\phi \in \mathcal H^{A}$, the series $\sum_{v \in \supp\,\Hi{b}{u}}\frac{|\phi(e_v)|^2}{1+|\lambda_{uv}|^2}$ converges in $[0, \|\phi\|^2]$, and hence by
the Cauchy-Schwarz inequality and
\beq \label{quasi-n}
\|\phi_h\| &=& \notag \sup_{\|k\|_\circ =1}\Big|\sum_{v \in \supp\,\Hi{b}{u}}\overline{k(v)}h(v)\Big| \\ &=& \notag \Big(\sum_{v \in \supp\,\Hi{b}{u}}\frac{|h(v)|^2}{1+|\lambda_{uv}|^2}\Big)^{1/2} \\&=& \|((\Db{b}{u})^*\Db{b}{u}+I)^{-1/2}h\|,
\eeq
and hence the claim stands verified.
 This also shows that $\mathcal H^\circ$ can be identified with the completion of $\Hi{b}{u}$ endowed with the inner-product $$\inp{x}{y}^\circ \,=\,\inp{((\Db{b}{u})^*\Db{b}{u}+I)^{-1}x}{y}, \quad x, y \in \Hi{b}{u}.$$
Thus we have the following chain of Hilbert spaces: 
\beqn \mathcal H_\circ ~\subsetneq ~\Hi{b}{u} ~\subsetneq ~\mathcal H^\circ, \eeqn  where $\mathcal H_\circ$ is dense in $\Hi{b}{u}$ and $\Hi{b}{u}$ is dense in  $\mathcal H^\circ$. One may refer to this chain of Hilbert spaces as the {\it Hilbert rigging} of $\Hi{b}{u}$ by $\mathcal H_\circ$ and $\mathcal H^\circ$. The triplet $(\mathcal H_\circ, \Hi{b}{u}, \mathcal H^\circ)$ is known as the {\it Gelfand-triplet}  (refer to \cite[Chapter 14]{BSU} for an abstract theory of rigged spaces). If
$\{(|\lambda_{uv}|^2+1)^{-1/2} : v \in \supp\,\Hi{b}{u}\}$ is square-summable,
then by \eqref{quasi-n}, the above Hilbert rigging is {\it quasi-nuclear} in the sense that the inclusion
$j : \Hi{b}{u} \rar \mathcal H^\circ$ is Hilbert-Schmidt (see \cite[Pg 121]{BSU}).

Suppose  that $W_{f, g}$ satisfies the compatibility condition II. Define $\phi_g : \mathcal H_\circ \rar \mathbb C$ by
\beqn
\phi_g(k)~=\sum_{v \in \supp\,\Hi{b}{u}}\overline{k(v)}{g(v)}, \quad k \in \mathcal H_\circ.
\eeqn
It may be concluded from \eqref{g-lambda-1} that $\phi_g \in \mathcal H^\circ.$
This allows us to introduce the bounded linear transformation $B : \mathcal H_\circ \rar \ell^2(V) \ominus \Hi{b}{u}$ by setting
\beqn
Bk \,=\, \overline{\phi_g(k)}\,f, \quad k \in \mathcal H_\circ,
\eeqn
%and consider the bounded linear transformation $B \circ i : \Hi{b}{u} \rar \ell^2(V) \ominus \Hi{b}{u}$.
Note that for any $l \in \ell^2(V) \ominus \Hi{b}{u}$ and $k \in \mathcal H_\circ$,
\beqn
(B^*l)(k)=\inp{l}{Bk}={\phi_g(k)}\,\inp{l}{f} ={(\phi_g \otimes f)(l)}(k).
\eeqn
Thus $B^*$ can be identified with $g \otimes f.$ In particular, the Hilbert space adjoint of $W_{f, g}$ can be identified with the formal adjoint of $W_{f, g}$ after replacing the Hilbert space $\Hi{b}{u}$ by the larger Hilbert space $\mathcal H^{\circ}$.

\section{Spectral analysis}

We now turn our attention to the spectral properties of rank one extensions of weighted join operators.
The main result of this section provides a complete spectral picture for rank one extensions $W_{f, g}$ of weighted join operators
(cf. \cite[Theorem 1]{St-Sz-3}, \cite[Theorem 2.3]{I}, \cite[Theorem 2.3]{JKP}, \cite[Corollary 2.8]{N}, \cite[Theorem 2.2]{ALMS}). It turns out that $W_{f, g}$ has nonempty resolvent set if and only if it satisfies the compatibility condition I. Among various applications, 
we characterize rank one extensions of weighted join operators on leafless directed trees which admit compact resolvent.
%We begin with a lemma.

\begin{theorem}[Spectral picture] \label{spectrum}
Let $\mathscr T=(V, E)$ be a rooted directed tree with root $\rootb$ and let $\mathscr T_{\infty}=(\V, E_{\infty})$ be the extended directed tree associated with $\mathscr T$.
For $u, \mf b \in V,$ consider the weight system
$\lambdab_u = \{\lambda_{uv}\}_{v\in \V}$ of complex numbers and let $W_{f, g}$ be the rank one extension  of the weighted join operator $\W{b}{u}$ on $\mathscr T$, where $f \in \ell^2(V) \ominus \Hi{b}{u}$ is non-zero and $g : \supp\, \Hi{b}{u} \rar \mathbb C$ is given $($see \eqref{rone-extn}$)$.
Then, we have the following statements:
\begin{enumerate}
\item The point spectrum $\sigma_p(W_{f, g})$ of $W_{f, g}$ is given by
\beqn
\sigma_p(W_{f, g}) \,=\, \begin{cases} 
\{\lambda_{uv} : v \in V\} & \mbox{if~} \mf b =u, \\
\{\lambda_{uv} : v \in \asc{u} \cup \Desb{u}\} \cup \{0\} & \mbox{if~} \mf b \in \mathsf{Des}_u(u), \\
\{\lambda_{uv} : v \in \des{u}\} \cup \{0\} & \mbox{otherwise}.
\end{cases}
\eeqn
\item The spectrum $\sigma(W_{f, g})$ of $W_{f, g}$ is given by
%\beqn
%\sigma(W_{f, g}) = \begin{cases} \overline{\sigma_p(W_{f, g})} & \mbox{if~} \frac{g}{\lambdab_u-\mu} \in \Hi{b}{u}~\mbox{for some~}\mu \in  \mathbb C \setminus\overline{\sigma_p(W_{f, g})}, \\
%\mathbb C & \mbox{otherwise}.
%\end{cases}
%\eeqn
\beqn
\sigma(W_{f, g}) \,=\, \begin{cases} \overline{\sigma_p(W_{f, g})} & \mbox{if~}  W_{f, g}~\mbox{satisfies the compatibility condition}~I,\\
\mathbb C & \mbox{otherwise}.
\end{cases}
\eeqn
\end{enumerate}
If, in addition, $W_{f, g}$ satisfies the compatibility condition I, then we have the following:
\begin{enumerate}
\item[(iii)]  For every $\mu \in \mathbb C \setminus \overline{\sigma_p(W_{f, g})}$,  the resolvent of $W_{f, g}$ at $\mu$ is given by
\beq \label{cpt-resolvent-eq}
\begin{array}{lll}
(W_{f, g}-\mu)^{-1} \,=\, \left[\begin{array}{cc}
(\Db{b}{u}-\mu)^{-1} & 0 \\
-(\N{b}{u}-\mu)^{-1}L_{\lambdab_u, \mu} & (\N{b}{u}-\mu)^{-1}
\end{array}
\right],
\end{array}
\eeq
where the linear transformation $L_{\lambdab_u, \mu}:=(f \obslash g)(\Db{b}{u}-\mu)^{-1}$ defines a Hilbert-Schmidt integral operator from $\Hi{b}{u}$ into $\ell^2(V) \ominus \Hi{b}{u}$.
\item[(iv)]  The essential spectrum $\sigma_e(W_{f, g})$ of $W_{f, g}$ is given by
\beqn
\sigma_e(W_{f, g}) \,=\, \begin{cases}
 \sigma_e(\Db{b}{u}) & \mbox{if~} \dim\big(\ell^2(V) \ominus \Hi{b}{u}\big) < \infty, \\[5pt]
\sigma_e(\Db{b}{u}) \cup \{0\} & \mbox{otherwise}.
\end{cases}
\eeqn
Moreover, $\mbox{ind}_{\,W_{f, g}}=0$ on $\mathbb C \setminus \sigma_e(W_{f, g}).$
%\beqn
%\sigma_e(W_{f, g}) = \begin{cases}
%\{\lambda_{uv} : v \in \supp\,\Hi{b}{u}\}^{\circ} & \mbox{if~} \dim\big(\ell^2(V) \ominus \Hi{b}{u}\big) < \infty, \\
%\{\lambda_{uv} : v \in \supp\,\Hi{b}{u}\}^{\circ} \cup \{0\} & \mbox{otherwise},
%\end{cases}
%\eeqn
%where $\omega^{\circ}$ denotes the complement of isolated points $\omega$ which are occurring at most finitely many times.
\end{enumerate}
\end{theorem}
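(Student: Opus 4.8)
The plan is to read everything off the block-triangular form
\[
W_{f,g} \,=\, \left[\begin{array}{cc} \Db{b}{u} & 0 \\ f \obslash g & \N{b}{u} \end{array}\right] \quad\text{on}\quad \ell^2(V) \,=\, \Hi{b}{u} \oplus \big(\ell^2(V) \ominus \Hi{b}{u}\big),
\]
treating the normal diagonal block $\Db{b}{u}$ and the bounded rank one block $\N{b}{u} = e_u \otimes e_{_{\lambdab_u, A_u}}$ as the principal part and $f \obslash g$ as a perturbation kept under control by the compatibility conditions. To compute the point spectrum in (i), I would analyze an eigenpair $(\mu, (h,k))$, which satisfies $\Db{b}{u}h = \mu h$ and $(f\obslash g)h + \N{b}{u}k = \mu k$. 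If $h \neq 0$, the first equation forces $\mu$ to be a diagonal entry $\lambda_{uv}$ with $v \in \supp\,\Hi{b}{u}$; taking $h = e_v$ and, when $\mu \notin \{0, \lambda_{uu}\}$, solving the finite-dimensional second equation for $k$ through the invertible $\N{b}{u}-\mu$ exhibits $\lambda_{uv}$ as an eigenvalue (if $g(v)=0$ one takes $k=0$). If $h = 0$, the second equation is the eigenvalue problem for the rank one operator $\N{b}{u}$, whose spectrum is $\{0, \lambda_{uu}\}$ by Lemma \ref{lem-inj-ten}(ii), since $\inp{e_u}{e_{_{\lambdab_u, A_u}}} = \lambda_{uu}$ as $u \in A_u$. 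Collecting the diagonal entries with $\{0,\lambda_{uu}\}$ and substituting the descriptions of $\supp\,\Hi{b}{u}$ and $A_u$ from Theorem \ref{o-deco} for the three positions of $\mf b$ reproduces the three listed cases.

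Next I would establish (ii) and (iii) together. For $\mu \in \varGamma_{\lambdab_u} \setminus \{0, \lambda_{uu}\}$, both $\Db{b}{u}-\mu$ and $\N{b}{u}-\mu$ are boundedly invertible, and under compatibility condition I the transformation $L_{\lambdab_u, \mu} = (f \obslash g)(\Db{b}{u}-\mu)^{-1}$ is bounded by Proposition \ref{lem-H-transform}(iv); a direct multiplication then verifies that the operator in \eqref{cpt-resolvent-eq} is a two-sided inverse of $W_{f,g}-\mu$, proving (iii) and placing $\mu$ in $\rho(W_{f,g})$. Since $\sigma(\Db{b}{u}) = \mathbb C \setminus \varGamma_{\lambdab_u} = \overline{\sigma_p(\Db{b}{u})}$, the complement of $\overline{\sigma_p(W_{f,g})} = \overline{\sigma_p(\Db{b}{u})} \cup \{0, \lambda_{uu}\}$ is exactly $\varGamma_{\lambdab_u} \setminus \{0, \lambda_{uu}\}$, giving $\sigma(W_{f,g}) \subseteq \overline{\sigma_p(W_{f,g})}$; the reverse inclusion is automatic as $\sigma$ is closed and contains $\sigma_p$. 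For the converse, if $\mu \in \rho(W_{f,g})$, then reading the two coordinates of $(h,k) = (W_{f,g}-\mu)^{-1}(p,0)$ shows first that $\Db{b}{u}-\mu$ is boundedly invertible, so $\mu \in \varGamma_{\lambdab_u}$, and then that $L_{\lambdab_u, \mu}p = -(\N{b}{u}-\mu)k$ depends boundedly on $p$; Proposition \ref{lem-H-transform} forces compatibility condition I. Hence $\rho(W_{f,g}) \neq \emptyset$ if and only if compatibility condition I holds, which gives $\sigma(W_{f,g}) = \mathbb C$ otherwise. The Hilbert--Schmidt assertion is then immediate: $L_{\lambdab_u,\mu}$ has range in $[f]$ (its output is a scalar multiple of $f$, the scalar controlled by $g_{_{\lambdab_u, \mu}} \in \Hi{b}{u}$), so it is finite rank and a fortiori Hilbert--Schmidt.

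For the essential spectrum and index in (iv), I would write $W_{f,g} = \W{b}{u} + B$, where $\W{b}{u} = \Db{b}{u} \oplus \N{b}{u}$ is closed by Theorem \ref{o-deco} and $B$ is the off-diagonal operator carrying $f \obslash g$. Under compatibility condition I, Theorem \ref{closed-0} (cf.\ Corollary \ref{bdd-rel}) shows $B$ is $\W{b}{u}$-bounded, and since $B$ has one-dimensional range it is in fact $\W{b}{u}$-compact. The stability of Fredholmness, of the essential spectrum, and of the Fredholm index under relatively compact perturbations (see \cite{K, Sc}) then yields $\sigma_e(W_{f,g}) = \sigma_e(\W{b}{u}) = \sigma_e(\Db{b}{u}) \cup \sigma_e(\N{b}{u})$ and $\mbox{ind}_{\,W_{f,g}} = \mbox{ind}_{\,\W{b}{u}}$. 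As $\N{b}{u}$ is rank one, $\sigma_e(\N{b}{u})$ equals $\{0\}$ when $\ell^2(V) \ominus \Hi{b}{u}$ is infinite-dimensional and is empty otherwise, which is exactly the stated dichotomy; and the index vanishes off $\sigma_e$ because the normal operator $\Db{b}{u}$ has index $0$ wherever it is Fredholm, while $\N{b}{u}-\mu$ has index $0$ for $\mu \neq 0$.

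The main obstacle will be the equivalence buried in (ii): extracting compatibility condition I from the bare hypothesis that $W_{f,g}-\mu$ has a bounded inverse, which forces one to recover the boundedness of the discrete-Hilbert-transform operator $L_{\lambdab_u, \mu}$ from the block inverse and then feed it into Proposition \ref{lem-H-transform}. A secondary technical point is to confirm that $B$ is genuinely $\W{b}{u}$-compact (and not merely relatively bounded), so that the perturbation theorems for the essential spectrum and the Fredholm index in (iv) genuinely apply; here the finite-dimensionality of the range of $B$ is what converts relative boundedness into relative compactness.
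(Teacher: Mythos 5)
You are essentially reproducing the paper's own proof: the same eigenpair case analysis on the lower-triangular block form for (i), the same use of Proposition \ref{lem-H-transform} to convert invertibility of $W_{f,g}-\mu$ into compatibility condition I for (ii)--(iii), and the same Kato relative-compactness perturbation argument for (iv). Two of your shortcuts are genuine (and sound) simplifications: in the converse half of (ii) you deduce $\mu\in\varGamma_{\lambdab_u}$ directly from surjectivity of $\Db{b}{u}-\mu$, which removes the paper's separate case $\varGamma_{\lambdab_u}=\emptyset$ (handled there by a density-of-$\sigma_p$/closedness-of-spectrum argument); and in (iii)--(iv) you replace the paper's explicit Hilbert--Schmidt kernel computation and its sequential verification that $B$ is $A$-compact by the remark that a relatively bounded operator whose range lies in the one-dimensional space $[f]$ is automatically relatively compact. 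One slip needs repair in (i): Lemma \ref{lem-inj-ten}(ii) is stated only for Hilbert spaces of dimension bigger than $1$, while for $\mf b=u$ the block $\ell^2(V)\ominus\Hi{b}{u}=[e_u]$ is one-dimensional and $\sigma(\N{b}{u})=\{\lambda_{uu}\}$; as written, your $h=0$ case would insert a spurious eigenvalue $0$ into the first case of (i), whereas the paper records that $0$ arises from $\ker\N{b}{u}$ only ``provided $\mf b\neq u$'', where one has $\dim\ell^2(A_u)\geqslant 2$. Finally, for the relative boundedness of $B$ under compatibility condition I the clean citation is not Corollary \ref{bdd-rel} (which assumes condition II) but the estimate $\big|\sum_{v} h(v)\overline{g(v)}\big|\leqslant \|(\Db{b}{u}-\mu_0)h\|\,\|g_{_{\lambdab_u,\mu_0}}\|$ from the first part of the proof of Proposition \ref{lem-H-transform}; with these two repairs your argument goes through.
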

\begin{remark} By \cite[Theorem 7.1]{Kl}, for any diagonal operator $D$ with simple point spectrum and perfect spectrum, there exists a bounded rank one perturbation $h \otimes k$ of arbitrarily small positive norm such that $D+h\otimes k$ has no point spectrum. This situation does not appear in the context of rank one extensions of weighted join operators, where the point spectrum is always non-empty.
 \end{remark}
 
\index{$L_{\lambdab_u, \mu}$}

\begin{proof}
%[Proof of Theorem \ref{spectrum}]
Let $\mu$ be a complex number and $(h, k) \in
\mathscr D(W_{f, g})$ be a non-zero vector such that $W_{f, g}(h, k)=\mu (h, k).$
By \eqref{rone-extn},
 $h \in \mathscr D(\Db{b}{u}) \cap \mathscr D(f \obslash g),$ $k \in \ell^2(V) \ominus \Hi{b}{u}$ and
\beq
\label{spec-rone}
\Db{b}{u}h=\mu\, h, \quad \Big(\sum_{v \in \supp\, \Hi{b}{u}} \!\!\!\!\!\!\!\! h(v)\overline{g(v)}\Big) f  +
\inp{k}{e_{_{\lambdab_u, A_u}}}\,e_u=\mu\, k.
\eeq

\begin{case}
$h=0:$
\end{case}
In this case, $\inp{k}{e_{_{\lambdab_u, A_u}}}\,e_u=\mu\, k$. Accordingly, any one of the following possibilities occur:
\begin{enumerate}
\item[(1)] $e_u$ (considered as the vector $(0, e_u)$) is an eigenvector of $W_{f, g}$ corresponding to the eigenvalue $\mu=\lambda_{uu}$.
\item[(2)] $k$  is an eigenvector of $W_{f, g}$ corresponding to the eigenvalue $\mu=0$ provided $\mf b \neq u$, where $k \in \big(\ell^2(V) \ominus \Hi{b}{u}\big)\ominus [e_{_{\lambdab_u, A_u}}]$.
\end{enumerate}
Here, in the second assertion, we used the facts that 
$\dim \ell^2(A_u) \geqslant 2$ (since $\mf b \neq u$) and
\beqn \dim\big(\ell^2(V) \ominus \Hi{b}{u}\big) \,\geqslant\,\dim \ell^2(A_u) \eeqn
(see \eqref{A-u}).
\begin{case}
$h \neq 0:$
\end{case}
In this case, $\mu \in \sigma_p(\Db{b}{u})$, and hence $\mu =\lambda_{uw}$ for some $w \in \supp\, \Hi{b}{u}$ and
\beqn
h \in \mathscr E_{\Db{b}{u}}(\mu)=\ell^2(W_w),
\eeqn
where $W_w$ is as given in \eqref{W-u}.
It follows from \eqref{spec-rone} that
\beq
\label{spec-rone-k}
\Big(\sum_{v \in W_w} \!\! h(v)\overline{g(v)}\Big) f  +
\inp{k}{e_{_{\lambdab_u, A_u}}}\,e_u=\lambda_{uw}\, k.
\eeq
Taking inner-product with $e_{_{\lambdab_u, A_u}}$ on both sides, we get
\beq \label{spec-rone-kk}
\Big(\sum_{v \in W_w} \!\! h(v)\overline{g(v)}\Big) \inp{f}{e_{_{\lambdab_u, A_u}}}  \,=\, (\lambda_{uw}-\lambda_{uu}) \inp{k}{e_{_{\lambdab_u, A_u}}}.
\eeq
Accordingly, any one of the following possibilities occur :
\begin{enumerate}
\item[(1)]  $\lambda_{uw}$ is a non-zero number equal to $\lambda_{uu}:$ In this case,   $$\inp{f}{e_{_{\lambdab_u, A_u}}}=0 ~\mbox{or}~\sum_{v \in W_w} \!\! h(v)\overline{g(v)}=0.$$ 
%(the later one happens if either $g(v) =0$ for some $v \in W_w$ or $\mbox{card}(W_w) \Ge 2$). 
Thus $k$ belongs either to $[f]$ or to $[e_u].$
\item[(2)] $\lambda_{uw}$ is a non-zero number not equal to $\lambda_{uu}:$ By \eqref{spec-rone-k} and \eqref{spec-rone-kk}, $k$ takes the form
\beq \label{k-unique} k\,=\,\frac{\sum_{v \in W_w} \!\! h(v)\overline{g(v)}}{\lambda_{uw}}
\Big( f  +
\frac{\inp{f}{e_{_{\lambdab_u, A_u}}}}{\lambda_{uw}-\lambda_{uu}} \,e_u\Big).
\eeq
In this case, $k$ belongs to the span of $f  +
\frac{\inp{f}{e_{_{\lambdab_u, A_u}}}}{\lambda_{uw}-\lambda_{uu}} \,e_u.$
\item[(3)] $\lambda_{uw}=0$: In this case, any non-zero vector $(h, k)$ with $h \in \ell^2(W_w),$ $k \in \ell^2(V) \ominus \Hi{b}{u}$ satisfying the following identity will be an eigenvector of $W_{f, g}$ corresponding to the eigenvalue $0$:
\beqn
\Big(\sum_{v \in W_w} \!\! h(v)\overline{g(v)}\Big) f  +
\inp{k}{e_{_{\lambdab_u, A_u}}}\,e_u=0.
\eeqn
\end{enumerate}
In particular, the cases above show that \beqn \sigma_p(W_{f, g}) =\begin{cases} \sigma_p(\Db{b}{u}) \cup \{\lambda_{uu}\} & \mbox{if~} \mf b =u, \\
 \sigma_p(\Db{b}{u}) \cup \{\lambda_{uu}, 0\} & \mbox{otherwise}.
 \end{cases}
\eeqn
The conclusion in (i) is now clear from the fact that $\sigma_p(\Db{b}{u})=\big\{\lambda_{uv} : v \in \supp\,\Hi{b}{u}\big\}$, \eqref{H-u} and \eqref{H-u-1}.

%Further,
%by (1) and (2) of Case I and \eqref{til-N}, we obtain (ii)(a), while (ii)(b) follows from (1) and (3) of Case II and \eqref{spec-rone-k}. Moreover,
%(ii)(c) is immediate from (1) of Case I and (1) of Case II, while (ii)(d) follows from (3) of Case II and \eqref{spec-rone-k}.

%The inequality in (iii)(a) follows from (c) and (d) of (ii) and the fact
%that $(h, 0) \in \mathscr E_{W_{f, g}}(\lambda_{uv})$ if $h \in \ell^2(W_v) \cap \ker(f \otimes g).$ Further, $\supp(g) \cap W_v =\emptyset$ if and only $\ell^2(W_v) \subseteq \ker(f \otimes g)$. In view of the fact that
%$\lambda_{uv} \notin \{0, \lambda_{uu}\}$ is not an eigenvalue of $\N{b}{u}$,
%in this case, $\mathscr E_{W_{f, g}}(\lambda_{uv})=\{(h, 0): h \in \ell^2(W_v)\}$.

%Assume that $W_{f, g}$ satisfies the compatibility condition I.
To see (ii), let $\mu \in \mathbb C \setminus \overline{\sigma_p(W_{f, g})},$ that is, $\mu$ is a non-zero number such that $\mu \neq \lambda_{uu}$ and
\beq \label{D-inverse} \dist(\lambdab_u, \mu)~=\inf_{v \in \supp\,\Hi{b}{u}} | \lambda_{uv} - \mu| > 0. \eeq
%Note that $\Db{b}{u}-\mu$ is invertible.
By \eqref{rone-extn}, $(k_1, k_2) \in \mbox{ran}(W_{f, g}-\mu)$ if and only if there exists $(h_1, h_2) \in \mathscr D(W_{f, g})$ such that
\beq \label{surjectivity-W-fg}
(\Db{b}{u}-\mu)h_1=k_1, \quad \Big(\!\!\!\!\!\!\!\!\sum_{v \in \supp\, \Hi{b}{u}} \!\!\!\!\!\!\!\! h_1(v)\overline{g(v)}\Big) f  +
\inp{h_2}{e_{_{\lambdab_u, A_u}}}\,e_u - \mu h_2=k_2.
\eeq
We claim that  
\beq \label{claim-surj}
\mbox{$W_{f, g}-\mu$ is surjective if and only if $
\mathscr D(\Db{b}{u}) \subseteq \mathscr D(f \obslash g).$}
\eeq
To see the claim, suppose that $W_{f, g}-\mu$ is surjective and let $h'_1 \in \mathscr D(\Db{b}{u}).$ Letting $k_1=(\Db{b}{u}-\mu)h'_1$ and $k_2=0$, by surjectivity of $W_{f, g}-\mu$, we get $(h_1, h_2) \in \mathscr D(W_{f, g})$ such that \eqref{surjectivity-W-fg} holds. However, since $\Db{b}{u}-\mu$ is injective, $h'_1=h_1$, and hence $h'_1 \in \mathscr D(f \obslash g).$ To see the reverse implication, assume that $
\mathscr D(\Db{b}{u}) \subseteq \mathscr D(f \obslash g),$ and let $k_1 \in \Hi{b}{u}$ and $k_2 \in \ell^2(V) \ominus \Hi{b}{u}$. By \eqref{D-inverse}, $\Db{b}{u}-\mu$ is invertible, and hence there exists $h_1 \in \mathscr D(\Db{b}{u})$ such that $(\Db{b}{u}-\mu)h_1=k_1.$ By assumption, $h_1 \in \mathscr D(f \obslash g).$ 
Since $\mu \neq \lambda_{uu}$,  the following equation can be uniquely solved for $\inp{h_2}{e_{_{\lambdab_u, A_u}}}$:
\beqn
\inp{h_2}{e_{_{\lambdab_u, A_u}}}(\lambda_{uu} - \mu) \,=\,\inp{k_2}{e_{_{\lambdab_u, A_u}}} - \Big(\!\!\!\!\!\!\!\!\sum_{v \in \supp\, \Hi{b}{u}} \!\!\!\!\!\!\!\! h_1(v)\overline{g(v)}\Big) \inp{f}{e_{_{\lambdab_u, A_u}}}.
\eeqn
Since $\mu \neq 0,$ substituting the above value of $\inp{h_2}{e_{_{\lambdab_u, A_u}}}$ in \eqref{surjectivity-W-fg} determines $h_2 \in \ell^2(V) \ominus \Hi{b}{u}$ uniquely.
%It is easy to see from \eqref{surjectivity-W-fg} that
%\beqn
%\inp{h_2}{e_{_{\lambdab_u, A_u}}}(\lambda_{uu} - \mu) \,=\,\inp{k_2}{e_{_{\lambdab_u, A_u}}} - \Big(\!\!\!\!\!\!\!\!\sum_{v \in \supp\, \Hi{b}{u}} \!\!\!\!\!\!\!\! h_1(v)\overline{g(v)}\Big) \inp{f}{e_{_{\lambdab_u, A_u}}}.
%\eeqn
%Substituting the value of $\inp{h_2}{e_{_{\lambdab_u, A_u}}}$ in \eqref{surjectivity-W-fg} determines $h_2 \in \ell^2(V) \ominus \Hi{b}{u}$ uniquely.
This completes the verification of \eqref{claim-surj}. The first part in (ii) now follows from Proposition \ref{lem-H-transform}.

To see the remaining part in (ii), suppose that $W_{f, g}$ does not satisfy the compatibility condition I. Let  $\varGamma_{\lambdab_u}$ be as given in \eqref{varG-0}.
Thus there are two possibilities:

\begin{cas3}
$\varGamma_{\lambdab_u}=\emptyset:$
\end{cas3}
In this case, $\sigma_p(\Db{b}{u})$ is necessarily dense in $\mathbb C$, and hence by (i), $\sigma_p(W_{f, g})$ is also dense in $\mathbb C.$
If possible, then assume that $\sigma(W_{f, g})$ is not equal to $\mathbb C.$ Then, by \cite[Lemma 1.17]{CM}, $W_{f, g}$ is closed. However,
the spectrum of a closed operator is always closed (see \cite[Proposition 2.6]{Sc}). This is not possible since $\sigma_p(W_{f, g})$ is dense and proper subset of $\mathbb C$, and hence we must have $\sigma(W_{f, g})=\mathbb C.$

\begin{cas3}
$\varGamma_{\lambdab_u} \neq \emptyset:$
\end{cas3}
If possible, then suppose that $\sigma(W_{f, g}) \subsetneq \mathbb C.$ Thus there exists $\mu \in \mathbb C \setminus \sigma(W_{f, g})$ and  a linear operator $R(\mu) \in \mathcal B(\ell^2(V))$ such that
\beq \label{resol-eq} (W_{f, g}-\mu) R(\mu)h\,=\,h, \quad h \in \ell^2(V).\eeq
Consider the following decomposition of $R(\mu)$:
\beqn
\begin{array}{lll}
R(\mu) = \left[\begin{array}{cc}
A(\mu) & B(\mu) \\
C(\mu) & D(\mu)
\end{array}
\right] ~ \mbox{on}~ \ell^2(V)=\Hi{b}{u} \oplus (\ell^2(V) \ominus \Hi{b}{u}).
\end{array}
\eeqn
Note that for any $k \in \ell^2(V) \ominus \Hi{b}{u}$,
\beqn
\left[\begin{array}{c}
0 \\
k
\end{array}
\right] & \overset{\eqref{resol-eq}}= &(W_{f, g}-\mu)R(\mu)\left[\begin{array}{c}
0 \\
k
\end{array}
\right] \\ &=& \left[\begin{array}{c}
(\Db{b}{u}-\mu)B(\mu)k \\
f\obslash g (B(\mu)k) + (\N{b}{u}-\mu)(D(\mu)k)
\end{array}
\right],
\eeqn
which yields $(\Db{b}{u}-\mu)B(\mu)k=0$. However, $\mu \notin \sigma(\Db{b}{u})$, and consequently, $B(\mu)=0.$
Also, since for any $h \in \Hi{b}{u}$, \beqn R(\mu)\left[\begin{array}{c}
h \\
0
\end{array}
\right] \in \mathscr D(W_{f, g}-\mu)=\mathscr D(W_{f, g}),
\eeqn
by the definition of domain of $W_{f, g}$, we must have \beq \label{A(mu)}
A(\mu)h \in \mathscr D(f \obslash g),  \quad (\Db{b}{u}-\mu)A(\mu)=I \eeq (see \eqref{resol-eq}). It follows that $A(\mu)=(\Db{b}{u}-\mu)^{-1}$ and hence any arbitrary vector in $\mathscr D(\Db{b}{u})$ is of the form $A(\mu)h$ for some $h \in \Hi{b}{u}$. This together with \eqref{A(mu)} yields the inclusion
$\mathscr D(\Db{b}{u}) \subseteq \mathscr D(f \obslash g).$ An application of Proposition \ref{lem-H-transform}, however, shows that $W_{f, g}$ satisfies the compatibility condition I, which is contrary to our assumption. This completes the proof of (ii).

To see (iii) and (iv), assume that $W_{f, g}$ satisfies the compatibility condition I. Let $\mu \in \mathbb C \setminus \overline{\sigma_p(W_{f, g})}$.
By Proposition \ref{lem-H-transform}, $L_{\lambdab_u, \mu}$ is bounded.
 A routine verification shows that $L_{\lambdab_u, \mu}$
is an integral  operator with kernel $K_{\lambdab_u, \mu}$ given by
\beqn K_{\lambdab_u, \mu}(w,v) \,:=\,
\frac{\overline{g(v)}f(w)}{\lambda_{uv}-\mu}, \quad w \in V \setminus \supp\,\Hi{b}{u}, ~v \in\supp\,\Hi{b}{u}.\eeqn
By the compatibility condition I and the assumption that $f \in \ell^2(V) \ominus \Hi{b}{u}$, $K_{\lambdab_u, \mu}$ belongs to
$\ell^2((V \setminus \supp\,\Hi{b}{u}) \times \supp\,\Hi{b}{u})$. By \cite[Theorem 3.8.5]{Si1},  $L_{\lambdab_u, \mu}$ is a Hilbert-Schmidt operator. We leave it to the reader to verify that the expression given by \eqref{cpt-resolvent-eq} defines the resolvent of $W_{f, g}$ at $\mu$.

%We now check that $B+C$ is $A$-compact, where
To see (iv), let $A, B, C$ be as given by \eqref{ABC} and note that $A+B+C=W_{f, g}$. Since $A=\Db{b}{u} \oplus 0$ on $\Hi{b}{u} \oplus \big(\ell^2(V) \ominus \Hi{b}{u}\big)$, it suffices to check that $\sigma_e(W_{f, g})=\sigma_e(A)$ and $\mbox{ind}_{\,W_{f, g}}=\mbox{ind}_{A}.$
In view of \cite[Theorems 5.26 and 5.35, Chapter IV]{K},
it is sufficient to verify that
$B + C$ is $A$-compact (see also the foot note 1 on \cite[Pg 244]{K}).
Let $\{h_n\}$ be a bounded sequence in $\mathscr D(A) \subseteq \mathscr D(B)$ such that $\{Ah_n\}$ is bounded. Since $C$ is a finite rank operator, it suffices to check that $\{Bh_n\}$ has a convergent subsequence. By part (iii), $L_{\lambdab_u, \mu}$ is a compact operator, and hence so is $B(A-\mu)^{-1}$.  However, $\{(A-\mu)h_n\}$ is bounded, and hence $\{Bh_n\}$ admits a convergent subsequence.
The remaining part follows from the fact that the index function for a diagonal operator is identically $0$.
%Finally, by \cite[Proposition 2.6]{Sc}, \beqn
%\rho(\Db{b}{u}) = \{\mu \in \pi(\Db{b}{u}) : d_{\Db{b}{u}}(\mu) =0\}, \eeqn
%and hence  the remaining part follows from  $\rho(\Db{b}{u})=\pi(\Db{b}{u})$.
 \end{proof}
 \begin{remark}
 In this remark, we describe eigenspaces of $W_{f, g}$ (under the hypotheses of Theorem \ref{spectrum}). To see that, we need some notations. Given a subspace $\ell^2(W)$ of $\ell^2(V)$ and a  rank one operator $h \obslash k$ from $\ell^2(V)$ into $\ell^2(U)$, we introduce the linear transformation $h \obslash k|_{\ell^2(W)}$ from $\ell^2(W)$ into $\ell^2(U)$ as follows:
\beqn
\mathscr D(h \obslash k|_{\ell^2(W)}) &=& \mathscr D(h \obslash k) \cap \ell^2(W), \\ h \obslash k|_{\ell^2(W)}(l) &=& h \obslash k(l), \quad l \in \mathscr D(h \obslash k|_{\ell^2(W)}).
\eeqn
For $\mu \in \mathbb C,$ let $W_\mu$ be given by
\index{$W_{\mu}$}
\index{$W_{\lambda_{uv}}=W_v$}
\index{${\tt graph}(T)$}
\index{$h \obslash k\vert_{\ell^2(W)}$}
\beq
 \label{W-v} W_\mu &=& \{w \in  \supp\, \Hi{b}{u} : \lambda_{uw} = \mu\}. \eeq
In case $\mu =\lambda_{uv}$ for some $v \in V,$ we denote $W_{\mu}$ by the simpler notation $W_v.$
Further, we reserve the notation ${\tt graph}(T)$ for the graph of a linear operator $T$ in $\mathcal H.$ 
If $\mathscr E_{W_{f, g}}(\mu)$ denotes the eigenspace corresponding to the  eigenvalue $\mu$ of $W_{f, g}$, then we have the following statements:
\begin{enumerate}
\item[(a)]  If $\lambda_{uv} \neq 0$ for every $v \in \supp\, \Hi{b}{u},$ then
\beqn \mathscr E_{W_{f, g}}(0)\,=\, \{0\} \oplus \ker(\N{b}{u}).
%\{(0, k) : k \in \ell^2(V) \ominus \Hi{b}{u}, ~\N{b}{u}(k)=0\}.
\eeqn
\item[(b)] If $\lambda_{uv} = 0$ for some $v \in \supp\, \Hi{b}{u},$ then
\beqn
 \mathscr E_{W_{f, g}}(0) \,=\, \begin{cases}
 \ker(f \obslash \tilde{g})
  & \mbox{if~}f \in [e_u],
\\
 \ker({f \obslash g}|_{_{\ell^2(W_v)}}) \oplus \ker(\N{b}{u})
& \mbox{otherwise},
\end{cases}
\eeqn
where $\tilde{g} : W_v \cup \big(V \setminus \supp\, \Hi{b}{u}\big) \rar \mathbb C$ is given by
\beqn
\tilde{g}(w) \,=\, \begin{cases} \overline{f(u)}\,g(w) & \mbox{if~}w \in W_v, \\
e_{_{\lambdab_u, A_u}}(w) & \mbox{otherwise}.
\end{cases}
\eeqn
 \item[(c)] If $\mu=\lambda_{uu}$ is non-zero, then
\beqn
 \mathscr E_{W_{f, g}}(\mu) \,=\,\begin{cases} {\gr}(\tilde{f} \obslash g|_{_{\ell^2(W_u)}}) + [e_u] & \mbox{if~}f \in \ker \N{b}{u}, \\
 \ker(f \obslash g|_{_{\ell^2(W_u)}}) \oplus [e_u] & \mbox{otherwise},
 \end{cases}
 %\{(h, k) : h \in \ker(f \otimes g|_{_{\ell^2(W_u)}}), ~k \in [e_u]\}.
 \eeqn
where
 $\tilde{f}={f}/{\lambda_{uu}}$.
 \item[(d)] If $\mu =\lambda_{uv}$ for some $v \in \supp\, \Hi{b}{u}$ and $\mu \notin \{0, \lambda_{uu}\},$ then
  \beqn
 \mathscr E_{W_{f, g}}(\mu) \,=\, {\gr}\big( \tilde{f} \obslash g|_{_{\ell^2(W_u)}}\big),
\eeqn
where $\tilde{f}=\frac{1}{\lambda_{uv}} \Big(f + \frac{\inp{f}{e_{_{\lambdab_u, A_u}}}}{\lambda_{uv}-\lambda_{uu}}e_u\Big).$
\end{enumerate}
To see the above statements, suppose that $(h, k) \in \mathscr E_{W_{f, g}}(\mu)$.
Since $\mf b \neq u$,  by Case I(2) of the proof of Theorem \ref{spectrum},
\beq
\label{case2-2}
\big(\ell^2(V) \ominus \Hi{b}{u}\big)\ominus [e_{_{\lambdab_u, A_u}}]=\ker(\N{b}{u}) \subseteq \mathscr E_{W_{f, g}}(0).
\eeq
The desired conclusion in (a) is now immediate from \eqref{spec-rone} and the assumption that $\lambda_{uv} \neq 0$ for every $v \in \supp\, \Hi{b}{u}$. To see (b), assume that
$\lambda_{uv}=0$ for some $v \in \supp\, \Hi{b}{u}$, and suppose that
$f= c\, e_u$ for some non-zero scalar $c.$
By
\eqref{spec-rone-k} and \eqref{case2-2}, $(h, k) \in \mathscr E_{W_{f, g}}(0)$  if and only if
\beqn c\Big(\sum_{v \in W_w} \!\! h(v)\overline{g(v)}\Big)   +
\inp{k}{e_{_{\lambdab_u, A_u}}}=0,\eeqn
which is equivalent to $(h, k) \in \ker(f \obslash \tilde{g}).$
This yields first part in (b). If $f$ and $e_u$ are linearly independent, then
$\sum_{v \in W_w} \!\! h(v)\overline{g(v)}=0$ and $\inp{k}{e_{_{\lambdab_u, A_u}}}=0$. Thus
the other part in (b) follows at once from \eqref{case2-2}. To see (c), suppose that $\mu=\lambda_{uu}$ is non-zero. By Case I(1) and \eqref{spec-rone-k}, $k=\alpha\, f + \beta\, e_u$ for some $\alpha, \beta \in \mathbb C.$ Combining this with \eqref{spec-rone-k} yields
\beqn
\Big(\sum_{v \in W_u} \!\! h(v)\overline{g(v)}-\alpha\, \lambda_{uu}\Big)f +\alpha\, \inp{f}{e_{_{\lambdab_u, A_u}}} e_u= 0
\eeqn
If $f \in \ker(\N{b}{u})$, then $\inp{f}{e_{_{\lambdab_u, A_u}}}=0,$ and the above equation determines $\alpha$ uniquely, whereas $\beta$ can be chosen arbitrarily to get the conclusion in the first part of (c). If $f \notin \ker(\N{b}{u})$, then by \eqref{spec-rone-kk}, $\sum_{v \in W_u} \!\! h(v)\overline{g(v)}=0,$ that is, $h \in
\ker(f \obslash g|_{_{\ell^2(W_u)}}).$ However, in this case, $k \in [e_u],$ which yields the remaining part of (c). To see (d), assume that $\mu=\lambda_{uv} \notin \{0, \lambda_{uu}\}$. Once again, by \eqref{k-unique}, $k= \tilde{f} \obslash g(h)$. This completes the verification of (d).
 \end{remark}

The following sheds more light into the spectral picture of rank one extensions of weighted join operators.
\begin{corollary} \label{coro-spectrum}
Let $\mathscr T=(V, E)$ be a rooted directed tree with root $\rootb$ and let $\mathscr T_{\infty}=(\V, E_{\infty})$ be the extended directed tree associated with $\mathscr T$.
For $u, \mf b \in V,$ consider the weight system
$\lambdab_u = \{\lambda_{uv}\}_{v\in \V}$ of complex numbers and let $W_{f, g}$ be the rank one extension of the weighted join operator $\W{b}{u}$ on $\mathscr T$, where $f \in \ell^2(V) \ominus \Hi{b}{u}$ is non-zero and $g : \supp\,\Hi{b}{u} \rar \mathbb C$ is given.
Then $\sigma(W_{f, g})$ is a proper closed subset of $\mathbb C$ if and only if $W_{f, g}$ satisfies the compatibility condition I. Further, we have the following:
\begin{enumerate}
\item[(a)] In case $W_{f, g}$ satisfies the compatibility condition I, $W_{f, g}$ defines a closed linear operator such that the following hold:
\begin{enumerate}
\item[(a1)]  $\sigma(W_{f, g})=\overline{\sigma_p(W_{f, g})}$.
\item[(a2)]  $\pi(W_{f, g})=\mathbb C \setminus \overline{\sigma_p(W_{f, g})}.$
\end{enumerate}
\item[(b)]  In case $W_{f, g}$ does not satisfy the compatibility condition I, the following hold:
\begin{enumerate}
\item[(b1)] $\sigma(W_{f, g})=\mathbb C$.
\item[(b2)] Either $W_{f, g}$ is not closed or $\pi(W_{f, g}) = \emptyset.$
%$\pi(W_{f, g}) \subseteq \overline{\sigma_p(W_{f, g})} \setminus \sigma_p(W_{f, g}).$
\end{enumerate}
\end{enumerate}
\end{corollary}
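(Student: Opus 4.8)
The plan is to read off almost everything from the spectral picture already established in Theorem \ref{spectrum}, supplemented by two soft facts: that the point spectrum always sits inside the approximate point spectrum, and that the regularity domain $\pi$ is open whenever the operator is closed. First I would dispose of the main equivalence and of (a1), (b1) simultaneously. By Theorem \ref{spectrum}(ii), $\sigma(W_{f,g})=\overline{\sigma_p(W_{f,g})}$ exactly when the compatibility condition I holds and $\sigma(W_{f,g})=\mathbb C$ otherwise, which is precisely (a1) and (b1). For the properness claim, note that the compatibility condition I forces $\varGamma_{\lambdab_u}\neq\emptyset$, and, as recorded after Proposition \ref{lem-H-transform}, $\sigma(\Db{b}{u})=\mathbb C\setminus\varGamma_{\lambdab_u}$, a proper closed subset of $\mathbb C$. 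Since the proof of Theorem \ref{spectrum}(i) shows that $\overline{\sigma_p(W_{f,g})}$ is obtained from $\overline{\sigma_p(\Db{b}{u})}=\sigma(\Db{b}{u})$ by adjoining at most the two points $0$ and $\lambda_{uu}$, it remains proper and closed; and the reverse implication is trivial because $\mathbb C$ is not a proper subset of itself. This settles the displayed equivalence.

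For part (a) I would invoke Theorem \ref{closed-0}: satisfying a compatibility condition guarantees closedness of $W_{f,g}$. Closedness makes $\pi(W_{f,g})$ open, hence $\sigma_{ap}(W_{f,g})$ closed. The elementary chain $\sigma_p(W_{f,g})\subseteq\sigma_{ap}(W_{f,g})\subseteq\sigma(W_{f,g})=\overline{\sigma_p(W_{f,g})}$ then forces $\sigma_{ap}(W_{f,g})=\overline{\sigma_p(W_{f,g})}$, since the closed set $\sigma_{ap}(W_{f,g})$ contains $\sigma_p(W_{f,g})$, hence its closure, while being contained in $\sigma(W_{f,g})$. Passing to complements yields (a2).

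The real work is (b2). If $W_{f,g}$ is not closed the first alternative holds, so I assume $W_{f,g}$ closed and aim to prove $\pi(W_{f,g})=\emptyset$, i.e. $\sigma_{ap}(W_{f,g})=\mathbb C$. The points of $\mathbb C\setminus\varGamma_{\lambdab_u}=\sigma(\Db{b}{u})=\overline{\sigma_p(\Db{b}{u})}$ together with $0,\lambda_{uu}$ all lie in $\overline{\sigma_p(W_{f,g})}\subseteq\sigma_{ap}(W_{f,g})$ (again using $\sigma_p\subseteq\sigma_{ap}$ and closedness of $\sigma_{ap}$). It remains to treat $\mu\in\varGamma_{\lambdab_u}\setminus\{0,\lambda_{uu}\}$, where I would construct explicit approximate eigenvectors. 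Because the compatibility condition I fails while $\varGamma_{\lambdab_u}\neq\emptyset$, Proposition \ref{lem-H-transform} gives $g_{_{\lambdab_u,\mu}}\notin\Hi{b}{u}$, that is, $\sum_{v\in\supp\,\Hi{b}{u}}|g(v)|^2/|\lambda_{uv}-\mu|^2=\infty$. For finite $F\subseteq\supp\,\Hi{b}{u}$ set $h_F=\sum_{v\in F}\frac{g(v)}{|\lambda_{uv}-\mu|^2}e_v$ and $k_F=-(\N{b}{u}-\mu)^{-1}(f\obslash g)h_F$, which is legitimate since $\mu\notin\{0,\lambda_{uu}\}=\sigma(\N{b}{u})$. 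A direct computation gives $(W_{f,g}-\mu)(h_F,k_F)=((\Db{b}{u}-\mu)h_F,\,0)$ with $\|(\Db{b}{u}-\mu)h_F\|^2=S_F:=\sum_{v\in F}|g(v)|^2/|\lambda_{uv}-\mu|^2$, while $(f\obslash g)h_F=S_F\,f$ forces $\|(h_F,k_F)\|\geq\|k_F\|=S_F\,\gamma$ with $\gamma=\|(\N{b}{u}-\mu)^{-1}f\|>0$. Hence the quotient $\|(W_{f,g}-\mu)(h_F,k_F)\|/\|(h_F,k_F)\|$ is at most $(\gamma\sqrt{S_F})^{-1}$, and letting $F$ exhaust $\supp\,\Hi{b}{u}$ drives $S_F\to\infty$, so $W_{f,g}-\mu$ is not bounded below and $\mu\in\sigma_{ap}(W_{f,g})$. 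Combining the two families of points yields $\sigma_{ap}(W_{f,g})=\mathbb C$, i.e. $\pi(W_{f,g})=\emptyset$, proving (b2).

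I expect the last construction to be the main obstacle: the coefficients must be chosen as $c_v=g(v)/|\lambda_{uv}-\mu|^2$ precisely so that the underlying Cauchy--Schwarz estimate saturates, making both the numerator $\|(\Db{b}{u}-\mu)h_F\|$ and the norm $\|k_F\|$ governed by the same divergent sum $S_F$; any less careful choice would leave a gap and fail to produce a Weyl sequence. Everything else is bookkeeping around Theorem \ref{spectrum}, Theorem \ref{closed-0}, and the elementary inclusions among $\sigma_p$, $\sigma_{ap}$, and $\sigma$.
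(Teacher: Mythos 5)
Your proposal is correct, and on the only genuinely delicate point, part (b2), it takes a different route from the paper. The paper argues indirectly: assuming $W_{f,g}$ closed, it uses \cite[Proposition 2.6]{Sc} to identify $\mathbb C\setminus\sigma(W_{f,g})$ with the set of $\mu\in\pi(W_{f,g})$ having defect number $d_{W_{f,g}}(\mu)=0$, notes this set is empty by (b1) (this is \eqref{pi-di}), and then derives a contradiction by showing that every $\mu\in\pi(W_{f,g})$ in fact has dense range for $W_{f,g}-\mu$ (exhibiting explicit preimages of the vectors $e_v$, $v\in\supp\,\Hi{b}{u}$, and using $(W_{f,g}-\mu)(\ell^2(V)\ominus\Hi{b}{u})=\ell^2(V)\ominus\Hi{b}{u}$). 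You instead prove $\sigma_{ap}(W_{f,g})=\mathbb C$ directly by producing a Weyl sequence at each $\mu\in\varGamma_{\lambdab_u}$ outside $\sigma(\N{b}{u})$: your choice $h_F(v)=g(v)/|\lambda_{uv}-\mu|^2$ and $k_F=-(\N{b}{u}-\mu)^{-1}(f\obslash g)h_F$ is right, the computation $\|(\Db{b}{u}-\mu)h_F\|^2=S_F$, $(f\obslash g)h_F=S_F f$, $\|(h_F,k_F)\|\geqslant \gamma S_F$ checks out, and the divergence $S_F\to\infty$ is exactly the failure of compatibility condition I at $\mu$. This is more hands-on and even proves slightly more: since the regularity domain is open for \emph{any} operator (no closedness needed), your argument yields $\pi(W_{f,g})=\emptyset$ unconditionally when condition I fails, which of course implies the stated disjunction. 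Your handling of the main equivalence and of (a) differs only cosmetically: you obtain ``proper $\Rightarrow$ condition I'' as the contrapositive of Theorem \ref{spectrum}(ii), where the paper reruns the surjectivity criterion \eqref{claim-surj} together with Proposition \ref{lem-H-transform}; and you cite Theorem \ref{closed-0} for closedness where the paper uses \cite[Lemma 1.17]{CM}. All of these are legitimate.

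One slip should be patched. You assert that $0$ and $\lambda_{uu}$ always lie in $\overline{\sigma_p(W_{f,g})}$, but by Theorem \ref{spectrum}(i) this can fail for $0$ when $\mf b=u$: there $\sigma_p(W_{f,g})=\{\lambda_{uv}:v\in V\}$, and $0$ need not belong to its closure, while your Weyl construction as written excludes $\mu=0$. The fix is one line: when $\mf b=u$ the complement $\ell^2(V)\ominus\Hi{b}{u}=[e_u]$ is one-dimensional (see \eqref{H-u-1}), so $\sigma(\N{b}{u})=\{\lambda_{uu}\}$; if $\lambda_{uu}=0$ then $0\in\sigma_p(W_{f,g})$ after all, and if $\lambda_{uu}\neq 0$ then $(\N{b}{u})^{-1}$ exists and your construction applies verbatim at $\mu=0$. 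The clean formulation is to run the Weyl argument at every $\mu\in\varGamma_{\lambdab_u}\setminus\sigma(\N{b}{u})$ and to absorb $\sigma(\N{b}{u})\subseteq\{0,\lambda_{uu}\}$ into the point-spectrum part, treating the case $\mf b=u$ as just indicated.
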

\begin{proof}
%In view of Theorem \ref{spectrum}(ii), it suffices to check that $\mathscr D(\Db{b}{u}) \subseteq \mathscr D(f \otimes g).$ This is an easy consequence of the Cauchy-Schwarz inequality, in view of $\mathscr D(\Db{b}{u}-\mu) = \mathscr D(\Db{b}{u})$ and \eqref{condition-g}.
Suppose that $\sigma(W_{f, g})$ is a proper closed subset of $\mathbb C$. Thus there exists $\mu \in \mathbb C \setminus \sigma_p(W_{f, g})$ such that $W_{f, g}-\mu$ is surjective. By \eqref{claim-surj}, we obtain the domain inclusion $\mathscr D(\Db{b}{u}) \subseteq \mathscr D(f \obslash g),$ and hence by Proposition \ref{lem-H-transform}, $W_{f, g}$ satisfies the compatibility condition I. Conversely, if $W_{f, g}$ satisfies the compatibility condition I, then $\varGamma_{\lambdab_u}$ is non-empty (see \eqref{varG-0}). It may now be concluded from (i) and (ii) of Theorem \ref{spectrum} that $\sigma(W_{f, g})$ is a proper closed subset of $\mathbb C.$ 
%Since spectrum of a closed operator is a closed subset of $\mathbb C$, $\sigma(W_{f, g})$ is a closed subset of $\mathbb C.$
%{\red An examination of the proof of Theorem \ref{spectrum}(ii) shows that $\sigma(W_{f, g}) \subsetneq \mathbb C$ if and only if $W_{f, g}$ satisfies the compatibility condition I (see \eqref{claim-surj}). Since spectrum of a closed operator is a closed subset of $\mathbb C$,
%this together with Theorem \ref{closed-0} yields first part of this corollary.}

To see (a), assume that $W_{f, g}$ satisfies the compatibility condition I.
Since $\sigma(W_{f, g})$ is a proper subset of $\mathbb C,$ $W_{f, g}$ is closed (see \cite[Lemma 1.17]{CM}).  Further, (a1) follows from Theorem \ref{spectrum}(ii). Since the complement of the regularity domain of a densely defined closed operator is a closed subset of spectrum that contains the point spectrum  (see \cite[Proposition 2.1]{Sc}), the conclusion in (a2) is immediate.

To see (b), assume that $W_{f, g}$ does not satisfy the compatibility condition I.
Clearly, (b1) follows from the first part of this corollary.
To see (b2), assume that $W_{f, g}$ is closed. By \cite[Proposition 2.6]{Sc}, \beq \label{pi-di}
\{\mu \in \pi(W_{f, g}) : d_{W_{f, g}}(\mu) =0\}=\mathbb C \setminus \sigma(W_{f, g}) \overset{(b1)}=\emptyset \eeq
(see \eqref{d-S-mu}).
Let $\mu \in \pi(W_{f, g}).$ Since $\pi(W_{f, g}) \subseteq \mathbb C \setminus {\sigma_p(W_{f, g})},$ $\mu \notin \sigma_p(W_{f, g})$. Then, by the proof of Theorem \ref{spectrum}(i), $\mu \notin \sigma_p(\Db{b}{u}) \cup \sigma(\N{b}{u})$.
It follows that for every $ v \in \supp\,\Hi{b}{u},$
%\beqn
%(W_{f, g}-\mu)((\lambda_{uv}-\mu)^{-1}e_v, -(\lambda_{uv}-\mu)^{-1} \overline{g(v)}(\N{b}{u}-\mu)^{-1}(f))=(e_v, 0), \quad v \in \supp\,\Hi{b}{u},
%\eeqn
\beqn
 \left[\begin{array}{cc}
\Db{b}{u}-\mu & 0 \\
f \obslash g & \N{b}{u}-\mu
\end{array}
\right]  \left[\begin{array}{c}
(\lambda_{uv}-\mu)^{-1}e_v  \\
-(\lambda_{uv}-\mu)^{-1} \overline{g(v)}(\N{b}{u}-\mu)^{-1}(f)
\end{array}
\right]=\left[\begin{array}{c}
e_v  \\
0
\end{array}
\right].
\eeqn
which implies that $(W_{f, g}-\mu) \mathscr D(W_{f, g})$ is dense in $\Hi{b}{u}$. Also, $$(W_{f, g}-\mu)(\ell^2(V) \ominus \Hi{b}{u})=\ell^2(V) \ominus \Hi{b}{u},$$ which implies that $d_{W_{f, g}}(\mu)=0$. This together with \eqref{pi-di} shows that $\pi(W_{f, g})=\emptyset$ completing the proof.
%Also, by the definition of the regularity domain, $\pi(W_{f, g}) \subseteq \mathbb C \setminus \sigma_p(W_{f, g})$. To complete the verification of (b2), note that $\pi(W_{f, g}) \cap \mathbb C \setminus \overline{\sigma_p(W_{f, g})} = \emptyset.$
%Indeed, if $\mu \in \pi(W_{f, g}) \cap \mathbb C \setminus \overline{\sigma_p(W_{f, g})}$, then $W_{f, g}-\mu$ is bounded from below, $(W_{f, g}-\mu)\Hi{b}{u}=\Hi{b}{u}$ and $(W_{f, g}-\mu)(\ell^2(V) \ominus \Hi{b}{u})=\ell^2(V) \ominus \Hi{b}{u}$, which implies that $W_{f, g}-\mu$ is bijective with bounded inverse.
%Thus $\pi(W_{f, g}) \subseteq \overline{\sigma_p(W_{f, g})} \setminus \sigma_p(W_{f, g}).$
\end{proof}

In general, the spectrum of $W_{f, g}$ may not be the topological closure of its point spectrum.

\begin{example} \label{exam-spectrum}
Let $g : \supp\,\Hi{b}{u} \rar \mathbb C$ be such that $\sum_{v \in
\supp\,\Hi{b}{u}}|g(v)|^2=\infty$ and
$$\mbox{card}(\supp(g))=\aleph_0=\mbox{card}(\supp\,\Hi{b}{u} \setminus \supp(g))$$
(for instance, one may let $u=v_2$ and $\supp(g)=\des{\bf v_5}$ in the rooted directed tree as given in Figure \ref{fig0}).
Let $\lambdab_u$ be a weight system such that \beq \label{annulus} \{\lambda_{uv} : v \in \supp(g)\} & \subseteq &\{z \in \mathbb C : 1 \leqslant |z| \leqslant 2\}, \\ \label{annulus1} \overline{\{\lambda_{uv} : v \in \supp\,\Hi{b}{u}\}} &\neq &  \mathbb C.\eeq It is easy to see using \eqref{annulus} that $W_{f, g}$ does not satisfy the compatibility condition. Hence, by Corollary \ref{coro-spectrum}, $\sigma(W_{f, g})=\mathbb C.$ Further, since $\sigma_p(W_{f, g})$ is not dense in $\mathbb C$ (see \eqref{annulus1}), we must have $\overline{\sigma_p(W_{f, g})} \subsetneq \sigma(W_{f, g}).$
Thus the spectral picture of a rank one extension $W_{f, g}$ of a weighted join operator can be summarized as follows:
\begin{enumerate}
\item If $g$ satisfies the compatibility condition I, then $\sigma(W_{f, g})=\overline{\sigma_p(W_{f, g})}$ is a proper subset of $\mathbb C.$
\item If $g$ does not satisfy the compatibility condition I, then $\sigma(W_{f, g})=\mathbb C$ and $\overline{\sigma_p(W_{f, g})}$ may be a proper subset of $\mathbb C.$
\end{enumerate}
%{\red In the last case, if $\mu \in \mathbb C \setminus \overline{\sigma_p(W_{f, g})}$, then
%$\ker(W_{f, g}-\mu)=\{0\}$. }
In the last case, either $W_{f, g}$ is not closed or $\pi(W_{f, g})=\emptyset.$
%its worth exploring the structure of $\sigma(W_{f, g}) \setminus \overline{\sigma_p(W_{f, g})}$. Note that if $W_{f, g}$ is closed in (ii), then
% since $\rho(T)=\{\mu \in \pi(T) : d_{\mu}(T) =0\}$ (see \cite[Proposition 2.6]{Sc}),  we must have $d_{\mu}(T) > 0$ for every $\mu \in \pi(T)$.
\eop
\end{example}

As an application to Theorem \ref{spectrum}, we characterize those rank one extensions of weighted join operators on leafless directed trees, which admit compact resolvent.
\begin{corollary} \label{cpt-res}
Let $\mathscr T=(V, E)$ be a rooted directed tree with root $\rootb$
and let $\mathscr T_{\infty}=(\V, E_{\infty})$ be the extended
directed tree associated with $\mathscr T$. For $u \in V$ and $\mf b \in V \setminus \{u\},$ consider the weight system $\lambdab_u =
\{\lambda_{uv}\}_{v\in \V}$ of complex numbers and let $W_{f, g}$ be
the rank one extension  of the weighted join operator $\W{b}{u}$ on
$\mathscr T$, where $f \in \ell^2(V) \ominus \Hi{b}{u}$ is non-zero
and $g : \supp\,\Hi{b}{u} \rar \mathbb C$ is given. Suppose that
$W_{f, g}$ satisfies the compatibility condition I. If $\mathscr T$
is leafless, then the following
 are equivalent:
 \begin{enumerate}
 \item
The rank one extension $W_{f, g}$ of the weighted join operator $\W{b}{u}$ on $\mathscr T$ admits a
compact resolvent.
\item  The set $\{\lambda_{uv} : v \in \supp\,\Hi{b}{u}\}$ has accumulation point only at $\infty$ with each of its entries appearing finitely many times and the set
$V_{\prec}$ of branching vertices
of $\mathscr T$ is disjoint from $\asc{u}$.
%$\Db{b}{u}$ has compact resolvent.
\end{enumerate}
%If this happens, then the essential spectrum of $W_{f, g}$ equals the essential spectrum %of Pg [87, SS]
\end{corollary}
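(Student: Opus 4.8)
The plan is to read off the resolvent from the explicit formula \eqref{cpt-resolvent-eq} and reduce compactness to three independent conditions on the two diagonal blocks. Since $W_{f, g}$ satisfies the compatibility condition~I, Corollary \ref{coro-spectrum}(a) shows that $W_{f, g}$ is closed with nonempty resolvent set $\rho(W_{f, g}) = \mathbb C \setminus \overline{\sigma_p(W_{f, g})}$. Fixing any $\mu \in \rho(W_{f, g})$, Theorem \ref{spectrum}(iii) presents $(W_{f, g}-\mu)^{-1}$ as the lower-triangular operator matrix \eqref{cpt-resolvent-eq} whose off-diagonal entry is $-(\N{b}{u}-\mu)^{-1}L_{\lambdab_u, \mu}$. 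Because $L_{\lambdab_u, \mu}$ is Hilbert--Schmidt (again Theorem \ref{spectrum}(iii)) and $(\N{b}{u}-\mu)^{-1}$ is bounded, this off-diagonal block is compact. A lower-triangular block operator with compact off-diagonal entry is compact if and only if both of its diagonal blocks are compact; hence $W_{f, g}$ admits a compact resolvent if and only if both $(\Db{b}{u}-\mu)^{-1}$ and $(\N{b}{u}-\mu)^{-1}$ are compact. Since the conditions I extract below do not involve $\mu$, the dichotomy is the same for every $\mu \in \rho(W_{f, g})$, as consistency demands.

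Next I would treat the two diagonal blocks separately. The operator $\N{b}{u} = e_u \otimes e_{_{\lambdab_u, A_u}}$ is rank one on $\ell^2(V) \ominus \Hi{b}{u}$, so the remark following Lemma \ref{lem-inj-ten} shows that $(\N{b}{u}-\mu)^{-1}$ is compact precisely when $\dim\big(\ell^2(V) \ominus \Hi{b}{u}\big) < \infty$. For the other block, $(\Db{b}{u}-\mu)^{-1}$ is itself a diagonal operator, with entries $(\lambda_{uv}-\mu)^{-1}$, $v \in \supp\,\Hi{b}{u}$; by the standard characterization of compact diagonal operators \cite{Co} it is compact if and only if $|\lambda_{uv}| \to \infty$ in the net sense, that is, if and only if the set $\{\lambda_{uv} : v \in \supp\,\Hi{b}{u}\}$ has $\infty$ as its only accumulation point and each of its values is attained for only finitely many $v$. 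This reproduces verbatim the first half of condition~(ii).

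It remains to identify the finiteness of $\dim\big(\ell^2(V) \ominus \Hi{b}{u}\big)$ with the disjointness $V_{\prec} \cap \asc{u} = \emptyset$, and this is where the combinatorial bookkeeping lies. Using \eqref{H-u} I would compute the support $V \setminus \supp\,\Hi{b}{u}$ of $\ell^2(V) \ominus \Hi{b}{u}$ in the two cases: when $\mf b \in \des{u}$ it equals $V_u \cup [u, \mf b]$, and otherwise, invoking \eqref{cg-dec0}, it equals $V_u \cup \asc{u} \cup \{u\}$. As $[u, \mf b]$ is finite and $\asc{u}$ is finite (because $\mathscr T$ is rooted), in either case $V \setminus \supp\,\Hi{b}{u}$ differs from $V_u$ by a finite set, so $\dim\big(\ell^2(V) \ominus \Hi{b}{u}\big) < \infty$ if and only if $V_u$ is finite. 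Since $\mathscr T$ is leafless, Proposition \ref{lem-lns}(iii) forces $V_u$ to be either empty or of cardinality $\aleph_0$, so $V_u$ is finite exactly when $V_u = \emptyset$; and by Proposition \ref{lem-lns}(ii), $\mbox{card}(V_u) = \aleph_0$ if and only if $\asc{u}$ contains a branching vertex. Hence $\dim\big(\ell^2(V) \ominus \Hi{b}{u}\big) < \infty$ if and only if $V_{\prec} \cap \asc{u} = \emptyset$. Combining the three analyses yields the equivalence of (i) and (ii).

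The main obstacle is not conceptual but the careful case-by-case determination of $V \setminus \supp\,\Hi{b}{u}$ in the final paragraph and its reduction, modulo a finite set, to $V_u$; once that reduction is secured, the leafless dichotomy of Proposition \ref{lem-lns} together with the triangular-block compactness argument closes the proof with no further work.
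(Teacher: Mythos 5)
Your proposal is correct and follows essentially the same route as the paper's proof: it reduces compactness of the resolvent, via the formula \eqref{cpt-resolvent-eq} and block-matrix compactness, to compactness of the resolvents of $\Db{b}{u}$ and $\N{b}{u}$, then invokes the diagonal-operator criterion, Lemma \ref{lem-inj-ten}(iii) for the rank one block, and Proposition \ref{lem-lns} together with \eqref{H-u} for the identification of $\dim\big(\ell^2(V)\ominus\Hi{b}{u}\big)<\infty$ with $V_{\prec}\cap\asc{u}=\emptyset$. The only difference is that you spell out the case-by-case computation of $V\setminus\supp\,\Hi{b}{u}$, which the paper compresses into a single citation of Proposition \ref{lem-lns}(ii) and \eqref{H-u}.
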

\begin{proof} We need a couple of general facts in this proof.
\begin{enumerate}
\item[(a)] The diagonal operator $D_{\lambdab}$ has compact resolvent if and only if the weight system $\lambdab$ has accumulation point only at $\infty$ with each of its entries appearing finitely many times.
\item[(b)] A finite block matrix with operator entries being bounded linear is compact if and only if all of its entries are compact.
\end{enumerate}
To see the equivalence of (i) and (ii), assume that $\mathscr T$ is leafless.
In view of (b), the formula \eqref{cpt-resolvent-eq} and Theorem \ref{spectrum}(iii), $W_{f, g}$ has compact resolvent if and only if $\Db{b}{u}$ and $\N{b}{u}$ have compact resolvents. 
On the other hand, by Proposition \ref{lem-lns}(ii) and \eqref{H-u}, $\ell^2(V) \ominus \Hi{b}{u}$ is finite dimensional if and only if $V_{\prec} \cap \asc{u} = \emptyset.$ In view of Lemma \ref{lem-inj-ten}(iii), this is equivalent to the assertion that $\N{b}{u}$ has compact resolvent.
The desired equivalence now follows from (a).
\end{proof}
\begin{remark} 
Assume that $\mathscr T$ is leafless and $\mf b =u$. Then, by \eqref{H-u-1}, $\ell^2(V) \ominus \Hi{b}{u}$ is one-dimensional. One may now argue as the proof of Corollary \ref{cpt-res} to show that $W_{f, g}$ has compact resolvent if and only if the set $\{\lambda_{uv} : v \in V\}$ has accumulation point only at $\infty$ with each of its entries appearing finitely many times.
\end{remark}

It is well-known that given any closed subset $\sigma$ of the complex plane, there exists a diagonal operator $D_{\lambdab}$ on $\ell^2(\mathbb N)$ such that $\sigma(D_{\lambdab})=\sigma.$ Here is a variant of this fact for rank one extensions of weighted join operators. 
%The following may be deduced from Theorem \ref{spectrum}.
\begin{corollary} \label{construction}
Let $\mathscr T=(V, E)$ be a rooted directed tree and let $\mf b \in V.$ Let $u \in V$ be such that $\mathscr T_u=(\des{u}, E_u)$ is an infinite directed subtree of $\mathscr T.$ Then,
for any closed, unbounded proper subset $\sigma$ of the complex plane, there exists a rank one extension $W_{f, g}$ of a weighted join operator $\W{b}{u}$ on $\mathscr T$ such that the following hold:
\begin{enumerate}
\item $g \notin \Hi{b}{u}$,
\item $W_{f, g}$ satisfies the compatibility condition I, and
\item $\sigma(W_{f, g})=\sigma$.
\end{enumerate}
\end{corollary}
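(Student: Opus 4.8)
The plan is to read the spectrum of $W_{f,g}$ off its point spectrum via Theorem \ref{spectrum}, and then to tune the weight system $\lambdab_u$ and the function $g$ so that $\overline{\sigma_p(W_{f,g})}=\sigma$ while the compatibility condition~I holds and $g$ is forced out of $\Hi{b}{u}$. Since $\mathscr T_u$ is infinite, $\des{u}$ is countably infinite, and hence by \eqref{H-u} the index set $\supp\,\Hi{b}{u}$ (one of $\asc{u}\cup\desb{u}$, $\mathsf{Des}_u(u)$, or $V\setminus\{u\}$) is countably infinite; moreover $\sigma_p(\Db{b}{u})=\{\lambda_{uv}:v\in\supp\,\Hi{b}{u}\}$. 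By Theorem \ref{spectrum}(i), $\sigma_p(W_{f,g})$ is this set together with at most the two automatic values $\lambda_{uu}$ and $0$ (the latter arising from $\ker\N{b}{u}$ when $\mf b\neq u$).

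First I would fix the weights. As $\sigma$ is a nonempty closed subset of $\mathbb C$ it is separable; choose a countable dense subset $D$ of $\sigma$ and, via a bijection of $\supp\,\Hi{b}{u}$ with $\mathbb N$, assign values so that $D\subseteq\{\lambda_{uv}:v\in\supp\,\Hi{b}{u}\}\subseteq\sigma$, taking $\lambda_{uu}\in\sigma$ and keeping $0$ among the chosen values. Because $\sigma$ is unbounded and $D$ is dense in $\sigma$, the set $\{\lambda_{uv}\}$ is an unbounded subset of $\sigma$, and with this choice $\overline{\{\lambda_{uv}:v\in\supp\,\Hi{b}{u}\}}=\sigma$ with the automatic points $0,\lambda_{uu}$ lying in $\sigma$. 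Hence Theorem \ref{spectrum}(i) gives $\overline{\sigma_p(W_{f,g})}=\sigma$.

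Next I would secure compatibility condition~I while arranging $g\notin\Hi{b}{u}$. Since $\sigma$ is proper, pick $\mu_0\in\mathbb C\setminus\sigma$; as $\sigma$ is closed, $\mbox{dist}(\mu_0,\sigma)>0$, and because $\{\lambda_{uv}\}\subseteq\sigma$ we get $\mbox{dist}(\mu_0,\lambdab_u)>0$, so $\mu_0\in\varGamma_{\lambdab_u}$ (see \eqref{varG-0}). The unboundedness of $\{\lambda_{uv}\}$ lets me pick infinitely many distinct $v_1,v_2,\dots\in\supp\,\Hi{b}{u}$ with $|\lambda_{uv_k}-\mu_0|\geqslant k^2$, and I set $g(v_k)=|\lambda_{uv_k}-\mu_0|/k$ and $g(v)=0$ otherwise. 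Then, with $g_{_{\lambdab_u,\mu_0}}$ as in \eqref{g-lambda-mu},
\[
\sum_{v\in\supp\,\Hi{b}{u}}\frac{|g(v)|^2}{|\lambda_{uv}-\mu_0|^2}=\sum_{k\geqslant1}\frac1{k^2}<\infty,
\]
so $g_{_{\lambdab_u,\mu_0}}\in\Hi{b}{u}$ and compatibility condition~I holds, whereas
\[
\sum_{v\in\supp\,\Hi{b}{u}}|g(v)|^2=\sum_{k\geqslant1}\frac{|\lambda_{uv_k}-\mu_0|^2}{k^2}\geqslant\sum_{k\geqslant1}k^2=\infty,
\]
so $g\notin\Hi{b}{u}$. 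Choosing any nonzero $f\in\ell^2(V)\ominus\Hi{b}{u}$ completes the data, and then compatibility condition~I together with Theorem \ref{spectrum}(ii) yields $\sigma(W_{f,g})=\overline{\sigma_p(W_{f,g})}=\sigma$, giving (i)--(iii).

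The main obstacle is the tension in the last step: $g$ must be genuinely non-square-summable (so $W_{f,g}$ is a true unbounded extension with $g\notin\Hi{b}{u}$) yet satisfy condition~I, which forces $g(v)/(\lambda_{uv}-\mu_0)$ to be square-summable. This is reconcilable only because $\sigma$ is \emph{unbounded}, which makes the weights unbounded and leaves room for $g$ to grow along the large weights while $g_{_{\lambdab_u,\mu_0}}$ stays in $\Hi{b}{u}$; the \emph{properness} of $\sigma$ is precisely what supplies a point $\mu_0\in\varGamma_{\lambdab_u}$ needed to formulate condition~I. The one bookkeeping point is that the unavoidable eigenvalue $0$ (present when $\mf b\neq u$) must sit inside $\sigma$, which is why $0$ is included among the chosen weight values.
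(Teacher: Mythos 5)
Your proposal is correct and takes essentially the same route as the paper's proof: weights chosen to form a dense subset of $\sigma$, a point $\mu_0\in\mathbb C\setminus\sigma$ (available by properness) serving as the reference point in $\varGamma_{\lambdab_u}$, and $g$ supported on vertices whose weights grow fast enough (available by unboundedness) so that $g_{_{\lambdab_u,\mu_0}}\in\Hi{b}{u}$ while $g\notin\Hi{b}{u}$, with Theorem \ref{spectrum}(ii) then identifying $\sigma(W_{f,g})$ with $\overline{\sigma_p(W_{f,g})}=\sigma$. If anything, your explicit bookkeeping of the automatic eigenvalues $0$ and $\lambda_{uu}$ is more careful than the paper's own construction, which sets all unused weights (including $\lambda_{uu}$) to $0$ and thereby tacitly places $0$ in the spectrum without comment.
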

\begin{proof} Let $f=e_u$ and let $z_0 \in \mathbb C \setminus \sigma.$
Let $\{\mu_n\}_{n \geqslant 1}$ be a countable dense subset of $\sigma$ and let $\{\nu_n\}_{n \geqslant 1}$ be a subset of $\sigma$ such that  
\beq \label{nu-growth}
|\nu_n- z_0| \geq {2}^{n/2}~\mbox{for every integer} ~n \geqslant 1
\eeq
(which exists since $\sigma$ is unbounded).
Consider the countable dense subset $\{\lambda_n\}_{n \geqslant 1}$ of $\sigma$  defined by
\beqn
\lambda_n \,=\, \begin{cases} \mu_k & \mbox{if~}n=2k, ~k \geqslant 1, \\
\nu_k & \mbox{if~}n=2k-1, ~k \geqslant 1.
\end{cases}
\eeqn
By axiom of choice \cite[Pg 11]{Si}, there exists a sequence $\{v_n\}_{n \geq 1} \subseteq \supp \, \Hi{b}{u}$ such that $\dep_{v_n} =n$ for every integer $n \geqslant 1.$
Set \beqn
\lambda_{uv} \,=\, \begin{cases}\lambda_n & \mbox{if~}v=v_n, \\
0 & \mbox{otherwise}.
\end{cases}
\eeqn
Define $g : \supp \,\Hi{b}{u} \rar \mathbb C$ by 
\beqn
g(v) \,=\, \begin{cases} \frac{\lambda_n-z_0}{\dep_v} & \mbox{if~}v=v_n, \\
0 & \mbox{otherwise}.
\end{cases}
\eeqn
Then, by the choice of vertices $\{v_n\}_{n \geq 1},$ the weight system $\lambdab_u$ and $g,$ \beqn \sum_{v \in \supp \, \Hi{b}{u}} \frac{|g(v)|^2}{|\lambda_{uv}-z_0|^2}=
\sum_{n=1}^{\infty} \frac{1}{n^2} < \infty,
\eeqn
and hence the rank one extension $W_{f, g}$ of the weighted join operator $\W{b}{u}$ satisfies the compatibility condition I. 
This combined with Theorem \ref{spectrum}(ii) yields (iii).
On the other hand, by \eqref{nu-growth} and the definition of $\{\lambda_n\}_{n \geqslant 1}$,
\beqn
\sum_{v \in \supp \, \Hi{b}{u}} |g(v)|^2 \,\geqslant\, \sum_{k=1}^{\infty} \frac{|\nu_{k}-z_0|^2}{|2k-1|^2} \,\geqslant\, \sum_{k=1}^{\infty} \frac{2^k}{|2k-1|^2},
\eeqn
which shows that $g \notin \Hi{b}{u}.$ This completes the proof. 
\end{proof}
It is worth noting that the conclusion of Corollary \ref{construction} does not hold in case $\sigma$ is a bounded subset of $\mathbb C$. Indeed, the boundedness of the spectrum of a rank one extension $W_{f, g}$ of a weighted join operator $\W{b}{u}$ implies that the diagonal operator $\Db{b}{u}$ is bounded. Then, by Remark \ref{rmk-rone-extn}, $W_{f, g}$ is not even closable. So, by Theorem \ref{closed-0}, $W_{f, g}$ can not satisfy a compatibility condition.

%Recall that the {\it numerical range} $\Theta(T)$ of a densely defined linear operator $T$ is given by $$\Theta(T):=\{\inp{Tf}{f} : f \in \mathscr D(T), ~\|f\|=1\}.$$
%Since the numerical range of a diagonal operator is contained in the closed convex hull of its diagonal entries, we obtain
%\beqn
%\Theta(\Db{b}{u})  \subseteq   \mbox{conv} \{\lambda_{uv} : v \in \supp\,\Hi{b}{u} \}.
%\eeqn
%where $\mbox{conv}(A)$ denotes the closed convex hull of $A.$

\subsection{Spectra of weighted join operators}

A special case of Theorem \ref{spectrum} (the case of $g=0$) provides a complete spectral picture for weighted join operators. This together with some additional properties is summarized in the next result.
We first introduce some notations and definitions.
%from the spectral theory of unbounded operators (refer to \cite{K, Si1, Sc}).

Let $T$ be a densely defined linear operator in $\mathcal H.$
%By the {\it multiplicity function} of $T$, we understand the function ${\mf m}_T : \sigma_p(T) \rar \mathbb Z_+ \cup \{\aleph_0\}$ assigning with each eigenvalue $\mu$ of $T$, the dimension of the eigenspace $\mathscr E_T(\mu)$ of $T$ corresponding to $\mu$.
A complex number $\mu$ is said to be a {\it generalized eigenvalue} of $T$ if there exists a positive integer $k$ and a non-zero vector $f \in \mathscr D(T^k)$ (to be referred to as {\it generalized eigenvector corresponding to $\mu$}) such that $(T-\mu)^kf=0$.
The {\it rootspace} $\mathscr R_T(\mu)$ of $T$ corresponding to the generalized eigenvalue $\mu$ is defined as the closed space spanned by the corresponding generalized eigenvectors of $T$. Any vector in the rootspace of $T$ is referred to as {\it root vector} for $T.$
We say that $T$ is {\it complete} if it has a complete set of root vectors (refer to \cite{GK} for the basics of completeness of root systems and to \cite{BY} for completeness of root systems for the class of rank one perturbations of self-adjoint operators).

\index{$\mathscr R_T(\mu)$}

\begin{theorem} \label{spectrum1}
Let $\mathscr T=(V, E)$ be a rooted directed tree with root $\rootb$ and let $\mathscr T_{\infty}=(\V, E_{\infty})$ be the extended directed tree associated with $\mathscr T$.
For $u \in V$ and $\mf b \in V \setminus \{u\},$ consider the weight system
$\lambdab_u = \{\lambda_{uv}\}_{v\in \V}$ of complex numbers and let $\W{b}{u}$ be the weighted join operator on $\mathscr T$.
Then, we have the following statements:
\begin{enumerate}
\item The point spectrum $\sigma_p(\W{b}{u})$ of $\W{b}{u}$ is given by
\beqn
\sigma_p(\W{b}{u}) \,=\, \begin{cases} 
\{\lambda_{uv} : v \in V\} & \mbox{if~} \mf b =u, \\
\{\lambda_{uv} : v \in \asc{u} \cup \Desb{u}\} \cup \{0\} & \mbox{if~} \mf b \in \mathsf{Des}_u(u), \\
\{\lambda_{uv} : v \in \des{u}\} \cup \{0\} & \mbox{otherwise}.
\end{cases}
\eeqn
%\beqn
%\sigma_p(\W{b}{u}) \,=\, \begin{cases} \{\lambda_{uv} : v \in \asc{u} \cup \Desb{u}\} \cup \{0\} & \mbox{if~} \mf b \in \des{u}, \\
%\{\lambda_{uv} : v \in \des{u}\} \cup \{0\} & \mbox{otherwise}.
%\end{cases}
%\eeqn
\item 
The spectrum $\sigma(\W{b}{u})$ of $\W{b}{u}$ is the topological closure of $\sigma_p(\W{b}{u}).$
%\beqn
%\sigma(\W{b}{u}) \,=\, \begin{cases} \overline{\{\lambda_{uv} : v \in \asc{u} \cup \Desb{u}\}} \cup \{0\} & \mbox{if~} \mf b \in \des{u}, \\
%\overline{\{\lambda_{uv} : v \in \des{u}\}} \cup \{0\} & \mbox{otherwise}.
%\end{cases}
%\eeqn
\item If $\mathscr E_{\W{b}{u}}(\mu)$ denotes the eigenspace corresponding to the eigenvalue $\mu$ of $\W{b}{u}$, then
\beqn
\mathscr E_{\W{b}{u}}(\mu) \,=\, \begin{cases} \ell^2(W_\mu) & \mbox{if~} \mu \neq 0, ~\mu \neq \lambda_{uu}\\
\ell^2(W_\mu) \oplus [e_u] & \mbox{if~} \mu \neq 0, ~\mu =\lambda_{uu},\\
\ell^2(W_0) \oplus \big(\ell^2(V) \ominus \Hi{b}{u}\big) \ominus [e_{_{\lambdab_u, A_u}}] & \mbox{if~}\mu=0,
  \end{cases}
\eeqn
where $W_\mu$ is given by \eqref{W-v}, $\Hi{b}{u}$ is given by \eqref{H-u} and $A_u$ is given by \eqref{A-u}.
\item The multiplicity function ${\mf m}_{\W{b}{u}} : \sigma_p(\W{b}{u}) \rar \mathbb Z_+ \cup \{\aleph_0\}$ is given by
\beqn
{\mf m}_{\W{b}{u}} (\mu) \,=\,  \mbox{card}\,\{v \in \{u\} \cup \supp\, \Hi{b}{u}  : \lambda_{uv}=\mu\} ~   \mbox{if~} \mu \neq 0.
\eeqn
In addition, if $\mathscr T$ is leafless and if there exists a branching vertex $w \in \asc{u},$ then
%\beqn
${\mf m}_{\W{b}{u}} (0)= \aleph_0.$
%\eeqn
\item The rootspace $\mathscr R_{\W{b}{u}}(\mu)$ of $\W{b}{u}$ corresponding to the generalized eigenvalue $\mu$ is given by
\beqn
%\mathscr R_{\W{b}{u}}(\mu) = \begin{cases}  \mathscr E_{\W{b}{u}}(\mu) & \mbox{if~} \mu \in \sigma_p(\W{b}{u}), \\
%[e_u] &  \mbox{if~}(-\mu)^k  +
% ((\lambdab_{uu}-\mu)^k - (-\mu)^k) \lambdab_{uu}=0.
%  \end{cases}
\mathscr R_{\W{b}{u}}(\mu) \,=\, \begin{cases}   \mathscr E_{\W{b}{u}}(\mu)  & \mbox{if} ~ \mu \in \sigma_p(\W{b}{u}) \setminus \{0\},   \\
\mathscr E_{\W{b}{u}}(0) & \mbox{if~} \mu =0,  ~\lambda_{uu} \neq 0, \\
\mathscr E_{\W{b}{u}}(0) \oplus [e_{_{\lambdab_u, A_u}}]  & \mbox{if~} \mu =0, ~ \lambda_{uu} = 0.
\end{cases}
\eeqn
%\item The regularity domain of the operator $\W{b}{u}$ is given by
%\beqn \pi(\W{b}{u})=\mathbb C \setminus \overline{\sigma_p(\W{b}{u})}. \eeqn
\end{enumerate}
\end{theorem}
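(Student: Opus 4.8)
The plan is to deduce Theorem~\ref{spectrum1} as the special case $g=0$ of Theorem~\ref{spectrum} and Theorem~\ref{dichotomy}, since setting $g=0$ makes the rank one component $f\obslash g$ vanish identically, so that $W_{f,g}$ collapses to $\W{b}{u}=\Db{b}{u}\oplus \N{b}{u}$ as guaranteed by the orthogonal decomposition $(\Db{b}{u},\N{b}{u},\Hi{b}{u})$ of Theorem~\ref{o-deco}. In this situation the compatibility condition~I holds trivially (one may take $\mu_0\in\varGamma_{\lambdab_u}$, whereupon $g_{_{\lambdab_u,\mu_0}}=0\in\Hi{b}{u}$), so parts (i), (ii) and the eigenspace description follow by reading off the $g=0$ specialization of Theorem~\ref{spectrum}(i),(ii) and the eigenspace remark following it. First I would record the direct sum $\W{b}{u}=\Db{b}{u}\oplus\N{b}{u}$ and note $\sigma_p(\Db{b}{u})=\{\lambda_{uv}:v\in\supp\,\Hi{b}{u}\}$, $\sigma_p(\N{b}{u})=\{0,\lambda_{uu}\}$, from which (i) is immediate via \eqref{H-u} and \eqref{H-u-1}, and (ii) follows since the spectrum of a direct sum is the closure of the union of the spectra and $\sigma(\Db{b}{u})=\overline{\sigma_p(\Db{b}{u})}$ for a diagonal operator.

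For part (iii), the plan is to analyze the eigenvalue equation $\W{b}{u}(h,k)=\mu(h,k)$ directly on the decomposition, which reduces to $\Db{b}{u}h=\mu h$ and $\N{b}{u}k=\mu k$ since there is no coupling. The first equation forces $h\in\ell^2(W_\mu)$ with $W_\mu$ as in \eqref{W-v}, and the second is governed by Lemma~\ref{lem-inj-ten}(ii) applied to $\N{b}{u}=e_u\otimes e_{_{\lambdab_u,A_u}}$: its nonzero eigenvalue is $\lambda_{uu}=\inp{e_u}{e_{_{\lambdab_u,A_u}}}$ with eigenspace $[e_u]$, and its zero-eigenspace is $\big(\ell^2(V)\ominus\Hi{b}{u}\big)\ominus[e_{_{\lambdab_u,A_u}}]$. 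Assembling the three cases $\mu\notin\{0,\lambda_{uu}\}$, $\mu=\lambda_{uu}\neq0$, and $\mu=0$ yields the stated formula for $\mathscr E_{\W{b}{u}}(\mu)$.

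Part (iv) is then a counting step: the multiplicity ${\mf m}_{\W{b}{u}}(\mu)$ for $\mu\neq0$ is $\dim\mathscr E_{\W{b}{u}}(\mu)$, which by (iii) equals $\mbox{card}\{v\in\{u\}\cup\supp\,\Hi{b}{u}:\lambda_{uv}=\mu\}$ (the vertex $u$ contributing precisely when $\mu=\lambda_{uu}$). The leafless claim ${\mf m}_{\W{b}{u}}(0)=\aleph_0$ follows from Corollary~\ref{lns}: a branching vertex $w\in\asc{u}$ yields a large null summand, so $\ker\W{b}{u}$ is infinite-dimensional. The genuinely delicate part is (v), the computation of the rootspaces. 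Here I would exploit that $\W{b}{u}=C+M$ with $M$ nilpotent of index $2$ (see \eqref{nilp-powers}), so generalized eigenvectors satisfy $(\W{b}{u}-\mu)^2 f=0$. For $\mu\neq0$ the operator $\N{b}{u}-\mu$ is invertible on $\ell^2(V)\ominus\Hi{b}{u}$ while $\Db{b}{u}-\mu$ is a diagonal operator with no Jordan blocks, so no genuine generalized eigenvectors beyond ordinary ones arise and $\mathscr R_{\W{b}{u}}(\mu)=\mathscr E_{\W{b}{u}}(\mu)$. The main obstacle is the case $\mu=0$: when $\lambda_{uu}=0$ the summand $\N{b}{u}=e_u\otimes e_{_{\lambdab_u,A_u}}$ is a nonzero nilpotent of index $2$, so $e_{_{\lambdab_u,A_u}}$ is a generalized (but not ordinary) eigenvector, and one must check via \eqref{nilp-powers} that $\ker(\N{b}{u})^2=\ell^2(A_u)$ while $\ker\N{b}{u}=\ell^2(A_u)\ominus[e_{_{\lambdab_u,A_u}}]$, producing the extra summand $[e_{_{\lambdab_u,A_u}}]$; when $\lambda_{uu}\neq0$ no such enlargement occurs. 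I expect the careful bookkeeping distinguishing $\ker(\W{b}{u})^2$ from $\ker\W{b}{u}$ at $\mu=0$, and verifying that the diagonal part contributes nothing new, to be the step requiring the most attention.
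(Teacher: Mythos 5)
Your treatment of parts (i)--(iv) is essentially the paper's own argument: parts (i) and (ii) are obtained by specializing Theorem \ref{spectrum} to $g=0$ (exactly as the paper does), the eigenspaces in (iii) are read off from the orthogonal decomposition $\W{b}{u}=\Db{b}{u}\oplus\N{b}{u}$ together with the rank one analysis of Lemma \ref{lem-inj-ten}, and the multiplicity statement in (iv), including ${\mf m}_{\W{b}{u}}(0)=\aleph_0$ in the leafless branching case, follows from Proposition \ref{lem-lns} (your route through Corollary \ref{lns} is equivalent). One small caveat: for $g=0$ the compatibility condition I holds only if $\varGamma_{\lambdab_u}\neq\emptyset$; when $\varGamma_{\lambdab_u}=\emptyset$ the eigenvalues are dense and both sides of (ii) equal $\mathbb C$, so your direct-sum argument (or this observation) closes that case.

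Part (v), however, contains a step that fails. You assert that for every $\mu\neq 0$ the operator $\N{b}{u}-\mu$ is invertible on $\ell^2(V)\ominus\Hi{b}{u}$ and conclude $\mathscr R_{\W{b}{u}}(\mu)=\mathscr E_{\W{b}{u}}(\mu)$. This is false precisely when $\mu=\lambda_{uu}\neq 0$: by your own part (iii), $e_u$ is an eigenvector of $\N{b}{u}$ with eigenvalue $\lambda_{uu}$, so $\N{b}{u}-\lambda_{uu}$ is not injective, and this is exactly the case where something must be proved, namely that there is no Jordan chain of length two above $e_u$, i.e. $\ker(\N{b}{u}-\lambda_{uu})^k=[e_u]$ for all $k\geqslant 1$. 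The paper supplies this by expanding $(\N{b}{u}-\mu)^k$ binomially via \eqref{nilp-powers} and showing that $(\N{b}{u}-\mu)^kh=0$ with $\mu\neq 0$ forces $h\in[e_u]$ and $\mu=\lambda_{uu}$; alternatively, Lemma \ref{lem-inj-ten}(i) shows $\N{b}{u}$ is annihilated by $z(z-\lambda_{uu})$, whose roots are simple when $\lambda_{uu}\neq 0$, so its root spaces coincide with its eigenspaces. Relatedly, your opening claim that generalized eigenvectors satisfy $(\W{b}{u}-\mu)^2f=0$ ``because $\W{b}{u}=C+M$ with $M$ nilpotent of index $2$'' is a non sequitur: $C$ and $M$ do not commute (the paper notes $CM-MC=\lambda_{uu}M$), so nilpotency of $M$ alone gives nothing; the legitimate vehicle is the orthogonal decomposition, which yields $(\W{b}{u}-\mu)^k=(\Db{b}{u}-\mu)^k\oplus(\N{b}{u}-\mu)^k$ and hence $\mathscr R_{\W{b}{u}}(\mu)=\mathscr R_{\Db{b}{u}}(\mu)\oplus\mathscr R_{\N{b}{u}}(\mu)$. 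Finally, in the case $\mu=0$, $\lambda_{uu}=0$, your formulas should read $\ker(\N{b}{u})^2=\ell^2(V)\ominus\Hi{b}{u}$ and $\ker\N{b}{u}=\big(\ell^2(V)\ominus\Hi{b}{u}\big)\ominus[e_{_{\lambdab_u,A_u}}]$, not $\ell^2(A_u)$ and $\ell^2(A_u)\ominus[e_{_{\lambdab_u,A_u}}]$; the extra summand $[e_{_{\lambdab_u,A_u}}]$ does survive, but only after this correction, since $\N{b}{u}$ annihilates all of $\big(\ell^2(V)\ominus\Hi{b}{u}\big)\ominus[e_{_{\lambdab_u,A_u}}]$ and not merely a subspace of $\ell^2(A_u)$.
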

\begin{remark}
In case $\mf b  = u$, the spectral picture of $\W{b}{u}$ coincides with that of the diagonal operator $\D{u}$ (see Remark \ref{rmk-dicho}). We discuss here the spectral picture of $\W{\mf \infty}{u}$.
By \eqref{meet-deco}, $\W{\mf \infty}{u}$ is orthogonal direct sum of rank one operators
\beqn e_{u_j} \obslash e_{_{\lambdab_u,  \des{u_j} \setminus \des{u_{j-1}}}}, \quad j =0, \ldots, \dep_u, \eeqn where
 $\des{u_{-1}}=\emptyset$ and $u_j:=\parentn{j}{u}$ for $j =0, \ldots, \dep_u.$
 If $\lambdab_u \in \ell^2(V)$, then it follows that $\W{\mf \infty}{u} \in B(\ell^2(V)),$ and hence by Lemma \ref{lem-inj-ten}(iii),
\beqn
 \sigma(\W{\mf \infty}{u}) = \{0\} \cup \{\lambda_{uu_j} : j=0, \ldots, \dep_u\} = \sigma_p(\W{\mf \infty}{u}).
\eeqn
Assume now that $\lambdab_u \notin \ell^2(V).$
By Lemma \ref{lem-inj-ten-unb}, $\sigma_p(\W{\mf \infty}{u})$ is given by the same formula as above.
Further, another application of Lemma \ref{lem-inj-ten-unb} shows that $\W{\mf \infty}{u}$ is not closed, and hence $\sigma(\W{\mf \infty}{u})=\mathbb C.$
\end{remark}
\begin{proof} 
%We may assume that $\mf b \neq u.$
The conclusions in (i) and (ii) are immediate from (i) and (iii) of Theorem \ref{spectrum}. Since $\ell^2(A_u) \subseteq \ell^2(V) \ominus \Hi{b}{u}$ and $\mbox{card}(A_u) \Ge 2$ (see \eqref{A-u}),
by Lemma \ref{lem-inj-ten}(iii) and \eqref{til-N}, we obtain
\beqn
&& \sigma_p(\N{b}{u}) = \{0, \lambda_{uu}\}  =\sigma(\N{b}{u}), \\   && {\mf m}_{\N{b}{u}}(0)=\aleph_0  ~\mbox{if~} \dim \big(\ell^2(V) \ominus \Hi{b}{u}\big)=\aleph_0,  \quad {\mf m}_{\N{b}{u}}(\lambda_{uu})=1~\mbox{if~}\lambda_{uu} \neq 0, \\ && \lambda_{uu} \neq 0 ~\Longrightarrow ~\mathscr E_{\N{b}{u}}(\mu) = \begin{cases} [e_{_{\lambdab_u, A_u}}]^{\perp} & \mbox{if~}\mu=0, \\
[e_u] & \mbox{if~}\mu=\lambda_{uu}.
\end{cases}
\eeqn
In view of the fact that $\dim  \big(\ell^2(V) \ominus \Hi{b}{u}\big)=\aleph_0$ if and only if
$\mbox{card}(V_u) = \aleph_0$,
the conclusions in (iii) and (iv) pertaining to the eigenspaces and multiplicities now follow from Proposition \ref{lem-lns}.

 To see (v), let $k$ be a positive integer and let $\mu \in \mathbb C \setminus \{0\}$. By \eqref{nilp-powers},
\beqn
 (\N{b}{u}-\mu)^k &=& \sum_{l=0}^k (-\mu)^{k-l}{k \choose l}{\N{b}{u}}^l  \\ &=&
 (-\mu)^k I +
 \displaystyle \sum_{l=1}^k (-\mu)^{k-l}{k \choose l}  \lambda^{l-1}_{uu} e_u \otimes e_{_{\lambdab_u, A_u}}.
%\\
%&=& \begin{cases}(-\mu)^k I + ((\lambdab_{uu}-\mu)^k - (-\mu)^k)e_u \otimes e_{\lambdab, [u, \mf b]}
% & \mbox{if~} \mf b \in \des{u} \\
%(-\mu)^k I  +
% ((\lambdab_{uu}-\mu)^k - (-\mu)^k) e_u \otimes e_{\lambdab_u, \asc{u} \cup \{\mf b, u\}}  & \mbox{otherwise}.
%\end{cases}
\eeqn
It is not difficult to see that for any $h \in \ell^2(V) \ominus \Hi{b}{u}$ such that
$(\N{b}{u}-\mu)^kh=0,$ we must have $h= h(u)e_u$ and $\mu = \lambda_{uu}$.
%and $(-\mu)^k  +
% ((\lambdab_{uu}-\mu)^k - (-\mu)^k) \lambdab_{uu}=0$ or $f(u) = %0.$
 However, $e_u$ is an eigenvector, and hence a root vector.
 This shows that the rootspace of $\N{b}{u}$ corresponding to $\mu$ is spanned by $e_u.$ Since the generalized eigenvalues and eigenvalues coincide for a diagonal operator, the desired conclusion in (v) is immediate provided $\lambda_{uu} \neq 0$. In case $\lambda_{uu}=0,$ by Corollary \ref{coro-c-Jordan}, $\N{b}{u}$ is a nilpotent operator of nilpotency index $2$, and hence any $h \in \ell^2(V) \ominus \Hi{b}{u}$ is a root vector. This completes the verification of (v).
\end{proof}

%\begin{remark}
%Assume that $\W{b}{u}$ on $\mathscr T$ is bounded and $V_u \neq \emptyset$ (see \eqref{cg-dec0}). Recall that the essential spectrum of an orthogonal direct sum of two bounded operators $A, B \in B(\mathcal H)$ is union of essential spectra of $A$ and $B$.
%Also, since essential spectrum is invariant under compact perturbation \cite{Co}, Theorem \ref{o-deco} together with Proposition \ref{lem-lns} implies that
%\beqn
%\sigma_e(\W{b}{u}) = \begin{cases}   \sigma_e(\D{u}|_{_{\ell^2(\Desb{u})}})  \cup \{0\}  &  \mbox{if~} \mf b \in \des{u} \setminus \{\infty\}, \\
%\{0\} & \mbox{if~} \mf b  = \infty, \\
%\sigma_e(\D{u}|_{_{\ell^2(\des{u})}}) \cup \{0\}
% & \mbox{otherwise}.
%\end{cases}
%\eeqn
%On the other hand, the essential spectrum of a normal operator is the complement of isolated eigenvalues of finite multiplicity in its spectrum \cite{Co}.
%%This together with Theorem \ref{spectrum} shows that
%Thus the weight system $\lambdab_u$ completely determines the essential spectra of bounded weighted join operators.
%Let $C^*(\W{b}{u})$ denote the $C^*$-algebra generated by $\W{b}{u}$. Since $\W{b}{u}$ is essentially normal (see Theorem \ref{o-deco}), the quotient $C^*$-algebra $C^*(\W{b}{u})/\mathcal K$ can be identified with $C(\sigma_e(\W{b}{u}))$, where $\mathcal K$ is the $C^*$-algebra of compact operators and $C(X)$ denotes the $C^*$-algebra of continuous functions on a compact Hausdorff space $X$ endowed with sup norm.
%\end{remark}

A result of  Wermer says that the spectral synthesis holds for all normal compact operators \cite{W}. As shown by Hamburger \cite{Ha}, this no longer holds for compact operators.
Interestingly, the following result can be used to construct examples of compact non-normal operators which are not even complete.
\begin{corollary} \label{completeness}
Let $\mathscr T=(V, E)$ be a rooted directed tree with root $\rootb$ and let $\mathscr T_{\infty}=(\V, E_{\infty})$ be the extended directed tree associated with $\mathscr T$.
For $u \in V$ and $\mf b \in V \setminus \{u\}$, consider the weight system
$\lambdab_u = \{\lambda_{uv}\}_{v\in \V}$ of complex numbers and let $\W{b}{u}$ be the weighted join operator on $\mathscr T$. Then $\W{b}{u}$  is complete if and only if $\lambda_{uu}=0$ or $\lambda_{uv}=0$ for every $v \in A_u \setminus \{u\}$.
\end{corollary}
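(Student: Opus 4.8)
The plan is to analyze completeness of $\W{b}{u}$ by understanding the closed linear span of all root vectors, using the orthogonal decomposition $(\Db{b}{u}, \N{b}{u}, \Hi{b}{u})$ from Theorem \ref{o-deco} together with the full description of eigenspaces and rootspaces given in Theorem \ref{spectrum1}. Since $\W{b}{u}=\Db{b}{u} \oplus \N{b}{u}$ and $\Db{b}{u}$ is diagonal, the diagonal summand contributes all of $\Hi{b}{u}$ to the closed span of root vectors (a diagonal operator is always complete on its own space, its eigenvectors $\{e_v : v \in \supp\,\Hi{b}{u}\}$ forming an orthonormal basis). Hence the completeness question reduces entirely to whether the root vectors lying in the finite-dimensional complementary space $\ell^2(V)\ominus\Hi{b}{u}$, coming from the nilpotent/rank-one summand $\N{b}{u}$, span that complement.

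First I would fix the reduction carefully: the closed span of all root vectors of $\W{b}{u}$ equals the closed span of the root vectors of $\Db{b}{u}$ together with those of $\N{b}{u}$, because the two operators act on orthogonal reducing subspaces and any generalized eigenvector decomposes along this splitting (here one must note that the cross term in $\W{b}{u}$ vanishes on the relevant subspaces, so powers $(\W{b}{u}-\mu)^k$ respect the decomposition on $\mathscr D_V$, a core). Since $\Db{b}{u}$ already fills $\Hi{b}{u}$, $\W{b}{u}$ is complete if and only if the root vectors of $\N{b}{u}$ span all of $\ell^2(V)\ominus\Hi{b}{u}$. Now $\N{b}{u}=e_u \otimes e_{_{\lambdab_u, A_u}}$ is a rank one operator on $\ell^2(V)\ominus\Hi{b}{u}$, and by Theorem \ref{spectrum1}(v) its rootspaces are explicitly known: when $\lambda_{uu}=0$ the operator is nilpotent of index $2$ and \emph{every} vector is a root vector, so completeness is automatic; when $\lambda_{uu}\neq 0$ the root vectors span only $[e_u] \oplus [e_{_{\lambdab_u, A_u}}]^{\perp}$ within $\ell^2(V)\ominus\Hi{b}{u}$, i.e. the span of the eigenvectors for $\lambda_{uu}$ and for $0$.

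The decisive computation is therefore: when $\lambda_{uu}\neq 0$, the closed span of root vectors of $\N{b}{u}$ is $[e_u] + \mathscr E_{\N{b}{u}}(0) = [e_u] + [e_{_{\lambdab_u, A_u}}]^{\perp}$, and this equals the entire space $\ell^2(V)\ominus\Hi{b}{u}$ if and only if $e_u \notin [e_{_{\lambdab_u, A_u}}]^{\perp}$, that is, if and only if $\inp{e_u}{e_{_{\lambdab_u, A_u}}} \neq 0$. By the formula \eqref{e-lam} for $e_{_{\lambdab_u, A_u}}$ and the definition \eqref{A-u} of $A_u$ (which always contains $u$ when $\mf b \neq u$), one has $\inp{e_u}{e_{_{\lambdab_u, A_u}}}=\lambda_{uu}$, so in the case $\lambda_{uu}\neq 0$ the span is deficient by exactly one dimension; thus $\N{b}{u}$ fails to be complete. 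Combining the two cases, $\W{b}{u}$ is complete precisely when $\lambda_{uu}=0$, which coincides with the first alternative in the statement. The remaining subtlety is the alternative condition ``$\lambda_{uv}=0$ for every $v \in A_u \setminus \{u\}$'': when $\lambda_{uu}\neq 0$ but all other weights on $A_u$ vanish, $e_{_{\lambdab_u, A_u}}=\overline{\lambda}_{uu}\,e_u$ is a scalar multiple of $e_u$, so $\N{b}{u}=\lambda_{uu}\, e_u\otimes e_u$ is diagonalizable of rank one, $\ell^2(V)\ominus\Hi{b}{u}$ decomposes into $[e_u] \oplus [e_u]^{\perp}$ with both being eigenspaces (for $\lambda_{uu}$ and $0$ respectively), and the root vectors span everything.

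The main obstacle I anticipate is handling this second alternative cleanly: the delicate point is that completeness of $\N{b}{u}$ depends not on $\lambda_{uu}$ alone but on the geometric relation between $e_u$ and $e_{_{\lambdab_u, A_u}}$, and one must verify that the two stated conditions $\lambda_{uu}=0$ and ``$e_{_{\lambdab_u, A_u}} \in [e_u]$'' together exhaust exactly the cases where $[e_u] + [e_{_{\lambdab_u, A_u}}]^{\perp}$ fills the complement. Concretely, I would prove the clean equivalence: the root vectors of $\N{b}{u}$ span $\ell^2(V)\ominus\Hi{b}{u}$ if and only if either $\lambda_{uu}=0$ or $e_{_{\lambdab_u, A_u}}$ is proportional to $e_u$, and then translate the proportionality condition into the vanishing of $\lambda_{uv}$ for $v \in A_u\setminus\{u\}$ via \eqref{e-lam}. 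A point requiring care is that $A_u$ is a finite set (so $\ell^2(V)\ominus\Hi{b}{u}$ is finite-dimensional by Theorem \ref{o-deco}(i)) and the rank one operator $e_u \otimes e_{_{\lambdab_u, A_u}}$ lives naturally on $\ell^2(A_u)$, so one should restrict attention there and invoke Lemma \ref{lem-inj-ten} for the spectral and eigenspace data, being mindful that completeness is about the \emph{entire} complement including the part orthogonal to $\ell^2(A_u)$, on which $\N{b}{u}$ acts as zero and every vector is trivially an eigenvector for $0$.
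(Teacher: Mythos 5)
You follow the same route as the paper: decompose $\W{b}{u}=\Db{b}{u}\oplus\N{b}{u}$ via Theorem \ref{o-deco}, note that the diagonal summand already fills $\Hi{b}{u}$, and reduce everything to whether the root vectors of the rank one summand $\N{b}{u}=e_u\otimes e_{_{\lambdab_u, A_u}}$ span $\ell^2(V)\ominus\Hi{b}{u}$. The gap is that your decisive step is internally inconsistent. You correctly assert that, for $\lambda_{uu}\neq 0$, the span of the root vectors of $\N{b}{u}$ is $[e_u]+[e_{_{\lambdab_u, A_u}}]^{\perp}$, and that this equals $\ell^2(V)\ominus\Hi{b}{u}$ if and only if $\inp{e_u}{e_{_{\lambdab_u, A_u}}}\neq 0$; and you correctly compute $\inp{e_u}{e_{_{\lambdab_u, A_u}}}=\lambda_{uu}$. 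But these two facts give the \emph{opposite} of what you then conclude: when $\lambda_{uu}\neq 0$ the inner product is non-zero, so $e_u$ lies outside the closed hyperplane $[e_{_{\lambdab_u, A_u}}]^{\perp}=\ker\N{b}{u}$, hence $[e_u]+[e_{_{\lambdab_u, A_u}}]^{\perp}$ is \emph{all} of $\ell^2(V)\ominus\Hi{b}{u}$; indeed $\N{b}{u}$ annihilates the polynomial $z(z-\lambda_{uu})$, which has distinct roots when $\lambda_{uu}\neq 0$, so its eigenvectors span. Your sentence that ``the span is deficient by exactly one dimension; thus $\N{b}{u}$ fails to be complete'' is the negation of what your own criterion yields, not a consequence of it.

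This is not a repairable slip, because the corrected computation contradicts the ``only if'' direction of the statement you are proving: for every weight system the root vectors of $\W{b}{u}$ span $\ell^2(V)$ --- when $\lambda_{uu}=0$ because $\N{b}{u}$ is nilpotent of index $2$ (every vector is a root vector), and when $\lambda_{uu}\neq 0$ because $\N{b}{u}$ is diagonalizable as above. You should also be aware that the paper's own necessity argument has the mirror-image defect: it asserts $\bigvee_{\mu\in\mathbb C}\mathscr R_{\W{b}{u}}(\mu)\subseteq\ell^2(V)\ominus[e_{_{\lambdab_u, A_u}}]$ when $\lambda_{uu}\neq 0$ and $\lambda_{uv}\neq 0$ for some $v\in A_u\setminus\{u\}$, yet by Theorem \ref{spectrum1}(iii) (equivalently Lemma \ref{lem-inj-ten}(ii)) one has $e_u\in\mathscr E_{\W{b}{u}}(\lambda_{uu})$, and $\inp{e_u}{e_{_{\lambdab_u, A_u}}}=\lambda_{uu}\neq 0$, so this root vector is not orthogonal to $e_{_{\lambdab_u, A_u}}$ and the claimed inclusion fails. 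In summary: your sufficiency discussion is sound and agrees with the paper; your necessity step is logically inverted; and neither your argument nor the paper's establishes the necessity, which in fact appears to be false under the paper's definition of completeness.
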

\begin{proof}
If $\lambda_{uu}=0$, then by (iii) and (v) of Theorem \ref{spectrum1}, the root vectors for $\W{b}{u}$ forms a complete set. If $\lambda_{uv}=0$ for every $v \in A_u \setminus \{u\}$, then by Theorem \ref{o-deco}, $\W{b}{u}$ is a diagonal operator, and hence we get the sufficiency part. To see the necessity part, suppose that $\lambda_{uu} \neq 0$ and $\lambda_{uv} \neq 0$ for some $v \in A_u \setminus \{u\}$. Thus $e_{_{\lambdab_u, A_u}}$ is a non-zero vector in $\ell^2(V)$, and by (iv) and (v) of Theorem \ref{spectrum1}, $$\bigvee_{\mu \in \mathbb C} \mathscr R_{\W{b}{u}}(\mu) ~\subseteq ~\ell^2(V) \ominus [e_{_{\lambdab_u, A_u}}].$$ This shows that $\W{b}{u}$ is not complete.
\end{proof}

\chapter{Special classes}

In this chapter, we discuss some special classes of
weighted join operators and their rank one extensions.
In particular, we exhibit families of sectorial operators and infinitesimal generators of quasi-bounded strongly continuous semigroups within these classes. Further, we characterize hyponormal
operators and $n$-symmetric operators within the class of weighted join operators
on rooted directed trees.
We also investigate the classes of hyponormal and $n$-symmetric rank one extensions $W_{f, g}$ of weighted join operators. The complete characterizations of these classes seem to be beyond reach at present, particularly, in view of the fact that the structures of positive integral powers and Hilbert space adjoint of $W_{f, g}$ are complicated.
%Needless to say, there has been significant literature on the classes of sectorial operators, hyponormal operators and $n$-symmetric operators (see \cite{AJS, K, BSU, M, Has, Sc, CM, Si1, SS}, \cite{Ja, OS, Ja2, Ja3, CL, JL, Jablonski, JBS-3} and \cite{He0, He, BH, Sc0, BF, Ag-St, Ru}).

\section{Sectoriality}

%Let $A$ be a densely defined linear operator in a Hilbert space $\mathcal
%H$ and let $\rho(A)$ be the resolvent set of $A$. We say that $A$
%is an {\it sectorial operator} if
%%\beq
%%\label{r-estimate}
%% \lambda \in \rho(A)~\mbox{and~} \|(A-\lambda)^{-1}\| \leqslant \frac{1}{|\Re \lambda|}\quad \mbox{whenever $\lambda \in {\mathbb C}$ and $\Re \lambda < 0$.}
%% \eeq
%there exist $a \in {\mathbb R}, \ M \in
%(0,\infty), \ \theta \in (0, \frac {\pi}{2})$ such that
%\beq \label{r-estimate}
% \lambda \in \rho(A)~\mbox{and~} \|(A-\lambda)^{-1}\| \leqslant \frac{M}{|\lambda -a|}\quad \mbox{whenever $\lambda \in {\mathbb C}$ and $|\arg(\lambda-a)| \geqslant
%\theta$.}\quad \quad
% \eeq

\index{$\mathbb S_{\theta, \alpha}$}

A densely defined linear operator $T$ in $\mathcal H$ is {\it sectorial}~ if
there exist $a \in {\mathbb R}, \ M \in
(0,\infty)$ and $\theta \in (0, \frac {\pi}{2})$ such that 
\vskip-.6cm
\beq \label{r-estimate}
 \lambda \in \rho(T)~\mbox{and~} \|R_T(\lambda)\| \leqslant \frac{M}{|\lambda -a|}\quad \mbox{whenever $\lambda \in {\mathbb C}$ and $|\arg(\lambda-a)| \geqslant
\theta$.}
 \eeq
Sometimes we say that $T$ is sectorial with {\it angle} $\theta$ and {\it vertex} $a$.
Note that there is considerable divergence of terminology in the literature, for example, Kato \cite{K} calls it {\it $m$-sectorial}, while we call it just sectorial, the correspondence being a minus sign between the two.
For the basic theory of sectorial operators, the reader is referred to  \cite{AJS, K, BSU, M, Has, Sc, CM, Si1, SS}).
%A densely defined linear operator $T$ in $\mathcal H$ is {\it sectorial}~ if
%there exist $a \in {\mathbb R}, \ M \in
%(0,\infty)$ and $\theta \in (0, \frac {\pi}{2})$ such that
%\beqn
% \lambda \in \rho(T)~\mbox{and~} \|R_T(\lambda)\| \leqslant \frac{M}{|\lambda -a|}\quad \mbox{whenever $\lambda \in {\mathbb C}$ and $|\arg(\lambda-a)| \geqslant
%\theta$.}
% \eeqn
% For the basic theory of sectorial operators, the reader is referred to  \cite{AJS, K, BSU, M, Has, Sc, CM, Si1, SS}).
 The following result yields a family of sectorial rank one extensions of weighted join operators (cf. \cite[Proposition 3]{Ja}).
\begin{proposition} \label{sectorial}
Let $\mathscr T=(V, E)$ be a rooted directed tree with root $\rootb$ and let $\mathscr T_{\infty}=(\V, E_{\infty})$ be the extended directed tree associated with $\mathscr T$.
For $u, \mf b \in V,$ consider the weight system
$\lambdab_u = \{\lambda_{uv}\}_{v\in \V}$ of complex numbers and let $W_{f, g}$ be the rank one extension  of the weighted join operator $\W{b}{u}$ on $\mathscr T$, where $f \in \ell^2(V) \ominus \Hi{b}{u}$ is non-zero and $g : \supp\,\Hi{b}{u} \rar \mathbb C$ is given.
Suppose that $W_{f, g}$ satisfies the compatibility condition II.
If  $\{\lambda_{uv} : v \in \supp\,\Hi{b}{u}\}$ is contained in the sector $\mathbb S_{\theta, \alpha} :=\{z \in \mathbb C : |\arg(z-\alpha)| <
\theta\}$ for some $\theta \in (0, \pi/2)$ and $\alpha \in \mathbb R$, then $W_{f, g}$ is a sectorial operator.
\end{proposition}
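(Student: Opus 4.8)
The plan is to reduce the sectoriality of $W_{f, g}$ to that of the diagonal operator $\Db{b}{u}$, exploiting the decomposition $W_{f, g} = A + B + C$ from \eqref{ABC} together with the relative boundedness established in Theorem \ref{closed-0}. First I would observe that since $\{\lambda_{uv} : v \in \supp\,\Hi{b}{u}\} \subseteq \mathbb S_{\theta, \alpha}$, the diagonal operator $\Db{b}{u}$ is itself sectorial with angle $\theta$ and vertex $\alpha$: for $\mu$ outside the closed sector, the resolvent $(\Db{b}{u}-\mu)^{-1}$ is diagonal with entries $(\lambda_{uv}-\mu)^{-1}$, so $\|(\Db{b}{u}-\mu)^{-1}\| = \sup_{v}|\lambda_{uv}-\mu|^{-1}$, and an elementary geometric estimate gives $\dist(\mu, \mathbb S_{\theta, \alpha}) \Ge c|\mu - \alpha|$ for some $c > 0$ whenever $|\arg(\mu-\alpha)| \Ge \theta'$ with $\theta < \theta' < \pi/2$, yielding the resolvent bound \eqref{r-estimate} for $A = \Db{b}{u} \oplus 0$ (the finite-dimensional bounded summand $0 \oplus \N{b}{u}$ contributes only a bounded perturbation and can be absorbed by enlarging the vertex).

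The core of the argument is then a perturbation step. By Corollary \ref{bdd-rel}, under the compatibility condition II the operator $B + C$ is $A$-bounded with $A$-bound equal to $0$; that is, for every $a > 0$ there is $b \Ge 0$ with $\|(B+C)h\| \Le a\|Ah\| + b\|h\|$ for all $h \in \mathscr D(A)$. The standard perturbation theorem for sectorial operators (see \cite[Theorem 2.4]{K} or the discussion in \cite{SS, Sc}) asserts that a sectorial operator remains sectorial, after a suitable shift of the vertex along the real axis, under an additive perturbation whose $A$-bound is smaller than a constant depending only on the angle $\theta$. Since the $A$-bound here is $0$, this hypothesis is satisfied with room to spare, and so $W_{f, g} = A + (B+C)$ is sectorial. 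Concretely, I would verify the resolvent estimate directly: writing $(W_{f,g}-\mu) = (A-\mu)\big(I + (A-\mu)^{-1}(B+C)\big)$ on $\mathscr D(A)$, the sectoriality of $A$ controls $\|(A-\mu)^{-1}\|$ and $\|(A-\mu)^{-1}(B+C)\|$, the latter being made strictly less than $1$ for $\mu$ in a shifted sector by choosing $a$ small; a Neumann series then bounds $\|(W_{f,g}-\mu)^{-1}\|$ by $M'/|\mu - a'|$ for a new vertex $a'$.

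The step I expect to be the main obstacle is making the geometric resolvent estimate for $A$ uniform and then tracking how the vertex and angle degrade under the Neumann-series perturbation. Although the $A$-bound is $0$, the constant $b$ in the inequality $\|(B+C)h\| \Le a\|Ah\| + b\|h\|$ blows up as $a \to 0$ (it is $\sim m^2\|f\|^2 G_m + \|C\|^2$ in the notation of \eqref{kbs-estimate-1}), so I must be careful to first fix the angle $\theta'$ and the small coefficient $a$ governing the contraction $\|(A-\mu)^{-1}(B+C)\| < 1$, and only afterwards push the vertex $a'$ far enough into the real axis to absorb the resulting $b$. A clean way to organize this is to note that $(A-\mu)^{-1}(B+C)$ factors through $(A-\mu)^{-1} = (\Db{b}{u}-\mu)^{-1} \oplus (-\mu^{-1})$, and on the diagonal block the compatibility condition II is precisely what is needed, via the estimate \eqref{estimate-m}, to bound $\|(f \obslash g)(\Db{b}{u}-\mu)^{-1}\|$ by a quantity tending to $0$ as $\Re(\mu-\alpha) \to -\infty$ within the complementary sector. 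Once this decay is in hand, the sectoriality of $W_{f, g}$ follows, with angle arbitrarily close to $\theta$ and vertex determined by how far left one must translate.
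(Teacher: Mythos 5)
Your proposal is correct and follows essentially the same route as the paper: establish the sectorial resolvent estimate for the diagonal part $\Db{b}{u}$ by the elementary angular-gap geometry, invoke Corollary \ref{bdd-rel} to get that $B+C$ is $A$-bounded with $A$-bound $0$, and conclude via the standard perturbation theorem for sectorial operators (the paper cites \cite[Theorem 4.5.7]{M}, whose proof is precisely the Neumann-series argument you sketch, including the order of quantifiers --- fix the small relative bound $a$ first, then shift the vertex left to absorb $b$). The only cosmetic differences are that your factorization is more conveniently written as $(W_{f,g}-\mu)=\big(I+(B+C)(A-\mu)^{-1}\big)(A-\mu)$ so that the Neumann series runs on all of $\ell^2(V)$ rather than on $\mathscr D(A)$, and that the harmless extra eigenvalue $0$ coming from the block $A=\Db{b}{u}\oplus 0$ should be absorbed by taking the new vertex $\leqslant \min\{\alpha,0\}$ --- a point the paper glosses over as well.
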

\begin{proof}
Assume that $\{\lambda_{uv} : v \in \supp\,\Hi{b}{u}\}$ is contained in $\mathbb S_{\theta, \alpha}$ for some $\theta \in (0, \pi/2)$. After replacing $\theta$ by $\theta + \epsilon \in (0, \pi/2)$ for some $\epsilon >0$, we may assume without loss of generality  that $\sigma(\Db{b}{u}) \subseteq \overline{\mathbb S}_{\theta', \alpha}$ for some $\theta' \in (0, \theta)$.
It is well-known that the diagonal operator $\Db{b}{u}$ satisfies the estimate \eqref{r-estimate}
%\beq \label{r-estimate}
% \lambda \in \rho(A)~\mbox{and~} \|(A-\lambda)^{-1}\| \leqslant \frac{M}{|\lambda -a|}\quad \mbox{whenever $\lambda \in {\mathbb C}$ and $|\arg(\lambda-a)| \geqslant
%\theta$.}\quad \quad
% \eeq
(see, for instance, \cite[Chapter 2, Section 2.2.1]{Has} and \cite[Example 4.5.2]{M}). Indeed, for any $\mu \in \mathbb C \setminus \overline{\mathbb S}_{\theta}$,
\pagebreak
\beqn
\|(\Db{b}{u}-\mu)^{-1}\| &=& \sup_{v \in \supp\,\Hi{b}{u}}|\lambda_{uv}-\mu|^{-1} \\ &=& \frac{1}{\displaystyle \inf_{v \in \supp\,\Hi{b}{u}}|\lambda_{uv}-\mu|} \\ & \leqslant & \frac{1}{\displaystyle \inf_{t \in \overline{\mathbb S}_{\theta', \alpha}}|t-\mu|} \\ &\leqslant &  \begin{cases} \frac{1}{|\mu-\alpha|} & \mbox{if~}|\arg(\mu-\alpha)| \geqslant \theta + \pi/2, \\
\frac{1}{|\mu-\alpha|} \frac{1}{\sin(\arg(\mu-\alpha)-\theta')} & \mbox{otherwise}
\end{cases}
\eeqn
(see Figure \ref{fig10}).
Thus \eqref{r-estimate} holds with $M=\frac{1}{\sin(\theta-\theta')}$. 
%With the notations in proof of Theorem \ref{closed},  
By Corollary \ref{bdd-rel},
$W_{f, g}=A+B+C$, where $B+C$ is $A$-bounded with  $A$-bound equal to $0$ (see \eqref{ABC} and \eqref{kbs-estimate-1}). Since $A$ is sectorial (since so is $\Db{b}{u}$), an application of \cite[Theorem 4.5.7]{M} shows that $W_{f, g}$ is sectorial.
\end{proof}

\begin{figure}
\begin{tikzpicture}[scale=.8, transform shape]
\tikzset{vertex/.style = {shape=circle,draw,minimum size=1em}}
\tikzset{edge/.style = {->,> = latex'}}
% vertices
\node[] (z) at  (-5,0) {};
%\draw (0,0) -- (4,0) -- (4,4) -- (0,4) -- (0,0);
\draw [->] (-6, 0) -- (6, 0);
\draw [->] (6, 0) -- (-6, 0);
\draw [->] (0, -5.3) -- (0, 5.3);
\draw [->] (0, 5.3) -- (0, -5.3);
\draw (-1, 0) -- (0, .5) -- (1, 1) -- (2, 1.5) -- (4, 2.5);
\draw (-1, 0) -- (0, -.5) -- (1, -1) -- (2, -1.5) -- (4, -2.5);
\draw (-1, 0) --  (3.5, 3);
\draw (-1, 0) --  (3.5, -3);
%\draw (-3, 4) -- (0, 1);
%\draw [dashed] (-2, 5) -- (1, 2);
\draw [dashed] (-2, 5) -- (.2, .6);
\draw [dashed] (-2, 5) -- (-1, 0);
\node[] (a) at  (3.9,2.8) {$\mathbb S_{\theta', \alpha}$};
\node[] (a) at  (2.9,3.1) {$\mathbb S_{\theta, \alpha}$};
\node[] (a) at  (-1.3,-0.3) {$\alpha$};
\node[] (y) at  (-1,0) {$\bullet$};
\node[] (mu) at  (-2,5) {$\bullet$};
\node[] (y) at  (-1.8,5.3) {$\mu$};
\node[] (y) at  (-5,5) {$\theta < |\arg(\mu-\alpha)|<\theta+\pi/2$};
\node[] (y) at  (-2.3,2.6) {$|\mu-\alpha|$};
\node[] (t1) at  (0.2, 0.2) {$\theta'$};
\node[] (t2) at  (.9,.3) {$\theta$};
\node[] (t2) at  (-.9,.7) {$\eta$};
\node[] (t2) at  (-3.5,.7) {$\eta:=\arg(\mu-\alpha)-\theta'$};
\node[] (a) at  (4,.7) {$\big\{\lambda_{uv} : v \in \supp\,\Hi{b}{u}\big\}$};
%\node[] (b) at  (2,3) {$X_2$};
%\node[] (c) at  (2.5,4.5) {$[0, 1] \times [0, 1]$};
%\node[] (c) at  (4.5,2.6) {$X=X_1 \sqcup X_2$};
%\node[] (c) at  (2,2) {};
\node[] (d) at  (0,1) {};
\node[] (e) at  (0,-1) {};
%%edges
%\draw[] (x) to (y);
%\draw[] (y) to (x);
%\draw[edge] (c) to (e);
%\draw[edge] (e) to (b);
%\draw[edge] (b) to (a2);
%\draw[arc] (d) to (e);
\def\Radius{.7}
\draw [dashed] (0, .7) arc(90:-90:\Radius) -- cycle;
\def\Radius{.5}
\draw [dashed] (0, .5) arc(90:-90:\Radius) -- cycle;
\def\Radius{.5}
\draw [dashed] (-1.1, .5) arc(100:30:\Radius) ;
\end{tikzpicture}
\caption{The situation of Proposition \ref{sectorial}} \label{fig10}
\end{figure}

\index{$\mathbb H_\alpha$}

For all relevant definitions and basic theory of  strongly continuous quasi-bounded semigroups, the reader is referred to \cite{K, M, SS}. A result similar to the following has been obtained in \cite[Proposition 3.1]{N0} for a family of upper triangular operator matrices on non-diagonal domains (cf. \cite[Theorem 7.11]{BBDHL}).
\begin{proposition} \label{semigroup}
Let $\mathscr T=(V, E)$ be a rooted directed tree with root $\rootb$ and let $\mathscr T_{\infty}=(\V, E_{\infty})$ be the extended directed tree associated with $\mathscr T$.
For $u, \mf b \in V,$ consider the weight system
$\lambdab_u = \{\lambda_{uv}\}_{v\in \V}$ of complex numbers and let $W_{f, g}$ be the rank one extension  of the weighted join operator $\W{b}{u}$ on $\mathscr T$, where $f \in \ell^2(V) \ominus \Hi{b}{u}$ is non-zero and $g : \supp\,\Hi{b}{u} \rar \mathbb C$ is given.
Suppose that $W_{f, g}$ satisfies the compatibility condition II.
If  $\{\lambda_{uv} : v \in \supp\,\Hi{b}{u}\}$ is contained in the right half plane ${\mathbb H}_{\alpha}=\{z \in \mathbb C : \Re z \geqslant  \alpha\}$ for some $\alpha \in \mathbb R$, then $W_{f, g}$ is the generator of a strongly continuous semigroup $\{Q(t)\}_{_{t \geqslant 0}}$ satisfying $\|Q(t)\| \leqslant M e^{-\alpha t}$ for $t \geqslant 0$.
\end{proposition}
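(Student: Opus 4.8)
The plan is to follow the scheme of Proposition \ref{sectorial}, but to replace the sectorial perturbation theorem (which is unavailable once the spectrum sits only in a half-plane) by a generation theorem adapted to the non-analytic, triangular setting. First I would observe that, since $\{\lambda_{uv} : v \in \supp\,\Hi{b}{u}\} \subseteq \mathbb{H}_\alpha$, the set $\varGamma_{\lambdab_u}$ of \eqref{varG-0} contains the open half-plane $\{\Re z < \alpha\}$ and is in particular nonempty; hence by Corollary \ref{coro-H-tr} the compatibility condition II forces compatibility condition I, and by Theorem \ref{closed-0} the operator $W_{f,g}$ is densely defined and closed. I would then invoke Corollary \ref{bdd-rel} to write $W_{f,g} = A + B + C$ as in \eqref{ABC}, where $A = \Db{b}{u} \oplus 0$, the operator $C$ is bounded, and $B + C$ is $A$-bounded with $A$-bound equal to $0$.

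The generation step comes next. Since $\Re \lambda_{uv} \geq \alpha$ for every $v \in \supp\,\Hi{b}{u}$, the normal diagonal operator $\Db{b}{u} - \alpha$ is $m$-accretive, so by the Lumer--Phillips theorem \cite{M} the operator $-(\Db{b}{u} - \alpha)$ generates a contraction semigroup; consequently $-\Db{b}{u}$ generates the $C_0$-semigroup $\{e^{-t\Db{b}{u}}\}_{t \geq 0}$ with $\|e^{-t\Db{b}{u}}\| \leq e^{-\alpha t}$ (for a diagonal operator this is also immediate from $e^{-t\Db{b}{u}}e_v = e^{-t\lambda_{uv}}e_v$). The second diagonal block generates the identity semigroup and $\N{b}{u}$ is bounded, so the only genuinely unbounded coupling is the lower-left entry $f \obslash g$ of $B$, which maps $\Hi{b}{u}$ into $\ell^2(V) \ominus \Hi{b}{u}$. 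This one-sided (triangular) coupling together with the relative bound $0$ is exactly the configuration handled by the generation theory for operator matrices on non-diagonal domains; I would apply \cite[Proposition 3.1]{N0} (cf. \cite[Theorem 7.11]{BBDHL}) to conclude that $-W_{f,g}$ generates a strongly continuous semigroup $\{Q(t)\}_{t \geq 0}$ on $\ell^2(V)$.

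Finally I would extract the quasi-bound. The triangular structure yields the Duhamel representation
\[
Q(t) = \left[\begin{array}{cc} e^{-t\Db{b}{u}} & 0 \\ Q_{21}(t) & e^{-t\N{b}{u}} \end{array}\right], \qquad Q_{21}(t) = -\int_0^t e^{-(t-s)\N{b}{u}}\,(f \obslash g)\,e^{-s\Db{b}{u}}\, ds,
\]
so it remains to bound each entry by a multiple of $e^{-\alpha t}$. The $(1,1)$-entry is already controlled; the $(2,2)$-entry is handled via the rank-one functional calculus for $\N{b}{u}$ (using $\Re\lambda_{uu} \geq \alpha$ together with boundedness of $\N{b}{u}$); and the coupling entry is estimated by feeding the relative bound of $f \obslash g$ with respect to $\Db{b}{u}$ (with $A$-bound $0$, so the relative constant may be taken arbitrarily small) into the integral, the decay rate being governed by the half-plane containment. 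I expect the main obstacle to be precisely this generation-and-estimate step in the non-analytic regime: unlike in Proposition \ref{sectorial}, the spectrum now lies only in a half-plane, so $\{e^{-t\Db{b}{u}}\}$ need not be analytic and the sectorial perturbation theorem \cite[Theorem 4.5.7]{M} does not apply; it is the vanishing $A$-bound and the purely triangular form of the coupling that render the off-diagonal Duhamel term admissible and keep the growth bound at $e^{-\alpha t}$.
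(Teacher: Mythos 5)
Up to the generation step your argument coincides with the paper's proof: the same splitting $W_{f,g}=A+B+C$ from Corollary \ref{bdd-rel} with $A$-bound $0$, and the same quasi-contraction bound for the diagonal block (Hille--Yosida in the paper, Lumer--Phillips/direct computation in your version; immaterial). The genuine gap is in what you put in place of the paper's one-line citation of \cite[Corollary 2.5, Chapter IX]{K}. Your Duhamel estimate for $Q_{21}(t)=-\int_0^t e^{-(t-s)\N{b}{u}}\,(f\obslash g)\,e^{-s\Db{b}{u}}\,ds$ cannot be run the way you describe. First, the integrand $(f\obslash g)\,e^{-s\Db{b}{u}}$ is in general not a bounded operator: its boundedness requires $\sum_v e^{-2s\Re\lambda_{uv}}|g(v)|^2<\infty$, whereas the hypotheses control only $\Re\lambda_{uv}\geqslant\alpha$ and the weighted sum \eqref{g-lambda-1}; for instance $\lambda_{uv_n}=\alpha+i\,2^n$ and $|g(v_n)|=2^n/n$ satisfy both the half-plane condition and compatibility condition II, yet $\sum_n e^{-2s\alpha}|g(v_n)|^2=\infty$ for every $s$. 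So no operator-norm estimate of the integrand exists. Second, even on $\mathscr D(\Db{b}{u})$ the relative bound only yields $\|(f\obslash g)e^{-s\Db{b}{u}}h\|\leqslant e^{-s\alpha}\big(a\|\Db{b}{u}h\|+b_a\|h\|\big)$, so integration proves that $Q_{21}(t)$ is $\Db{b}{u}$-bounded, never that it is bounded; sending $a\to 0$ does not help because $b_a\to\infty$. What actually makes $Q_{21}(t)$ bounded is cancellation in the $s$-integral: writing $e^{-\tau\N{b}{u}}=I+c(\tau)\N{b}{u}$ (rank-one calculus) and integrating explicitly produces coefficients such as $\int_0^t e^{-s\lambda_{uv}}\,ds=(1-e^{-t\lambda_{uv}})/\lambda_{uv}$, which are $O\big(\min(C_t,\,C_t/|\lambda_{uv}|)\big)$, and it is compatibility condition II --- nowhere used in your estimate --- that makes these square-summable against $g$. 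For the same reason the generation step cannot be outsourced to a relative-bound black box: generators of non-analytic $C_0$-semigroups are \emph{not} stable under perturbations of relative bound $0$ (this is precisely why analyticity or accretivity hypotheses appear in such theorems), and \cite[Proposition 3.1]{N0} concerns upper triangular matrices on non-diagonal domains; the paper offers it only as an analogue (``a result similar to the following''), not as an applicable tool.

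Separately, your claim that the $(2,2)$-entry obeys $\|e^{-t\N{b}{u}}\|\leqslant Me^{-\alpha t}$ via functional calculus is false when $\alpha>0$: by \eqref{til-N} the operator $\N{b}{u}$ is rank one, so $e^{-t\N{b}{u}}$ acts as the identity on $\ker\N{b}{u}$, which is nonzero whenever $\mf b\neq u$; hence $\|e^{-t\N{b}{u}}\|\geqslant 1$ for all $t$, and the advertised bound can hold only in the form $Me^{-\min(\alpha,0)t}$. (This defect is inherited from the statement of the proposition and is equally untreated in the paper's appeal to Kato, so it is not yours alone, but a correct write-up must confront it.) On the positive side, your triangular Duhamel skeleton is the right architecture, and is in fact rescuable: replace the relative-bound estimate by the explicit integration above, invoke \eqref{g-lambda-1} to bound the resulting kernels, and you obtain boundedness and strong continuity of $Q(t)$ and can identify the generator by hand --- which would give a self-contained proof at exactly the point where the paper has only a citation.
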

\begin{proof} Assume that $\{\lambda_{uv} : v \in \supp\,\Hi{b}{u}\}$ is contained in the right half plane ${\mathbb H}_{\alpha}=\{z \in \mathbb C : \Re z \geqslant  \alpha\}$ for some $\alpha \in \mathbb R$. Thus $\sigma(\Db{b}{u}) \subseteq \mathbb H_{\alpha}$. It is easy to see that
\beqn
\|(\Db{b}{u}-\mu)^{-n}\| \,\leqslant\,\frac{1}{|\alpha-\mu|^n}, \quad \mu \in (-\infty, \alpha), ~n=1, 2, \ldots.
\eeqn
Hence, by the Hille-Yoshida Theorem (see \cite[Theorem 4.3.5]{M}, \cite[Theorem 2.3.3]{SS}), $\Db{b}{u}$ is the generator of a strongly continuous semigroup $\{S(t)\}_{_{t \geqslant 0}}$ satisfying the quasi-boundedness condition $\|S(t)\| \leqslant M e^{-\alpha t}$, $t \geqslant 0$.
%With the notations in proof of Theorem \ref{closed},  $W_{f, g}=A+B+C$, where $B+C$ is $A$-bounded with  $A$-bound equal to $0$ (see \eqref{kbs-estimate-1}).
By Corollary \ref{bdd-rel},
$W_{f, g}=A+B+C$, where $B+C$ is $A$-bounded with  $A$-bound equal to $0$ (see \eqref{ABC} and \eqref{kbs-estimate-1}).
Note that $A$ is the generator of the strongly continuous semigroup $\{S(t)\oplus I\}_{_{t \geqslant 0}}.$ The desired conclusion may now be derived from \cite[Corollary 2.5, Chapter IX]{K}.
\end{proof}
If $W_{f, g}$ is as in the preceding result, one can define fractional powers of $W_{f, g}$ (refer to \cite[Chapter 6]{M}). Further, one may obtain a counter-part of \cite[Corollary 2]{Ja} ensuring $H^{\infty}$-functional calculus for rank one extensions of weighted join operators (cf. \cite[Proposition 3.4]{Ch}). We refer the reader to \cite{Has} for more details on this topic.
%(see also \cite[Corollary 2.5, Pg 498]{K})
%Recall that the {\it numerical range} $\Theta(T)$ of a densely defined linear operator $T$ is given by $$\Theta(T):=\{\inp{Tf}{f} : f \in \mathscr D(T), ~\|f\|=1\}.$$
%Since the numerical range of a diagonal operator is contained in the closed convex hull of its diagonal entries, we obtain
%\beqn
%\Theta(\Db{b}{u})  \subseteq   \mbox{conv} \{\lambda_{uv} : v \in \supp\,\Hi{b}{u} \}.
%\eeqn
%where $\mbox{conv}(A)$ denotes the closed convex hull of $A.$

\section{Normality}
A densely defined linear operator $T$ in $\mathcal H$ is said to be
{\it hyponormal} if $\mathscr D(T) \subseteq \mathscr D(T^*)$ and $\|T^*x\| \Le \|Tx\|$ for all $x \in \mathscr D(T).$ We say that $T$ is
{\it cohyponormal} if $T$ is closed and $T^*$ is hyponormal.
There has been significant literature on the classes of hyponormal and cohyponormal operators  (refer to \cite{Ja, OS, Ja2, Ja3, CL, JL, Jablonski, JBS-2, JBS-3}).

%In this section, we discuss the class of hyponormal operators.
We begin with a rigidity result stating
that no weighted join operator can be hyponormal unless it is
diagonal. A variant of this fact in the context of bounded operators has been obtained in \cite[Theorem 2.3]{JL} (cf. \cite[Proposition 3.1]{I}).
\begin{proposition} \label{hypo}
Let $\mathscr T=(V, E)$ be a rooted directed tree with root $\rootb$ and let $\mathscr T_{\infty}=(\V, E_{\infty})$ be the extended directed tree associated with $\mathscr T$. For $u, \mf b \in V,$ consider the weight system
$\lambdab_u = \{\lambda_{uv}\}_{v\in \V}$ of complex numbers and let $\W{b}{u}$ be the weighted join operator on $\mathscr T$.
 Then the following statements are equivalent:
\begin{enumerate}
\item[(i)] $\W{b}{u}$ is normal.
\item[(ii)] $\W{b}{u}$ is hyponormal.
\item[(iii)] $\W{b}{u}$ is cohyponormal.
\item[(iv)] $\W{b}{u}$ is diagonal with respect to the orthonormal basis $\{e_v\}_{v \in V}.$
\item[(v)] $\mf b = u$ or $\lambda_{uv}=0$ for every $v \in A_u \setminus \{u\}$, where $A_u$ is given by \eqref{A-u}.
%\item[(iii)] ${\W{b}{u}}^*\W{b}{v} \Ge \W{b}{v}{\W{b}{u}}^*$ i
\end{enumerate}
\end{proposition}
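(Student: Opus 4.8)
The plan is to prove the chain of equivalences $\text{(i)}\Rightarrow\text{(ii)}\Rightarrow\text{(iv)}$, $\text{(i)}\Rightarrow\text{(iii)}\Rightarrow\text{(iv)}$, and $\text{(iv)}\Leftrightarrow\text{(v)}\Rightarrow\text{(i)}$, using the orthogonal decomposition $(\Db{b}{u}, \N{b}{u}, \Hi{b}{u})$ from Theorem \ref{o-deco} as the central tool. The implication $\text{(iv)}\Rightarrow\text{(i)}$ is trivial, since a diagonal operator is normal. The equivalence $\text{(iv)}\Leftrightarrow\text{(v)}$ is immediate from Theorem \ref{dichotomy}: by \eqref{til-N}, the rank one summand $\N{b}{u}=e_u\otimes e_{_{\lambdab_u, A_u}}$ vanishes precisely when $e_{_{\lambdab_u, A_u}}=0$, i.e. when $\lambda_{uv}=0$ for all $v\in A_u\setminus\{u\}$ (recalling $\lambda_{uu}$ sits in the diagonal part); together with the degenerate case $\mf b=u$, where $\W{b}{u}=\D{u}$ is diagonal by Remark \ref{rmk-action}(iv), this gives the diagonality statement in (iv).

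The substance is in showing that hyponormality (or cohyponormality) forces the rank one perturbation to disappear. First I would dispose of the case $\mf b=u$, where $\W{b}{u}=\D{u}$ is already normal and (v) holds. So assume $\mf b\in V\setminus\{u\}$ and invoke the decomposition $\W{b}{u}=\Db{b}{u}\oplus\N{b}{u}$ on $\Hi{b}{u}\oplus(\ell^2(V)\ominus\Hi{b}{u})$ from \eqref{deco-W}. Since normality/hyponormality respects orthogonal direct sums and $\Db{b}{u}$ is a normal diagonal operator by Remark \ref{rmk-action}(i), the question reduces to whether the bounded rank one operator $\N{b}{u}=e_u\otimes e_{_{\lambdab_u, A_u}}$ is hyponormal. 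The plan is to show that a rank one operator $x\otimes y$ on a space of dimension $\ge 2$ is hyponormal if and only if it is normal, which by Lemma \ref{lem-inj-ten}(v) happens exactly when $x=\alpha y$ for a unimodular $\alpha$. For $\N{b}{u}$ with $x=e_u$, $y=e_{_{\lambdab_u, A_u}}$, the vector $e_{_{\lambdab_u, A_u}}=\sum_{v\in A_u}\bar\lambda_{uv}e_v$ is a scalar multiple of the single basis vector $e_u$ only when $\lambda_{uv}=0$ for every $v\in A_u\setminus\{u\}$ — which is exactly (v). Thus if $\N{b}{u}$ is nonzero and not of this degenerate diagonal form, it fails to be hyponormal, and hence so does $\W{b}{u}$.

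The key computation, and the one genuine obstacle, is establishing that a rank one operator cannot be hyponormal without being normal. For $T=x\otimes y$ with unit vectors $x,y$, one computes $\|Tz\|^2=|\inp{z}{y}|^2$ and, using $(x\otimes y)^*=y\otimes x$ from \eqref{adjoint-ro}, $\|T^*z\|^2=|\inp{z}{x}|^2$. Hyponormality $\|T^*z\|\le\|Tz\|$ for all $z$ then reads $|\inp{z}{x}|\le|\inp{z}{y}|$ for all $z$; testing at $z=x$ gives $1\le|\inp{x}{y}|$, while Cauchy--Schwarz gives $|\inp{x}{y}|\le 1$, forcing $x$ and $y$ to be parallel, i.e. $x=\alpha y$ with $|\alpha|=1$, which is the normality condition of Lemma \ref{lem-inj-ten}(v). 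The cohyponormal case $\text{(iii)}\Rightarrow\text{(iv)}$ follows by the symmetric argument applied to $(\N{b}{u})^*=e_{_{\lambdab_u, A_u}}\otimes e_u$, or simply by noting that $T$ cohyponormal means $T^*$ hyponormal and the above characterization is symmetric in $x$ and $y$. I would need to take a little care that the ambient space $\ell^2(V)\ominus\Hi{b}{u}$ genuinely has dimension $\ge 2$ so that Lemma \ref{lem-inj-ten} applies; this is guaranteed because $\mf b\ne u$ forces $\mbox{card}(A_u)\ge 2$ by \eqref{A-u}, and $A_u\subseteq\supp(\ell^2(V)\ominus\Hi{b}{u})$. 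Assembling these pieces closes the cycle of equivalences.
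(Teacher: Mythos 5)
Your proposal is correct and follows essentially the same route as the paper: both reduce via the orthogonal decomposition $(\Db{b}{u}, \N{b}{u}, \Hi{b}{u})$ of Theorem \ref{o-deco} to the rank one summand $\N{b}{u}=e_u\otimes e_{_{\lambdab_u, A_u}}$, and both rule out hyponormality by testing at $e_u$ (your inequality $\|y\|\leqslant|\inp{x}{y}|$ plus Cauchy--Schwarz is exactly the paper's computation $\inp{[{\N{b}{u}}^*,\N{b}{u}]e_u}{e_u}=-\|e_{_{\lambdab_u,(u,\mf b]}}\|^2<0$, since $\inp{[T^*,T]z}{z}=\|Tz\|^2-\|T^*z\|^2$). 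The only cosmetic differences are that you package the key step as a general ``rank one hyponormal $\Rightarrow$ normal'' lemma and handle cohyponormality by symmetry, whereas the paper evaluates the self-commutator at the vectors $e_v$, $v\in A_u\setminus\{u\}$, directly.
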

\begin{proof} By Remark \ref{rmk-dicho}, $\W{b}{u}$  is a diagonal operator if $\mf b = u$ or $\lambda_{uv}=0$ for every $v \in A_u \setminus \{u\}$, and
hence (v) implies (i)-(iv).
By Theorem \ref{o-deco}, $\W{b}{u}$ admits the decomposition $(\Db{b}{u}, \N{b}{u}, \Hi{b}{u})$. Thus
the Hilbert space adjoint of $\W{b}{u}$ is given by
\beqn
\mathscr D({\W{b}{u}}^*) &=& \mathscr D(\W{b}{u}), \\
{\W{b}{u}}^* &=&  {\Db{b}{u}}^* \oplus {\N{b}{u}}^*.
\eeqn
Since $\Db{b}{u}$ is normal,
$\W{b}{u}$ is hyponormal (resp. cohyponormal) if and only if so is $\N{b}{u}.$
Note that
by \eqref{adjoint-ro},
${\N{b}{u}}^* =
 e_{_{\lambdab_u, A_u}} \otimes e_u.$
Also, by \eqref{adjoint-ro}, for $x, y \in \ell^2(V),$
\beqn [x \otimes y, (x \otimes y)^*]  &=& (x \otimes y )(y \otimes x) - (y \otimes x)(x \otimes y) \\ &=&
\|y\|^2 x \otimes x - \|x\|^2 y \otimes y.
\eeqn
It follows that
\beqn
[{\N{b}{u}}^*, \N{b}{u}]  \,=\,   \begin{cases}
   \quad e_{_{\lambdab_u, [u, \mf b]}} \otimes e_{_{\lambdab_u, [u, \mf b]}} - \|e_{_{\lambdab_u, [u, \mf b]}}\|^2 e_u \otimes e_u
  & \mbox{if~} \mf b \in \des{u}, \\
  e_{_{\lambdab_u, \asc{u} \cup \{u, \mf b\}}} \otimes e_{_{\lambdab_u, \asc{u} \cup \{u, \mf b\}}} \\ \quad - \quad \|e_{_{\lambdab_u, \asc{u} \cup \{u, \mf b\}}}\|^2 e_u \otimes e_u
   & \mbox{otherwise}.
\end{cases}
\eeqn
This yields
\beqn
\inp{[{\N{b}{u}}^*, \N{b}{u}]e_u}{e_u} \,=\, \begin{cases} - \|e_{_{\lambdab_u, (u, \mf b]}}\|^2
 & \mbox{if~} \mf b \in \des{u}, \\
- \|e_{_{\lambdab_u, \asc{u} \cup \{\mf b\}}}\|^2
  & \mbox{otherwise},
\end{cases}
\eeqn
which is always negative provided $\mf b \neq u$ and $\lambda_{uv} \neq 0$ for some $v \in A_u \setminus \{u\}$.
Further,  in this case,
\beqn
\inp{[{\N{b}{u}}^*, \N{b}{u}]e_{v}}{e_{v}} \,=\,  |\lambda_{uv}|^2, \quad v \in A_u \setminus \{u\},
\eeqn
and hence $\N{b}{u}$ is not cohyponormal.
This completes the proof.
\end{proof}
%\begin{remark}
%Since $0 \in \sigma_p(\W{b}{u})$ (Theorem \ref{spectrum}),
%$\W{b}{u}$ is never an isometry.
%\end{remark}

Let us now investigate the class of normal rank one extensions $W_{f, g}$ of weighted join operators.
%Since we do not know a formula for $W^*_{f, g},$

\begin{proposition} \label{hypo-rone}
Let $\mathscr T=(V, E)$ be a rooted directed tree with root $\rootb$ and let $\mathscr T_{\infty}=(\V, E_{\infty})$ be the extended directed tree associated with $\mathscr T$.
For $u, \mf b \in V,$ consider the weight system
$\lambdab_u = \{\lambda_{uv}\}_{v\in \V}$ of complex numbers and let $W_{f, g}$ be the rank one extension of the weighted join operator $\W{b}{u}$ on $\mathscr T$, where $f \in \ell^2(V) \ominus \Hi{b}{u}$ is non-zero and $g : \supp\,\Hi{b}{u} \rar \mathbb C$ is given.
Suppose that $W_{f, g}$ satisfies the compatibility condition I.
Then the following statements are equivalent:
\begin{enumerate}
\item[(i)] $W_{f, g}$ is normal.
%\item[(ii)] $\W{b}{u}$ is hyponormal.
%\item[(iii)] $\W{b}{u}$ is cohyponormal.
\item[(ii)] $W_{f, g}$ is diagonal with respect to the orthonormal basis $\{e_v\}_{v \in V}$.
\item[(iii)]  $g=0$ and
\beq
\label{hypo-nc}
\mbox{ either $\mf b = u$ or $\lambda_{uv}=0$ for every $v \in A_u \setminus \{u\}$.}
\eeq
%\item[(iii)] ${\W{b}{u}}^*\W{b}{v} \Ge \W{b}{v}{\W{b}{u}}^*$ i
\end{enumerate}
\end{proposition}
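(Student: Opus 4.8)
The plan is to prove the three equivalences by exploiting the orthogonal decomposition $(\Db{b}{u}, \N{b}{u}, \Hi{b}{u})$ of $\W{b}{u}$ together with the matrix form \eqref{rone-extn} of $W_{f, g}$, closely paralleling the argument in Proposition \ref{hypo} but accounting for the extra unbounded entry $f \obslash g$. The implications (iii) $\Rightarrow$ (ii) $\Rightarrow$ (i) are the easy direction: if $g = 0$ then $W_{f, g}$ reduces to $\W{b}{u}$ itself (the off-diagonal block $f \obslash g$ vanishes), and by Proposition \ref{hypo} the condition \eqref{hypo-nc} is exactly what makes $\W{b}{u}$ diagonal, hence normal. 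So the whole content lies in the implication (i) $\Rightarrow$ (iii).

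For (i) $\Rightarrow$ (iii), I would first record the action of $W_{f, g}^*$. Since $W_{f, g}$ satisfies compatibility condition I, Theorem \ref{closed-0} gives that $W_{f, g}$ is closed with domain \eqref{domain-W-fg}, and the preceding Proposition identifies $W^*_{f, g}$ as densely defined; normality forces $\mathscr D(W_{f, g}) = \mathscr D(W^*_{f, g})$ and $\|W_{f, g}x\| = \|W^*_{f, g}x\|$ on this common domain. The strategy is then to test normality on the standard basis vectors $e_v$, $v \in V$, which lie in $\mathscr D_V$, a core for $W_{f, g}$. First I would show $g = 0$: applying $W_{f, g}$ and a candidate adjoint to vectors $e_v$ with $v \in \supp\,\Hi{b}{u}$, the term $\overline{g(v)}\,f$ produced by $f \obslash g$ contributes a component in $\ell^2(V)\ominus\Hi{b}{u}$ that has no counterpart coming from the diagonal block $\Db{b}{u}$; comparing the norm equality $\|W_{f,g}e_v\|^2 = \|W^*_{f,g}e_v\|^2$ should isolate $|g(v)|^2\|f\|^2$ as a strictly positive excess unless $g(v) = 0$. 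Concretely, $\|W_{f,g}e_v\|^2 = |\lambda_{uv}|^2 + |g(v)|^2\|f\|^2$, while the diagonal block contributes only $|\lambda_{uv}|^2$ to $\|W^*_{f,g}e_v\|^2$, so any normality-induced balancing must come from the $\N{b}{u}$-block, which acts trivially on $e_v \in \Hi{b}{u}$; forcing $g(v) = 0$ for all $v$.

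Once $g = 0$ is established, $W_{f, g}$ collapses to $\W{b}{u}$, and then the remaining content of (iii), namely \eqref{hypo-nc}, follows immediately from the equivalence (i) $\Leftrightarrow$ (v) in Proposition \ref{hypo}. I expect the main obstacle to be the careful handling of the unbounded entry $f \obslash g$ when passing between $W_{f, g}$ and its Hilbert space adjoint: because $f \obslash g$ is only densely defined and (when $g \notin \Hi{b}{u}$) not closable, one cannot naively take entry-wise adjoints to get $W^*_{f, g}$, as emphasized in the section on adjoints and Gelfand-triplets. The cleanest way around this is to avoid computing $W^*_{f, g}$ in closed form altogether and instead argue that normality already fails at the level of the quadratic forms $\|W_{f, g}e_v\|^2$ versus the norms $\|W_{f, g}^* e_v\|^2$ evaluated through the defining inner-product identity $\langle W_{f,g}e_v, W_{f,g}e_w\rangle = \langle W^*_{f,g}e_v, W^*_{f,g}e_w\rangle$, using only that $e_v \in \mathscr D(W^*_{f,g})$ for the relevant basis vectors (a point one must verify, e.g.\ noting $e_v$ for $v \in \supp\,\Hi{b}{u}$ is a finite combination of eigenvectors handled in the remark following Theorem \ref{spectrum}). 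If extracting $g = 0$ directly proves delicate, an alternative is a spectral argument: a normal operator has equal algebraic and geometric eigenspace data and $\mathscr D(T) = \mathscr D(T^*)$, which, combined with the eigenspace descriptions in the remark after Theorem \ref{spectrum}, should again force both $g = 0$ and \eqref{hypo-nc}.
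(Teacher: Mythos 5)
Your proposal is correct, but it takes a genuinely different route from the paper's proof, and in the opposite order. The paper first deduces \eqref{hypo-nc}: since $\ell^2(V)\ominus\Hi{b}{u}$ is invariant for $W_{f,g}$, the restriction $\N{b}{u}$ of the normal operator $W_{f,g}$ is hyponormal, and (the proof of) Proposition \ref{hypo} applies; only afterwards does it prove $g=0$, by invoking compatibility condition I through Corollary \ref{coro-spectrum} and Theorem \ref{spectrum}(iii) to obtain the lower-triangular resolvent \eqref{cpt-resolvent-eq}, using that the resolvent of a normal operator is normal, and computing the self-commutator of that block matrix to force the corner entry $-(\N{b}{u}-\mu)^{-1}L_{\lambdab_u,\mu}$ to vanish, whence $f\obslash g=0$ on $\mathscr D(\Db{b}{u})$ and so $g=0$. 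You do the reverse: you extract $g=0$ first from the elementary fact that for $v\in\supp\,\Hi{b}{u}$ the functional $(h,k)\mapsto\inp{W_{f,g}(h,k)}{(e_v,0)}=\lambda_{uv}h(v)$ is bounded, so $W^*_{f,g}(e_v,0)=(\overline{\lambda}_{uv}\,e_v,0)$ exactly, and normality forces $|\lambda_{uv}|^2+|g(v)|^2\|f\|^2=\|W_{f,g}(e_v,0)\|^2=\|W^*_{f,g}(e_v,0)\|^2=|\lambda_{uv}|^2$; then you quote Proposition \ref{hypo}. Two remarks. First, your argument shows that the paper's own comment after the proof --- that the ``unavailability of a formula for the Hilbert space adjoint of $W_{f,g}$ necessitated'' the resolvent approach --- is not strictly accurate: one never needs a global formula for $W^*_{f,g}$, only its values on the vectors $(e_v,0)$, where the unbounded entry $f\obslash g$ is invisible to the pairing; your fallback via eigenvector data (and even the appeal to Theorem \ref{closed-0}) is unnecessary, since under normality $\mathscr D(W^*_{f,g})=\mathscr D(W_{f,g})\ni (e_v,0)$ automatically and the value of $W^*_{f,g}(e_v,0)$ follows from the boundedness of the functional above. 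Second, your core argument never uses compatibility condition I, so it establishes the equivalence without that hypothesis, whereas the paper's proof uses it essentially (to have a nonempty resolvent set at all). What the paper's route buys is a demonstration of its resolvent calculus and a method that survives when no convenient testing vectors decouple the unbounded entry; what yours buys is economy and slightly greater generality.
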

\begin{proof}
Assume that $W_{f, g}$ is normal. Note that $\N{b}{u},$ being the restriction of $W_{f, g}$ to $\ell^2(V) \ominus \Hi{b}{u},$ is hyponormal. By Proposition \ref{hypo}, (iii) holds. Thus $\N{b}{u}=\lambda_{uu} e_u \otimes e_u.$ Since $W_{f, g}$ satisfies the compatibility condition I, by Corollary \ref{coro-spectrum}, $\sigma(W_{f, g})$ is a proper closed subset of $\mathbb C$. Let $\mu \in \mathbb C \setminus \sigma(W_{f, g})$.
By Theorem \ref{spectrum}(iii),
\beq \label{ff}
\begin{array}{lll}
(W_{f, g}-\mu)^{-1} \,=\, \left[\begin{array}{cc}
(\Db{b}{u}-\mu)^{-1} & 0 \\
-(\N{b}{u}-\mu)^{-1}L_{\lambdab_u, \mu} & (\N{b}{u}-\mu)^{-1}
\end{array}
\right],
\end{array}
\eeq
where the linear transformation $L_{\lambdab_u, \mu}$ is given by $L_{\lambdab_u, \mu}=(f \obslash g)(\Db{b}{u}-\mu)^{-1}$. On the other hand, by \cite[Proposition 3.26(v)]{Sc}, $(W_{f, g}-\mu)^{-1}$ is normal. Let $A=(\Db{b}{u}-\mu)^{-1}$, $B=-(\N{b}{u}-\mu)^{-1}L_{\lambdab_u, \mu}$ and $C=(\N{b}{u}-\mu)^{-1} .$
Since $A$ and $C$ are normal, it may be concluded from \eqref{ff} that
\beqn
[((W_{f, g}-\mu)^{-1})^*, (W_{f, g}-\mu)^{-1}] 
%&=&   \left[\begin{array}{cc}
%A^*A+B^*B & B^*C \\
%C^*B & C^*C
%\end{array}
%\right] -   \left[\begin{array}{cc}
%AA^* & AB^* \\
%BA^* & BB^*+CC^*
%\end{array}
%\right] \\
&=&
\left[\begin{array}{cc}
B^*B & B^*C-AB^* \\
C^*B-BA^* & BB^*
\end{array}
\right].
\eeqn
Since $W_{f, g}$ is normal, $B=0$. It follows that $L_{\lambdab_u, \mu}=0$, and hence $f \obslash g=0$ on $\mathscr D(\Db{b}{u}).$ Since $f$ is non-zero, we must have $g=0.$
The remaining implications follow from Proposition \ref{hypo}.
\end{proof}
The methods of proofs of Propositions \ref{hypo} and \ref{hypo-rone} are different. In particular, unavailability of a formula for the Hilbert space adjoint of $W_{f, g}$ necessitated us to characterize normality of $W_{f, g}$ with the help of the resolvent function.
An inspection of the proof of Proposition \ref{hypo-rone} shows that \eqref{hypo-nc} is a necessary condition for $W_{f, g}$ to be a hyponormal operator.

\section{Symmetricity}

A densely defined linear operator $T$ in $\mathcal H$ is said to be  {\it $n$-symmetric} if \beqn \sum_{j=0}^n (-1)^{n-j} {n \choose j}
\inp{T^jx}{T^{n-j}y}=0, \quad x, y \in \mathscr D(T^n),\eeqn
where $n$ is a positive integer.
We refer to $1$-symmetric operator as {\it symmetric operator}.
We say that $T$ is {\it strictly $n$-symmetric} if it is $n$-symmetric, but not $(n-1)$-symmetric, where $n$ is a positive integer bigger than $1.$
For the basic properties of $n$-symmetric operators and its connection with the theory of differential equations, the reader is referred to \cite{He0, He, BH, Sc0, BF, Ag-St, Ru}.

The following proposition describes all $n$-symmetric weighted join operators.
It reveals the curious fact that there is no strictly $2$-symmetric weighted join  operator. On the other hand, strictly $3$-symmetric weighted join operators exist in abundance.
\begin{proposition} \label{symm}
Let $\mathscr T=(V, E)$ be a rooted directed tree with root $\rootb$ and let $\mathscr T_{\infty}=(\V, E_{\infty})$ be the extended directed tree associated with $\mathscr T$. For $\mf b, u \in V$ and the weight system
$\lambdab_u = \{\lambda_{uv}\}_{v\in \V}$ of complex numbers,
 let $\W{b}{u}$ denote
 the weighted join operator  on ${\mathscr T}$. Then, for any positive integer $n$, the following are equivalent:
\begin{enumerate}
\item $\W{b}{u}$ is $n$-symmetric.
%\item[(ii)] $\W{b}{u}$ is symmetric.
\item The weight system $\lambdab_u$ satisfies
\beqn
\lambda_{uv} \in \mathbb R, \quad v \in \begin{cases}
\asc{u} \cup  \Desb{u}
 & \mbox{if~} \mf b \in \des{u},  \\
 \des{u} & \mbox{otherwise},
\end{cases}
\eeqn
and one of the following holds:
\begin{enumerate}
\item $\lambda_{uu}=0$ and $n \Ge 3$.
\item $
\lambda_{uv} =0  ~\mbox{for~} v \in A_u \setminus \{u\},
$ where $A_u$ is as given in \eqref{A-u}.
\end{enumerate}
\item One of the following holds:
\begin{enumerate}
\item $\lambda_{uu}=0$ and $n \Ge 3$
\item The weighted join operator $\W{b}{u}$ is symmetric.
\end{enumerate}
\end{enumerate}
\end{proposition}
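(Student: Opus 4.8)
The plan is to reduce the $n$-symmetry of $\W{b}{u}$ to that of the two blocks in the orthogonal decomposition $\W{b}{u}=\Db{b}{u}\oplus\N{b}{u}$ on $\ell^2(V)=\Hi{b}{u}\oplus(\ell^2(V)\ominus\Hi{b}{u})$ provided by Theorem \ref{o-deco}. First I would record the elementary identity that for an orthogonal direct sum $T=T_0\oplus T_1$ one has $\inp{T^j(x_0,x_1)}{T^{n-j}(y_0,y_1)}=\inp{T_0^jx_0}{T_0^{n-j}y_0}+\inp{T_1^jx_1}{T_1^{n-j}y_1}$, so the defining alternating sum splits additively across the two blocks with no coupling. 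Since the coordinates $(x_0,x_1)$ and $(y_0,y_1)$ range independently, $\W{b}{u}$ is $n$-symmetric if and only if both $\Db{b}{u}$ and $\N{b}{u}$ are $n$-symmetric. This isolates the two mechanisms behind condition (ii).

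The diagonal block is quickly dispatched. It suffices to test the $n$-symmetry form on the standard basis vectors $e_v,e_w$, which span a core for $\Db{b}{u}$ and on which $\Db{b}{u}$ acts by scalars; orthogonality kills all $v\neq w$ contributions, and the surviving diagonal term collapses to $(\lambda_{uv}-\overline{\lambda_{uv}})^n$. Hence $\Db{b}{u}$ is $n$-symmetric if and only if $\lambda_{uv}\in\mathbb R$ for every $v\in\supp\,\Hi{b}{u}$, which by \eqref{H-u} is exactly the reality requirement of (ii) restricted to $\supp\,\Hi{b}{u}$.

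The heart of the proof is the rank one block $\N{b}{u}=e_u\otimes e_{_{\lambdab_u,A_u}}$, and this is where I expect the real work to lie. Writing $a=e_u$, $c=e_{_{\lambdab_u,A_u}}$ and $\alpha=\lambda_{uu}=\inp{a}{c}$, I would substitute the power formula \eqref{nilp-powers}, namely $\N{b}{u}^{\,k}=\lambda_{uu}^{\,k-1}\,a\otimes c$ for $k\ge1$, into the alternating sum. Using \eqref{adjoint-ro}, every term $\inp{\N{b}{u}^{\,j}x}{\N{b}{u}^{\,n-j}y}$ becomes a monomial in the four scalars $\inp{x}{a},\inp{x}{c},\inp{a}{y},\inp{c}{y}$ with an explicit power of $\alpha,\overline\alpha$, so the form is a fixed combination of $\inp{x}{a}\inp{c}{y}$, $\inp{x}{c}\inp{a}{y}$ and $\inp{x}{c}\inp{c}{y}$. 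The decisive case split is whether $a$ and $c$ are linearly dependent, which by \eqref{A-u} is precisely the dichotomy ``$\lambda_{uv}=0$ for all $v\in A_u\setminus\{u\}$'' against its negation. If they are independent, the three products are independent forms, so each coefficient must vanish: the coefficients of the first two are (up to sign) $\overline\alpha^{\,n-1}$ and $\alpha^{n-1}$, forcing $\alpha=0$ and $n\ge2$, and with $\alpha=0$ the coefficient of the third product survives only through the $j=1$ and $j=n-1$ summands, which cancel exactly when $n\ge3$ (at $n=2$ it equals $-2\neq0$). This produces case (a) and explains the threshold $n\ge3$. If instead $a$ and $c$ are dependent, then $\N{b}{u}=\alpha\,(e_u\otimes e_u)$ is a scalar multiple of a rank one projection, whose $n$-symmetry reduces to the scalar identity $(\alpha-\overline\alpha)^n=0$, i.e.\ $\lambda_{uu}\in\mathbb R$; this is case (b).

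Assembling the three analyses gives (i)$\Leftrightarrow$(ii): in case (a) the reality on $\supp\,\Hi{b}{u}$ together with $\lambda_{uu}=0$ upgrades to reality on $\supp\,\Hi{b}{u}\cup\{u\}$, while in case (b) the reality of $\lambda_{uu}$ supplied by the block analysis does the same, and by \eqref{desb} and \eqref{H-u} this enlarged index set is precisely the one named in (ii). For (ii)$\Leftrightarrow$(iii), I would note that when case (b) holds with the weights real, $\W{b}{u}=\Db{b}{u}\oplus\lambda_{uu}(e_u\otimes e_u)$ is a self-adjoint (indeed diagonal) operator, hence symmetric, giving (iii)(b); conversely a symmetric $T$ satisfies $\inp{T^jx}{T^{n-j}y}=\inp{x}{T^ny}$ for all $j$, so the alternating sum equals $(1-1)^n\inp{x}{T^ny}=0$ and symmetric operators are automatically $n$-symmetric for every $n$. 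Case (a) is common to (ii) and (iii), closing the equivalence. The main obstacle is the bookkeeping in the rank one block --- correctly summing the binomial coefficients, identifying which monomials may be probed independently in the linearly independent case, and verifying that the reality index set matches that of (ii) in both branches of \eqref{H-u}.
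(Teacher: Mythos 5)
Your proposal is correct and takes essentially the same route as the paper: both reduce $n$-symmetry to the two blocks of the orthogonal decomposition $(\Db{b}{u}, \N{b}{u}, \Hi{b}{u})$ from Theorem \ref{o-deco}, settle the diagonal block by reality of its entries, and analyze $\N{b}{u}$ via the power formula \eqref{nilp-powers}, arriving at the dichotomy $\lambda_{uu}=0$ with $n \Ge 3$ versus $\N{b}{u}=\lambda_{uu}\, e_u \otimes e_u$. Your extraction of coefficients against the three independent sesquilinear forms is the same computation as the paper's evaluation of the alternating sum at basis-vector pairs $e_v, e_w$ with $v, w \in A_u$ (the identities \eqref{7.1}), and your handling of (ii)$\Leftrightarrow$(iii), including the passage through symmetric operators being $n$-symmetric, likewise mirrors the paper's argument.
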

\begin{proof} By Theorem \ref{o-deco}, the weighted join operator $\W{b}{u}$ admits the decomposition $(\Db{b}{u}, \N{b}{u}, \Hi{b}{u})$.
Thus $\W{b}{u}$ is $n$-symmetric if and only if $\Db{b}{u}$ and
$\N{b}{u}$ are $n$-symmetric. It is easy to see that a diagonal
operator is $n$-symmetric if and only if it is symmetric, which in
turn is equivalent to the assertion that its diagonal entries are
real. Assume that $\N{b}{u}$ is $n$-symmetric and let $v, w \in
A_u.$ Note that by \eqref{nilp-powers}, \beq \label{7.1}
 & & \sum_{j=0}^n (-1)^{n-j} {n \choose j} \inp{{\N{b}{u}}^je_v}{{\N{b}{u}}^{n-j}e_w} \\
&=& \begin{cases} \notag
(\lambda_{uu}-\bar{\lambda}_{uu})^n & \mbox{if~}v =u, ~w=u, \\
 (-1)^n \bar{\lambda}_{uw}  \bar{\lambda}^{n-1}_{uu} + \displaystyle \sum_{j=1}^{n-1} (-1)^{n-j} {n \choose j} \bar{\lambda}_{uw}  \lambda^{j}_{uu} \bar{\lambda}^{n-j-1}_{uu}  & \mbox{if~} v = u, ~w \ne u, \\
\displaystyle \sum_{j=1}^{n-1} (-1)^{n-j} {n \choose j} \lambda_{uv}\bar{\lambda}_{uw} \lambda^{j-1}_{uu} \bar{\lambda}^{n-j-1}_{uu}  & \mbox{if~} v \neq u, ~w \ne u.
\end{cases}
\eeq Thus, by the first identity, $\lambda_{uu}$ is real, and hence,
by the second identity, for every $w \in A_u \setminus \{u\}$, \beqn
\lambda^{n-1}_{uu} \bar{\lambda}_{uw} \sum_{j=0}^{n-1}
(-1)^{n-j} {n \choose j}  = 0. \eeqn Thus \beq \label{uu-0}
\lambda_{uu} =0~ \mbox{or}~ \lambda_{uw}=0 ~\mbox{for every} ~w \in
A_u \setminus \{u\}. \eeq Suppose $\lambda_{uw} \neq 0$ for some $w
\in A_u \setminus \{u\}$ and $n \Le 2$. Then, by \eqref{uu-0},
$\lambda_{uu}=0$, and hence, by the third identity in \eqref{7.1},
\beqn \displaystyle \sum_{j=1}^{n-1} (-1)^{n-j} {n \choose j}
\lambda_{uv}\bar{\lambda}_{uw} \lambda^{n-2}_{uu} =0, \quad v
\in A_u \setminus \{u\}. \eeqn If $n=2$, then
$-2\lambda_{uv}\bar{\lambda}_{uw} =0$, which is not possible
for $v=w$ (since by assumption $\lambda_{uw} \neq 0$). This proves the implication (i) $\Rightarrow$ (ii). Also,
since condition (b) of (ii) is equivalent to the assertion that
$\N{b}{u}$ is equal to the normal rank one operator $\lambda_{uu}
e_u \otimes e_u$ (see \eqref{til-N}), we also obtain the equivalence
of (ii) and (iii). Finally, the implication (ii) $\Rightarrow$ (i)
may be easily deduced from \eqref{7.1} and \eqref{inv-dom}.
\end{proof}
\begin{remark} The weighted join operator
 $\W{b}{u}$ is either symmetric or strictly $3$-symmetric. In particular, $\W{b}{u}$ is never strictly $2$-symmetric.
\end{remark}

We capitalize on the last proposition to exhibit a family of $n$-symmetric rank one extensions of weighted join operators.
\begin{proposition} \label{symm-r-one}
Let $\mathscr T=(V, E)$ be a rooted directed tree with root $\rootb$ and let $\mathscr T_{\infty}=(\V, E_{\infty})$ be the extended directed tree associated with $\mathscr T$.
For $u, \mf b \in V,$ consider the weight system
$\lambdab_u = \{\lambda_{uv}\}_{v\in \V}$ of complex numbers and let $W_{f, g}$ be the rank one extension of the weighted join operator $\W{b}{u}$ on $\mathscr T$, where $f \in \ell^2(V) \ominus \Hi{b}{u}$ is non-zero and $g : \supp\,\Hi{b}{u} \rar \mathbb C$ is a non-zero function.
Assume that $\inp{f}{e_{_{\lambdab_u, A_u}}}=0.$ Then the following statements hold:
\begin{enumerate}
\item If $n \geqslant 2$ and $\supp(g) \cap \{v \in \supp\,\Hi{b}{u} : \lambda_{uv} \neq 0\} = \emptyset$, then $W_{f, g}$ is $n$-symmetric if and only if $\{\lambda_{uv} : v \in \supp\,\Hi{b}{u}\}$ is contained in $\mathbb R$ and either $f(u)=0$ or \eqref{uu-0} holds.
\item If $\supp(g) \cap \{v \in \supp\,\Hi{b}{u} : \lambda_{uv} \in \mathbb R \setminus \{0\}\} \neq \emptyset$, then $W_{f, g}$ is never $n$-symmetric.
\end{enumerate}
\end{proposition}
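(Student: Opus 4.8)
The plan is to work entirely with the block structure of $W_{f,g}$ on $\ell^2(V)=\Hi{b}{u}\oplus(\ell^2(V)\ominus\Hi{b}{u})$ and to evaluate the $n$-symmetry form $\Delta_n(x,y):=\sum_{j=0}^n(-1)^{n-j}\binom{n}{j}\inp{W_{f,g}^{\,j}x}{W_{f,g}^{\,n-j}y}$ on a well-chosen supply of vectors. Write $D=\Db{b}{u}$, $N=\N{b}{u}=e_u\otimes e_{_{\lambdab_u, A_u}}$ and $R=f\obslash g$, so that $W_{f,g}=\left[\begin{smallmatrix} D & 0 \\ R & N\end{smallmatrix}\right]$. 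The standing hypothesis $\inp{f}{e_{_{\lambdab_u, A_u}}}=0$ says precisely that $Nf=0$, and by \eqref{nilp-powers} one has $N^m=\lambda_{uu}^{m-1}\,e_u\otimes e_{_{\lambdab_u, A_u}}$ for $m\geq1$. First I would record $W_{f,g}^{\,k}=\left[\begin{smallmatrix} D^k & 0 \\ X_k & N^k\end{smallmatrix}\right]$ with $X_{k+1}=RD^k+NX_k$ and $X_0=0$. Under the hypothesis of (i), namely $\supp(g)\subseteq\{v:\lambda_{uv}=0\}$, one checks $RD^k=0$ for every $k\geq1$ and $NR=0$, whence $X_k=0$ for all $k\geq2$ while $X_1=R$; thus only the first power carries an off-diagonal block, and $W_{f,g}^{\,k}=D^k\oplus N^k$ for $k\geq2$ (the squaring phenomenon already recorded in the Example containing \eqref{inter-r}).

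For part (ii) the argument is short and self-contained, so I would dispatch it first. Choose $v_0\in\supp(g)$ with $\lambda:=\lambda_{uv_0}\in\mathbb R\setminus\{0\}$. Using $Nf=0$ one computes by induction $W_{f,g}^{\,j}(e_{v_0},0)=(\lambda^j e_{v_0},\,\lambda^{j-1}\overline{g(v_0)}f)$ for $j\geq1$. Testing $\Delta_n$ against $x=(e_{v_0},0)$ and $y=(0,f)$, every term with $1\leq j\leq n-1$ dies because $N^{n-j}f=0$ (here $\inp{f}{e_{_{\lambdab_u, A_u}}}=0$ is used again), the $j=0$ term vanishes outright, and only the $j=n$ term survives:
\[
\Delta_n\big((e_{v_0},0),(0,f)\big)=\lambda^{n-1}\,\overline{g(v_0)}\,\|f\|^2\neq0,
\]
since $\lambda\neq0$, $g(v_0)\neq0$ and $f\neq0$. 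Hence $W_{f,g}$ is not $n$-symmetric. Both test vectors lie in $\mathscr D(W_{f,g}^{\,n})$ (the first is finitely supported, the second satisfies $W_{f,g}(0,f)=0$).

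For part (i) I would substitute the power formulas into $\Delta_n$ and group the result as
\[
\Delta_n(x,y)=\Delta_n^{D}(x_1,y_1)+\Delta_n^{N}(x_2,y_2)-n\inp{N^{n-1}x_2}{Ry_1}+(-1)^{n-1}n\inp{Rx_1}{N^{n-1}y_2}+\varepsilon_n,
\]
where $\Delta_n^{D},\Delta_n^{N}$ are the $n$-symmetry forms of $D$ and $N$ and $\varepsilon_n$ is a remainder present only at $n=2$. Then I would read off the conditions by specializing: taking $x_1=y_1=0$ forces $N$ to be $n$-symmetric, which by the computation \eqref{7.1} inside the proof of Proposition~\ref{symm} is equivalent to $\lambda_{uu}\in\mathbb R$ together with \eqref{uu-0}; the reality of the entire weight $\{\lambda_{uv}\}$ comes cleanly from the observation that each $\lambda_{uv}$ is an eigenvalue of $W_{f,g}$ (Theorem~\ref{spectrum}(i)) and that any eigenvalue $\mu$ of an $n$-symmetric operator satisfies $(\mu-\bar\mu)^n=0$; and the surviving cross terms, computed via $N^{n-1}=\lambda_{uu}^{n-2}\,e_u\otimes e_{_{\lambdab_u, A_u}}$ and $Rx_1=\inp{x_1}{g}f$, are proportional to $f(u)=\inp{f}{e_u}$, so their vanishing for all test vectors is governed by whether $f(u)=0$ or the factor $\lambda_{uu}^{n-2}$ annihilates them. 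Assembling these with \eqref{til-N} and \eqref{A-u} yields the stated characterization, and the converse direction is the same computation run in reverse.

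The main obstacle is the bookkeeping of the cross terms: because $W_{f,g}$ is not a direct sum, the block $R$ couples the $D$-part to the $N$-part exactly at the first power, and one must track how $\inp{Rx_1}{N^{n-1}y_2}$ and its conjugate interact with the nilpotency index of $N$ and with the value $f(u)$. I expect the delicate point to be the low-order regime (especially $n=2$, where $\varepsilon_n$ is present and $n$-symmetry of $N$ is strictly more restrictive than for $n\geq3$), together with the careful separation of the two branches of \eqref{uu-0} — the normal branch $\lambda_{uw}=0$ for $w\in A_u\setminus\{u\}$ versus the nilpotent branch $\lambda_{uu}=0$ — since these feed into the cross-term analysis differently. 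Throughout, the hypothesis $\inp{f}{e_{_{\lambdab_u, A_u}}}=0$ is exactly what collapses the powers and is invoked at each stage.
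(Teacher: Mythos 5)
Your proof of part (ii) is correct and takes a genuinely different route from the paper's. The paper argues by contradiction through the general identities \eqref{4.3.4-1} and \eqref{4.3.5}: it first deduces that $n$ must be odd, then invokes the $n$-symmetry of $\N{b}{u}$ via \eqref{7.1} and \eqref{uu-0}, and finally eliminates all cases for $f(u)$ and $\lambda_{uu}$. You replace all of this with a single evaluation of the $n$-symmetry form: since the standing hypothesis gives $\N{b}{u}f=0$, the vector $(0,f)$ is annihilated by $W_{f,g}$, so in $\Delta_n\big((e_{v_0},0),(0,f)\big)$ only the $j=n$ term survives, and it equals $\lambda_{uv_0}^{n-1}\overline{g(v_0)}\,\|f\|^2\neq 0$. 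This is much shorter, and it proves more: reality of $\lambda_{uv_0}$ is never used, so the same two test vectors rule out $n$-symmetry whenever $\supp(g)$ meets $\{v : \lambda_{uv}\neq 0\}$ at all.

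Part (i) is where the gap is. Your preparation coincides with the paper's (your recursion for $X_k$ and the vanishing $X_k=0$ for $k\geqslant 2$ are \eqref{Lk} and \eqref{Lk-2}, and your grouped formula for $\Delta_n$ is exactly the paper's reduction to \eqref{n-sym-eq-1}, \eqref{n-sym-eq-2} together with the $n$-symmetry of $\N{b}{u}$), and each listed ingredient is sound; the eigenvalue argument for reality of the weights via Theorem \ref{spectrum}(i) is a nice substitute for reading it off \eqref{n-sym-eq-1}. But the sentence ``assembling these \dots yields the stated characterization, and the converse direction is the same computation run in reverse'' is precisely the step that cannot be waved through. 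What your specializations actually produce, for $n\geqslant 3$, is the conjunction: weights on $\supp\,\Hi{b}{u}$ real, \emph{and} $\N{b}{u}$ $n$-symmetric (equivalently $\lambda_{uu}\in\mathbb R$ together with \eqref{uu-0}), \emph{and} vanishing of the cross terms (equivalently $f(u)=0$ or $\lambda_{uu}=0$). Converting this into the stated condition needs the standing hypothesis $\inp{f}{e_{_{\lambdab_u, A_u}}}=0$ (it forces $f(u)=0$ when the second branch of \eqref{uu-0} holds with $\lambda_{uu}\neq 0$), and even then the two formulations do not match: in the configuration $f(u)=0$, $\lambda_{uu}\notin\mathbb R$ the stated right-hand side holds, yet $W_{f,g}$ cannot be $n$-symmetric, because its restriction $\N{b}{u}$ to the invariant subspace $\ell^2(V)\ominus\Hi{b}{u}$ has the non-real eigenvalue $\lambda_{uu}$ (test $x=y=(0,e_u)$: the form equals $(\lambda_{uu}-\overline{\lambda}_{uu})^n$). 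So the ``reverse run'' of your computation breaks down; an honest completion lands on the conditions listed above, which is in effect what the paper's own proof records (together with the fact that, under the hypotheses of (i), $n$-symmetry forces $n\geqslant 3$). Relatedly, the $n=2$ case you defer is not merely delicate but an obstruction: with $g\neq 0$ and $x_1=y_1=e_v$, $v\in\supp(g)$, your $\varepsilon_2=-2|g(v)|^2\|f\|^2\neq 0$ while every other term vanishes, so no operator satisfying the hypotheses of (i) is $2$-symmetric, and any proof of the stated equivalence must deal with this explicitly, as the paper does. In short: (ii) is done, and done better than in the paper; (i) reproduces the paper's setup but omits its decisive final analysis.
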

\begin{proof} By \eqref{rone-extn},  $W^n_{f, g}$ can be decomposed as
\beqn
\begin{array}{lll}
W^n_{f, g} &=& \left[\begin{array}{cc}
(\Db{b}{u})^n & 0 \\
L_n & (\N{b}{u})^n
\end{array}
\right],
\end{array}
\eeqn
where $L_n,$ $n \geqslant 0$ is defined inductively as follows:
\beqn
L_0=0, ~L_1=f \obslash g, ~L_n=L_{n-1}\Db{b}{u}+ (\N{b}{u})^{n-1}f \obslash g, ~n \geqslant 2. 
\eeqn
Recall from \eqref{nilp-powers} that
${\N{b}{u}}^k =
 \lambda^{k-1}_{uu} \N{b}{u},$
$k \geqslant 1.$ An inductive argument now shows that
\beq
\label{Lk}
L_k = f \obslash g(\Db{b}{u})^{k-1}+ \sum_{j=1}^{k-1} \lambda^{j-1}_{uu} \N{b}{u} (f \obslash g)(\Db{b}{u})^{k-j-1}, \quad k \geqslant 1.
\eeq
Note that $W_{f, g}$ is $n$-symmetric if and only if $\N{b}{u}$ is $n$-symmetric and for every $(h_1, h_2),$ $(k_1, k_2)$ in $\mathscr D(W^n_{f, g}),$
\beq
\sum_{j=0}^{n} (-1)^{n-j} {n \choose j} \inp{(\Db{b}{u})^jh_1}{(\Db{b}{u})^{n-j}k_1} \notag & +& \\  \label{n-sym-eq-1} \sum_{j=1}^{n-1} (-1)^{n-j} {n \choose j} \inp{L_jh_1}{L_{n-j}k_1} &=& 0, \\ \label{n-sym-eq-2}
\sum_{j=0}^{n-1} (-1)^{n-j} {n \choose j} \inp{(\N{b}{u})^jh_2}{L_{n-j}k_1} &=& 0.
\eeq
%It follows from the second equation (applied to any $h_2 \in \ker \N{b}{u}$) that
%$P_{\ker \N{b}{u}}L_n=0$. Thus $L_n =P_{[e_{\lambdab_u, A_u}]}L_n$.

Assume now that $n \geqslant 2$ and  $\inp{f}{e_{_{\lambdab_u, A_u}}}=0$. By \eqref{Lk}, \beq \label{Lk-2} L_k=f \obslash g(\Db{b}{u})^{k-1}, \quad k \geqslant 1. \eeq
It follows that for any $v, w \in \supp\,\Hi{b}{u}$,
\beqn
\sum_{j=1}^{n-1} (-1)^{n-j} {n \choose j} \inp{L_je_v}{L_{n-j}e_w}  &=& \overline{g(v)} g(w) \|f\|^2 \sum_{j=1}^{n-1} (-1)^{n-j} {n \choose j} \lambda^{j-1}_{uv} \bar{\lambda}^{n-j-1}_{uw}.
%&=& \frac{1}{}(\lambda_{uv} -\overline{\lambda_{uv}})^n\,\overline{g(v)} g(w) \|f\|^2.
\eeqn
Hence \eqref{n-sym-eq-1} holds if and only if for any $v, w \in \supp\,\Hi{b}{u}$,
\beq \label{4.3.4-1}
(\lambda_{uv} -\bar{\lambda}_{uv})^n + |{g(v)}|^2 \|f\|^2 \sum_{j=1}^{n-1} (-1)^{n-j} {n \choose j} \lambda^{j-1}_{uv} \bar{\lambda}^{n-j-1}_{uv}=0,  \\ \notag
 \overline{g(v)} g(w)  \sum_{j=1}^{n-1} (-1)^{n-j} {n \choose j} \lambda^{j-1}_{uv} \bar{\lambda}^{n-j-1}_{uw}=0, \quad v \neq w.
\eeq
Further, \eqref{n-sym-eq-2} holds if and only if for every $v \in V \setminus \supp\,\Hi{b}{u}$ and $w \in \supp\,\Hi{b}{u},$
\beq
\label{4.3.5}
(-1)^n \lambda^{n-1}_{uw} g(w)\overline{f(v)} \,+\, \overline{f(u)} g(w) \inp{e_v}{e_{_{\lambdab_u, A_u}}} \sum_{j=1}^{n-1} (-1)^{n-j} {n \choose j} \lambda^{j-1}_{uu} \bar{\lambda}^{n-j-1}_{uw}=0.
%\\
%\inp{(\N{b}{u})^je_v}{L_{n-j}e_w} = 0.
\eeq

Assume that $\supp(g) \cap \{v \in \supp\,\Hi{b}{u} : \lambda_{uv} \neq 0\} = \emptyset$.
In this case, $(f \obslash g)\Db{b}{u}=0$. Hence, by \eqref{Lk-2}, $L_k=0$ for $k \geqslant 2.$
Let $v \in \supp(g)$, $h_1=e_v=k_1$ in
\eqref{n-sym-eq-1}. Then
\beqn
\inp{L_1e_v}{L_{n-1}e_v}=0, \quad n \geqslant 2,
\eeqn
which is possible only if $n \geqslant 3.$ In this case, \eqref{n-sym-eq-1} holds if and only if $$\{\lambda_{uv} : v \in \supp\,\Hi{b}{u}\} \subseteq \mathbb R.$$ Further, \eqref{n-sym-eq-2} holds if and only if
\beqn
\inp{(\N{b}{u})^{n-1}e_w}{L_{1}e_v}=0, \quad v \in \supp\,\Hi{b}{u},~ w \in V \setminus \supp\,\Hi{b}{u},
\eeqn
which is possible if and only if either $f(u)=0$ or $\lambda_{uu}=0.$
This completes the verification of (i).

Assume next that
$\supp(g) \cap \{v \in \supp\,\Hi{b}{u} : \lambda_{uv} \in \mathbb R \setminus \{0\}\} \neq \emptyset$.
In this case, there exists $\eta \in \supp(g)$ such that $\lambda_{u\eta} \in \mathbb R \setminus \{0\}.$ If possible, then assume that $W_{f, g}$ is $n$-symmetric. By \eqref{4.3.4-1} with $v=\eta$,
\beqn
(\lambda_{u\eta} -\bar{\lambda}_{u\eta})^n \,+\, \frac{|{g(\eta)}|^2}{|\lambda_{u\eta}|^2} \|f\|^2 \big((\lambda_{u\eta} -\bar{\lambda}_{u\eta})^n -  (-1)^{n}   \lambda^n_{u\eta} -\bar{\lambda}^n_{u\eta} \big)=0.
\eeqn
However, $\lambda_{u\eta} \in \mathbb R \setminus \{0\}$, and hence $n$ is necessarily an odd integer.
Further, since $\N{b}{u}$ is also $n$-symmetric, by \eqref{7.1} and \eqref{uu-0}, $\lambda_{uu} \in \mathbb R$ and \beq \label{l-uu-l-uv}
\lambda_{uu} =0~ \mbox{or}~ \lambda_{uw}=0 ~\mbox{for every} ~w \in
A_u \setminus \{u\}. \eeq
By \eqref{4.3.5} with $w=\eta$, for any $v \in  V \setminus (\supp\,\Hi{b}{u} \cup A_u),$ $f(v)=0.$ Further, if $\lambda_{uu} \neq 0,$ then by \eqref{l-uu-l-uv}, $e_{_{\lambdab_u, A_u}}=\lambda_{uu}e_u,$ which by assumption is orthogonal to $f.$ This forces $f(u)$ to be equal to $0.$
In that case, by \eqref{4.3.5} with $w=\eta$, $f(v)=0$ for all $v \in A_u$. This is not possible since $f \neq 0.$ Thus $\lambda_{uu}=0$ and $f(u) \neq 0$.
Once again, by \eqref{4.3.5} with $v=u$ and $w=\eta$, \beqn
(-1)^n \lambda^{n-1}_{uw}  \,+\,   \sum_{j=1}^{n-1} (-1)^{n-j} {n \choose j} \lambda^{j}_{uu} {\lambda^{n-j-1}_{uw}}=0.
\eeqn
It follows that $(\lambda_{uu}-\lambda_{u\eta})^n=\lambda^n_{uu}$. This is not possible, since
$n$ is an odd integer and $\lambda_{uu}, \lambda_{u\eta} \in \mathbb R \setminus \{0\}$. Thus we arrive at a contradiction to the assumption that $W_{f, g}$ is $n$-symmetric.
\end{proof}
\begin{remark}
Let $g : \supp\,\Hi{b}{u} \rar \mathbb C$ be given. Then
 $W_{f, g}$ is symmetric if and only if $\{\lambda_{uv} : v \in \supp\,\Hi{b}{u}\}$ is contained in $\mathbb R$, $g=0$ and $
\lambda_{uv} =0  ~\mbox{for~} v \in A_u \setminus \{u\},
$ where $A_u$ is as given in \eqref{A-u}. This may be concluded from \eqref{n-sym-eq-1}, \eqref{n-sym-eq-2} (with $n=1$) and
Proposition \ref{symm}. 
%$W_{f, g}$ is symmetric if and only if $\{\lambda_{uv} : v \in \supp\,\Hi{b}{u}\}$ is contained in $\mathbb R$, $g=0$ and $\lambda_{uv} =0  ~\mbox{for~} v \in A_u \setminus \{u\}$.
\end{remark}

\subsection{Proof of Theorem \ref{w-x}.}
We are now in a position to present a proof of Theorem \ref{w-x} (recall the notations $U^{(\mf b)}_u$, $g_x$, $W_{w, x}$ as introduced in the Prologue).
\begin{proof}
%[Proof of Theorem \ref{w-x}]
Note that $\ell^2(U^{(\mf b)}_u)$ is nothing but $\Hi{b}{u}$, while
$W_{w, x}$ is a rank one extension of a weighted join operator in $\ell^2(V)$.
Clearly, $W_{w, x}$ is densely defined with $\{e_v : v \in V\}$ contained in $\mathscr D(W_{w, x})$.
By \eqref{wt-Dub} and Theorem \ref{spectrum}(i), $$\sigma_p(W_{w, x})=
%\{\lambda_{uv} : v \in \supp\,\Hi{b}{u}\} \cup \{\lambda_{uu}, 0\}=
\{\dep_v-\dep_u : v \in U^{(\mf b)}_u \cup \{u\}\}.$$
Let us find conditions on $x \in \mathbb R$ which ensure that $W_{w, x}$ satisfies the compatibility condition I. To see that, note first that $\sigma_p(W_{w, x})$ is closed subset of $\mathbb C$ and $\mu_0=-\dep_u-1 \notin \sigma_p(W_{w, x})$. By assumption, $(\des{u}, E_u)$ is a  narrow tree of width $\mf m$, and hence by \eqref{wt-Dub} and \eqref{gxv}, we have the estimate
\beq \label{narrow-estimate}
 \sum_{v \in \supp\,\Hi{b}{u}} \frac{|g_x(v)|^2}{|\lambda_{uv}-\mu_0|^2} \,=\, \sum_{v \in U^{(\mf b)}_u} \frac{\dep^{2x}_v}{(\dep_v+1)^2} \,\leqslant\, \mf m \sum_{n=0}^{\infty} \frac{n^{2x}}{(n+1)^2}.
\eeq Since $\mbox{card}(\des{u})=\aleph_0$, we must have \beqn
\sum_{v \in \supp\,\Hi{b}{u}}
\frac{|g_x(v)|^2}{|\lambda_{uv}-\mu_0|^2}  \,\geqslant\,
\sum_{n=0}^{\infty} \frac{n^{2x}}{(n+1)^2}. \eeqn 
It follows 
from the last two estimates that $W_{w, x}$ satisfies the
compatibility condition I if and only if $x < 1/2.$ It now follows
from Corollary \ref{coro-spectrum} that $\sigma(W_{w, x})$ is a
proper closed subset of $\mathbb C$ if and only if $x < 1/2.$ The
conclusion in (i), (ii) and (iii) now follow from Theorem \ref{closed-0}
and Corollary \ref{coro-spectrum}. Since the spectrum of $W_{w, x}$
is a subset of $\{-\dep_u + k : k \in \mathbb N\}$, parts (iv) and
(v) may be deduced from Propositions \ref{sectorial},
\ref{semigroup} and \ref{hypo-rone}. Finally, part (vi) may be
deduced from Corollary \ref{cpt-res}.
\end{proof}
\begin{remark} \label{remark-narrow}
Note that the conclusion of Theorem \ref{w-x} may not hold in case $(\des{u}, E_u)$ is not narrow. For instance, if $\mathscr T$ is the binary tree, then an examination of \eqref{narrow-estimate} shows that
$$\sum_{v \in \supp\,\Hi{b}{u}} \frac{|g_0(v)|^2}{|\lambda_{uv}-\mu_0|^2}\,=\,\infty.$$ In this case, $\sigma(W_{w, 0})=\mathbb C$ (see Corollary \ref{coro-spectrum}). Finally, note that by Proposition \ref{symm}, the weighted join operator $\W{b}{u}$ with weight system given by \eqref{wt-Dub} is strictly $3$-symmetric, while by Proposition \ref{symm-r-one}(ii), its rank one extension $W_{w, x},$ $x \in \mathbb R$ is never $n$-symmetric.
\end{remark}

\chapter{Weighted Join operators on rootless directed trees}

In this chapter, we extend the notion of join operation at a given
base point to rootless directed trees and study the associated weighted join
operators. This is achieved by introducing a
partial order relation on a rootless directed tree.

\section{Semigroup structures on extended rootless directed trees}

Let $\mathscr T=(V, E)$ be a rootless directed tree
and let $\mathscr T_{\infty}=(\V, E_{\infty})$ be the extended directed tree associated with $\mathscr T$. Fix $u, v \in \V$. Define
\beqn
u \leq v ~\mbox{if there exists a directed path} ~[u, v] \mbox{~from~} u ~\mbox{to~} v.
\eeqn
Note that $\leq $ defines a partial order on $\V.$ Further, $\leq$ is anti-symmetric, since $\mathscr T$ is a directed tree. Moreover, since $v \leq \infty$ for every $v \in \V,$ $\infty$ can be considered as a maximal element of $\mathscr T_{\infty}$, whereas $\mathscr T_{\infty}$, being rootless, has no minimal element.
One may now define the join operation $u \sqcup v$ on $\mathscr T_\infty$ by setting
\beqn
u \sqcup v \,=\, \begin{cases} u & \mbox{if~} v \leq u, \\
 v & \mbox{if~} u \leq v, \\ \infty & \mbox{otherwise}.
\end{cases}
\eeqn
As in the case of rooted directed trees (see Lemma \ref{sg-1}), $(\V, \sqcup)$ is a commutative semigroup admitting $\infty$ as an absorbing element.

Let us now define the meet operation. Let $u \in V$ and $v \in \V$. Note that by \cite[Proposition 2.1.4]{Jablonski} and the definition of extended directed tree, there exists $w_0 \in V$ such that $\{u, v\} \subseteq \des{w_0}.$ Thus $\parentn{n}{u} = w_0=\parentn{m}{v} ~\mbox{for some}~ m, n \in \mathbb N$.
In particular, the set $\parset{u}{v}$ given by
\beqn
\parset{u}{v} &:=&\{w \in V : \parentn{n}{u} = w=\parentn{m}{v} ~\mbox{for some}~ m, n \in \mathbb N\}
\eeqn
is non-empty. Further, $w \leq u$ for every $w \in \parset{u}{v}$.
%Note that $w_0 \leq u$ and $w_0 \leq v$. Thus the set $\{w \in V : \{u, v\} \subseteq \des{w}\}$ is non-empty and bounded above.
Moreover, it is totally ordered, that is, for any $w_1, w_2 \in \parset{u}{v}$, $w_1 \leq w_2$ or $w_2 \leq w_1.$
This follows since each vertex in $V$ has a unique parent. One may now define $u \sqcap v$ by
\beq
\label{u-v-des-w}
u \sqcap v \,=\, \max \big([w_0, u] \cap [w_0, v]\big),
\eeq
where $w_0$ is any element in $\parset{u}{v}.$
Note that \beq \label{u cap v -} u \sqcap v \in \asc{u} \cap \asc{v}.\eeq
Since $\parset{u}{v}$ is totally ordered, $u \sqcap v$ is independent of the choice of $w_0.$
Further,  we set $\infty \sqcap \infty = \infty.$
Once again, $(\V, \sqcap)$ is a commutative semigroup  admitting identity element as $\infty$ (cf. Lemma \ref{sg-2}). We leave the verification to the reader.

Before we define the join operation at a given base point, we present a decomposition of the extended directed tree.

\begin{lemma} \label{lem-v-infty-deco}
Let $\mathscr T=(V, E)$ be a rootless directed tree
and let $\mathscr T_{\infty}=(\V, E_{\infty})$ be the extended directed tree associated with $\mathscr T$. Then, for any $u \in V$, we have
\beqn
\V \,=\,  \bigcupdot_{j=0}^{\infty} \big(\des{\parentn{j}{u}} \setminus \des{\parentn{j-1}{u}}\big),
\eeqn
where we used the convention that $\des{\parentn{-1}{u}}=\emptyset.$
\end{lemma}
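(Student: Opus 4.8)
The plan is to establish the stated decomposition of $\V$ by showing that the ``shells'' $A_j := \des{\parentn{j}{u}} \setminus \des{\parentn{j-1}{u}}$ (with the convention $\des{\parentn{-1}{u}} = \emptyset$) are pairwise disjoint and together exhaust $\V$. This is the rootless counterpart of the decomposition \eqref{contend}; the one genuinely new feature is that the union is now indexed by all $j \geq 0$, since in the absence of a root the iterated parent $\parentn{j}{u}$ is defined for every $j$ and the ascending chain $u, \parentn{1}{u}, \parentn{2}{u}, \ldots$ never terminates.

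First I would record the nestedness of the descendant sets. For each $j \geq 1$, the vertex $\parentn{j-1}{u}$ is a child of $\parentn{j}{u}$, so $\parentn{j-1}{u} \in \des{\parentn{j}{u}}$ and therefore $\des{\parentn{j-1}{u}} \subseteq \des{\parentn{j}{u}}$. This produces the increasing chain $\des{u} = \des{\parentn{0}{u}} \subseteq \des{\parentn{1}{u}} \subseteq \des{\parentn{2}{u}} \subseteq \cdots$, of which the $A_j$ are precisely the successive differences. Pairwise disjointness is then immediate: whenever $i < j$ one has $A_i \subseteq \des{\parentn{i}{u}} \subseteq \des{\parentn{j-1}{u}}$, and $A_j$ is disjoint from $\des{\parentn{j-1}{u}}$ by its very definition, so $A_i \cap A_j = \emptyset$.

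For the covering property I would note that, the chain being increasing, the union telescopes to $\bigcup_{j=0}^{\infty} A_j = \bigcup_{j=0}^{\infty} \des{\parentn{j}{u}}$, so it suffices to prove that every element of $\V$ lies in $\des{\parentn{j}{u}}$ for some $j \geq 0$. The point $\infty$ lies in $\des{u} = A_0$ by Remark \ref{infty-child}. For an arbitrary $v \in V$, I would invoke the non-emptiness of $\parset{u}{v}$ established in the discussion preceding this lemma: there exist $m, n \in \mathbb{N}$ with $\parentn{n}{u} = \parentn{m}{v} =: w_0$. The relation $\parentn{m}{v} = w_0$ is equivalent to $v \in \childn{m}{w_0} \subseteq \des{w_0}$, whence $v \in \des{w_0} = \des{\parentn{n}{u}}$, placing $v$ in the required set with $j = n$. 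Combining disjointness with this exhaustion yields the asserted disjoint-union decomposition.

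The argument presents no serious obstacle; the only step demanding care is the covering part, and it rests entirely on the fact that any two vertices of a rootless directed tree admit a common ancestor, i.e.\ $\parset{u}{v} \neq \emptyset$. This in turn is a consequence of \cite[Proposition 2.1.4]{Jablonski}, already recorded in the paragraph defining the meet operation, so no additional machinery is needed.
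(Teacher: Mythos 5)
Your proof is correct, and it takes a mildly but genuinely different route from the paper's. The paper's proof runs entirely through the meet operation: for $v \in V$, by \eqref{u-v-des-w} there is a $j$ with $u \sqcap v = \parentn{j}{u}$, and the uniqueness of the meet at once places $v$ in the single shell $\des{\parentn{j}{u}} \setminus \des{\parentn{j-1}{u}}$, so covering and disjointness come in one stroke (with disjointness left implicit). You bypass the meet altogether: you first prove the descendant sets form an increasing chain, deduce pairwise disjointness of the successive differences from that, and then handle exhaustion by telescoping the union and invoking only the bare common-ancestor fact $\parset{u}{v} \neq \emptyset$ from \cite[Proposition 2.1.4]{Jablonski}. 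Both arguments ultimately rest on the same fact, since the meet is itself constructed from common ancestors; what yours buys is a more elementary, self-contained argument that needs neither the definition nor the uniqueness of $\sqcap$ and that makes the disjointness explicit. What the paper's buys is brevity and the sharper information of exactly which shell contains $v$ (the index $j$ with $\parentn{j}{u} = u \sqcap v$), which is the fact implicitly reused when the weighted meet operator $\W{\infty}{u}$ is decomposed along this very partition in Theorem \ref{o-deco-rl}.
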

\begin{proof}
Let $v \in V.$  By \eqref{u-v-des-w}, there exists a non-negative integer $j$ such that $u \sqcap v = \parentn{j}{u}.$ It is now immediate from \eqref{u cap v -} and the uniqueness of $u \sqcap v$ that $$v \in \des{\parentn{j}{u}} \setminus \des{\parentn{j-1}{u}}.$$ Consequently, we get the inclusion $$V ~\subseteq ~ \bigcup_{j=0}^{\infty} \des{\parentn{j}{u}} \setminus \des{\parentn{j-1}{u}}.$$
Since $\infty \in \des{u}$, we get the desired equality.
\end{proof}
%\begin{remark}
%Note that
%\beqn
%\V =  \des{u}  \bigcupdot \Big(\bigcupdot_{j=1}^{\infty} \Big(\des{\parentn{j}{u}} \setminus \des{\parentn{j-1}{u}}\Big) \Big),
%\eeqn
%\end{remark}
%Proposition 2.1.4
\begin{definition}[Join operation at a base point]
Let $\mathscr T=(V, E)$ be a rootless directed tree and let $\mathscr T_{\infty}=(\V, E_{\infty})$ be the extended directed tree associated with $\mathscr T$. Fix $\mf b  \in \V$ and let $u, v \in \V.$
Define the binary operation $\sqcup_{\mf b}$ on $\V$ by
\beqn
u \sqcup_{\mf b} v \,=\, \begin{cases} u \sqcap v & \mbox{if~} u, v \in \asc{\mf b}, \\
 u & \mbox{if~} v = \mf b, \\
 v & \mbox{if~} \mf b = u, \\
 u \sqcup v & \mbox{otherwise}.
\end{cases}
\eeqn
\end{definition}
%\begin{remark}
Note that $(\V, \sqcup_{\mf b})$ is a commutative semigroup admitting identity element as $\mf b.$ Further, $\sqcup_{\infty}=\sqcap.$ The table for join operation $u \sqcup_{\mf b} v$ at the base point $\mf b$ for a rootless directed tree is identical with Table \ref{Table1}.
%\end{remark}

\begin{definition}
Let $\mathscr T=(V, E)$ be a rootless directed tree and let $\mathscr T_{\infty}=(\V, E_{\infty})$ be the extended directed tree associated with $\mathscr T$.
Fix $u, \mf b \in \V$ and the weight system
$\lambdab_u = \{\lambda_{uv}\}_{v\in \V}$ of complex numbers,
we define the {\em weighted join operator $\W{b}{u}$ $($based at $\mf b$$)$  on ${\mathscr T}$} by
   \begin{align*}
   \begin{aligned}
{\mathscr D}(\W{b}{u}) & := ~\big\{f \in \ell^2(V) \colon
\varLambda^{(\mf b)}_{u} f \in \ell^2(V)\big\},
   \\
\W{b}{u} f & := ~ \varLambda^{(\mf b)}_{u} f, \quad f \in {\mathscr
D}(\W{b}{u}),
   \end{aligned}
   \end{align*}
where $\varLambda^{(\mf b)}_{u}$ is the mapping defined on
complex functions $f$ on $V$ by
   \begin{align*}
(\varLambda^{(\mf b)}_{u} f) (w) ~:=
\displaystyle \sum_{v \in M^{(\mf b)}_u(w)}\Lmd{u}{v}\,  f(v), \quad  w \in
V\end{align*}
with $M^{(\mf b)}_u(w)$ given by
\beqn \label{M-u-b-rl} M^{(\mf b)}_u(w)\,:=\,\{v \in V : u \w v=w\}. \eeqn
\end{definition}
\begin{remark}
As in the rooted case, it can be seen that $\mathscr D(\W{b}{u})$ forms a subspace of $\ell^2(V)$.
Clearly, $e_v \in {\mathscr D}(\W{b}{u})$ and
$(\W{b}{u} e_v)(w) =  \lambda_{uv}\, e_{u \w v}(w),$ $w \in V.$
Thus
\beqn \label{D-V}
\mathscr D_{V}:=\mbox{span}\,\{e_v : v \in V\} \subseteq {\mathscr D}(\W{b}{u}), \quad \W{b}{u} \mathscr D_{V} \subseteq \mathscr D_{V}.
\eeqn
Thus all positive integral powers of $\W{b}{u}$ are densely defined and the Hilbert space adjoint ${\W{b}{u}}^*$ of $\W{b}{u}$ is defined.
%{\red Further, one can imitate the proof of Proposition \ref{closed} to see that $\W{b}{u}$ is a closed linear operator such that $\mathscr D_V$ forms a core for $\W{b}{u}.$ }
\end{remark}

\begin{figure}
\begin{tikzpicture}[scale=.8, transform shape]
\tikzset{vertex/.style = {shape=circle,draw, minimum size=1em}}
\tikzset{edge/.style = {->,> = latex'}}
% vertices
\node[vertex] (x) at  (-5.5,0) {$\ldots$};
\node[vertex] (y) at  (-3.5,0) {$6$};
\node[vertex] (z) at  (-1.5,0) {$3$};
\node[vertex] (a) at  (0.5,0) {$0$};
\node[vertex] (b) at  (2,-1) {$1$};
\node[vertex] (c) at  (2,1) {$2$};
\node[vertex] (d) at  (4, -1) {$4$};
\node[vertex] (e) at  (4, 1) {$\ldots$};
\node[vertex] (f) at  (6, -1) {$\ldots$};
\node[vertex] (h) at  (8, 1) {$\ldots$};
\node[vertex] (i) at  (8, -1) {$\ldots$};

\tikzset{vertex/.style = {shape=circle,draw, blue, minimum size=1em}}
\node[vertex] (g) at  (6, 1) {$m$};
%edges
\draw[edge] (x) to (y);
\draw[edge] (y) to (z);
\draw[edge] (z) to (a);
\draw[edge] (a) to (b);
\draw[edge] (a) to (c);
\draw[edge] (b) to (d);
\draw[edge] (c) to (e);
\draw[edge] (d) to (f);
\draw[edge] (e) to (g);
\draw[edge] (g) to (h);
\draw[edge] (f) to (i);
\end{tikzpicture}
\caption{The rootless directed tree $\mathscr T_3$ with prescribed vertex $m$} \label{fig4}
\end{figure}

To get an idea about the structure of weighted join operators on rootless directed trees, let us discuss one example.
\begin{example}[With one branching vertex]
Let $\mathscr T_3$ denote the directed tree as shown in Figure \ref{fig4} (see \cite[Eqn (6.2.10)]{Jablonski}).
Consider the ordered orthonormal basis $$\{e_{3n} : n \in \mathbb N\} \cup \{e_{3n + 2} : n \in \mathbb N\} \cup \{e_{3n+1} : n \in \mathbb N\}$$ of $\ell^2(V).$
The matrix representation of the weighted join operator $\W{m}{0}$  and weighted meet operator $\W{\mf \infty}{m}$ on $\mathscr T_3$ are given by
\beqn
\W{m}{0} &=&    \begin{psmallmatrix}
\cdots & 0 & \cdots &     &   &   &  &\\
 & \vdots &  &   &   &   &  \\
\cdots & 0 &  \cdots &  &   &   &   &  \\
\ldots & \lambda_{{\mf m}3}  & \lambda_{{\mf m}0} &   \lambda_{{\mf m}2} & \cdots & \lambda_{{\mf m}{\mf m}} & 0 & \cdots & &\\
\cdots & 0 & 0 &  0 & 0 & \cdots & 0 & \lambda_{{\mf m}{\mf m}+3} &   0 & \cdots   & \\
 & & &  &\vdots  & &  \vdots & 0 &  \lambda_{{\mf m}{\mf m}+6}   & 0 &\\
 & & & & & & &  \vdots & 0  & \lambda_{{\mf m}{\mf m}+9}  & \\
 & & & & & & &  &   \vdots &   & \ddots \end{psmallmatrix} \oplus {\bf 0},
\\ \\ \\
 \W{\mf \infty}{m} &=& \begin{psmallmatrix}
\ddots & \\
 & \lambda_{m3} & \\
  & &  \lambda_{m0} & 0  & \cdots &  &  & \cdots & 0 & \lambda_{m1} & \lambda_{m4} & \cdots \\
  & & 0 & \lambda_{m2} & 0 & \cdots &    &    &\\
 & & \vdots & 0 & \ddots &  0 & \cdots  &   & \\
  & & & \vdots & & \lambda_{mm-3}  & 0  &  \cdots & \\
  & & & & &  0 &   \lambda_{mm} & \lambda_{mm+3}  & \cdots \\
  & & & & &  \vdots &   0 & 0  & \cdots \\
  & & & & &   &   \vdots & \vdots  &  \end{psmallmatrix}.
\eeqn
These expressions should be compared with the matrix representations of weighted join operator and weighted meet operator on $\mathscr T_2$ as discussed in Example \ref{ex-T2}.
\eop
\end{example}

\section{A decomposition theorem and spectral analysis}

Note that $\W{b}{\mf b}=\D{\mf b}$, the diagonal operator with diagonal entries $\lambdab_{\mf b}$. The structure of the weighted join operator $\W{b}{u}$, $u \neq \mf b$ turns out to be quite involved in case of rootless directed trees.
We present below a counter-part of Theorem \ref{o-deco} for weighted join operators on rootless directed trees.
\begin{theorem} \label{o-deco-rl}
Let $\mathscr T=(V, E)$ be a rootless directed tree and let $\mathscr T_{\infty}=(\V, E_{\infty})$ be the extended directed tree associated with $\mathscr T$.
For $\mf b \in \V$, $u \in V \setminus \{\mf b\}$ and the weight system
$\lambdab_u = \{\lambda_{uv}\}_{v\in \V}$ of complex numbers,
let $\D{u}$ be the diagonal operator on $\mathscr T$ and let $\W{b}{u}$ be the weighted join operator on $\mathscr T$.
Then the following hold:
\begin{enumerate}
\item Assume that $\mf b \in V$.
Consider
the subspace $\Hi{b}{u}$ of $\ell^2(V)$ given by
\beq \label{H-u-rl}
\Hi{b}{u} &=& \begin{cases}
\displaystyle \ell^2(\asc{u} \cup  \desb{u})
 & \mbox{if~} \mf b \in \des{u},  \\
 \displaystyle \ell^2(\mathsf{Des}_u(u))   & \mbox{otherwise}.
\end{cases}
\eeq
Then the weighted join operator $\W{b}{u}$ admits the decomposition \beq \label{deco-W-rl} \W{b}{u}\,=\,\Db{b}{u} \oplus \N{b}{u}~ \mbox{on~} \ell^2(V)\,=\,\Hi{b}{u} \oplus \big(\ell^2(V) \ominus \Hi{b}{u}\big),\eeq
where $\Db{b}{u}$ is a densely defined diagonal operator in $\Hi{b}{u}$ and $\N{b}{u}$ is a rank one densely defined linear operator on $\ell^2(V) \ominus \Hi{b}{u}$ with invariant domain. Further,
$\Db{b}{u}$ and $\N{b}{u}$ are given by
\beq \label{til-D-rl}
\Db{b}{u} &=& \D{u}|_{_{\Hi{b}{u}}}, \quad \mathscr D(\Db{b}{u})\,=\,\{f \in \Hi{b}{u} : \D{u}f \in \Hi{b}{u}\},
\\
\label{til-N-rl}
\N{b}{u} &=&
e_u \obslash e_{_{\lambdab_u, A_u}},
\eeq
where the subset $A_u$ of $V$ is given by
\beqn
\label{A-u-rl}
A_u &=& \begin{cases}
 [u, \mf b]
 & \mbox{if~} \mf b \in \des{u}, \\
\asc{u} \cup \{\mf b, u\}  & \mbox{otherwise}.
\end{cases}
\eeqn
%Moreover, $\N{b}{u}$ is bounded if $b \in \des{u}.$
\item Assume that $\mf b = \infty.$ Consider the orthogonal decomposition of $\ell^2(V)$ $($as ensured by Lemma \ref{lem-v-infty-deco}$)$ given by
\beqn
\label{infty-l2}
\ell^2(V) \,=\,   \bigoplus_{j=0}^{\infty} \ell^2(\des{u_j} \setminus \des{u_{j-1}}),
\eeqn
where $\des{u_{-1}}=\emptyset$ and $u_j:=\parentn{j}{u},$ $j \in \mathbb N.$
Further, with respect to the above decomposition, $\W{b}{u}$ decomposes as
\beq
\label{infty-W}
%\left.
%\begin{array}{ccc}
\mathscr D(\W{b}{u}) & =  & \notag  \bigoplus_{j=0}^{\infty} \mathscr D(e_{u_j} \obslash e_{_{\lambdab_u,  \des{u_j} \setminus \des{u_{j-1}}}}), \\
\W{b}{u}  & =  & \bigoplus_{j=0}^{\infty} e_{u_j} \obslash e_{_{\lambdab_u,  \des{u_j} \setminus \des{u_{j-1}}}}.
%\end{array}
%\right\}
\eeq
%\beq
%\label{infty-W}
%\left.
%\begin{array}{ccc}
%\mathscr D(\W{b}{u}) & =  & \bigoplus_{j=0}^{\infty} \mathscr D(e_{u_j} \obslash e_{_{\lambdab_u,  \des{u_j} \setminus \des{u_{j-1}}}}), \\
%\W{b}{u}  & =  & \bigoplus_{j=0}^{\infty} e_{u_j} \obslash e_{_{\lambdab_u,  \des{u_j} \setminus \des{u_{j-1}}}}.
%\end{array}
%\right\}
%\eeq
\end{enumerate}
\end{theorem}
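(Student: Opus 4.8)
The plan is to treat the two cases of Theorem \ref{o-deco-rl} separately, since part (i) is essentially the rootless analogue of Theorem \ref{o-deco} while part (ii) is the analogue of the $\mf b=\infty$ discussion in Remark \ref{rmk-dicho} and formula \eqref{meet-deco}. The crucial difference from the rooted situation is that in a rootless tree $\asc{u}$ is infinite, so in part (ii) the decomposition of $\W{\infty}{u}$ now involves \emph{infinitely} many orthogonal summands, each of which is a possibly unbounded rank one operator $e_{u_j}\obslash e_{_{\lambdab_u,\,\des{u_j}\setminus\des{u_{j-1}}}}$ of the kind studied in Lemma \ref{lem-inj-ten-unb}. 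This is what makes $\W{b}{u}$ possibly unbounded and possibly non-closable in the rootless case.

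For part (i), I would first observe that the action formula $\W{b}{u}e_v=\lambda_{uv}\,e_{u\sqcup_{\mf b}v}$ holds verbatim in the rootless setting, because the join-at-a-base-point $\sqcup_{\mf b}$ and Table \ref{Table1} are identical with the rooted case (as stated just before the definition of $\W{b}{u}$ in this chapter). Hence the computations \eqref{W11}, \eqref{W21}, \eqref{W22} carry over unchanged, provided I verify that $P_{\Hi{b}{u}}$ reduces $\D{u}$. Since $\Hi{b}{u}$ is supported on a subset of $V$ (namely $\asc{u}\cup\desb{u}$ or $\mathsf{Des}_u(u)$), its orthogonal projection commutes with the diagonal $\D{u}$, so by \cite[Proposition 1.15]{Sc} the subspace $\Hi{b}{u}$ reduces $\D{u}$; this gives \eqref{deco-W-rl}, \eqref{til-D-rl} and the diagonality of $\Db{b}{u}$ with the stated domain. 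The remaining point is the structure of $\N{b}{u}$: from Table \ref{Table1} the only non-diagonal contributions come from vertices $v\in A_u$ mapping into $[e_u]$, yielding $\N{b}{u}=e_u\otimes e_{_{\lambdab_u,A_u}}$ as in \eqref{til-N-rl}. The one genuinely new feature is that in the rootless case $\asc{u}$ is infinite, so when $\mf b\notin\des{u}$ the set $A_u=\asc{u}\cup\{\mf b,u\}$ is infinite and hence $e_{_{\lambdab_u,A_u}}$ need not lie in $\ell^2(V)$; this is why I state that $\N{b}{u}$ is merely a \emph{densely defined} rank one operator with invariant domain (invariance follows from \eqref{inv-dom}/the rootless $\mathscr D_V$ remark), rather than a bounded one.

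For part (ii), I would start from Lemma \ref{lem-v-infty-deco}, which provides the orthogonal decomposition $\ell^2(V)=\bigoplus_{j=0}^{\infty}\ell^2(\des{u_j}\setminus\des{u_{j-1}})$ with $u_j=\parentn{j}{u}$. Then I compute $\W{\infty}{u}e_v=\lambda_{uv}\,e_{u\sqcap v}$, which is the rootless analogue of \eqref{meet-action}; by \eqref{u cap v -} and Lemma \ref{lem-v-infty-deco}, if $v\in\des{u_j}\setminus\des{u_{j-1}}$ then $u\sqcap v=u_j$, so $\W{\infty}{u}$ maps each summand $\ell^2(\des{u_j}\setminus\des{u_{j-1}})$ into $[e_{u_j}]$. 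Recognising the restriction of $\W{\infty}{u}$ to the $j$-th summand as exactly the rank one operator $e_{u_j}\obslash e_{_{\lambdab_u,\,\des{u_j}\setminus\des{u_{j-1}}}}$ (using the definition of $h\obslash g$ from Section 1.3) gives \eqref{infty-W} together with the domain identity. I expect the main obstacle to be bookkeeping the domains and the convergence of the orthogonal direct sum of unbounded operators: I must check that $f\in\mathscr D(\W{\infty}{u})$ (i.e.\ $\varLambda^{(\infty)}_u f\in\ell^2(V)$) if and only if $f$ lies in the direct sum of the domains $\mathscr D(e_{u_j}\obslash e_{_{\lambdab_u,\,\des{u_j}\setminus\des{u_{j-1}}}})$ with the $\ell^2$-sum of the images finite, which amounts to justifying that the pairwise-orthogonal ranges $[e_{u_j}]$ make the naive termwise action coincide with the honest operator $\W{\infty}{u}$. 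This is routine but is where care is needed, since unlike the rooted case the sum is infinite and the individual blocks may be unbounded (Lemma \ref{lem-inj-ten-unb}), so the domain is genuinely a proper subspace and cannot be replaced by all of $\ell^2(V)$.
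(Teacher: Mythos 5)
Your proposal is correct and follows essentially the same route as the paper's proof: for part (i) both arguments rest on the observation that Table \ref{Table1} and the action formula $\W{b}{u}e_v=\lambda_{uv}\,e_{u\sqcup_{\mf b}v}$ persist verbatim in the rootless setting, so that $\Hi{b}{u}$ and its orthogonal complement are invariant, the block on $\Hi{b}{u}$ is the diagonal restriction $\D{u}|_{\Hi{b}{u}}$, and the complementary block is the rank one operator built from $e_{_{\lambdab_u, A_u}}$; for part (ii) both use Lemma \ref{lem-v-infty-deco} together with the computation $\W{\infty}{u}e_v=\lambda_{uv}\,e_{u\sqcap v}=\lambda_{uv}\,e_{u_j}$ for $v\in\des{u_j}\setminus\des{u_{j-1}}$. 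You also correctly isolated the one substantive change from the rooted case, namely that $A_u$ is infinite when $\mf b\notin\des{u}$, so $\N{b}{u}$ must be read as the possibly unbounded, densely defined operator $e_u\obslash e_{_{\lambdab_u, A_u}}$ rather than a bounded injective tensor, which is exactly how the paper's own proof writes the decomposition.
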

\begin{proof} Let $V_u$ be the complement of $\des{u} \cupdot \asc{u}$ in $V.$ We divide the proof into three cases.
\begin{case}
$\mf b \notin \des{u}:$
\end{case}
\noindent
Consider the following decomposition of $V$:
\beqn
V \,=\, \mathsf{Des}_u(u) \cupdot A_u \cupdot \Big(V_u \setminus \{\mf b\}\Big).
\eeqn
%where $V_u$ is the complement of $\des{u} \cupdot \asc{u}$ in $V.$
Thus
\beqn
\ell^2(V) \,=\, \ell^2(\mathsf{Des}_u(u)) \oplus \ell^2(A_u) \oplus \ell^2(V_u \setminus \{\mf b\}).
\eeqn
Note that $\ell^2(\mathsf{Des}_u(u))$, $\ell^2(A_u)$ and $\ell^2(V_u \setminus \{\mf b\})$ are invariant subspaces of $\W{b}{u}.$
We claim that the weighted join operator $\W{b}{u}$ is given by
\beqn
\mathscr D(\W{b}{u}) &=&  \mathscr D(\D{u}|_{_{\ell^2(\mathsf{Des}_u(u))}}) \oplus \mathscr D(e_u \obslash e_{_{\lambdab_u, A_u}}) \oplus \ell^2(V_u \setminus \{\mf b\}), \\
\W{b}{u} &=& \D{u}|_{\ell^2(\mathsf{Des}_u(u))} \oplus  e_u \obslash e_{_{\lambdab_u, A_u}} \oplus 0.
\eeqn
To see the above decomposition, let $f \in \ell^2(V)$ be of the form $f_1 \oplus f_2 \oplus f_3$ with $f_1 \in \ell^2(\mathsf{Des}_u(u)), f_2 \in \ell^2(A_u), f_3 \in \ell^2(V_u \setminus \{\mf b\}).$ Note that $f \in \mathscr D(\W{b}{u})$ if and only if $\W{b}{u}f \in \ell^2(V),$ where $\W{b}{u}f$ takes the form
\beqn
&& \sum_{v \in \mathsf{Des}_u(u)} f_1(v) \lambda_{uv}\, e_{u \w v} \oplus \sum_{v \in A_u} f_2(v) \lambda_{uv}\, e_{u \w v} \oplus \sum_{v \in V_u \setminus \{\mf b\}} f_3(v) \lambda_{uv}\, e_{u \w v} \\ &=&\sum_{v \in \mathsf{Des}_u(u)} f_1(v) \lambda_{uv}\, e_{v} \oplus \sum_{v \in A_u} f_2(v) \lambda_{uv}\, e_{u} \oplus 0.
\eeqn
It follows that $f \in \mathscr D(\W{b}{u})$ if and only if
\beqn
f_1 \in \mathscr D(\D{u}|_{_{\ell^2(\mathsf{Des}_u(u))}}), \quad f_2 \in \mathscr D(e_u \obslash e_{_{\lambdab_u, A_u}}), \quad f_3 \in \ell^2(V_u \setminus \{\mf b\}).
\eeqn
This yields the desired orthogonal decomposition of $\W{b}{u}$.

\begin{case}
$\mf b \in \des{u} \setminus \{\infty\}:$
\end{case}
\noindent
Consider the following decomposition of $V$:
\beqn
V &=& \Big(\asc{u} \cup \desb{u} \Big) \cupdot A_u \cupdot V_u.
\eeqn
 Thus
\beqn
\ell^2(V) &=& \ell^2(\asc{u} \cup \desb{u}) \oplus \ell^2(A_u) \oplus \ell^2(V_u).
\eeqn
Note that $\ell^2(\asc{u} \cup \desb{u})$, $\ell^2(A_u)$ and $\ell^2(V_u)$ are invariant subspaces of $\W{b}{u}.$
As in the previous case, one can verify that the weighted join operator $\W{b}{u}$ is given by
\beqn
\mathscr D(\W{b}{u}) &=&  \mathscr D(\D{u}|_{_{\ell^2(\asc{u} \cup \desb{u})}}) \oplus \mathscr D(e_u \obslash e_{_{\lambdab_u, A_u}}) \oplus \ell^2(V_u), \\
\W{b}{u} &=& \D{u}|_{_{\ell^2(\asc{u} \cup \desb{u})}} \oplus  e_u \obslash e_{_{\lambdab_u, A_u}} \oplus 0.
\eeqn

\begin{case}
$\mf b =\infty:$
\end{case}
\noindent
%The decomposition \eqref{infty-l2} is immediate from
%Lemma \ref{lem-v-infty-deco}, while
The decomposition \eqref{infty-W} follows from $$\W{b}{u}e_v = \lambda_{uv}e_{u \sqcap v} = \lambda_{uv}e_{u_j}, \quad v \in \des{u_j} \setminus \des{u_{j-1}}.$$
This completes the proof.
\end{proof}

Here are some immediate consequences of Theorem  \ref{o-deco-rl}.

\begin{corollary}[Dichotomy]
Let $\mathscr T=(V, E)$ be a rootless directed tree and let $\mathscr T_{\infty}=(\V, E_{\infty})$ be the extended directed tree associated with $\mathscr T$.
For $\mf b, u \in V$ and the weight system
$\lambdab_u = \{\lambda_{uv}\}_{v\in \V}$ of complex numbers,
the weighted join operator $\W{b}{u}$ on ${\mathscr T}$ is at most rank one (possibly unbounded) perturbation of a diagonal operator, while the weighted meet operator $\W{\mf \infty}{u}$ on ${\mathscr T}$ is an infinite rank operator provided $\lambdab_u \subseteq \mathbb C \setminus \{0\}$.
\end{corollary}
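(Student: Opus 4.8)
The plan is to read off both claims directly from the decomposition theorem just proved, namely Theorem \ref{o-deco-rl}, which handles the two regimes $\mf b \in V$ and $\mf b = \infty$ separately. For the first assertion, I would invoke part (i): for $u \neq \mf b$ with $\mf b \in V$, the decomposition \eqref{deco-W-rl} exhibits $\W{b}{u}$ as the orthogonal sum $\Db{b}{u} \oplus \N{b}{u}$, where $\Db{b}{u}$ is a diagonal operator on $\Hi{b}{u}$ and $\N{b}{u} = e_u \otimes e_{_{\lambdab_u, A_u}}$ is a rank one operator by \eqref{til-N-rl}. Since a diagonal operator together with a single rank one summand is, globally, a rank one perturbation of a diagonal operator (extend $\Db{b}{u}$ by $\lambda_{uu} e_u \otimes e_u$ on the complementary summand and absorb the discrepancy into the rank one term), the weighted join operator $\W{b}{u}$ is at most a rank one perturbation of the diagonal operator $\D{u}$. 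The one point requiring a word of care, in contrast to the rooted case of the corresponding Corollary to Theorem \ref{o-deco}, is that here $\N{b}{u}$ may be \emph{unbounded}: the vector $e_{_{\lambdab_u, A_u}}$ need not lie in $\ell^2(V)$ because $\asc{u}$ is now infinite (the tree being rootless), so the perturbation $e_u \obslash e_{_{\lambdab_u, A_u}}$ is a possibly unbounded rank one operator in the sense of Lemma \ref{lem-inj-ten-unb}. This is exactly why the statement inserts the qualifier ``(possibly unbounded)''.

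For the second assertion about the weighted meet operator $\W{\mf \infty}{u}$, I would invoke part (ii) of Theorem \ref{o-deco-rl}. There the decomposition \eqref{infty-W} expresses $\W{\mf \infty}{u}$ as the orthogonal direct sum
\beqn
\W{\mf \infty}{u} \,=\, \bigoplus_{j=0}^{\infty} e_{u_j} \obslash e_{_{\lambdab_u,  \des{u_j} \setminus \des{u_{j-1}}}},
\eeqn
where $u_j = \parentn{j}{u}$ and the summands are distinct because the index $j$ now ranges over all of $\mathbb N$ (this is the genuinely new feature relative to the rooted case, where $j$ was capped at $\dep_u$ and the meet operator had finite rank). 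Each summand $e_{u_j} \obslash e_{_{\lambdab_u, \des{u_j}\setminus\des{u_{j-1}}}}$ is a rank one operator whose range is spanned by $e_{u_j}$, and it is a \emph{nonzero} operator precisely when the coefficient system $\{\lambda_{uv} : v \in \des{u_j}\setminus\des{u_{j-1}}\}$ is not identically zero.

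The key step is therefore to verify that under the hypothesis $\lambdab_u \subseteq \mathbb C \setminus \{0\}$, every summand is nonzero, so that the direct sum genuinely has infinite rank. This is immediate: by Lemma \ref{lem-v-infty-deco} the sets $\des{u_j}\setminus\des{u_{j-1}}$ partition $V$ and each is nonempty (it contains $u_j$ itself, since $u_j \in \des{u_j}$ but $u_j \notin \des{u_{j-1}} = \des{\parentn{j-1}{u}}$), so each $e_{u_j} \obslash e_{_{\lambdab_u, \des{u_j}\setminus\des{u_{j-1}}}}$ acts nontrivially, sending $e_{u_j} \mapsto \lambda_{u u_j}\, e_{u_j}$ with $\lambda_{u u_j} \neq 0$. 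Consequently the ranges $[e_{u_j}]$, $j \in \mathbb N$, are pairwise orthogonal one-dimensional subspaces all meeting the range of $\W{\mf \infty}{u}$, forcing $\mbox{rank}\,\W{\mf \infty}{u} = \aleph_0$. The only mild obstacle I anticipate is bookkeeping around the convention $\des{\parentn{-1}{u}} = \emptyset$ and making sure that the rootlessness is what guarantees the decomposition runs to infinity; once the nonemptiness of each block is established from Lemma \ref{lem-v-infty-deco}, the infinite-rank conclusion is forced, and both halves of the dichotomy follow with essentially no computation.
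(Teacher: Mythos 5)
Your proof is correct and takes essentially the same route the paper intends: the corollary is stated there as an immediate consequence of Theorem \ref{o-deco-rl}, with part (i) yielding the (possibly unbounded) rank one perturbation claim for $\mf b \in V$, and part (ii) yielding the infinite orthogonal sum whose $j$-th block $\des{u_j}\setminus\des{u_{j-1}}$ is nonempty (it contains $u_j$), so that $e_{u_j} \mapsto \lambda_{uu_j}e_{u_j}$ with $\lambda_{uu_j} \neq 0$ forces infinite rank, exactly as you argue. The only point worth adding is the trivial case $\mf b = u$, excluded from Theorem \ref{o-deco-rl} but covered by the corollary's phrase ``at most,'' since there $\W{b}{u}=\D{u}$ is itself diagonal.
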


The orthogonal decomposition \eqref{deco-W-rl} of $\W{b}{u}$, as ensured by Theorem \ref{o-deco-rl}, is given by the triple $(\Db{b}{u}, \N{b}{u}, \Hi{b}{u})$, where $\Hi{b}{u}$, $\Db{b}{u}$ and $\N{b}{u}$ are given by \eqref{H-u-rl}, \eqref{til-D-rl} and \eqref{til-N-rl} respectively.

\begin{corollary} \label{coro-bdd-rl}
Let $\mathscr T=(V, E)$ be a rootless directed tree and let $\mathscr T_{\infty}=(\V, E_{\infty})$ be the extended directed tree associated with $\mathscr T$.
For $\mf b \in \V$, $u \in V \setminus \{\mf b\}$ and the weight system
$\lambdab_u = \{\lambda_{uv}\}_{v\in \V}$ of complex numbers,
let $\D{u}$ be the diagonal operator on $\mathscr T$ and let $\W{b}{u}$ be a weighted join operator on $\mathscr T$. Then
the following holds true:
\begin{enumerate}
\item If $\mf b \notin \des{u}$, then $\W{b}{u}$ is bounded  if and only if \beqn \lambdab_u  \in \ell^{\infty}(\des{u})~ \mbox{and}~ \sum_{v \in \asc{u}}|\lambda_{uv}|^2 < \infty.\eeqn
\item If $\mf b \in \des{u} \setminus \{\infty\}$, then $\W{b}{u}$ is bounded  if and only if $\lambdab_u  \in \ell^{\infty}(\asc{u} \cup \desb{u})$.
\item If $\mf b = \infty,$ then $\W{b}{u}$ is bounded  if and only if \beqn \sup_{j \Ge 0} \sum_{v \in \des{u_j} \setminus \des{u_{j-1}}}\!\!\!\!\!\!\!\!\!|\lambda_{uv}|^2 < \infty, \eeqn
where $\des{u_{-1}}=\emptyset$ and $u_j:=\parentn{j}{u}$ for $j \in \mathbb N.$
\item If $\mf b \notin \des{u} $ and $\sum_{v \in \asc{u}}|\lambda_{uv}|^2 = \infty$, then $\W{b}{u}$ is not closable.
\end{enumerate}
\end{corollary}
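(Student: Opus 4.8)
The plan is to read off all four assertions from the orthogonal decomposition of $\W{b}{u}$ furnished by Theorem \ref{o-deco-rl}, together with the dichotomy for unbounded rank one operators in Lemma \ref{lem-inj-ten-unb}. Write $A_u=\asc{u}\cup\{\mf b, u\}$ when $\mf b\notin\des{u}$ and $A_u=[u,\mf b]$ when $\mf b\in\des{u}\setminus\{\infty\}$. In the case $\mf b\notin\des{u}$, Theorem \ref{o-deco-rl}(i) gives
\[
\W{b}{u}\,=\,\D{u}|_{_{\ell^2(\mathsf{Des}_u(u))}}\,\oplus\,\big(e_u\obslash e_{_{\lambdab_u, A_u}}\big)\,\oplus\,0
\]
on $\ell^2(V)=\ell^2(\mathsf{Des}_u(u))\oplus\ell^2(A_u)\oplus\ell^2(V_u\setminus\{\mf b\})$; in the case $\mf b\in\des{u}\setminus\{\infty\}$ one has the analogous splitting with diagonal block $\D{u}|_{_{\ell^2(\asc{u}\cup\desb{u})}}$; and for $\mf b=\infty$ one has the orthogonal direct sum $\bigoplus_{j\Ge 0}e_{u_j}\obslash e_{_{\lambdab_u,\des{u_j}\setminus\des{u_{j-1}}}}$ of Theorem \ref{o-deco-rl}(ii).

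For (i)--(iii) I would use that an orthogonal direct sum of operators is bounded precisely when every summand is bounded with uniformly bounded norms, that a diagonal block $\D{u}|_{_{\ell^2(W)}}$ is bounded iff $\lambdab_u\in\ell^\infty(W)$, and that by Lemma \ref{lem-inj-ten-unb} a rank one block $e_w\obslash e_{_{\lambdab_u, B}}$ admits a bounded extension with norm $\big(\sum_{v\in B}|\lambda_{uv}|^2\big)^{1/2}$, finite iff $e_{_{\lambdab_u, B}}\in\ell^2(B)$. In case (i) the diagonal condition on $\mathsf{Des}_u(u)$ is $\lambdab_u\in\ell^\infty(\des{u})$ (the extra index $u$ being a single term), and the rank one condition on $A_u=\asc{u}\cup\{\mf b,u\}$ reduces, modulo the two finite terms indexed by $\mf b$ and $u$, to $\sum_{v\in\asc{u}}|\lambda_{uv}|^2<\infty$; in case (ii) $A_u=[u,\mf b]$ is finite so the rank one block is automatically bounded and only $\lambdab_u\in\ell^\infty(\asc{u}\cup\desb{u})$ remains; in case (iii) the uniform boundedness over $j$ of the norms $\big(\sum_{v\in\des{u_j}\setminus\des{u_{j-1}}}|\lambda_{uv}|^2\big)^{1/2}$ is exactly the stated supremum condition.

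The substantive assertion is (iv). Here the crucial point, and the feature with no analogue in the rooted case, is that for a \emph{rootless} tree $\asc{u}=\{\parentn{n}{u}:n\Ge 1\}$ is infinite, hence $A_u=\asc{u}\cup\{\mf b, u\}$ is an infinite directed set. Since $\sum_{v\in\asc{u}}|\lambda_{uv}|^2=\infty$ we get $\sum_{v\in A_u}|\lambda_{uv}|^2=\infty$, so $g:=e_{_{\lambdab_u, A_u}}\notin\ell^2(A_u)$. Applying Lemma \ref{lem-inj-ten-unb} with $\mathcal H=\ell^2(A_u)$, the infinite index set $A_u$, the nonzero vector $f=e_u$, and this $g\notin\ell^2(A_u)$, we conclude that $e_u\obslash e_{_{\lambdab_u, A_u}}$ is \emph{not closable}. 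By the definition of non-closability this produces a sequence $\{h_n\}\subseteq\mathscr D(e_u\obslash e_{_{\lambdab_u, A_u}})\subseteq\ell^2(A_u)$ with $h_n\to 0$ yet $(e_u\obslash e_{_{\lambdab_u, A_u}})h_n\to z$ for some $z\neq 0$.

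It then remains to transport this failure across the orthogonal direct sum. Viewing $h_n$ as the vector $(0,h_n,0)\in\mathscr D(\W{b}{u})$ relative to the splitting above, one has $(0,h_n,0)\to 0$ in $\ell^2(V)$ while
\[
\W{b}{u}(0,h_n,0)\,=\,\big(0,\,(e_u\obslash e_{_{\lambdab_u, A_u}})h_n,\,0\big)\,\longrightarrow\,(0,z,0)\neq 0 .
\]
Hence the closure of the graph of $\W{b}{u}$ contains the pair $(0,(0,z,0))$ with $z\neq 0$, so $\W{b}{u}$ is not closable, which proves (iv). The only genuine obstacle is the bookkeeping of the first paragraph, namely invoking Theorem \ref{o-deco-rl} to pin down the three blocks and the precise index sets; once the rank one block is isolated, (iv) is a direct consequence of Lemma \ref{lem-inj-ten-unb} and the triviality that a non-closable orthogonal summand forces the whole operator to be non-closable.
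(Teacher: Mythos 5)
Your proof is correct and follows exactly the paper's route: the paper likewise reads (i)--(iii) directly off the orthogonal decomposition of Theorem \ref{o-deco-rl}, and deduces (iv) by applying Lemma \ref{lem-inj-ten-unb} to the rank one summand $e_u \obslash e_{_{\lambdab_u, A_u}}$ (non-square-summable $g$ forces non-closability, which passes to the full operator since a non-closable orthogonal summand makes the whole operator non-closable). The only difference is that the paper compresses all of this into one sentence, whereas you have written out the bookkeeping explicitly.
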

\begin{proof}
The desired conclusions in (i)-(iii) are immediate from Theorem \ref{deco-W-rl}, while (iv) follows from (i) and Lemma \ref{lem-inj-ten-unb}.
\end{proof}
\begin{remark}
Let us briefly discuss the spectral picture for a weighted join operator $\W{b}{u}$ on the rootless directed tree $\mathscr T$. Consider the orthogonal decomposition $(\Db{b}{u}, \N{b}{u}, \Hi{b}{u})$ of $\W{b}{u}$.
In case $\mf b \in \des{u} \setminus \{\infty\}$, the operator $\N{b}{u}$ in the decomposition of $\W{b}{u}$ is bounded. Hence the spectral picture of $\W{b}{u}$ can be described as in the rooted case (see Theorem \ref{spectrum}). We leave the details to the reader.   In case $\mf b \notin \des{u}$ and $\sum_{v \in \asc{u}}|\lambda_{uv}|^2 < \infty$, $\N{b}{u}$ is bounded and the same remark as above is applicable. Suppose now that $\mf b \notin \des{u}$ and $\sum_{v \in \asc{u}}|\lambda_{uv}|^2 = \infty.$ Then, by Corollary \ref{coro-bdd-rl}, $\N{b}{u}$ is unbounded. Hence, by Lemma \ref{lem-inj-ten-unb},
\beqn
\sigma(\W{b}{u})=\mathbb C, \quad \sigma_p(\W{b}{u}) =  \sigma_p(\Db{b}{u}) \cup \{0, \lambda_{uu}\}.
\eeqn
%{\red It is worth mentioning that the multiplicity function of $\W{b}{u}$ at the eigenvalue $0$ is infinite. }
%Finally, we consider the case in which $\mf b = \infty.$
\end{remark}

%As in the case of rooted directed tree, it can be seen that a weighted join operator on rootless directed tree are hyponormal if and only if it is normal.
The verification of the following is similar to that of Corollary \ref{coro-c-Jordan}, and hence we skip its verification.
\begin{corollary} \label{C-Jordan-rl}
Let $\mathscr T=(V, E)$ be a rootless directed tree  and let $\mathscr T_{\infty}=(\V, E_{\infty})$ be the extended directed tree associated with $\mathscr T$.
For $u \in V$ and $\mf b \in V \setminus \{u\}$, consider the weight system
$\lambdab_u = \{\lambda_{uv}\}_{v\in \V}$ of complex numbers and
let $\W{b}{u}$ denote the weighted join operator on ${\mathscr T}$. Consider the orthogonal decomposition $(\Db{b}{u}, \N{b}{u}, \Hi{b}{u})$ of $\W{b}{u}$ as ensured by Theorem \ref{o-deco-rl}. Then the following statements hold:
\begin{enumerate}
\item If $\lambda_{uu}=0$, then $\W{b}{u}$ is a complex Jordan operator of index $2$ provided $\Db{b}{u}$ belongs to $B(\Hi{b}{u})$ or $\N{b}{u}$ belongs to $B(\ell^2(V) \ominus \Hi{b}{u})$.
\item If $\lambda_{uu} \neq 0$ and $\N{b}{u} \in B(\ell^2(V) \ominus \Hi{b}{u})$, then $\W{b}{u}$ admits a bounded Borel functional calculus.
\end{enumerate}
\end{corollary}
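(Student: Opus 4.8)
The plan is to follow the strategy of the proof of Corollary \ref{coro-c-Jordan}, using the orthogonal decomposition $(\Db{b}{u}, \N{b}{u}, \Hi{b}{u})$ supplied by Theorem \ref{o-deco-rl} in place of the one coming from Theorem \ref{o-deco}. The only genuinely new feature of the rootless setting is that $\N{b}{u}=e_u \obslash e_{_{\lambdab_u, A_u}}$ may now be unbounded (this happens precisely when $\mf b \notin \des{u}$ and $\sum_{v \in \asc{u}}|\lambda_{uv}|^2=\infty$, by Corollary \ref{coro-bdd-rl}(iv)), and the boundedness hypotheses in (i) and (ii) are imposed exactly to neutralise this phenomenon. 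Throughout I would write $M=\Db{b}{u}\oplus 0$ and $N=0\oplus\N{b}{u}$ relative to $\ell^2(V)=\Hi{b}{u}\oplus(\ell^2(V)\ominus\Hi{b}{u})$, so that $\W{b}{u}=M+N$.

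For part (i), the first step is to observe that the computation behind \eqref{nilp-powers} is insensitive to whether $\N{b}{u}$ is bounded, since Theorem \ref{o-deco-rl} guarantees that $\N{b}{u}$ has invariant domain; thus ${\N{b}{u}}^k=\lambda_{uu}^{k-1}\,e_u\obslash e_{_{\lambdab_u, A_u}}$ for all $k\Ge 1$, and the assumption $\lambda_{uu}=0$ forces $N^2=0$, so that $N$ is nilpotent of index $2$ with invariant domain. The operator $M$ is normal, being the orthogonal direct sum of the diagonal operator $\Db{b}{u}$ and the zero operator. The second step is the commutation relation: because $M$ annihilates $\ell^2(V)\ominus\Hi{b}{u}$ while $N$ annihilates $\Hi{b}{u}$ and maps its domain into $\ell^2(V)\ominus\Hi{b}{u}$, one checks directly that both composites vanish, whence $MN\subseteq NM$ and $NM\subseteq MN$ on the relevant common domains. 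It then remains to invoke the definition of a complex Jordan operator: if $\Db{b}{u}\in B(\Hi{b}{u})$ then $M\in B(\ell^2(V))$ and $M\in\{N\}'$, giving alternative (a), while if $\N{b}{u}\in B(\ell^2(V)\ominus\Hi{b}{u})$ then $N\in B(\ell^2(V))$ and $N\in\{M\}'$, giving alternative (b). Either way $\W{b}{u}$ is complex Jordan of index $2$.

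For part (ii), with $\lambda_{uu}\neq 0$ and $\N{b}{u}$ bounded, I would argue exactly as in the proof of Corollary \ref{coro-c-Jordan}(ii). Since $\Db{b}{u}$ is a (possibly unbounded) normal operator, it carries a bounded Borel functional calculus by \cite[Theorem 13.24]{R}. The bounded operator $\N{b}{u}$ is algebraic — by \eqref{nilp-powers} it annihilates $p(\lambda)=\lambda(\lambda-\lambda_{uu})$, so $\sigma(\N{b}{u})=\{0,\lambda_{uu}\}$ — and hence admits a bounded Borel functional calculus with the explicit form and norm estimate recorded in the proof of Corollary \ref{coro-c-Jordan}(ii). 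Because functional calculus distributes over the orthogonal direct sum \eqref{deco-W-rl} and the norm of a direct sum is the maximum of the summand norms, the assignment $f\mapsto f(\Db{b}{u})\oplus f(\N{b}{u})$ furnishes the desired bounded Borel functional calculus for $\W{b}{u}$, extending the polynomial calculus.

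The only delicate point — and the reason the boundedness hypotheses appear — is the bookkeeping with unbounded summands. In (i) the definition of complex Jordan used here demands that at least one of the normal and nilpotent parts be bounded, and when $\N{b}{u}$ is unbounded one must read $M\in\{N\}'$ as the inclusion $MN\subseteq NM$ of unbounded operators rather than an identity of bounded ones; this is where I would be most careful, although the orthogonality of the two summands makes the verification routine. In (ii) an unbounded $\N{b}{u}$ is, by Lemma \ref{lem-inj-ten-unb}, not even closable and supports no reasonable functional calculus, which is precisely why $\N{b}{u}\in B(\ell^2(V)\ominus\Hi{b}{u})$ must be assumed.
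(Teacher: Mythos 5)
Your proposal is correct and follows exactly the route the paper intends: the paper omits the proof with the remark that it is ``similar to that of Corollary \ref{coro-c-Jordan}'', and your argument is precisely that adaptation, using the decomposition of Theorem \ref{o-deco-rl}, the invariant-domain version of \eqref{nilp-powers}, and the boundedness hypotheses to place either the normal summand or the nilpotent summand in $B(\ell^2(V))$ as the definition of complex Jordan (and the commutant $\{T\}'\subseteq B(\mathcal H)$) requires. Your careful reading of $M\in\{N\}'$ as the inclusion $MN\subseteq NM$ when $\N{b}{u}$ is unbounded, and your observation that an unbounded $\N{b}{u}$ is not even closable (Lemma \ref{lem-inj-ten-unb}), correctly pin down why the hypotheses in (i) and (ii) are needed.
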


As in the case of rooted directed trees (see Definition \ref{def-r}), one may introduce the rank one extension $W_{f, g}$ of the weighted join operator $\W{b}{u}$ on a rootless directed tree in a similar fashion. In case the operator $\N{b}{u}$ appearing in the decomposition of $\W{b}{u}$ is unbounded, it turns out (due to the fact that $\Db{b}{u}$ has no ``good influence" on $\N{b}{u}$) that $W_{f, g}$ is not even closable. On the other hand, in case $\N{b}{u}$ is bounded, one can obtain counter-parts of Theorems \ref{closed-0},  \ref{spectrum} and Propositions \ref{sectorial}, \ref{semigroup} for rank one extensions of weighted join operators on rootless directed trees along similar lines.
We leave the details to the reader.
\chapter{Rank one perturbations}

The considerations in Chapter 4 around the notion of rank one extensions of weighted join operators were mainly motivated by the graph-model developed in earlier chapters. Some of these can be replicated in a general set-up simply by replacing the vertex set of the underlying rooted directed tree by a countably infinite directed set. The results in this chapter give a few glimpses of this general scenario.  In particular, we discuss the role of some compatibility conditions (differing from compatibility conditions I and II as introduced in Chapter 4) in the sectoriality of rank one perturbations of diagonal operators. We also discuss
sectoriality of the form-sum of the form associated with a sectorial diagonal operator  and a form associated with not necessarily square-summable functions $f$ and $g.$
\section{Operator-sum}

Throughout this chapter, $J$ denotes a countably infinite directed set and let $\{e_j : j \in J\}$ be the standard orthonormal basis of $\ell^2(J).$ Let
$D_{\lambdab}$ stand for the diagonal operator in $\ell^2(J)$ with diagonal entries $\lambdab=\{\lambda_j : j \in J\}$ given by
\beqn
D_{\lambdab}e_j = \lambda_j e_j, \quad j \in J.
\eeqn

The following main result of this section shows that a compatibility condition ensures the sectoriality of the operator-sum of a sectorial diagonal operator and an unbounded rank one operator.

\begin{theorem} \label{operator-sum}
Let $D_{\lambdab}$ be a sectorial operator in $\ell^2(J)$ 
%such that \beqn
%\max\{|\lambda_j|, ~n\} \leqslant |\lambda_j - e^{i \theta}n|, \quad n \geqslant 1, ~j \in J.
%\eeqn
and let $f \in \ell^2(J)$. Let $g : J \rar \mathbb C$ be such that for some $z_0 \in \rho(D_{\lambdab}),$ \beq \label{CC-II-gen} \sum_{j \in J}\frac{|g(j)|^2}{|\lambda_{j}-z_0|^2} ~< ~\infty. \eeq Then $D_{\lambdab} + f \obslash g$ defines a sectorial operator in $\ell^2(J)$ with domain $\mathscr D(D_{\lambdab}).$
\end{theorem}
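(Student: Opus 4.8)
The plan is to show that the perturbation $f\obslash g$ is relatively $D_{\lambdab}$-bounded with $D_{\lambdab}$-bound equal to $0$, and then invoke a standard stability result for sectorial operators under small relative perturbations (namely \cite[Theorem 4.5.7]{M}, exactly as in the proof of Proposition \ref{sectorial}). The condition \eqref{CC-II-gen} is the analogue of the compatibility condition II relative to the resolvent point $z_0$, so the mechanism of Theorem \ref{closed-0} and Corollary \ref{bdd-rel} is available and should be mimicked almost verbatim.

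First I would record the key estimate. Since $D_{\lambdab}$ is sectorial it is in particular closed, and for a positive integer $m$ I would equip $\mathscr D(D_{\lambdab})$ with the graph-type inner product $\inp{x}{y}_{m}=\inp{D_{\lambdab}x}{D_{\lambdab}y}+m^2\inp{x}{y}$. By Kato's second representation theorem \cite[Theorem 2.23, Chapter VI]{K} one has $\|h\|^2_{m}=\sum_{j\in J}(|\lambda_j|^2+m^2)|h(j)|^2$ for $h\in\mathscr D(D_{\lambdab})$. Setting
\beqn
G_m \,:=\, \sum_{j\in J}\frac{|g(j)|^2}{|\lambda_j|^2+m^2}, \quad m\geqslant 1,
\eeqn
the Cauchy--Schwarz inequality gives
\beqn
\Big|\sum_{j\in J} h(j)\overline{g(j)}\Big|^2 \,\leqslant\, G_m\,\|h\|^2_{m}, \quad h\in\mathscr D(D_{\lambdab}).
\eeqn
This already shows $\mathscr D(D_{\lambdab})\subseteq\mathscr D(f\obslash g)$ and yields $\|(f\obslash g)h\|^2\leqslant \|f\|^2 G_m(\|D_{\lambdab}h\|^2+m^2\|h\|^2)$. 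The crucial point, to be verified next, is that $G_m\to 0$ as $m\to\infty$: this follows from \eqref{CC-II-gen} by comparing $|\lambda_j-z_0|^{-2}$ with $(|\lambda_j|^2+1)^{-1}$ (so $G_1<\infty$), and then applying the Lebesgue dominated convergence theorem with dominating summand $|g(j)|^2/(|\lambda_j|^2+1)$. Hence for any $a>0$ there is $m$ with $\|f\|^2 G_m\leqslant a$, giving $\|(f\obslash g)h\|^2\leqslant a\|D_{\lambdab}h\|^2+b\|h\|^2$ with $b=m^2\|f\|^2 G_m$; that is, $f\obslash g$ is $D_{\lambdab}$-bounded with $D_{\lambdab}$-bound $0$.

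With the relative bound established, I would conclude in two short strokes. By the Kato--Rellich theorem \cite[Theorem 1.1, Chapter IV]{K}, $D_{\lambdab}+f\obslash g$ is closed on the domain $\mathscr D(D_{\lambdab})$. Since $D_{\lambdab}$ is sectorial and $f\obslash g$ has $D_{\lambdab}$-bound $0$, the perturbation theorem \cite[Theorem 4.5.7]{M} for sectorial operators applies and shows that $D_{\lambdab}+f\obslash g$ is again sectorial, with domain $\mathscr D(D_{\lambdab})$ as claimed.

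The main obstacle is purely the bookkeeping of the relative bound with the correct comparison of $|\lambda_j-z_0|^{-2}$ and $(|\lambda_j|^2+m^2)^{-1}$; one must ensure the constant $M>0$ with $|\lambda_j-z_0|^2\leqslant M(|\lambda_j|^2+1)$ exists, which is automatic since $z_0$ is a fixed complex number and the map $t\mapsto |t-z_0|^2/(|t|^2+1)$ is bounded on $\mathbb C$. Everything else is a direct transcription of the compatibility-condition-II half of Theorem \ref{closed-0}, so no genuinely new difficulty should arise beyond confirming the $D_{\lambdab}$-bound is truly $0$ rather than merely finite.
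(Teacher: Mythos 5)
Your proof is correct, but it takes a different route to the key step than the paper does. Both arguments reduce the theorem to showing that $f \obslash g$ is $D_{\lambdab}$-bounded with $D_{\lambdab}$-bound equal to $0$ and then cite \cite[Theorem 4.5.7]{M}; the difference lies in how that relative bound is obtained. The paper proves it through resolvent estimates: Proposition \ref{prop-CCI} shows that $G_z = f \obslash g\,(D_{\lambdab}-z)^{-1}$ is Hilbert--Schmidt with explicit kernel and that one can choose a sequence $\{z_n\} \subseteq \rho(D_{\lambdab})$ escaping to $\infty$ with $|\lambda_j - z_0| \leqslant |\lambda_j - z_n|$ (this is where the geometry of the sector is used), so that $\|G_{z_n}\|_2 \rar 0$; Proposition \ref{rel-bdd-bdd} then converts $\|G_z\| < a$ into the relative bound. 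You instead transplant the compatibility-condition-II mechanism from Theorem \ref{closed-0} and Corollary \ref{bdd-rel}: the weighted graph norm, the quantities $G_m = \sum_{j}|g(j)|^2/(|\lambda_j|^2+m^2)$, and dominated convergence to get $G_m \rar 0$. Your route is more elementary and self-contained --- it needs no Hilbert--Schmidt machinery and no construction of resolvent points far from the spectrum; indeed it only uses that $z_0$ is a fixed complex number (via $|\lambda_j - z_0|^2 \leqslant M(|\lambda_j|^2+1)$), so the hypothesis $z_0 \in \rho(D_{\lambdab})$ is never invoked and sectoriality enters only at the final perturbation step. What the paper's route buys in exchange is the extra structural information in Proposition \ref{prop-CCI} --- the explicit square-summable kernel of $G_z$ and its vanishing Hilbert--Schmidt norm --- which the paper reuses immediately afterwards in its discussion of eigenvalues and the essential spectrum of $D_{\lambdab}+f\obslash g$ (paralleling Theorem \ref{spectrum}(iii)--(iv)). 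Two minor remarks: your appeal to Kato's second representation theorem is unnecessary for a diagonal operator, since $\|D_{\lambdab}h\|^2+m^2\|h\|^2 = \sum_j (|\lambda_j|^2+m^2)|h(j)|^2$ by direct computation; and the separate Kato--Rellich step is redundant, as the sectoriality conclusion already contains closedness on $\mathscr D(D_{\lambdab})$. Neither affects correctness.
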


Clearly, Theorem \ref{operator-sum} generalizes Proposition \ref{sectorial}.
In its proof, we need a couple of observations of independent interests. The first of which characterizes the $B$-boundedness of $A$ in terms of the strict contractivity of $B(A-z)^{-1}$ for some $z \in \rho(A)$, where $A$ is a normal operator satisfying certain growth condition.
%(see Propositions \ref{rel-bdd-bdd} and \ref{prop-CCI}). 

\begin{proposition} \label{rel-bdd-bdd}
Let $A$ be a normal operator in $\mathcal H$ and let $B$ be a linear operator in $\mathcal H$ with $\mathscr D(A)\subseteq \mathscr D(B)$. 
If there exists $z\in \rho(A)$ such that $\|B(A-z)^{-1}\|<1,$ then 
\begin{eqnarray}\label{Kato-Relich-Assume}
\|B x\|\,\leqslant\,a \|Ax\| + b\|x\|,\quad x\in\mathscr D(A),
\end{eqnarray}
where $a=\|B(A-z)^{-1}\|$ and $b \in (0, \infty)$.
Conversely, if there exist $a\in (0,1)$ and $b \in (0, \infty)$ such that \eqref{Kato-Relich-Assume} holds and if for some $\theta \in \mathbb R$,
\beq \label{sp-growth}
\max\{|\mu|, ~n\} \,\leqslant\,|\mu - e^{i \theta}n|, \quad n \in \mathbb N, ~\mu \in \sigma(A),
\eeq 
then $\|B(A-z)^{-1}\|<1$ for some  $z\in \rho(A)$.  
\end{proposition}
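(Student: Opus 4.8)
The forward implication is essentially the Kato--Rellich estimate written in resolvent form, so I would dispose of it first. Assume $z \in \rho(A)$ with $a := \|B(A-z)^{-1}\| < 1$. For $x \in \mathscr D(A)$, set $y = (A-z)x$, so that $x = (A-z)^{-1}y$ and $Bx = B(A-z)^{-1}y$. Then $\|Bx\| \leqslant a\|y\| = a\|(A-z)x\| \leqslant a\|Ax\| + a|z|\,\|x\|$, which gives \eqref{Kato-Relich-Assume} with $b = a|z|$. (One should note $a > 0$: if $B(A-z)^{-1} = 0$ then $B = 0$ on $\ran(A-z)^{-1} = \mathscr D(A)$, and the inequality holds trivially for any $a$, so the claim is vacuously fine; otherwise $a \in (0,1)$.)

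For the converse, the plan is to produce $z$ of the form $z = e^{i\theta} n$ with $n$ a large positive integer, exploiting the growth hypothesis \eqref{sp-growth} to control the relevant suprema. Since $A$ is normal with spectral measure supported on $\sigma(A)$, for any $z \in \rho(A)$ the operator $B(A-z)^{-1}$ restricted to $\mathscr D(A)$ satisfies, via \eqref{Kato-Relich-Assume} applied to $x = (A-z)^{-1}h$,
\[
\|B(A-z)^{-1}h\| \,\leqslant\, a\|A(A-z)^{-1}h\| + b\|(A-z)^{-1}h\|, \quad h \in \mathcal H.
\]
By the spectral theorem, $\|A(A-z)^{-1}\| = \sup_{\mu \in \sigma(A)} \frac{|\mu|}{|\mu - z|}$ and $\|(A-z)^{-1}\| = \sup_{\mu \in \sigma(A)} \frac{1}{|\mu - z|}$. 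With $z = e^{i\theta}n$, the hypothesis \eqref{sp-growth} yields $|\mu| \leqslant |\mu - e^{i\theta}n|$ and $n \leqslant |\mu - e^{i\theta}n|$ for every $\mu \in \sigma(A)$, hence
\[
\|A(A-z)^{-1}\| \,\leqslant\, 1, \qquad \|(A-z)^{-1}\| \,\leqslant\, \tfrac{1}{n}.
\]
Combining these gives $\|B(A-z)^{-1}\| \leqslant a + \frac{b}{n}$. Choosing $n$ large enough that $\frac{b}{n} < 1 - a$ (possible since $a < 1$) produces $\|B(A-z)^{-1}\| < 1$, as required; one checks $z = e^{i\theta}n \in \rho(A)$ since \eqref{sp-growth} forces $|\mu - z| \geqslant n > 0$ for all $\mu \in \sigma(A)$.

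The step I expect to require the most care is the passage from the two-term estimate \eqref{Kato-Relich-Assume} to the single bound on $\|B(A-z)^{-1}\|$, specifically justifying that $\|A(A-z)^{-1}\| \leqslant 1$ and $\|(A-z)^{-1}\| \leqslant 1/n$ uniformly over the spectrum. This is where \eqref{sp-growth} does its work: it is precisely the geometric condition ensuring that the line $\{e^{i\theta}n : n \geqslant 1\}$ stays far enough from $\sigma(A)$, relative to the magnitude of points of $\sigma(A)$, for the supremum defining $\|A(A-z)^{-1}\|$ not to blow up. I would verify the spectral-theorem norm identities carefully (using that $A$ is normal, so its functional calculus is isometric on bounded Borel functions of the spectrum), and confirm that the estimate \eqref{Kato-Relich-Assume}, a priori only valid on $\mathscr D(A)$, transfers to a bound on the everywhere-defined bounded operator $B(A-z)^{-1}$ by density of $\ran(A-z)^{-1} = \mathscr D(A)$ together with closability considerations for $B(A-z)^{-1}$.
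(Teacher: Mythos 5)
Your proposal is correct and follows essentially the same route as the paper's proof: the forward direction via $Bx = B(A-z)^{-1}(A-z)x$ with $b = a|z|$, and the converse by applying \eqref{Kato-Relich-Assume} to $x=(A-z)^{-1}h$ with $z=e^{i\theta}n$ and using the spectral theorem together with \eqref{sp-growth} to get $\|B(A-e^{i\theta}n)^{-1}\| \leqslant a + b/n < 1$ for large $n$. The only cosmetic difference is that the paper bounds the spectral integral $\int_{\sigma(A)}|\mu/(\mu-e^{i\theta}n)|^2\,\|E(d\mu)y\|^2$ directly rather than quoting the norm identities $\|A(A-z)^{-1}\|=\sup_{\mu\in\sigma(A)}|\mu|/|\mu-z|$, and your closing worry about density and closability is unnecessary, since $(A-z)^{-1}$ maps $\mathcal H$ onto $\mathscr D(A)\subseteq\mathscr D(B)$, so the estimate holds for every $h\in\mathcal H$ outright.
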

\begin{remark}
There are two particular instances in which \eqref{sp-growth} can be ensured.
\begin{enumerate}
\item If $A$ is self-adjoint, then by \cite[Corollary 3.14]{Sc}, $\sigma(A) \subseteq \mathbb R$, and hence \eqref{sp-growth} holds with $\theta =\pm \pi/2.$
\item If $A$ is sectorial with vertex at $0$, then \eqref{sp-growth} holds with $\theta=\pi.$
\end{enumerate}
\end{remark}
\begin{proof}
%\begin{eqnarray}\label{Kato-Relich-Assume}
%\|B x\|\leqslant a \|Ax\| + b\|x\|,\quad x\in\mathscr D(A).
%\end{eqnarray}
%Kato-Rellich theorem implies that $A+B$ is a closed operator on $D(A)$. 
Note that if $z\in \rho(A)$ is such that $\|B(A-z)^{-1}\|<1,$ then for any $x\in\mathscr D(A)$
\begin{eqnarray*}
\|Bx\|=\|B(A-z)^{-1}(A-z)x\|
&\leqslant & a \|Ax\| + b \|x\|, 
\end{eqnarray*}
where $a=\|B(A-z)^{-1}\|$ and $b=|z|\|B(A-z)^{-1}\|.$ This yields \eqref{Kato-Relich-Assume}. 

%Here we discuss the converse of this assertion.
To see the converse, suppose that \eqref{Kato-Relich-Assume} and \eqref{sp-growth} hold.
%Suppose $\rho(A)$ is a non-empty subset of the complex plane. 
For any $z\in \rho(A),$ $(A-z)^{-1}$ is a bounded operator on $\mathcal H$ with
range equal to $\mathscr D(A).$ Thus \eqref{Kato-Relich-Assume} becomes 
\begin{eqnarray}\label{B(A-z)}
\|B(A-z)^{-1}y\|\,\leqslant\,a \|A(A-z)^{-1}y\| + b\|(A-z)^{-1}y\|, \quad z\in \rho(A), ~y\in\mathcal H.
\end{eqnarray}
Let $E(\cdot)$ denote the spectral measure of $A$ 
%and let $z=e^{i \theta} n$ where 
and let $n\in \mathbb N.$ Clearly, by \eqref{sp-growth}, $e^{i \theta} n \in \rho(A).$
It follows from the spectral theorem \cite[Theorem 13.24]{R} and \eqref{B(A-z)}, that for any $y \in \mathcal H,$ 
\beqn \|B(A- e^{i \theta} n)^{-1}y\| & \leqslant & a \sqrt{\int_{\sigma(A)}\Big|\frac{\mu}{\mu - e^{i \theta}n}\Big|^2 \|E(d\mu)(y)\|^2} \, + \, b\|(A-e^{i \theta} n)^{-1}y\|\\ & \overset{\eqref{sp-growth}} \leqslant & a\|y\| + \frac{b}{n}\|y\|. \eeqn
Thus, for sufficiently large integer $n,$
\[\|B(A-e^{i \theta} n)^{-1}\|\,\leqslant\,a + \frac{b}{n}<1.\]
This completes the proof.
%For any $z\in \rho(A),$ $(A-z)^{-1}$ is a bounded operator on $\mathcal H$ 
%and $R((A-z)^{-1})=\mathscr D(A).$ Thus \eqref{Kato-Relich-Assume} becomes 
%\begin{eqnarray}\label{B(A-z)}
%\|B(A-z)^{-1}y\|\leqslant a \|A(A-z)^{-1}y\| + \|(A-z)^{-1}y\|, \quad y\in\mathcal H.
%\end{eqnarray}
%Further assume that $A$ is self-adjoint with spectral measure $E(\cdot)$. 
%Let $z=\pm in$ where $n\in \mathbb N.$ 
%By the spectral theorem \cite[Theorem 13.24]{R} and \eqref{B(A-z)}, we have 
%\beqn \|B(A\pm in)^{-1}y\| & \leqslant & a \sqrt{\int_{\mathbb R}\Big|\frac{\mu}{\mu\pm in}\Big|^2 \|E(d\mu)(y)\|^2} + b\|(A\pm in)^{-1}y\|\\ & \leqslant & a\|y\| + \frac{b}{n}\|y\| \eeqn
%for every $y \in \mathcal H$. Thus, for sufficiently large integer $n,$
%\[\|B(A\pm in)^{-1}\|\leqslant a + \frac{b}{n}<1.\]
%Almost the same argument yields the following slightly general fact.
\end{proof}

%The previous proposition can be applied to the  case in which $A=D_{\lambdab}$ and $B=f\obslash g$. 
 We need one more fact in the proof of Theorem \ref{operator-sum} (cf. Theorem \ref{spectrum}(iii)).
 \begin{proposition} \label{prop-CCI}
 Let $D_{\lambdab}$ be a sectorial operator in $\ell^2(J)$ and let $f \in \ell^2(J)$. Let $g : J \rar \mathbb C$ be such that for some $z_0 \in \rho(D_{\lambdab}),$  \eqref{CC-II-gen} holds. 
 %$$\sum_{j \in J}\frac{|g(j)|^2}{|\lambda_{j}-z_0|^2} < \infty.$$ 
Then, for any $z\in\rho(D_{\lambdab})$, $G_z:=f \obslash g(D_{\lambdab}-z)^{-1}$ is a Hilbert-Schmidt integral operator with square-summable kernel \beqn K_z(j,k)\,:=\,\frac{f(j)\overline{g(k)}}{\lambda_{k}-z}, \quad j, k \in J.\eeqn Moreover,
 there exists a sequence $\{z_n\}_{n \in \mathbb N} \subseteq \rho(D_{\lambdab})$  such that 
\beqn
\lim_{n \rar \infty} \|G_{z_n}\|_2 \,=\, 0,
\eeqn
where $\|\cdot\|_2$ denotes the Hilbert-Schmidt norm.
\end{proposition}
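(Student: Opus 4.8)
```latex
\textbf{Proof plan for Proposition \ref{prop-CCI}.}
The plan is to establish the Hilbert--Schmidt assertion by a direct computation of the Hilbert--Schmidt norm of $G_z$ in terms of the kernel $K_z$, and then to extract a sequence $\{z_n\}$ along which this norm tends to zero by exploiting the sectoriality of $D_{\lambdab}$ together with a dominated-convergence argument. First I would fix $z \in \rho(D_{\lambdab})$ and observe that $(D_{\lambdab}-z)^{-1}$ is the diagonal operator sending $e_k \mapsto (\lambda_k - z)^{-1} e_k$, so that for $h \in \ell^2(J)$ one has
\beqn
G_z(h) \,=\, (f \obslash g)(D_{\lambdab}-z)^{-1}h \,=\, \Big(\sum_{k \in J} \frac{h(k)\overline{g(k)}}{\lambda_k - z}\Big) f.
\eeqn
This exhibits $G_z$ as the integral operator with kernel $K_z(j,k) = f(j)\overline{g(k)}/(\lambda_k - z)$, and its Hilbert--Schmidt norm factors as
\beqn
\|G_z\|_2^2 \,=\, \sum_{j,k \in J} |K_z(j,k)|^2 \,=\, \Big(\sum_{j \in J}|f(j)|^2\Big)\Big(\sum_{k \in J}\frac{|g(k)|^2}{|\lambda_k - z|^2}\Big) \,=\, \|f\|^2 \sum_{k \in J}\frac{|g(k)|^2}{|\lambda_k - z|^2}.
\eeqn
For the particular choice $z = z_0$ of \eqref{CC-II-gen} this sum is finite, and I would then argue that finiteness propagates to all $z \in \rho(D_{\lambdab})$ via the elementary inequality $|\lambda_k - z_0|^2 \le 2(|\lambda_k - z|^2 + |z_0 - z|^2)$ combined with the lower bound $|\lambda_k - z| \ge \dist(z, \sigma(D_{\lambdab})) > 0$ available for $z \in \rho(D_{\lambdab})$ (exactly as in Remark \ref{one-all}); this shows $K_z \in \ell^2(J \times J)$ for every $z \in \rho(D_{\lambdab})$, whence $G_z$ is Hilbert--Schmidt by \cite[Theorem 3.8.5]{Si1}.

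For the second assertion I would produce the sequence $\{z_n\}$ by moving $z$ to infinity along a ray inside the resolvent set. Since $D_{\lambdab}$ is sectorial, there is a half-line (say along the negative real axis after translating the vertex, or more precisely a ray $\{z_0 - t : t \ge 0\}$ lying in $\rho(D_{\lambdab})$ once $t$ is large) on which $|\lambda_k - z_n| \to \infty$ monotonically for each fixed $k$. Concretely I would set $z_n = z_0 - n$ (or $z_n$ marching out along the negative ray of the sector), so that $z_n \in \rho(D_{\lambdab})$ for all large $n$ and, for each fixed $k$,
\beqn
\frac{|g(k)|^2}{|\lambda_k - z_n|^2} \longrightarrow 0 \quad \text{as } n \to \infty,
\eeqn
while each summand is dominated by $|g(k)|^2/|\lambda_k - z_0|^2$ after adjusting constants as above, which is summable by \eqref{CC-II-gen}. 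An application of the Lebesgue dominated convergence theorem on the counting-measure space $J$ then yields $\sum_{k}|g(k)|^2/|\lambda_k - z_n|^2 \to 0$, and by the norm identity above $\|G_{z_n}\|_2 = \|f\|\big(\sum_k |g(k)|^2/|\lambda_k - z_n|^2\big)^{1/2} \to 0$, as desired.

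The main obstacle I anticipate is ensuring that the chosen ray $\{z_n\}$ genuinely lies in $\rho(D_{\lambdab})$ for all large $n$ and that $|\lambda_k - z_n|$ is bounded below uniformly enough to secure a summable dominating function; this is where sectoriality is essential. Since $\sigma(D_{\lambdab}) = \{\lambda_k : k \in J\}^{-}$ sits inside a sector $\overline{\mathbb S}_{\theta, \alpha}$ with $\theta < \pi/2$, any point $z_n$ on the negative real axis far enough from $\alpha$ stays outside this sector and hence in $\rho(D_{\lambdab})$, with $\dist(z_n, \sigma(D_{\lambdab}))$ growing like $|z_n|$; this simultaneously guarantees $z_n \in \rho(D_{\lambdab})$ and furnishes the lower bound needed to dominate the summands. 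Once this geometric point is made precise, the remaining steps are the routine kernel computation and the dominated convergence argument sketched above.
```
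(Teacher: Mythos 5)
Your proposal is correct and follows essentially the same route as the paper: compute the Hilbert--Schmidt norm explicitly as $\|G_z\|_2^2=\|f\|^2\sum_{j\in J}|g(j)|^2/|\lambda_j-z|^2$, then send $z_n\to\infty$ along a real ray lying in $\rho(D_{\lambdab})$ (guaranteed by sectoriality) and apply dominated convergence. The only cosmetic difference is that the paper chooses $\{z_n\}$ so that $|\lambda_j-z_0|\leqslant|\lambda_j-z_n|$ holds exactly, whereas you dominate with a uniform constant coming from $\dist(z_n,\sigma(D_{\lambdab}))\gtrsim|z_n|$; both yield the same summable majorant.
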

\begin{proof}
Let $z\in\rho(D_{\lambdab})$. Then, as in the proof of Theorem \ref{spectrum}, it can be seen that $G_z$ is a Hilbert-Schmidt integral operator with kernel $K_z \in \ell^2(J \times J).$ Moreover, 
\begin{eqnarray}\label{Norm Gz}
\|G_z\|_2^2\,=\,\|f\|^2 \sum_{j \in J} \frac{|g(j)|^2}{|\lambda_{j}-z|^2}.
\end{eqnarray}
On the other hand, it is easily seen that there exists a sequence $\{z_n\}_{n \in \mathbb N} \subseteq \rho(D_{\lambdab})$ with the only accumulation point at $\infty$  such that 
\beqn |\lambda_{j}-z_0|\,\leqslant\,|\lambda_{j}-z_n|, \quad n \in \mathbb N, ~j \in J.\eeqn 
Using Lebesgue dominated convergence theorem, we see that 
\[\sum_{j \in J}\frac{|g(j)|^2}{|\lambda_{j}-z_n|^2} ~\to ~0 ~\mbox{ as }~n\to\infty.\]
Hence, by \eqref{Norm Gz}, we obtain the remaining part.
%there exists $N\in\mathbb N$ such that $\|G_{z_n}\|<1$ for all integers $n\geqslant N.$ 
\end{proof}

\begin{proof}[Proof of Theorem \ref{operator-sum}]
As in the proof of Proposition \ref{lem-H-transform}, it is easily seen that \beq \label{D-I} \mathscr D(D_{\lambdab}) \subseteq \mathscr D(f \obslash g).\eeq Also,
by Proposition \ref{prop-CCI}, for any $a \in (0, 1)$, there exists $z \in \rho(D_{\lambdab})$ such that the Hilbert-Schmidt norm of $f \obslash g(D_{\lambdab}-z)^{-1}$ is less than $a$.
Since the operator norm of any Hilbert-Schmidt operator is less than or equal to its Hilbert-Schmidt norm, it follows from Proposition \ref{rel-bdd-bdd} that $f \obslash g$ is $D_{\lambdab}$-bounded with $D_{\lambdab}$-bound equal to $0.$ The desired conclusion is now immediate from \cite[Theorem 4.5.7]{M}.
\end{proof}

The following provides a variant of Corollary \ref{bdd-rel}. Since the bounded component $\N{b}{u}$ in the rank one extension $W_{f, g}$ has no effect in the  $\Db{b}{u}$-boundedness of $f \obslash g,$ this variant may be obtained by imitating the proof of Theorem \ref{operator-sum}.
\begin{corollary}
Let $\mathscr T=(V, E)$ be a rooted directed tree with root $\rootb$ and let $\mathscr T_{\infty}=(\V, E_{\infty})$ be the extended directed tree associated with $\mathscr T$.
For $u, \mf b \in V,$  consider the weight system
$\lambdab_u = \{\lambda_{uv}\}_{v\in \V}$ of complex numbers and let $W_{f, g}$ be the rank one extension of the weighted join operator $\W{b}{u}$ on $\mathscr T$, where $f \in \ell^2(V) \ominus \Hi{b}{u}$ is non-zero and $g : \supp\,\Hi{b}{u} \rar \mathbb C$ is given.
%is such that $\sum_{v \in  \supp(\Hi{b}{u})}|g(v)|^2=\infty$.
Suppose that $W_{f, g}$ satisfies the compatibility condition I. Then
$W_{f, g}$ decomposes as $A + B + C,$ where $A, B, C$ are densely defined operators given by
\eqref{ABC} such that $B+C$ is $A$-bounded with $A$-bound equal to $0.$
\end{corollary}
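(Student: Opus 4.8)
The plan is to reduce the hypothesis (compatibility condition I) to the hypothesis of the already-proved Corollary \ref{bdd-rel} (compatibility condition II) and then quote that result verbatim. First I would recall the roles of the summands in \eqref{ABC}: on $\ell^2(V) = \Hi{b}{u} \oplus (\ell^2(V) \ominus \Hi{b}{u})$ one has $A = \Db{b}{u} \oplus 0$, $C = 0 \oplus \N{b}{u}$ bounded, and $B = (f \obslash g) \oplus 0$ carrying the unbounded rank one part. Since $C$ is bounded, the estimate $\|(B+C)x\| \Le \|Bx\| + \|C\|\,\|x\|$ shows that the $A$-bound of $B+C$ is at most the $A$-bound of $B$; and because $\|B(h_1,h_2)\| = \|(f \obslash g)h_1\|$ while $\|A(h_1,h_2)\| = \|\Db{b}{u}h_1\|$, the $A$-bound of $B$ is exactly the $\Db{b}{u}$-bound of $f \obslash g$ on $\Hi{b}{u}$. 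Thus everything comes down to showing that $f \obslash g$ is $\Db{b}{u}$-bounded with $\Db{b}{u}$-bound $0$, which is precisely what Corollary \ref{bdd-rel} delivers under compatibility condition II.

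The one genuinely new step is the implication that compatibility condition I forces compatibility condition II. By hypothesis there is $\mu_0 \in \varGamma_{\lambdab_u}$ with $g_{\lambdab_u, \mu_0} \in \Hi{b}{u}$, that is,
\begin{equation*}
\sum_{v \in \supp\,\Hi{b}{u}} \frac{|g(v)|^2}{|\lambda_{uv} - \mu_0|^2} < \infty
\end{equation*}
(see \eqref{g-lambda-mu}). I would then record the elementary pointwise inequality
\begin{equation*}
|\lambda_{uv} - \mu_0|^2 \Le 2|\lambda_{uv}|^2 + 2|\mu_0|^2 \Le (2 + 2|\mu_0|^2)\,(|\lambda_{uv}|^2 + 1), \quad v \in \supp\,\Hi{b}{u},
\end{equation*}
which yields $\tfrac{1}{|\lambda_{uv}|^2 + 1} \Le \tfrac{2 + 2|\mu_0|^2}{|\lambda_{uv} - \mu_0|^2}$. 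Summing against $|g(v)|^2$ gives
\begin{equation*}
\sum_{v \in \supp\,\Hi{b}{u}} \frac{|g(v)|^2}{|\lambda_{uv}|^2 + 1} \Le (2 + 2|\mu_0|^2) \sum_{v \in \supp\,\Hi{b}{u}} \frac{|g(v)|^2}{|\lambda_{uv} - \mu_0|^2} < \infty,
\end{equation*}
which is exactly compatibility condition II \eqref{g-lambda-1}. Having verified condition II, I would invoke Corollary \ref{bdd-rel} to obtain the decomposition $W_{f, g} = A + B + C$ with $A, B, C$ as in \eqref{ABC} and with $B + C$ being $A$-bounded of $A$-bound $0$, completing the argument.

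I expect the main point to be conceptual rather than computational: one must resist imitating the proof of Theorem \ref{operator-sum} literally. That proof routes through Proposition \ref{prop-CCI} to produce a sequence $z_n$ along which the Hilbert--Schmidt norm of $f \obslash g\,(\Db{b}{u} - z_n)^{-1}$ tends to $0$, and the existence of such a sequence relies on the sectoriality of the diagonal operator (so that its spectrum lies in a half-plane). Here $\Db{b}{u}$ is an arbitrary diagonal operator, and if $\varGamma_{\lambdab_u}$ happens to be bounded the relevant Hilbert--Schmidt norm can stay bounded away from $0$, so that route is simply unavailable. The reduction to compatibility condition II above circumvents the difficulty entirely, since the comparison of the denominators $|\lambda_{uv} - \mu_0|^2$ and $|\lambda_{uv}|^2 + 1$ is unconditional, depending only on $|\mu_0|$; this is why I would present the corollary as a consequence of Corollary \ref{bdd-rel} rather than of Theorem \ref{operator-sum}.
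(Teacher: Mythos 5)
Your proof is correct, but it is not the paper's route. The paper disposes of this corollary in the single remark preceding it: since the bounded block $\N{b}{u}$ has no effect on the $\Db{b}{u}$-boundedness of $f \obslash g$, the statement ``may be obtained by imitating the proof of Theorem \ref{operator-sum}'' --- that is, by producing points $z_n \in \varGamma_{\lambdab_u}$ along which the Hilbert--Schmidt norm of $(f \obslash g)(\Db{b}{u}-z_n)^{-1}$ tends to $0$ (Proposition \ref{prop-CCI}) and then invoking Proposition \ref{rel-bdd-bdd}. You instead prove the unconditional inequality $|\lambda_{uv}-\mu_0|^2 \leqslant (2+2|\mu_0|^2)(|\lambda_{uv}|^2+1)$, deduce that compatibility condition I implies compatibility condition II, and quote Corollary \ref{bdd-rel}, whose proof rests on the dominated-convergence fact $G_m \rar 0$ from \eqref{Gm} in Theorem \ref{closed-0}. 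This is a genuinely different argument, and it buys two things. First, it is sound where a literal imitation of Theorem \ref{operator-sum} is not: Proposition \ref{prop-CCI} manufactures the sequence $\{z_n\}$ from sectoriality of the diagonal (the spectrum sits in a sector, so $z_n$ can be sent to infinity while keeping $|\lambda_{j}-z_0| \leqslant |\lambda_{j}-z_n|$), whereas under condition I alone the set $\varGamma_{\lambdab_u}$ may be bounded --- for instance when $\{\lambda_{uv} : v \in \supp\,\Hi{b}{u}\}$ is dense in $\{z : |z| \geqslant 1\}$ --- and then $\sum_v |g(v)|^2/|\lambda_{uv}-z|^2$ can stay bounded away from $0$ on all of $\varGamma_{\lambdab_u}$; since for this rank one operator that sum equals $\|(f \obslash g)(\Db{b}{u}-z)^{-1}\|^2/\|f\|^2$, no resolvent estimate can then certify $A$-bound $0$, so your detour through condition II is not merely convenient but necessary for the stated generality. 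Second, the implication ``condition I $\Rightarrow$ condition II'' is a clean complement to Corollary \ref{coro-H-tr} (which proves the converse when $\varGamma_{\lambdab_u} \neq \emptyset$) that the paper never records: together they show the two compatibility conditions coincide exactly when $\varGamma_{\lambdab_u} \neq \emptyset$, and in particular the condition-I case of relative boundedness is subsumed by the condition-II case. What the paper's suggested route buys, in the setting where it does apply (sectorial diagonals, as in the rest of Chapter 7), is the stronger resolvent-norm information $\|(f\obslash g)(\Db{b}{u}-z_n)^{-1}\|_2 \rar 0$, which is what feeds directly into the sectoriality conclusion of Theorem \ref{operator-sum}; your argument yields the relative bound without that extra payload.
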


We conclude this section with a brief discussion on some spectral properties of rank one perturbations of the diagonal operator $D_{\lambdab}$. 
Assume that there exists $z_0 \in \rho(D_{\lambdab})$ such that $g : J \rar \mathbb C$ satisfies \eqref{CC-II-gen}.
By \eqref{D-I}, $D_{\lambdab}+ f \obslash g$ is a densely defined operator in $\ell^2(J)$ with domain $\mathscr D(D_{\lambdab})$. Let $\mu \in \mathbb C \setminus \lambdab$ be an eigenvalue of $D_{\lambdab}+ f \obslash g.$ Thus there exists a non-zero vector $h$ in $\ell^2(J)$ such that for every $j \in J,$
\begin{eqnarray*}
\lambda_jh(j)+ \Big(\sum_{k \in J} h(k)\overline{g(k)}\Big) f(j)&=&\mu h(j)\\
\Longrightarrow ~(\mu - \lambda_{j})h(j)&=&\Big(\sum_{k \in J} h(k)\overline{g(k)}\Big)f(j)\\
\Longrightarrow ~ h(j)&=& a \frac{f(j)}{\mu - \lambda_{j}},
\end{eqnarray*}
where $a = \displaystyle \sum_{k \in J} h(k)\overline{g(k)}$ is non-zero. Therefore, we have 
\begin{eqnarray}\label{Hilbert-Eigen}
\sum_{j \in J} \frac{f(j)\overline{g(j)}}{\mu-\lambda_{j}}\,=\,1.
\end{eqnarray}
Notice that expression in \eqref{Hilbert-Eigen} is an analytic function in $\mu$ outside the spectrum of $D_{\lambdab}.$ Also, for $\mu \in \mathbb C \setminus \lambdab$ to be an eigenvalue for $D_{\lambdab}+ f \obslash g$, it has to satisfy \eqref{Hilbert-Eigen}. Therefore, the set of all eigenvalues of $D_{\lambdab}+ f \obslash g$ outside the set $\sigma(D_{\lambdab})$ has to be discrete (cf. \cite[Corollary 2.5]{I}). One may argue now as in the proof of Theorem \ref{spectrum}(iv)  using Weyl's theorem to conclude that $\sigma_e(D_{\lambdab}+ f \obslash g)= \sigma_e(D_{\lambdab}).$

\section{Form-sum}

% Let $J$ be a countably infinite directed set.
%Let $D_{\lambdab}$ be the diagonal operator in $\ell^2(J)$ with respect to the orthonormal basis $\{e_j : j \in J\}$ and with diagonal entries $\lambdab=\{\lambda_j : j \in J\}.$ 
Consider a sectorial diagonal operator
 $D_{\lambdab}$ in $\ell^2(J)$. By \cite[Theorem 3.35, Chapter V]{K} $D_{\lambdab}$ (and hence $D^*_{\lambdab}$  as well) has a unique square root; let us denote it  by $R_{\lambdab}$ and note that $R_{\lambdab}$ is a sectorial operator with $\mathscr D(R_{\lambdab})=\mathscr D(R^*_{\lambdab})$. Consider the form $Q_R$ given by
  \beq \label{QR}
  Q_R(h, k)\,:=\, \inp{R_{\lambdab}h}{R^*_{\lambdab}k}, \quad h, k \in \mathscr D(R_{\lambdab}).
  \eeq
 Then  having the unbounded form-perturbation of  $Q_R$ by $Q_{f,g}$ is to find conditions on $f, g$, so that the form  
 \beq \label{Qfg} Q_{f,g}(h,k)\,:=\,\sum_{j \in J} h(j)\overline{g(j)} \ \sum_{j \in J} f(j)\overline{k(j)}  \eeq 
 is well defined for  all $h,k \in  \mathscr D(R_{\lambdab})$  and  to ensure that  the perturbation by $Q_{f, g}$ is small, (so that an application of  \cite[Theorems 1.33 and 2.1, Chapter VI]{K}  can be made through a choice of large enough $z$ in the  appropriate sector).   
 In such a case, the form-sum $Q_R + Q_{f, g}$ is closed and defines a sectorial operator  with domain contained in the domain of $Q_R$ (the reader is referred to \cite[Chapter VI]{K} for all definitions pertaining to sesquilinear forms in Hilbert spaces). This is made precise in the following theorem.
 
 \begin{theorem} \label{form-sum-1}
 Let $D_{\lambdab}$ be a sectorial diagonal operator in $\ell^2(J)$ with angle $\theta \in (0, \pi/2)$ and vertex $0$. Let $f  :  J \rar \mathbb C $ and $g : J \rar \mathbb C$  be such that  for some $z_0 \in (-\infty, 0)$,
 \beq \label{CC-III} \sum_{j \in J} \frac{|g(j)|^2}{|\sqrt{\lambda_{j}}-z_0|^2}~<~ \infty, \quad \sum_{j \in J} \frac{|f(j)|^2}{|\sqrt{\lambda_{j}}-z_0|^2}~<~ \infty, \eeq
where the square-root is obtained by the branch cut at the non-positive real axis.
Let  $Q_R$ and $Q_{f, g}$ be as given by \eqref{QR} and \eqref{Qfg}.
Then $Q_{f, g}(h, k)$ is defined for all $h, k \in \mathscr D(R_{\lambdab}).$ Moreover, the form $Q_R+Q_{f,g}$ is sectorial and there exists a unique sectorial operator $T$ in $\ell^2(J)$ with domain contained in the domain of $Q_R$ such that 
\[Q_R(h, k)\,+\,Q_{f,g}(h,k)\,=\,\inp{Th}{k},\quad h\in \mathscr D(T),~k\in \mathscr D(Q_R).\]   
 \end{theorem}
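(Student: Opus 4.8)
The plan is to reduce the form-sum statement to an application of the Kato theory of sesquilinear forms (specifically \cite[Theorems 1.33 and 2.1, Chapter VI]{K}), exactly along the lines suggested in the paragraph preceding the theorem, while using the compatibility hypothesis \eqref{CC-III} to control the perturbing form $Q_{f,g}$. Throughout, write $R_{\lambdab}$ for the unique square root of the sectorial diagonal operator $D_{\lambdab}$, so that $R_{\lambdab}e_j = \sqrt{\lambda_j}\,e_j$ with the indicated branch cut, and note $\mathscr D(R_{\lambdab}) = \mathscr D(R^*_{\lambdab})$. First I would verify that $Q_{f,g}(h,k)$ is well-defined on $\mathscr D(R_{\lambdab}) \times \mathscr D(R_{\lambdab})$. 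For $h \in \mathscr D(R_{\lambdab})$, by the Cauchy-Schwarz inequality and \eqref{CC-III},
\[
\Big|\sum_{j \in J} h(j)\overline{g(j)}\Big|^2 \,\leqslant\, \Big(\sum_{j\in J}|\sqrt{\lambda_j}-z_0|^2|h(j)|^2\Big)\Big(\sum_{j\in J}\frac{|g(j)|^2}{|\sqrt{\lambda_j}-z_0|^2}\Big),
\]
and the first factor equals $\|(R_{\lambdab}-z_0)h\|^2 < \infty$ since $h \in \mathscr D(R_{\lambdab})$. The analogous estimate with $f$ in place of $g$ and $k$ in place of $h$ handles the second tensor factor. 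Hence both sums in \eqref{Qfg} converge absolutely, and $Q_{f,g}$ is a well-defined sesquilinear form on $\mathscr D(R_{\lambdab})$.

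Next I would recast these bounds as relative smallness of $Q_{f,g}$ with respect to $Q_R$. The form $Q_R$ is sectorial and closed (it is the form generated by the sectorial operator $D_{\lambdab}$; see \cite[Chapter VI]{K}), with form domain $\mathscr D(R_{\lambdab})$. Writing $g_{z_0}(j) := g(j)/(\sqrt{\lambda_j}-z_0)$ and $f_{z_0}(j) := f(j)/(\sqrt{\lambda_j}-z_0)$, both of which lie in $\ell^2(J)$ by \eqref{CC-III}, the estimates above give
\[
|Q_{f,g}(h,k)| \,\leqslant\, \|g_{z_0}\|\,\|f_{z_0}\|\,\|(R_{\lambdab}-z_0)h\|\,\|(R_{\lambdab}-z_0)k\|.
\]
The decisive point is that $\|g_{z_0}\|$ and $\|f_{z_0}\|$ can be made arbitrarily small by translating $z_0$ to $-\infty$ along the negative real axis: since $z_0 \in (-\infty,0)$ lies in the resolvent set of the sectorial $R_{\lambdab}$, an application of the Lebesgue dominated convergence theorem (as in Proposition \ref{prop-CCI}) shows $\sum_j |g(j)|^2/|\sqrt{\lambda_j}-z|^2 \to 0$ as $z \to -\infty$ through the negative reals, and likewise for $f$. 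Thus for every $\varepsilon > 0$ there is $z < 0$ with $\|g_z\|\,\|f_z\| < \varepsilon$, which furnishes the Kato-type bound $|Q_{f,g}(h,h)| \leqslant \varepsilon\,|Q_R(h,h) - z\,\text{(lower order)}| + \text{const}\cdot\|h\|^2$ in the relatively-bounded-with-bound-zero form required by the form-perturbation theorem.

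Finally I would invoke \cite[Theorems 1.33 and 2.1, Chapter VI]{K}: a sectorial closed form perturbed by a form that is relatively bounded with relative bound less than one (here, bound zero) remains sectorial and closed, and the first representation theorem then produces a unique sectorial operator $T$ with $\mathscr D(T) \subseteq \mathscr D(Q_R)$ representing $Q_R + Q_{f,g}$, that is, $Q_R(h,k) + Q_{f,g}(h,k) = \inp{Th}{k}$ for $h \in \mathscr D(T)$, $k \in \mathscr D(Q_R)$. I expect the main obstacle to be the bookkeeping around the non-self-adjoint square root: because $D_{\lambdab}$ is merely sectorial (not self-adjoint), the form $Q_R$ uses $R_{\lambdab}$ on one side and $R^*_{\lambdab}$ on the other, so I must be careful that the ``smallness'' estimate is phrased relative to $\mathrm{Re}\,Q_R$ (the symmetric part) rather than $Q_R$ itself, since Kato's relative-boundedness hypothesis for sectorial forms is stated in terms of the real part of the form. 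Verifying that \eqref{CC-III} indeed controls $Q_{f,g}$ against $\mathrm{Re}\,Q_R + \text{const}$, using that the numerical range of $R_{\lambdab}$ sits in a sector of half-angle $\theta/2 < \pi/4$ about the positive real axis, is the step that requires genuine care; the remaining invocations of Kato's theorems are then routine.
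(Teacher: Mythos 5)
Your proposal is correct and follows essentially the same route as the paper's proof: well-definedness of $Q_{f,g}$ via Cauchy--Schwarz against $\|(R_{\lambdab}-z_0)h\|$, smallness of the relative form bound obtained by pushing $z\to-\infty$ along the negative reals with dominated convergence, and then Kato's Theorems 1.33 and 2.1 of Chapter VI. The only cosmetic difference is that the paper verifies closedness of $Q_R$ by hand (showing $\Re Q_R$ is closed using the sectoriality angle and closedness of $R_{\lambdab}$), whereas you cite it as the form of the diagonal sectorial operator --- which is legitimate here since $D_{\lambdab}$ is normal --- and you correctly flag the need to phrase the relative bound against $\Re Q_R$, the same point the paper handles via the inequality $|\Im Q_R(h,h)|\leqslant \tan\theta\,\Re Q_R(h,h)$.
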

 \begin{proof}
 Note that $z_0 \in \rho(R_{\lambdab})$, and hence by \eqref{CC-III} and the definition of $\mathscr D(R_{\lambdab})$, $Q_{f, g}(h, k)$ is well-defined for all $h, k \in \mathscr D(R_{\lambdab}).$ To see the remaining part, 
we make some general observations. Notice first that \beq \label{real-im-form}
\Re Q_R(h, h)=\sum_{j \in J} \Re \lambda_{j} |h(j)|^2, \quad \Im Q_R(h, h)=\sum_{j \in J} \Im \lambda_{j} |h(j)|^2.
\eeq 
Further, since $R_{\lambdab}$ is normal,
\beq
\label{R-QR} \|R_{\lambdab}h\|^2 \,\geqslant\,|Q_R(h, h)|^2, \quad h \in \mathscr D(R_{\lambdab}).
\eeq
Furthermore,
since $D_{\lambdab}$ is a sectorial operator with angle $\theta$,
\beq \label{angle} 
\left.
  \begin{minipage}{60ex}
\begin{enumerate}
\item[] $|\Im \lambda_j|  \,\leqslant\, \tan\theta \ \Re \lambda_j, \quad j \in J,$
\vskip.1cm
\item[] $|\Im Q_R(h, h)|  \,\leqslant\, \tan \theta \ \Re Q_R(h, h), \quad h \in \mathscr D(R_{\lambdab}).$
\end{enumerate}
 \end{minipage}
   \right\}
\eeq
%\beq \label{angle} 
%|\Im \lambda_j| & \leqslant & \tan\theta \ \Re \lambda_j, \quad j \in J, \notag \\
%|\Im Q_R(h, h)| & \leqslant & \tan \theta \ \Re Q_R(h, h), \quad h \in \mathscr D(R_{\lambdab}). \eeq
 
We  claim
that $Q_R$ is a closed form.  It suffices to check that $\Re Q_R$ is closed (see \cite[Pg 336]{K}). 
Let $h\in \ell^2(J)$, $\{h_n \}_{n \in \mathbb N} \subseteq \mathscr D(Q_R)$ be such that $h_n\to h$ as $n\to\infty$ and $\Re Q_R(h_n-h_m, h_n-h_m)\to 0$ as $n$  and $m$ tend to $\infty$. 
%We need to show that $h\in \mathscr D(Q_R)$ and $Q_R(h_n-h, h_n-h)\to 0$ as $n\to \infty.$ 
It follows that
\begin{eqnarray*}
 \Re Q_R(h_n-h_m, h_n-h_m) &\overset{\eqref{real-im-form}
 } =& \sum_{j \in J} \Re \lambda_{j} |h_n(j)-h_m(j)|^2\\
& \overset{\eqref{angle}}\geqslant & \frac{1}{\sqrt{1+\tan^2 \theta} } \sum_{j \in J}  |\lambda_j| 
|h_n(j)-h_m(j)|^2
\\ & = & \frac{1}{\sqrt{1+\tan^2 \theta} } \sum_{j \in J}   
|\sqrt{\lambda_j} h_n(j)-\sqrt{\lambda_j} h_m(j)|^2 
\\
&=&  \frac{1}{\sqrt{1+\tan^2 \theta} } \|R_{\lambdab}(h_n-h_m)\|^2.
\end{eqnarray*}
This shows that $\{R_{\lambdab}(h_n)\}_{ n \in \mathbb N}$ is a Cauchy sequence in $\ell^2(J)$. Thus there exists $g \in \ell^2(J)$ such that $R_{\lambdab}(h_n) \to g$ as $n\to\infty.$ Since $R_{\lambdab}$ is closed, $h \in \mathscr D(R_{\lambdab})=\mathscr D(Q_R)$ and $g=R_{\lambdab}h$.
By \eqref{R-QR}, $Q_R(h_n-h, h_n-h)\to 0$ as $n\to\infty$. This completes the verification of the claim.

We next show that $Q_{f,g}$ is $Q_{R}$-bounded with $Q_R$-bound less than $1.$
To see this, let $z \leqslant z_0$ be a negative real number. 
Note that $|w-z_0| \leqslant |w-z|$ for any $w \in  \mathbb C$ such that  $|\arg w| <
\theta.$ It follows from \eqref{CC-III} that
\beq
\label{z-z0}
\left.
  \begin{minipage}{60ex}
\begin{enumerate}
\item[] $\displaystyle \sum_{j \in J} \frac{|f(j)|^2}{|\lambda_{j}^{1/2}-z|^2} \,\leqslant\,\displaystyle \sum_{j \in J} \frac{|f(j)|^2}{|\lambda_{j}^{1/2}-z_0|^2} < \infty,$
\vskip.3cm
\item[] $\displaystyle \sum_{j \in J} \frac{|g(j)|^2}{|\lambda_{j}^{1/2}-z|^2}  \,\leqslant\, \displaystyle \sum_{j \in J} \frac{|g(j)|^2}{|\lambda_{j}^{1/2}-z_0|^2} < \infty.$
\end{enumerate}
 \end{minipage}
   \right\}
   \eeq
%\begin{eqnarray} \label{z-z0}
%\sum_{j \in J} \frac{|f(j)|^2}{|\lambda_{j}^{1/2}-z|^2} &\leqslant &\sum_{j \in J} \frac{|f(j)|^2}{|\lambda_{j}^{1/2}-z_0|^2} < \infty, \notag\\
%\sum_{j \in J} \frac{|g(j)|^2}{|\lambda_{j}^{1/2}-z|^2} & \leqslant &\sum_{j \in J} \frac{|g(j)|^2}{|\lambda_{j}^{1/2}-z_0|^2} < \infty.
%\end{eqnarray}
For any $k \in \mathscr D(R_{\lambdab}),$ note that
\begin{eqnarray}
\label{dot-1}
\Big|\sum_{j \in J} f(j)\overline{k(j)} \Big|^2 
&\leqslant & \sum_{j \in J} \frac{|f(j)|^2}{|\lambda_{j}^{1/2}-z|^2} \sum_{j \in J} |\lambda_{j}^{1/2}-z|^2|k(j)|^2 \notag \\
&\leqslant & 2\sum_{j \in J} \frac{|f(j)|^2}{|\lambda_{j}^{1/2}-z|^2} \big(\sum_{j \in J} |\lambda_{j}| |k(j)|^2 + \sum_{j \in J} |z|^2 |k(j)|^2\big) \notag \\ 
& \overset{\eqref{angle}} \leqslant & 2 \sum_{j \in J} \frac{|f(j)|^2}{|\lambda_{j}^{1/2}-z|^2} \Big(\frac{1}{\sqrt{1+\tan^2 \theta}} |Q_{R}(k, k)| +  |z|^2 \|k\|^2 \Big). \quad \quad 
\end{eqnarray}
Similarly, we can conclude that for any $h \in \mathscr D(R_{\lambdab}),$
\beq \label{dot-2} \Big|\sum_{j \in J} h(j)\overline{g(j)} \Big|^2  \leqslant 
2 \sum_{j \in J} \frac{|g(j)|^2}{|\lambda_{j}^{1/2}-z|^2} \Big(\frac{1}{\sqrt{1+\tan^2 \theta}} |Q_{R}(h, h)| +  |z|^2 \|h\|^2 \Big).\eeq
Combining \eqref{dot-1} and \eqref{dot-2} together,  for any $h \in \mathscr D(R_{\lambdab}),$ we obtain
\begin{eqnarray}\label{relat-Q}
|Q_{f,g}(h, h)|\,\leqslant\,2\Big(\frac{|Q_{R}(h, h)|}{\sqrt{1+\tan^2 \theta}} +|z|^2\|h\|^2 \Big) \sqrt{\sum_{j \in J} \frac{|f(j)|^2}{|\lambda_{j}^{1/2}-z|^2}}\sqrt{\sum_{j \in J} \frac{|g(j)|^2}{|\lambda_{j}^{1/2}-z|^2}}.\quad \quad 
\end{eqnarray}
Also, note that for any $j \in J,$
\beqn \lim_{\substack{z\to -\infty \\ z\leqslant z_0}}\frac{f(j)}{\lambda_{j}^{1/2}-z}= 0, \quad \lim_{\substack{z\to -\infty \\ z\leqslant z_0}} \frac{g(j)}{\lambda_{j}^{1/2}-z} = 0. \eeqn 
We now conclude from \eqref{z-z0} and the Lebesgue dominated convergence theorem that
\begin{eqnarray*}
\lim_{\substack{z\to -\infty \\ z\leqslant z_0}} \sum_{j \in J} \frac{|f(j)|^2}{|\lambda_{j}^{1/2}-z|^2} = 0, \quad
\lim_{\substack{z\to -\infty \\ z\leqslant z_0}} \sum_{j \in J} \frac{|g(j)|^2}{|\lambda_{j}^{1/2}-z|^2} =  0.
\end{eqnarray*}
Let $h \in \mathscr D(R_{\lambdab})$. Then, for any $a \in (0,1),$ there exists $z\leqslant z_0$ such that 
\begin{eqnarray*}
\sum_{j \in J} \frac{|f(j)|^2}{|\lambda_{j}^{1/2}-z|^2}  <  \frac{a\sqrt{1+\tan^2 \theta}}{2},\quad
\sum_{j \in J} \frac{|g(j)|^2}{|\lambda_{j}^{1/2}-z|^2}  <  \frac{a\sqrt{1+\tan^2 \theta}}{2},
\end{eqnarray*}
%where $b=\frac{2}{\sqrt{1+\tan^2 \theta}}.$
and hence by \eqref{relat-Q}, we conclude that for some $b >0$,
\begin{eqnarray}\label{Qfg-QR}
|Q_{f,g}(h, h)|\leqslant a|Q_{R}(h, h)|+b\|h\|^2.
\end{eqnarray}
Since $Q_R$ is a sectorial form, \eqref{Qfg-QR} together with \cite[Theorem 1.33, Chapter VI]{K} implies that $Q_R+Q_{f,g}$ is a sectorial form. The remaining assertion about the existence of $T$ is immediate from \cite[Theorem 2.1, Chapter VI]{K}. 
%This completes the proof.
\end{proof}

We next present a variant of Theorem \ref{form-sum-1}, where we discuss sectoriality of form-sum with $Q_R$ replaced by the form $Q$ 
defined as
\beq \label{form-Q} Q(h,k):=\inp{|D_{\lambdab}|^{1/2}h}{|D_{\lambdab}|^{1/2}k}, \quad h, k \in \mathscr D(|D_{\lambdab}|^{1/2}),
\eeq
where $|A|$ denotes the modulus of a densely defined closed operator $A.$
Needless to say, we alter compatibility conditions as per the requirement. 
\begin{theorem} \label{form-sum2}
 Let $D_{\lambdab}$ be a sectorial diagonal operator in $\ell^2(J)$ with angle $\theta$ and vertex $0$. Let $f  :  J \rar \mathbb C $ and $g : J \rar \mathbb C$  be such that  for some $\beta_0  > 0$, 
 \begin{eqnarray}\label{f-g-beta}
\sum_{j \in J} \frac{|f(j)|^2}{(|\lambda_{j}|^{1/2}+\beta_0)^2} < \infty, \quad
\sum_{j \in J} \frac{|g(j)|^2}{(|\lambda_{j}|^{1/2}+\beta_0)^2} < \infty.
\end{eqnarray}
Let  $Q$ and $Q_{f, g}$ be as given by \eqref{form-Q} and \eqref{Qfg}.
Then $Q_{f, g}(h, k)$ is defined for all $h, k \in \mathscr D(|D_{\lambdab}|^{1/2}).$ Moreover, the form $Q+Q_{f,g}$ is sectorial and there exists a unique sectorial operator $T$ in $\ell^2(J)$ with domain contained in the domain of $Q$ such that 
\[Q(h, k)+Q_{f,g}(h,k)=\inp{Th}{k},\quad h\in \mathscr D(T),\, k\in \mathscr D(Q).\]   
 \end{theorem}
 \begin{proof}
 %Clearly, $Q$ defines a  sectorial form.
For $h, k \in \mathscr D(|D_{\lambdab}|^{1/2}),$
note that
\begin{eqnarray}\label{K-beta}
Q_{f,g}(h,k) 
%\sum (|\lambda_{j}|^{1/2}+\beta)h(j)\frac{g(j)}{|\lambda_{j}|^{1/2}+\beta} \sum (|\lambda_{j}|^{1/2}+\beta)\overline{h(j)}\frac{\overline{f(w)}}{|\lambda_{uw}|^{1/2}+\beta}\notag\\
= \sum_{p, q \in J} K_\beta (p,q)\tilde{h}(p)\overline{\tilde{k}(q)},   
\end{eqnarray}
where, for $p, q \in J$ and $\beta \in [\beta_0, \infty),$ 
\beqn 
\left.
  \begin{minipage}{65ex}
\begin{enumerate}
\item[] 
$K_\beta(p,q) := \frac{f(p)}{|\lambda_{p}|^{1/2}+\beta}\frac{\overline{g(q)}}{|\lambda_{q}|^{1/2}+\beta}, $
\vskip.1cm
\item[] $\tilde{h}(p):=(|\lambda_{p}|^{1/2}+\beta)h(p),  \quad \tilde{k}(p):=(|\lambda_{p}|^{1/2}+\beta)k(p).$
\end{enumerate}
\end{minipage}
\right.
%\}
\eeqn 
Let $G_{\beta}$ denote the integral operator with kernel $K_\beta$ and notice that 
\begin{eqnarray*}
\|G_\beta\|_2^2=\sum_{p \in J} \frac{|f(p)|^2}{(|\lambda_{p}|^{1/2}+\beta)^2}\,\sum_{q \in J} \frac{|g(q)|^2}{(|\lambda_{q}|^{1/2}+\beta)^2}.
\end{eqnarray*}
From \eqref{f-g-beta}, it is clear that $G_\beta$ is a Hilbert-Schmidt operator for any $\beta \geqslant \beta_0$. One may argue as in Proposition \ref{prop-CCI}, using Lebesgue dominated convergence theorem, to conclude that $\|G_\beta\|_2\to 0$ as $\beta \to \infty.$
Now one may use \eqref{K-beta} to see that for any $h \in \mathscr D(|D_{\lambdab}|^{1/2}),$ 
\begin{eqnarray*}
|Q_{f,g}(h,h)| &\leqslant &  \sum_{q \in J} |h(q)||{g(q)}| \ \sum_{p \in J} |f(p)||{k(p)}| \\ & \leqslant & 
\|G_\beta\|_2 \sum_{p \in J}  (|\lambda_{p}|^{1/2}+\beta)^2 |h(p)|^2\\
&\leqslant & 2\|G_\beta\|_2 (\sum_{p \in J} |\lambda_{p}||h(p)|^2+\beta^2 \|h\|^2)
\\ & = & 2\|G_\beta\|_2 ({Q}(h, h)+\beta^2 \|h\|^2).
\end{eqnarray*}
Since $\lim_{\beta \rar \infty} \|G_\beta\|_2 = 0$, with an arbitrarily small $a>0$, we obtain for some $b >0$,
\[|Q_{f,g}(h,h)|\leqslant a\, {Q}(h, h)+ b\|h\|^2, \quad h \in \mathscr D(|D_{\lambdab}|^{1/2}).\]
Since $Q$ defines a  sectorial form, one may now argue as in the proof of Theorem \ref{form-sum-1} to complete the proof.
\end{proof}
In general, the form $Q_{f, g}$ is far from being closable. 
In fact, since $Q_{f, g}$ is associated with the rank one operator $f \obslash g,$ it may be derived from Kato's first representation theorem \cite[Theorem 2.1, Chapter VI]{K} and Lemma \ref{lem-inj-ten-unb} that $Q_{f, g}$ is closable if and only if $g \in \ell^2(J)$. Under suitable compatibility conditions, Theorems \ref{form-sum-1} and \ref{form-sum2} above ensure that the form-sums $Q_R + Q_{f, g}$ and $Q + Q_{f, g}$ are indeed closed. Finally, we note that as in the case of operator-sum, under the compatibility condition \eqref{CC-II-gen}, it can be seen that the eigenvalues of the sectorial operator $T$ associated with $Q+Q_{f, g}$ outside $\sigma(D_{\lambdab})$ is discrete and the essential spectrum of $T$ coincides with that of $D_{\lambdab}$.

\chapter*{Epilogue}

%\uwam{rooted vs rootless}

The present paper capitalizes on the order structure of directed trees to introduce and study the classes of weighted join operators and their rank one extensions.
In particular, we discuss the issue of closedness, unravel the structure of Hilbert space adjoint and identify various spectral parts of members of these classes. Certain discrete Hilbert transforms arise naturally in the spectral theory of rank one extensions of weighted join operators.
The assumption that the underlying directed trees are rooted or rootless brings several prominent differences in the structures of these classes. Further, these classes overlap with the well-studied classes of complex Jordan operators, $n$-symmetric operators and sectorial operators. This work also takes a brief look into the general theory of rank one perturbations.
%It is worth noting that unbounded rank one perturbations of weighted join operators are too complicated  that even simple questions related to their closedness, adjoints and self-commutators are unanswered.
As a natural outgrowth of this work, the study of finite rank extensions of weighted join operators would be desirable. In this regard, we would like to draw attention to the very recent work \cite{LT-1} on finite rank (self-adjoint) perturbations of self-adjoint operators.

%\subsection*{Concluding remarks}

In the remaining part of this section, we discuss some problems pertaining to the theory
of weighted join operators and their rank one extensions, which arise naturally in our efforts to understand these operators.
%$C^*$-algebras generated by a commuting family of weighted join operators, numerical range of rank one extensions and classification of hyponormal and $n$-symmetric of rank one extension.
%We also indicate some possible directions for future work.
In what follows, let $\W{b}{u}$ be a weighted join operator on a rooted directed tree $\mathscr T=(V, E)$ and let $W_{f, g}$ be its rank one extension. Here $u \in V$, $\mf b \in V$, $f \in \ell^2(V) \ominus \Hi{b}{u}$ is non-zero and $g : \supp\,\Hi{b}{u} \rar \mathbb C$ is given.

%\subsection*{Domain inclusion and closability}
%
%Recall that if the domain of a sectorial operator is unchanged after perturbation, then perturbed operator is sectorial only if the perturbation is bounded (see \cite[Section 5.5, Pg 119]{Has} for a precise statement). We do not know whether the rank one extension of a weighted join operator enjoys the same property. In particular, it is interesting to know whether under the hypotheses of Theorem \ref{closed-0}, the following holds:
%{\it If $g \notin \Hi{b}{u}$ and $\varGamma_{\lambdab_u} \neq \emptyset$, then whether $\mathscr D(\Db{b}{u}) $ and $\mathscr D(f \obslash g)$ coincide ?}
%In view of Theorem \ref{closed-0} and (i) $\Leftrightarrow$ (ii) of
%Proposition \ref{lem-H-transform}, the answer is affirmative if  $W_{f, g}$ can be shown to be non-closed.

\subsection*{Numerical range and Friedrichs extension}

\index{$\Theta(T)$}

Let  $(\Db{b}{u}, \N{b}{u}, \Hi{b}{u})$ denote the orthogonal decomposition of $\W{b}{u}$.
Recall that the {\it numerical range} $\Theta(T)$ of a densely defined linear operator $T$ is given by $$\Theta(T)\,:=\,\{\inp{Tf}{f} : f \in \mathscr D(T), ~\|f\|=1\}.$$
Since the numerical range of a diagonal operator is contained in the closed convex hull of its diagonal entries, we obtain
%\beqn
$\Theta(\Db{b}{u})  ~\subseteq ~  \mbox{conv} \{\lambda_{uv} : v \in \supp\,\Hi{b}{u} \}.$
%\eeqn
where $\mbox{conv}(A)$ denotes the closed convex hull of $A.$
Further, for any $f \in \ell^2(V) \ominus \Hi{b}{u}$ of unit norm, we have
\beqn \inp{e_u \otimes e_{_{\lambdab_u, A_u}}f}{f}=\inp{f}{e_{_{\lambdab_u, A_u}}}\overline{f(u)}=\overline{f(u)} \sum_{v \in A_u}\lambda_{uv} f(v), \eeqn
where $A_u$ is as given in \eqref{A-u}.
Thus the numerical range $\Theta(\N{b}{u})$ of $\N{b}{u}$ satisfies
\beqn
\Theta(\N{b}{u})  &=&
\Big\{ \overline{f(u)} \sum_{v \in A_u}\lambda_{uv} f(v) : \|f\|_{\ell^2(V)}=1\Big\} \\
&\subseteq & 
\Big\{z \in \mathbb C : |z|^2 \Le \sum_{v \in A_u}|\lambda_{uv}|^2 \Big\}.
\eeqn
The numerical range of the rank one extension $W_{f, g}$ of $\W{b}{u}$ is given by
\beqn
%\left.
%\begin{array}{lll}
%\mathscr D(\W{b}{u}[f, g]) &=& \big\{(h, k) : h \in \mathscr D(\Db{b}{u}) \cap \mathscr D(f \otimes g), ~k \in \ell^2(V) \ominus \Hi{b}{u}\big\} \\
%W(W_{f, g}) &=&
\left\{
%\left[\begin{array}{c}
\inp{\Db{b}{u}h}{h}  +
\inp{(f \obslash g)h}{k} + \inp{\N{b}{u}k}{k}
%\end{array}
%\right]
:   (h, k) \in \mathscr D(W_{f, g}),~
 \|h\|^2+\|k\|^2=1
\right\}.
%\end{array}
%\right\}
\eeqn
Recall that if the numerical range is a proper subset of the complex plane, then  the underlying operator is closable (see \cite[Theorem 3.4, Chapter V]{K}). It would be interesting otherwise also to find conditions on $g$ (different from the compatibility conditions) so that the numerical range of $W_{f, g}$ is a proper subset of the complex plane or is contained in a sector.

Let us now discuss the so-called Friedrichs extensions of weighted join operators and their rank one extensions \cite{M}.
Suppose, for some $r \in {\mathbb R}$ and $M \in (0,\infty)$, we have
\beq \label{F-e} |{\Im} \langle \W{b}{u}h,h \rangle| \,\leqslant\, M \ {\Re}
\langle (\W{b}{u}-r)h, h \rangle, \quad
h \in  {\mathscr D}(\W{b}{u}).
\eeq
By \cite[Theorem 2.12.1]{M}, there exist a subspace $\Gamma$ of $\ell^2(V)$, an inner product
$\langle \cdot,\cdot\rangle_{\Gamma}$ on $\Gamma$ with the corresponding norm $\|\cdot\|_{\Gamma}$,
and a sectorial sesquilinear form $\mathcal F$ on $\Gamma$ such that the following assertions hold:
\begin{enumerate}
\item[(a)] ${\mathscr D}(\W{b}{u})$ is a dense subspace of $\Gamma$ (in $\|\cdot\|_{\Gamma}$).
\item[(b)] $\langle h, k \rangle_{\Gamma} = (1/2)\big(\langle \W{b}{u}h, k \rangle +
\langle h, \W{b}{u}k \rangle\big) +
(1-r)\langle h, k \rangle,$ $h, k \in {\mathscr D}(\W{b}{u})$.
\item[(c)] ${\mathcal F}(h, k) = \langle \W{b}{u}h, k\rangle$ for all $h, k \in {\mathscr D}(\W{b}{u})$.
\end{enumerate}
It turns out that the linear operator $A$ associated with the sectorial sesquilinear form $\mathcal F$, referred to as the {\it Friedrichs extension} of $\W{b}{u}$, turns out to be $\W{b}{u}$ itself. Since $\W{b}{u}$ is a closed linear operator (Proposition \ref{closed}), this fact may be deduced from \cite[Lemma 1.6.14]{M} and Theorem \ref{spectrum} (see also \cite[Theorem 2.9, Chapter VI]{K}).
It would be desirable to find conditions on $g$ (similar to compatibility conditions) so that \eqref{F-e} is ensured for the rank one extension $W_{f, g}$ of $\W{b}{u}$.  In this case,  $W_{f, g}$ admits a Friedrichs extension. In case $W_{f, g}$ is closed, then it can be seen once again that this extension coincides with $W_{f, g}$ itself. The Friedrichs extension would be of interest, particularly, in case either $W_{f, g}$ is not closed or $\sigma(W_{f, g})$ is the entire complex plane. One may be keen to know whether or not there exists a rank one extension $W_{f, g}$, which is closable but not closed.

\subsection*{Hyponormality and $n$-Symmetricity} It has been seen in Proposition \ref{hypo} that the notions of hyponormality and normality coincide in the context of rank one extensions of weighted join operators. We do not know whether or not there exists any non-normal hyponormal rank one extension $W_{f, g}$ of a weighted join operator. The essential difficulty in this problem is unavailability of an explicit expression for the Hilbert space adjoint of $W_{f, g}$.
As evident from Propositions \ref{symm} and \ref{symm-r-one}, $n$-symmetric rank one extensions of weighted join operators exist in abundance. The problem of classifying all $n$-symmetric rank one extensions of weighted join operators remains unsolved.

\subsection*{$C^*$-algebras}

%\begin{remark}
Let $\mathscr T$ be a  leafless rooted directed tree and let $\W{b}{u}$ be a weighted join operator on $\mathscr T$. 
Assume that $\W{b}{u}$ is bounded and $V_u \neq \emptyset$ (see \eqref{cg-dec0}). Recall that the essential spectrum of an orthogonal direct sum of two bounded operators $A, B \in B(\mathcal H)$ is a union of essential spectra of $A$ and $B$.
Also, since essential spectrum is invariant under compact perturbation \cite{Co}, Theorem \ref{o-deco} together with Proposition \ref{lem-lns}(iii) implies that
\beqn
\sigma_e(\W{b}{u}) \,=\, \begin{cases}   \sigma_e(\D{u}|_{_{\ell^2(\Desb{u})}})  \cup \{0\}  &  \mbox{if~} \mf b \in \des{u} \setminus \{\infty\}, \\
\{0\} & \mbox{if~} \mf b  = \infty, \\
\sigma_e(\D{u}|_{_{\ell^2(\des{u})}}) \cup \{0\}
 & \mbox{otherwise}.
\end{cases}
\eeqn
On the other hand, the essential spectrum of a normal operator is the complement of isolated eigenvalues of finite multiplicity in its spectrum \cite{Co}.
%This together with Theorem \ref{spectrum} shows that
Thus the weight system $\lambdab_u$ completely determines the essential spectra of bounded weighted join operators.
Let $C^*(\W{b}{u})$ denote the $C^*$-algebra generated by $\W{b}{u}$. Since $\W{b}{u}$ is essentially normal (see Theorem \ref{o-deco}), the quotient $C^*$-algebra $C^*(\W{b}{u})/\mathcal K$ can be identified with $C(\sigma_e(\W{b}{u}))$, where $\mathcal K$ is the $C^*$-algebra of compact operators and $C(X)$ denotes the $C^*$-algebra of continuous functions on a compact Hausdorff space $X$ endowed with sup norm.
%\end{remark}

\index{$C^*(\W{b}{u})$}

We conclude this paper with another possible line of investigation.  For $\mf b \in V,$ consider the family $\mathscr F_{\mf b}:=\{\W{b}{u}\}_{u \in V}$ of bounded linear  weighted join operators $\W{b}{u}$ on a directed tree $\mathscr T=(V, E).$ A routine verification shows that $\W{b}{u}\W{b}{v} = \W{b}{v}\W{b}{u}$ if and only if $$\lambda_{vw} \lambda_{u v \sqcup w} \,=\, \lambda_{uw} \lambda_{vu\sqcup w}, \quad w \in V.$$
The later condition holds, in particular, for the constant weight systems $\lambdab_u,$ $u \in V$ with value $1$. Assume that the family $\mathscr F_{\mf b}$ is commuting. By Theorems \ref{o-deco} and \ref{o-deco-rl}, the family $\mathscr F_{\mf b}$
is essentially normal.
Motivated by \cite[Theorem 2.11]{KK}, one may ask whether the $C^*$-algebra $C^*(\mathscr F_{\mf b})$ generated by $\mathscr F_{\mf b}$ is completely determined by the directed tree $\mathscr T$ and weight systems $\lambdab_u$, $u \in V$ ? In case the answer is no, what are the complete invariants which determine $C^*(\mathscr F_{\mf b})$ ?

\medskip \textit{Acknowledgement}. \
The authors would like to thank Shailesh Trivedi and Soumitra Ghara for some stimulating conversations related to the subject of this paper.
We also convey our sincere thanks to Sumit Mohanty for drawing our attention to the references \cite{Fr} and \cite{Pr}, where respectively the notions of graph with boundary and spiral-like ordering, relevant to the present investigations, were introduced.
\bibliographystyle{amsalpha}

\vskip.2cm
%\newpage

\noindent
%\address{Department of Mathematics and Statistics\\Indian Institute of Technology Kanpur, India }\\
%   \email{chavan@iitk.ac.in}\\
%   \email{rajeevg@iitk.ac.in} \\
%  \vskip.1cm
%   \noindent
%   \address{J.N. Centre for Advanced Scientific Research, Jakkur, Bangalore 560064, India  \\ and
% Indian Statistical Institute, India
%  }\\
%\email{kbs@jncasr.ac.in}

  %  \begin{multicols}{4}
 %      \printindex
   % \end{multicols}

\printindex

\end{document}